\documentclass[11pt,a4paper]{article}

\usepackage[utf8]{inputenc}  
\usepackage[T1]{fontenc}  
\usepackage[english]{babel}
\usepackage{tikz}
\usetikzlibrary{positioning,chains,fit,shapes,calc}

\usepackage{geometry}
\geometry{hmargin=2.5cm,vmargin=2.5cm}

\usepackage{amsmath}
\usepackage{amsfonts}
\usepackage{amssymb}
\usepackage{amsthm}
\usepackage{stmaryrd}
\usepackage{dsfont}
\usepackage{bbm}
\usepackage{yhmath}
\usepackage{mathtools}

\newtheorem{thm}{Theorem}[section]
\newtheorem{prop}[thm]{Proposition}

\newtheorem{lem}[thm]{Lemma}
\newtheorem*{rk}{Remark}
\newtheorem*{ex}{Example}
\newtheorem*{exs}{Examples}

\newtheorem{defi}[thm]{Definition}
\newtheorem*{thm*}{Theorem}
\newtheorem*{prop*}{Proposition}
\newtheorem*{lem*}{Lemma}

\usepackage{graphicx}
\usepackage{float}
\restylefloat{figure}

\newcommand{\E}{\mathbb{E}}
\renewcommand{\P}{\mathbb{P}}
\newcommand{\bL}{\mathbb{L}}
\newcommand{\be}{\mathbbm{e}}
\newcommand{\R}{\mathbb{R}}
\newcommand{\N}{\mathbb{N}}
\newcommand{\Z}{\mathbb{Z}}
\newcommand{\C}{\mathbb{C}}
\newcommand{\D}{\mathbb{D}}
\newcommand{\oD}{\overline{\mathbb{D}}}
\newcommand{\bCL}{\mathbb{CL}}
\newcommand{\bS}{\mathbb{S}}
\newcommand{\bT}{\mathbb{T}}
\newcommand{\K}{\mathbb{K}}
\newcommand{\cP}{\mathcal{P}}

\newcommand{\cT}{\mathcal{T}}

\newcommand{\cF}{\mathcal{F}}

\newcommand{\cEG}{\mathcal{EG}}
\newcommand{\kS}{\mathfrak{S}}
\newcommand{\kM}{\mathfrak{M}}
\newcommand{\kT}{\mathfrak{T}}
\newcommand{\kC}{\mathfrak{C}}
\newcommand{\kU}{\mathfrak{U}}
\newcommand{\kBT}{\mathfrak{BT}}
\newcommand{\kK}{\mathfrak{K}}

\newcommand{\mub}{\mu^\circ}
\newcommand{\mun}{\mu^\bullet}

\newcommand{\wn}{w^\bullet}
\newcommand{\wb}{w^\circ}
\newcommand{\twb}{\tilde{w}^\circ}
\newcommand{\twn}{\tilde{w}^\bullet}

\usepackage[colorlinks=false]{hyperref}\tikzset{cross/.style={cross out, draw=black, minimum size=2*(#1-\pgflinewidth), inner sep=0pt, outer sep=0pt},
cross/.default={1pt}}
\usepackage{animate}
\usepackage{todonotes}

\usetikzlibrary{patterns}

\author{Paul Thevenin\footnote{CMAP \& \'Ecole polytechnique, paul.thevenin@polytechnique.edu \newline The author acknowledges partial support from Agence Nationale de la Recherche,
Grant Number ANR-14-CE25-0014 (ANR GRAAL).}}
\title{Random stable type minimal factorizations of the $n$-cycle}
\date{}
\begin{document}

\maketitle

\begin{abstract}
\small{We investigate random minimal factorizations of the $n$-cycle, that is, factorizations of the permutation $(1 \, 2 \cdots n)$ into a product of cycles $\tau_1, \ldots, \tau_k$ whose lengths $\ell(\tau_1), \ldots, \ell(\tau_k)$ verify the minimality condition $\sum_{i=1}^k(\ell(\tau_i)-1)=n-1$. By associating to a cycle of the factorization a black polygon inscribed in the unit disk, and reading the cycles one after an other, we code a minimal factorization by a process of colored laminations of the disk, which are compact subsets made of red noncrossing chords delimiting faces that are either black or white. Our main result is the convergence of this process as $n \rightarrow \infty$, when the factorization is randomly chosen according to Boltzmann weights in the domain of attraction of an $\alpha$-stable law, for some $\alpha \in (1,2]$. The new limiting process interpolates between the unit circle and a colored version of Kortchemski's $\alpha$-stable lamination. Our principal tool in the study of this process is a bijection between minimal factorizations and a model of size-conditioned labelled random trees whose vertices are colored black or white.}
\end{abstract}

\section{Introduction}
\label{sec:intro}

\subsection{Model and motivation}

The purpose of this work is to introduce and investigate a geometric representation, as compact subsets of the unit disk, of certain random minimal factorizations of the $n$-cycle. For an integer $n \geq 1$, let $\kS_n$ be the group of permutations of $\llbracket 1,n \rrbracket$, and $\kC_n$ the set of cycles of $\kS_n$. We denote by $\ell(c)$ the length of a cycle $c \in \kC_n$. A particular object of interest is the $n$-cycle $c_n \coloneqq (1 2 \ldots n)$, which maps $i$ to $i+1$ for $1 \leq i \leq n-1$, and $n$ to $1$. For any $n \geq k \geq 1$, the elements of the set
\begin{align*}
\kM_n^{(k)} := \left\{ (\tau_1, \ldots, \tau_k) \in \kC_n^k, \, \tau_1 \cdots \tau_k = c_n, \, \forall i \, \, \ell(\tau_i) \geq 2, \sum\limits_{i=1}^k \left( \ell(\tau_i)-1 \right) = n-1 \right\}
\end{align*}
are called minimal factorizations of $c_n$ of order $k$, while an element of
\begin{align*}
\kM_n := \underset{k=1}{\overset{n-1}{\bigcup}} \kM_n^{(k)}.
\end{align*}
is simply called a minimal factorization of $c_n$ (one can check that $\kM_n^{(k)}$ is empty as soon as $k \geq n$).
By convention, we read cycles from the left to the right, so that $\tau_1 \tau_2$ corresponds to $\tau_2 \circ \tau_1$. Notice that the condition $\sum_{i=1}^k \left( \ell(\tau_i)-1 \right) = n-1$ in the definition of $\kM_n^{(k)}$ is a condition of minimality, in the sense that any $k$-tuple of cycles $(\tau_1, \ldots, \tau_k)$ such that $\tau_1 \cdots \tau_k = c_n$ necessarily verifies 
\begin{align*}
\sum\limits_{i=1}^k \left( \ell(\tau_i)-1 \right) \geq n-1.
\end{align*}

Minimal factorizations of the $n$-cycle are a topic of interest, mostly in the restrictive case of factorizations into transpositions (that is, all cycles in the factorization have length $2$). The number of minimal factorizations of $c_n$ into transpositions is known to be $n^{n-2}$ since Dénes \cite{Den59}, and bijective proofs of this result have been given, notably by Moszkowski \cite{Mos89} or Goulden and Pepper \cite{GP93}. These proofs use bijections between the set of minimal factorizations into transpositions and sets of trees, whose cardinality is computed by other ways.

More recently, factorizations into transpositions have been studied from a probabilistic approach by Féray and Kortchemski, who investigate the asymptotic behaviour of such a factorization taken uniformly at random, as $n$ grows. On one hand from a 'local' point of view \cite{FK18}, by studying the joint trajectories of finitely many integers through the factorization. On the other hand from a 'global' point of view \cite{FK17}, by coding a factorization in the unit disk as was initially suggested by Goulden and Yong \cite{GY02}: associating to each transposition a chord in the disk and drawing these chords in the order in which the transpositions appear in the factorization, they code a uniform factorization by a random process of sets of chords, and prove the convergence of the $1$-dimensional marginals of this process as $n$ grows, after time renormalization. The author \cite{The19} extends this result by proving the functional convergence of the whole process, highlighting in addition interesting connections between this model and a fragmentation process of the so-called Brownian Continuum Random Tree (in short, CRT), due to Aldous and Pitman \cite{AP98}. This fragmentation process codes a way of cutting the CRT at random points into smaller components, as time passes. 

Let us also mention Angel, Holroyd, Romik and Vir{\'a}g \cite{AHRV07}, and later Dauvergne \cite{Dau19}, who investigate from a geometric point of view the closely related model of uniform sorting networks, that is, factorizations of the reverse permutation (which exchanges $1$ with $n$, $2$ with $n-1$, etc.) into adjacent transpositions, that exchange only consecutive integers. 

In an other direction, more general minimal factorizations have been studied as combinatorial structures. Specifically, Biane \cite{Bia96} investigates the case of minimal factorizations of $c_n$ of class $\overline{a} \coloneqq (a_1, \ldots, a_k)$, where $a_1, \ldots, a_k$ are all integers $\geq 2$ such that $\sum_{i=1}^k (a_i-1) = n-1$, which are $k$-tuples of cycles $(\tau_1, \ldots, \tau_k) \in \kM_n^{(k)}$ such that, for $1 \leq i \leq k$, $\ell(\tau_i)=a_i$. Biane notably proves that, at $\overline{a}$ fixed, the number of factorizations of class $\overline{a}$ is surprisingly always equal to $n^{k-1}$, and therefore only depends on the cardinality of the class. A proof based on a bijection with a new model of trees is given by Du and Liu \cite{DL15}, which inspired our own bijection, exposed in Section \ref{sec:minfac} and very close to theirs. In particular, the class $(2,2,\ldots,2)$, where $2$ is repeated $n-1$ times, corresponds to minimal factorizations of the $n$-cycle into transpositions, and one recovers Dénes' result.

Our goal in this paper is to extend the geometric approach of uniform minimal factorizations into transpositions initiated by Féray \& Kortchemski to minimal factorizations into cycles of random lengths, when the probability of choosing a given factorization only depends on its class.

\paragraph*{Weighted minimal factorizations}

Let us immediately introduce the object of interest of this paper, which is a new model of random factorizations. The idea is to generalize minimal factorizations of the cycle into transpositions, by giving to each element of $\kM_n$ a weight which depends on its class and then choosing a factorization at random proportionally to its weight. 

We fix a sequence $w \coloneqq (w_i)_{i \geq 1}$ of nonnegative real numbers, which we call weights. We will always assume that there exists $i \geq 1$ such that $w_i>0$. For any positive integers $n,k$, and any factorization $f \coloneqq (\tau_1, \ldots, \tau_k) \in \kM_n^{(k)}$, define the weight of $f$ as 
\begin{align*}
W_w(f) \coloneqq \prod_{i=1}^k w_{\ell(\tau_i)-1}.
\end{align*}
Then, we define the $w$-minimal factorization of the $n$-cycle, denoted by $f_n^w$, as the random variable on the set $\kM_n$ such that, for all $f \in \kM_n$, the probability that $f_n^w$ is equal to $f$ is proportional to the weight of $f$:
\begin{align*}
\P \left( f_n^w=f \right) = \frac{1}{Y_{n,w}} W_w(f),
\end{align*}
where $Y_{n,w} \coloneqq \sum_{f \in \kM_n} W_w(f)$ is a renormalization constant. We shall implicitly restrict our study to the values of $n$ such that $Y_{n,w}>0$.

Remark that some particular weight sequences give birth to specific models of random factorizations:
\begin{itemize}
\item fix an integer $r \geq 2$, and define $\delta^r$ as follows: $\delta^r_{r-1}=1$, and for all $k \neq r-1$, $\delta^r_k=0$. Then $f_n^{\delta^r}$ is a uniform minimal factorization of the $n$-cycle into $r$-cycles. In particular, when $r=2$, one recovers the model of minimal factorizations into transpositions, studied in depth in \cite{FK17, FK18, The19}.
\item define $v$ as the weight sequence such that, for all $k \geq 1$, $v_k=1$. Then $f_n^v$ is a uniform element of $\kM_n$.
\end{itemize}

\paragraph*{Minimal factorizations of stable type}

We specifically focus in this paper on a particular case of weighted factorizations, which we call factorizations of stable type. These random factorizations of the $n$-cycle are of great interest, as we can code them by a process of compact subsets of the unit disk which converges in distribution (see Theorem \ref{thm:mainresult}).

Let us start with some definitions. A function $L: \R_+^* \rightarrow \R_+^*$ is said to be slowly varying if, for any $c>0$, $L(cx)/L(x) \rightarrow 1$ as $x \rightarrow \infty$.
For $\alpha \in (1,2]$, we say that a probability distribution $\mu$ which is critical - that is, $\mu$ has mean $1$ - is in the domain of attraction of an $\alpha$-stable law if there exists a slowly varying function $L$ such that, as $x \rightarrow \infty$,
\begin{equation}
\label{eq:stablelaw}
Var\left[ X \, \mathds{1}_{X \leq x} \right] \sim x^{2-\alpha} L(x),
\end{equation}
where $X$ is a random variable distributed according to $\mu$. We refer to \cite{Jan11} for an in-depth study of these well-known distributions. In particular, any law with finite variance is in the domain of attraction of a $2$-stable law.

Throughout the paper, for such a distribution $\mu$, $(B_n)_{n \geq 1}$ denotes a sequence of positive real numbers satisfying
\begin{equation}
\label{eq:Bn}
\frac{n L(B_n)}{B_n^\alpha} \underset{n \rightarrow \infty}{\rightarrow} \frac{\alpha (\alpha - 1)}{\Gamma \left( 3 - \alpha \right)},
\end{equation}
where $L$ verifies \eqref{eq:stablelaw}. Notice in particular that, if $\mu$ has finite variance $\sigma^2$, then $L(x) \rightarrow \sigma^2$ as $x \rightarrow \infty$, and \eqref{eq:Bn} can be rewritten $B_n \underset{n \rightarrow \infty}{\sim} \frac{\sigma}{\sqrt{2}} \sqrt{n}$. For such a sequence $(B_n)_{n \geq 1}$, we denote by $(\tilde{B}_n)_{n \geq 1}$ the sequence defined as
\begin{equation}
\label{eq:tbn}
\tilde{B}_n = \left\{
      \begin{aligned}
        B_n \qquad & \text{ if } \quad \alpha<2, \text{ or } \alpha=2 \text{ and } Var(\mu)=\infty,\\
      \sqrt{\frac{\sigma^2+1}{2}} \sqrt{n} \qquad & \text{ if } \quad Var(\mu) = \sigma^2<\infty. \\
      \end{aligned}
    \right.
\end{equation}

Finally, for $\alpha \in (1,2]$, we say that a weight sequence $w$ is of $\alpha$-stable type if there exists a critical distribution $\nu$ in the domain of attraction of an $\alpha$-stable law and a real number $s>0$ such that, for all $i \geq 1$,
\begin{align*}
w_i = \nu_i s^i.
\end{align*}
In this case, $\nu$ is said to be the critical equivalent of $w$. One can check that, if $w$ admits a critical equivalent - which is not always the case - then it is unique. Furthermore, it appears that, whenever different weight sequences may have the same critical equivalent, the distribution of the minimal factorization $f_n^w$ only depends on this critical law. If $w$ is a weight sequence of $\alpha$-stable type, then we also say that $f_n^w$ is a minimal factorization of $\alpha$-stable type. 

Throughout the paper, we investigate several combinatorial quantities of these factorizations. Here are two examples. As a first result, we can control the number of cycles in such a factorization. For any $n \geq 1$ and any minimal factorization $F$ of the $n$-cycle, denote by $N(F)$ the number of cycles in $F$.

\begin{lem}
\label{lem:intronbcycles}
Let $w$ be a weight sequence of $\alpha$-stable type for some $\alpha \in (1,2]$, and $\nu$ be its critical equivalent. Then, as $n \rightarrow \infty$,
\begin{equation*}
\frac{1}{n} N \left( f_n^w \right) \overset{\P}{\rightarrow} 1-\nu_0,
\end{equation*}
where $\overset{\P}{\rightarrow}$ denotes the convergence in probability.
\end{lem}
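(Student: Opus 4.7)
The plan is to translate the problem via the bijection of Section~\ref{sec:minfac} between minimal factorizations and a model of labelled colored trees, and then to apply a known limit theorem for the empirical offspring distribution of size-conditioned Galton-Watson trees. First, I would recall that under this bijection a factorization $f = (\tau_1, \ldots, \tau_k) \in \kM_n^{(k)}$ corresponds to a tree $T(f)$ in which each cycle of length $\ell$ is represented by an internal (black) vertex with $\ell-1$ children, so that the number of cycles $N(f) = k$ is identified with the number of non-leaf vertices of $T(f)$, and the minimality condition $\sum_i (\ell(\tau_i) - 1) = n - 1$ ensures that $T(f)$ has exactly $n$ vertices in total.

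Next, I would use the decomposition $w_i = \nu_i s^i$ of an $\alpha$-stable weight sequence to rewrite the Boltzmann weight as
\begin{align*}
W_w(f) = s^{n-1} \prod_{i=1}^k \nu_{\ell(\tau_i) - 1}.
\end{align*}
Since the factor $s^{n-1}$ does not depend on $f$, the image of $f_n^w$ under the bijection has law proportional to $\prod_{v} \nu_{c(v)}$, where $c(v)$ denotes the number of children of a vertex $v$. This identifies $T(f_n^w)$ as a critical Galton-Watson tree with offspring distribution $\nu$ conditioned on having exactly $n$ vertices, and explains in passing why the law of $f_n^w$ depends only on $\nu$ and not on the extra parameter $s$.

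Finally, I would invoke the classical law of large numbers for the empirical offspring distribution of a critical size-conditioned Galton-Watson tree whose offspring distribution lies in the domain of attraction of an $\alpha$-stable law: for every integer $k \geq 0$,
\begin{align*}
\frac{1}{n} \# \{ v \in T(f_n^w) : c(v) = k \} \overset{\P}{\longrightarrow} \nu_k.
\end{align*}
Specializing this to $k = 0$ yields that the proportion of leaves of $T(f_n^w)$ converges in probability to $\nu_0$, so that the proportion of non-leaves — namely $N(f_n^w)/n$ — converges in probability to $1 - \nu_0$, which is the claimed convergence.

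The principal obstacle lies in the first two steps, i.e.\ in the combinatorial reduction: one has to check carefully that the two-type labelled tree model of Section~\ref{sec:minfac}, once the weight-independent data has been forgotten, truly reduces to a plain single-type size-conditioned Galton-Watson tree with offspring distribution $\nu$, and that the statistic $N(f)$ corresponds exactly to the non-leaf count under the bijection. Once this identification is in place, the probabilistic conclusion is a direct application of standard limit theorems for conditioned Galton-Watson trees and requires no further work.
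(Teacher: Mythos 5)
There is a genuine gap, and it sits exactly where you anticipated: the combinatorial reduction in your first two steps does not hold. The bijection of Section~\ref{sec:minfac} does \emph{not} send $f\in\kM_n^{(k)}$ to a single-type tree with $n$ vertices in which cycles are internal vertices and the weight is $\prod_v \nu_{c(v)}$. It sends $f$ to a \emph{bi-type} tree $T(f)\in\kU_n^{(k)}$ with $n$ white vertices (one per white face of $S_{lab}(f)$) and $k$ black vertices (one per cycle), hence $n+k$ vertices in total with $k$ random. A quick sanity check shows your proposed tree cannot be a bijection: for factorizations into transpositions every "internal vertex" would have exactly one child, so the shape would be a path, and the $(n-1)!$ labellings of a path cannot account for the $n^{n-2}$ minimal factorizations. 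Moreover, even granting some tree model, the Boltzmann weight $\prod_{i}\nu_{\ell(\tau_i)-1}$ carries no $\nu_0$ factor per leaf, and the missing factor $\nu_0^{\#\text{leaves}}$ depends on $k$ and therefore changes the law; and the count of admissible labellings of $T(f)$ contributes a factor $\prod_{x\text{ white}}1/k_x(T)!$, which is precisely why Theorem~\ref{thm:btsg} identifies $T(f_n^w)$ as the $(\mu_*,w)$-BTSG $\cT_n^{(\mu_*,w)}$ with Poisson$(1)$ white weights, conditioned on having $n$ \emph{white} vertices — not as a $\nu$-GW tree conditioned on total size $n$. Consequently your final step applies the offspring LLN to an object that is not the one produced by the bijection.

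The paper's route is: Theorem~\ref{thm:btsg} (law of $T(f_n^w)$ is that of $\cT_n^{(\mu_*,w)}$), plus the observation that $N(f)=N^\bullet(T(f))$, plus Lemma~\ref{lem:nbofcycles}, whose proof is not a one-line LLN. It conditions on the number $k$ of white grandchildren of each white vertex, notes that the number of its black children is then an independent variable $X_k$, controls the counts $N_k^{n,\circ}$ via asymptotic normality in the white reduced tree $\bT_n^\mu$ (where $\mu$ is the \emph{reached} distribution of Theorem~\ref{thm:reachabledistributions}, not $\nu$), and finally computes $\sum_k\mu_k\E[X_k]=1-\nu_0$ using the equivalent bi-type GW measures $\mub,\mun$ of \eqref{eq:defmubmun}. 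Your answer $1-\nu_0$ is correct, and indeed it is consistent with the heuristic that the empirical law of $\ell(\tau_i)-1$ converges to $\nu(\cdot\mid\cdot\geq 1)$ together with criticality of $\nu$ and $\sum_i(\ell(\tau_i)-1)=n-1$; but to justify that empirical law you would still need the correct bi-type tree model, so the gap is not cosmetic.
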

In words, the number of cycles in $f_n^w$ behaves linearly in $n$. The proof of this lemma can be found in Section \ref{ssec:image}.

\begin{ex}
If one looks at the weight sequence $v$ defined as $v_i=1$ for all $i \geq 1$ (so that $f_n^v$ is a uniform element of $\kM_n$), then one can check that $v$ has a critical equivalent $\nu$ satisfying $\nu_0 = (3-\sqrt{5})/2$ and $\nu_i = ((3-\sqrt{5})/2)^i$ for $i \geq 1$. Thus, the average number of cycles in a uniform minimal factorization of the $n$-cycle is of order $(1-\nu_0) n = (\sqrt{5}-1) \, n/2$.
\end{ex}

As an other side result, we are able to control the length of the largest cycle in such factorizations. For any $n \geq 1$ and any minimal factorization $F$ of the $n$-cycle, denote by $\ell_{max}(F)$ the length of the largest cycle in $F$.

\begin{prop}
\label{prop:largestcycle}
Let $w$ be a weight sequence of $\alpha$-stable type for some $\alpha \in (1,2]$, and $\nu$ be its critical equivalent. Let $(B_n)_{n \geq 1}$ be a sequence satisfying \eqref{eq:Bn}. Then:
\begin{itemize}
\item[(i)] if $\alpha = 2$, then with probability going to $1$ as $n \rightarrow \infty$, $\ell_{max}(f_n^w) = o(B_n)$;
\item[(ii)] if $\alpha<2$ then for any $\epsilon>0$ there exists $\eta>0$ such that, for $n$ large enough, with probability larger than $1-\epsilon$, $\eta B_n \leq \ell_{max}(f_n^w) \leq \eta^{-1} B_n$.
\end{itemize}
\end{prop}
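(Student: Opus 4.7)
The plan is to reduce the statement to classical asymptotics for the maximum out-degree of a size-conditioned $\alpha$-stable Galton-Watson tree, by going through the bijection between $\kM_n$ and bicolored labelled trees that is announced in the introduction and developed in Section \ref{sec:minfac}. Under this bijection, each cycle $\tau$ of a factorization $f$ is encoded by a black vertex of the associated tree $T(f)$, and $\ell(\tau)$ equals the number of white children of that vertex (up to a universal additive shift). Hence, if $\Delta_\bullet(T)$ denotes the maximum out-degree of a black vertex of $T$, one has
\begin{equation*}
\ell_{max}(f_n^w) = \Delta_\bullet\bigl(T(f_n^w)\bigr) + O(1),
\end{equation*}
and proving the proposition amounts to controlling $\Delta_\bullet(T(f_n^w))$.

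The first step is to identify the law of $T(f_n^w)$. Using $w_i = \nu_i s^i$, the pushforward under the bijection is a Boltzmann distribution on bicolored labelled trees which, after the standard collapsing of each black vertex with its white children (in the spirit of Jonsson-Stef\'ansson for two-type branching trees), is the distribution of a Bienaym\'e-Galton-Watson tree with offspring distribution $\nu$, conditioned on having a prescribed number of vertices. Lemma \ref{lem:intronbcycles}, which gives $N(f_n^w)/n \to 1-\nu_0$ in probability, ensures that this conditioning is on a size of order $n$, so that the standard invariance principles apply with the normalizing sequence $B_n$ up to a universal multiplicative constant (harmless by slow variation of $L$).

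The second step is to invoke the now classical asymptotics for the maximum out-degree of such a conditioned tree, themselves a consequence of the Skorokhod convergence of the rescaled \L{}ukasiewicz path toward an excursion-type process. When $\alpha = 2$ (including $Var(\nu) = \infty$), the limit is the Brownian excursion, which is continuous, so the rescaled jump $(\ell_{max}(f_n^w)-1)/B_n$ tends to $0$ in probability, proving (i). When $\alpha \in (1,2)$, the limit is the normalized excursion of the $\alpha$-stable L\'evy process, whose largest jump is almost surely both finite and strictly positive; given $\epsilon>0$ one can thus pick $\eta>0$ such that this largest jump lies in $[\eta,\eta^{-1}]$ with probability at least $1-\epsilon$. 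Continuity of the largest-jump functional at excursions whose maximum jump is attained uniquely, combined with the invariance principle, transfers this two-sided estimate to $\ell_{max}(f_n^w)/B_n$ for $n$ large, yielding (ii).

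The probabilistic part is essentially off-the-shelf, so the main obstacle lies in the bookkeeping of the first step: one must verify that the two-type weighted tree genuinely reduces to a one-type Galton-Watson tree with offspring law exactly $\nu$ (and not some convolved or Esscher-tilted variant produced by the bicoloring), and that the random conditioning on $N(f_n^w)$ is compatible with the standard scaling limits for size-conditioned GW trees. Once these points are settled, part (i) follows from the continuity of the Brownian excursion and part (ii) from the jump structure of the $\alpha$-stable excursion, by results going back to Duquesne and made explicit in the present form by Kortchemski.
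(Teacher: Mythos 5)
Your overall strategy --- pass through the tree bijection, identify the length of a cycle with the degree of the corresponding black vertex, and conclude via scaling limits of conditioned Galton--Watson trees --- is the same as the paper's. But the step you yourself flag as ``bookkeeping'' is in fact the crux, and as written it is a genuine gap. The two-type tree $T(f_n^w)$, whose law is that of $\cT_n^{(\mu_*,w)}$ (Theorem \ref{thm:btsg}), does \emph{not} collapse to a one-type Galton--Watson tree with offspring law $\nu$ conditioned on its size. What the paper actually establishes (Theorem \ref{thm:reachabledistributions}) is that the \emph{white reduced tree} $\cT_n^{\circ}$ --- obtained by contracting grandparent--grandchild relations among white vertices --- is a $\mu$-GW tree conditioned to have exactly $n$ vertices, where $F_\mu=e^{F_w(s\cdot)-F_w(s)}$ is a compound-Poisson transform of $\nu$, not $\nu$ itself. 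The degrees in this reduced tree count white \emph{grandchildren} of white vertices, not children of black vertices. Your proposed ``collapsing of each black vertex with its white children'' is not a well-defined tree operation here, and conditioning on the number of white vertices of the two-type tree is not the same as conditioning a collapsed one-type tree on its total size; the appeal to Lemma \ref{lem:intronbcycles} does not repair this (and that lemma is itself proved in the paper via the very tree structure you are trying to set up).

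The place where this gap actually bites is the lower bound in (ii). The upper bounds in (i) and (ii) survive, because the maximal black degree is bounded by the maximal degree of the white reduced tree, $\mu$ lies in the domain of attraction of the same $\alpha$-stable law with the same scaling $\tilde B_n\asymp B_n$, and Theorem \ref{thm:cvcontour} then gives $o(B_n)$ (continuity of the Brownian excursion) resp.\ $O(B_n)$ (tightness of the largest jump). But for the lower bound one must exhibit a \emph{single} black vertex with $\Theta(B_n)$ children, and a white vertex with $\Theta(B_n)$ white grandchildren (guaranteed by Lemma \ref{lem:grossommets}~(i)) could a priori distribute them over many black children each of small degree. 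Ruling this out is exactly the content of Lemma \ref{lem:pnu<2} in the paper: with high probability all but $\tilde B_n n^{-\delta}$ of the grandchildren of any such vertex share one black parent, a statement proved by a union bound combined with polynomial tail estimates on $\nu$ and the constraint $2\delta(\alpha+1/\alpha)<1$. Your argument contains no substitute for this step, so part (ii)'s lower bound does not follow from what you have written. If you want to keep your route, you would need either to prove the claimed reduction to a $\nu$-GW tree rigorously (and check that the maximal black degree really is the maximal jump of its \L{}ukasiewicz path), or to import an analogue of Lemma \ref{lem:pnu<2}.
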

In other terms, for $\alpha<2$, the largest cycle in $f_n^w$ is of order $B_n$ (thus of order $n^{1/\alpha}$, up to a slowly varying function). If $\alpha=2$ then one can only say that the largest cycle is of length $o(B_n)$ (which means $o(\sqrt{n})$ if $\nu$ has finite variance). This is proved in Section \ref{ssec:randomtrees}.

\subsection{Coding a minimal factorization by a colored lamination-valued process}
\label{ssec:codingminfac}

The first aim of this paper is to code random minimal factorizations in the unit disk. In what follows, $\oD \coloneqq \{ z \in \C, |z| \leq 1 \}$ denotes the closed unit disk and $\bS^1 \coloneqq \{ z \in \oD, |z|=1 \}$ the unit circle.

The idea of coding random structures by compact subsets of $\oD$ goes back to Aldous \cite{ald} who investigates triangulations of large polygons: let $n \in \Z_+$ and define $P_n$, the regular $n$-gon inscribed in $\oD$, whose vertices are $\left\{ e^{2i k\pi/n}, 1 \leq k \leq n \right\}$. Aldous proves that a random uniform triangulation of $P_n$ (that is, a set of non-crossing diagonals of $P_n$ whose complement in $P_n$ is a union of triangles) converges in distribution towards a random compact subset of the disk which he calls the \textit{Brownian triangulation}. This Brownian triangulation is notably a lamination - that is, a compact subset of $\oD$ made of the union of the circle and a set of chords that do not cross, except maybe at their endpoints. In particular, the faces of this lamination, which are the connected components of its complement in $\oD$, are all triangles. See Fig. \ref{fig:colorlam}, middle, for an approximation of this lamination. Since then, the Brownian triangulation has been appearing as the limit of various random discrete structures \cite{Bet17, CK14}, and has also been connected to random maps \cite{LGP08}.

In a extension of Aldous' work, Kortchemski \cite{Kor14} constructed a family $(\bL_\infty^{(\alpha)})_{1<\alpha \leq 2}$ of random laminations, called $\alpha$-stable laminations (see Fig. \ref{fig:colorlam}, left, for an approximation of the stable lamination $\bL^{(1.3)}$). These laminations appear as limits of large general Boltzmann dissections of the regular $n$-gon \cite{Kor14}. This extends Aldous' result about triangulations, since the $2$-stable lamination $\bL_\infty^{(2)}$ is distributed as the Brownian triangulation. Stable laminations are also limits of large non-crossing partitions \cite{KM17}.

Let us now introduce colored laminations, which generalize the notion of lamination. A colored lamination is a subset of $\oD$, in which each point is colored either black, red or left white, so that the subset of red points is a lamination whose faces are each either completely black or completely white. See an example on Fig. \ref{fig:firstlamination}. A particular example of colored laminations is the colored analogue of the $\alpha$-stable laminations. These objects are colored laminations whose red chords form the $\alpha$-stable lamination, and whose faces are colored black in an i.i.d. way. Specifically, for $p \in [0,1]$, the $p$-colored $\alpha$-stable lamination $\bL_\infty^{(\alpha),p}$ is a random colored lamination such that: (i) the red part of $\bL_\infty^{(\alpha),p}$ has the law of $\bL_\infty^{(\alpha)}$; (ii) independently of the red component, the faces of $\bL_\infty^{(\alpha),p}$ are colored black independently of each other with probability $p$ (see Fig. \ref{fig:colorlam}, right for a simulation of $\bL_\infty^{(2), 0.5}$).

\begin{figure}[!h]
\center
\caption{Left: a simulation of the $1.3$-stable lamination $\bL_\infty^{(1.3)}$. Middle: a simulation of the Brownian triangulation $\bL_\infty^{(2)}$. Right: a simulation of $\bL_\infty^{(2),0.5}$.}
\label{fig:colorlam}
\begin{tabular}{c c c}
\includegraphics[scale=.34]{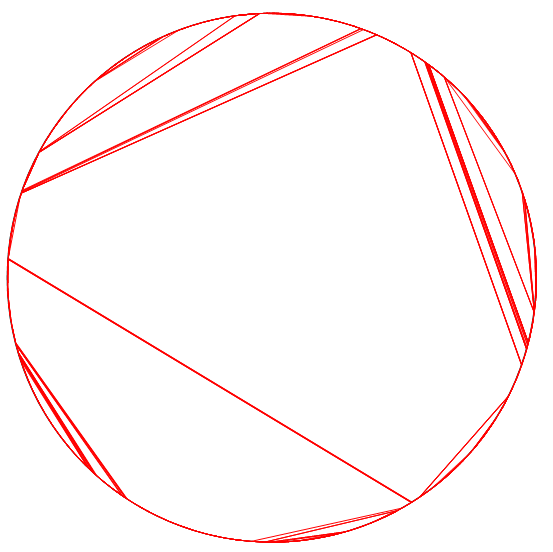}
&
\includegraphics[scale=.25]{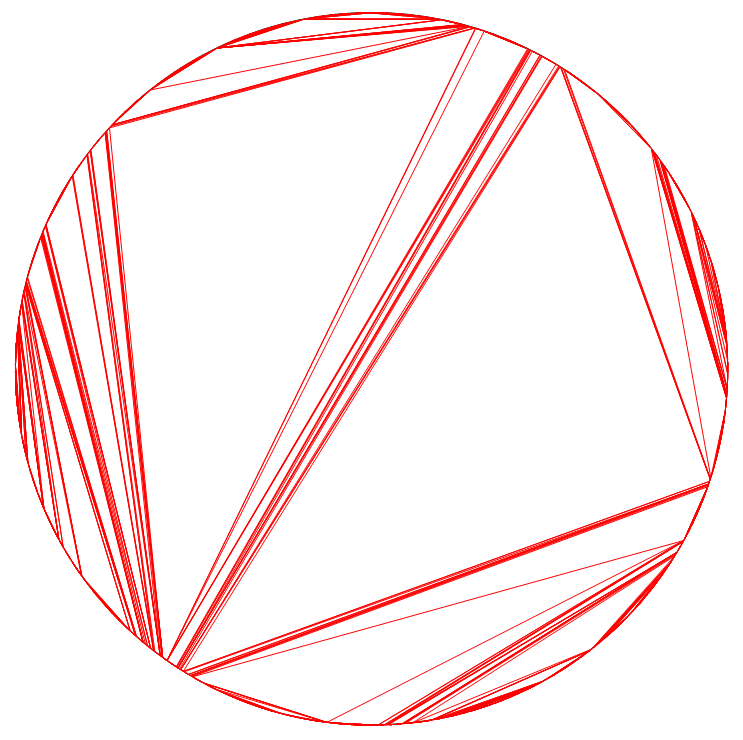} 
&
\includegraphics[scale=.25]{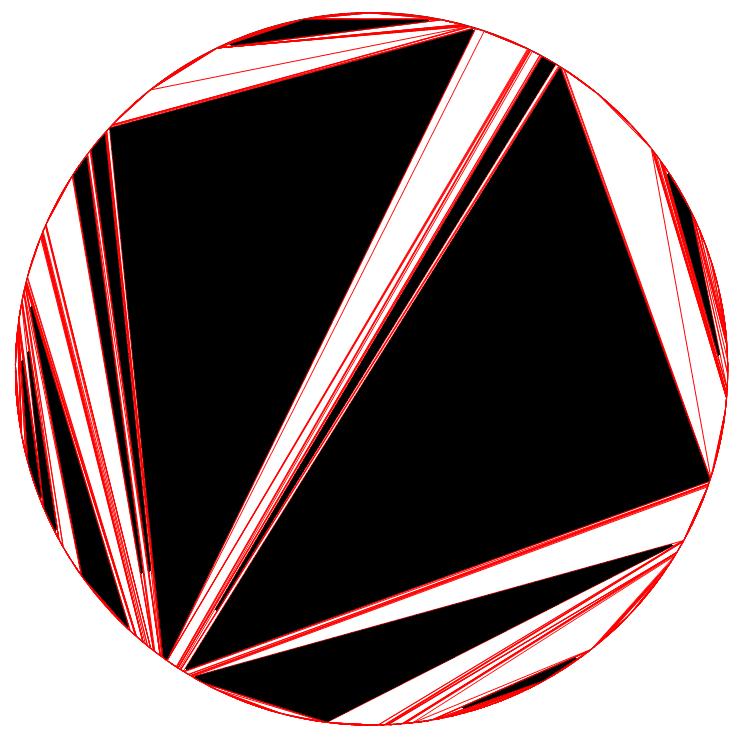}

\end{tabular}
\end{figure}

We now provide a way of representing a minimal factorization by a process of colored laminations of the unit disk. This representation is a generalization of the representation of a minimal factorization into transpositions, introduced by Goulden and Yong \cite{GY02}. It consists in drawing, for each cycle $\tau$ of the factorization, a number $\ell(\tau)$ of red chords in $\oD$, and coloring the face that it creates in black. More precisely, for $n \geq 1$, let $\tau \in \kC_n$ be a cycle and assume that it can be written as  $(e_1, \ldots, e_{\ell(\tau)})$, where $e_1 < \cdots < e_{\ell(\tau)}$. We will prove later that indeed, if $\tau$ appears in a minimal factorization of the $n$-cycle, then it satisfies this condition (see Section \ref{sec:minfac} for details and proofs). Then draw in red, for each $1 \leq j \leq \ell(\tau)-1$, the chord $[e^{-2i \pi e_j/n}, e^{-2i \pi e_{j+1}/n}]$, and also draw the chord $[e^{-2i \pi e_{\ell(\tau)}/n}, e^{-2i \pi e_1/n}]$ (here, $[x,y]$ denotes the segment connecting $x$ and $y$ in $\R^2$). This creates a red cycle. Color now the interior of this cycle in black, and finally color the unit circle in red. We denote the colored lamination that we obtain by $S(\tau)$. 

\begin{figure}[!h]
\center
\caption{The subset $S(f)$ where $f \coloneqq (5678)(23)(125)(45)$ is an element of $\kM_8$.}
\label{fig:firstlamination}
\begin{tikzpicture}[scale=.7,rotate=-45]
\draw[red] (0,0) circle (3);
\foreach \k in {1,...,8}
{\draw ({3.25*cos(360*(\k-1)/8)},{-3.25*sin(360*(\k-1)/8)}) node{\k};}
\draw[red,fill=black] ({3*cos(360*0/8)},{-3*sin(360*0/8)}) -- ({3*cos(360*4/8)},{-3*sin(360*4/8)}) -- ({3*cos(360/8)},{-3*sin(360/8)}) -- cycle;
\draw[red] ({3*cos(360*4/8)},{-3*sin(360*4/8)}) --  ({3*cos(360*3/8)},{-3*sin(360*3/8)});
\draw[red] ({3*cos(360*1/8)},{-3*sin(360*1/8)}) -- ({3*cos(360*2/8)},{-3*sin(360*2/8)});
\draw[red,fill=black] ({3*cos(360*7/8)},{-3*sin(360*7/8)}) -- ({3*cos(360*4/8)},{-3*sin(360*4/8)}) -- ({3*cos(360*5/8)},{-3*sin(360*5/8)}) -- ({3*cos(360*6/8)},{-3*sin(360*6/8)}) -- cycle ;
\end{tikzpicture}
\end{figure}

Now, let $n, k \geq 1$ and $f \coloneqq (\tau_1,\ldots,\tau_k) \in \kM_n^{(k)}$. For $c \in [0,\infty]$, define $S_c(f)$ as the colored lamination
$$S_c(f) = \bS^1 \cup \bigcup_{r=1}^{\lfloor c \rfloor \wedge k} S(\tau_r),$$
and finally define $S(f)$ as $S(f) \coloneqq S_k(f) = \bigcup_{r=1}^k S(\tau_r)$. See Fig. \ref{fig:firstlamination} for an example.

In their study of a uniform minimal factorization of the $n$-cycle into transpositions (which we now denote by $f_n^{(2)}$ instead of $f_n^{\delta^2}$, for convenience), Féray and Kortchemski \cite{FK17} show that a phase transition appears after having read roughly $\sqrt{n}$ transpositions. Specifically, at $c \geq 0$ fixed, the lamination $S_{c \sqrt{n}}(f_n^{(2)})$ converges in distribution for the Hausdorff distance, as $n$ grows, to a random lamination $\bL_c^{(2)}$. The author \cite{The19} later obtains the functional analogue of this convergence, thus providing a coupling between the laminations $(\bL^{(2)}_c)_{c \geq 0}$. Let us explain in which sense we understand the convergence of colored lamination-valued processes: for $E,F$ two metric spaces, following Annex $A2$ in \cite{Kal02}, let $\D(E,F)$ be the space of càdlàg functions from $E$ to $F$ (that is, right-continuous functions with left limits), endowed with the $J_1$ Skorokhod topology. In our case, we see a colored lamination of $\oD$ as an element of $\K^2$, where $\K$ denotes the space of compact subsets of $\D$. The first coordinate corresponds to the set of red points which is a lamination by definition, and the second one to the colored component (the set of points that are black or red). Finally, the set $\bCL(\oD)$ of colored laminations of $\oD$ is endowed with the distance which is the sum of the usual Hausdorff distances $d_H$ on the two coordinates. This means that, if $A $ and $B$ are two colored laminations of $\oD$, $d_H(A,B) = d_H(A_r,B_r)+d_H(A_c,B_c)$ where $L_r$ denotes the set of red points of a colored lamination $L$, and $L_c$ the set of colored points of $L$ (that is, either black or red). For convenience, we denote this new distance on $\bCL(\oD)$ by $d_H$ as well. Finally, the set of laminations of the disk is seen as a subset of $\bCL(\oD)$, on which the red part and the colored part of an element are equal.

\begin{thm*}{\cite[Theorem $1.2$]{The19}}
There exists a lamination-valued process $(\bL^{(2)}_c)_{c \in [0,\infty]}$ such that the following convergence holds in distribution for the Skorokhod distance in $\D([0,\infty], \bCL(\oD))$, as $n \rightarrow \infty$:
\begin{equation}
\label{eq:cvthe19}
\left( S_{c \sqrt{n}}\left(f_n^{(2)} \right) \right)_{c \in [0,\infty]} \overset{(d)}{\rightarrow} \left( \bL^{(2)}_c \right)_{c \in [0,\infty]}.
\end{equation}
\end{thm*}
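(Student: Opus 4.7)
The plan is to reduce this convergence to known limit theorems for uniform Cayley trees and their fragmentation. First, I would invoke a classical bijection between $\kM_n^{(n-1)}$ and labelled Cayley trees on $\{1,\ldots,n\}$ equipped with a total order on their $n-1$ edges (for example the bijection of Moszkowski \cite{Mos89}, or the specialization of the bijection presented in Section \ref{sec:minfac}). Under this bijection, a uniform $f_n^{(2)}$ corresponds to a uniform Cayley tree $T_n$ carrying i.i.d.\ uniform $[0,1]$ labels on its edges; the $k$-th transposition of $f_n^{(2)}$ is the edge with the $k$-th smallest label, and the chord it produces in $\oD$ joins the two points of $\bS^1$ associated to the labels of its endpoints via $x \mapsto e^{-2i\pi x/n}$.

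Next, I would identify the candidate limit using the Brownian CRT and the Aldous--Pitman fragmentation. Aldous' theorem yields the convergence of $T_n$, rescaled by $\sqrt{n}$, to the Brownian CRT $\cT$, and the work of Aldous and Pitman \cite{AP98} shows that cutting the edges of $T_n$ according to the uniform labels, after a time rescaling by $\sqrt{n}$, converges to the Poisson fragmentation of $\cT$ driven by a Poisson point process on $\cT \times \R_+$ of intensity $\ell \otimes dt$, where $\ell$ is the length measure on $\cT$. Each cut in $\cT$ at time $c$ separates the tree into two subtrees; projecting the CRT contour to $\bS^1$ associates to this cut a unique chord in $\oD$, the descendant subtree giving the interior of the corresponding black face. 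The aggregation of all such chords and colorings up to time $c$ is the candidate limit $(\bL^{(2)}_c)_{c \in [0,\infty]}$, consistently at $c=\infty$ with the Féray--Kortchemski identification \cite{FK17} of the total limit with the Brownian triangulation.

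The convergence \eqref{eq:cvthe19} then splits into the two usual ingredients. Finite-dimensional convergence follows by combining the joint convergence of $(T_n,\text{cut labels})$ to $(\cT,\text{Poisson cuts})$ with an almost-sure continuity statement for the map (tree, set of cuts) $\mapsto$ (colored chord configuration), valid on the set of trees with generic labellings. For tightness in the Skorokhod space $\D([0,\infty],\bCL(\oD))$, the key estimate is that uniformly in $c$, with high probability no single transposition performed between times $c\sqrt{n}$ and $(c+\delta)\sqrt{n}$ creates a chord of diameter larger than $\epsilon$; equivalently, no cut of $T_n$ in this time window disconnects a subtree whose vertex labels span an arc of $\bS^1$ of length more than $\epsilon$.

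The main obstacle is this last tightness estimate, because a priori a single transposition can produce a chord of diameter close to $2$ (a single cut may detach a very large subtree). Controlling this requires a quantitative tail bound on the size, and more importantly on the cyclic span of the vertex labels, of a subtree disconnected by a cut during a short time interval of the Aldous--Pitman fragmentation. I expect this can be obtained by combining Aldous' stick-breaking construction of the CRT (which gives explicit control on the rate at which the fragmentation produces fragments of each given mass) with a union bound over cuts in the relevant time window, or alternatively from explicit enumerative formulas for the joint law of the two subtree sizes at a uniformly chosen edge in a uniform Cayley tree. Once this tightness is established, the finite-dimensional convergence identifies the limit uniquely, and the extension to $c=\infty$ is provided by the Féray--Kortchemski result.
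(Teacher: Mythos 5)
Your proposal founders at its very first step: the claimed distributional identity between a uniform $f_n^{(2)}$ and a uniform Cayley tree carrying i.i.d.\ uniform edge labels is false. A simple count already rules out a bijection between $\kM_n^{(n-1)}$ (cardinality $n^{n-2}$) and edge-ordered Cayley trees (cardinality $n^{n-2}(n-1)!$), so the map "factorization $\mapsto$ (underlying tree, order of appearance of its edges)" lands in a heavily constrained subset. Concretely, by Goulden--Yong \cite{GY02} the underlying graph, drawn on the $n$-th roots of unity via $a\mapsto e^{-2i\pi a/n}$, is always a \emph{non-crossing} tree (so the underlying Cayley tree is not uniform: e.g.\ for $n=4$ only $12$ of the $16$ trees can occur), and the edge labels must increase clockwise around every vertex (so, given the tree, the ordering is far from exchangeable). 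Both constraints are essential to your geometric dictionary: it is precisely the non-crossing embedding that tells you which chord of $\oD$ a given cut produces, and it is precisely the label constraint that makes the time at which a chord appears non-uniform. Replacing the constrained labelling by a uniform one without changing the scaling limit of the lamination-valued process is the actual technical heart of the proof of \eqref{eq:cvthe19}; it is handled in \cite{The19} (and in Sections \ref{ssec:shuffling}--\ref{ssec:proof2} of the present paper for general cycle lengths) by an explicit shuffling operation on the dual tree, which must be shown to displace large chords only by $o(1)$. Your write-up assumes this difficulty away.

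On the remaining architecture: your identification of the limit as the Aldous--Pitman Poisson cutting of the CRT is morally consistent with the paper's construction (the intensity $\frac{2\,ds\,dt}{d(F,s,t)-g(F,s,t)}\,dr$ on $\cEG(\be)\times\R_+$ is the pullback of length measure $\otimes\, dt$ on the CRT), and the tightness concern you isolate (no macroscopic chord created in a short time window at scale $\sqrt{n}$) is a real estimate one must prove. But the route actually taken is not "CRT fragmentation plus continuity of a cut-to-chord map": the paper works with the dual tree of the lamination (a size-conditioned Galton--Watson tree whose contour function converges to $\be$), proves convergence of the chord process attached to uniformly labelled vertices of that tree (Theorem \ref{thm:cvlamgw}), and then transfers the result to the factorization via the shuffling argument. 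Two smaller slips: for transpositions every $S(\tau)$ is a single chord, so there are no black faces and no coloring to track at the limit; and identifying the two endpoints of the limiting chord produced by a cut requires relating circle positions to contour times of the dual tree (Lemma \ref{lem:facesproches} in the present paper), which your abstract Cayley-tree formulation does not provide.
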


In this case, it is to note that no face with $3$ or more chords in its boundary appears in $S(f_n^{(2)})$, as $S(\tau)$ is in fact just the union of $\bS^1$ and a chord when $\ell(\tau)=2$. Therefore, no point of $S(f_n^{(2)})$ is black and, for any $c \in [0,\infty]$, $S_{c\sqrt{n}}(f_n^{(2)})$ is just a lamination. Moreover, it appears that the process $(\bL^{(2)}_c)_{c \in [0,\infty]}$ is a nondecreasing interpolation between the unit circle and the Brownian triangulation. 

Our main result, which generalizes \eqref{eq:cvthe19}, states the convergence of the geometric representation of a random minimal factorization of stable type. In the stable case, the phase transition does not appear at the scale $\sqrt{n}$ anymore, but at the scale $\tilde{B_n}$.

\begin{thm}
\label{thm:mainresult}
Let $\alpha \in (1,2]$ and $w$ a weight sequence of $\alpha$-stable type. Let $\nu$ be its critical equivalent and $(\tilde{B}_n)_{n \geq 0}$ satisfying \eqref{eq:tbn}. Then, there exists a lamination-valued process $(\bL_c^{(\alpha)})_{c \in [0,\infty]}$, depending only on $\alpha$, such that:
\begin{itemize}
\item[(I)] If $\alpha<2$, then the following convergence holds:
\begin{align*}
\left( \left( S_{c (1-\nu_0) \tilde{B}_n}(f_n^w) \right)_{0 \leq c < \infty}, S_\infty(f_n^w) \right) \underset{n \rightarrow \infty}{\overset{(d)}{\rightarrow}} \left( \left(\bL_c^{(\alpha)} \right)_{0 \leq c < \infty}, \bL_\infty^{(\alpha), 1}\right).
\end{align*}

\item[(II)] If $\nu$ has finite variance, there exists a parameter $p_\nu \in [0,1]$ such that the following convergence holds:
\begin{align*}
\left( \left( S_{c (1-\nu_0) \tilde{B}_n}(f_n^w) \right)_{0 \leq c < \infty}, S_\infty(f_n^w) \right) \underset{n \rightarrow \infty}{\overset{(d)}{\rightarrow}} \left( \left(\bL_c^{(2)} \right)_{0 \leq c < \infty}, \bL_\infty^{(2), p_\nu}\right).
\end{align*}
Furthermore,
\begin{align*}
p_\nu = \frac{\sigma_\nu^2}{\sigma_\nu^2+1},
\end{align*}
where $\sigma_\nu^2$ denotes the variance of $\nu$.
\end{itemize}
Both convergences hold in distribution in the space $\D(\R_+, \bCL(\oD)) \times \bCL(\oD)$.
\end{thm}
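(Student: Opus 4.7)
The plan is to leverage the bijection announced in the abstract (developed in Section \ref{sec:minfac}) between minimal factorizations of $c_n$ and a model of bicolored labelled trees, so that $f_n^w$ becomes a size-conditioned two-type Galton--Watson tree, and then to transport the scaling limit of this tree into a lamination-valued limit via a stable-lamination construction in the spirit of Kortchemski.

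First, I would pull $f_n^w$ back through the bijection to a random bicolored tree $T_n^w$ whose white vertices correspond to elements of $\llbracket 1,n\rrbracket$ and whose black vertices correspond to the cycles of the factorization. Since $W_w(f)=\prod_i w_{\ell(\tau_i)-1}$ and $w_i=\nu_i s^i$, the Boltzmann distribution on factorizations should translate into a two-type Galton--Watson law on trees with offspring laws built from $\nu$. Lemma \ref{lem:intronbcycles} already identifies the asymptotic proportion of black vertices as $1-\nu_0$, which is the law-of-large-numbers consequence of this encoding and which dictates the time rescaling $c(1-\nu_0)\tilde B_n$ in the statement.

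Second, I would invoke the classical scaling-limit machinery for stable Galton--Watson trees (Duquesne--Le Gall) to obtain convergence of the rescaled \L{}ukasiewicz path of $T_n^w$ with space normalization $\tilde B_n$ towards the $\alpha$-stable L\'evy excursion. Reading the cycles of $f_n^w$ in order corresponds, through the bijection, to exploring black vertices of $T_n^w$ in a canonical depth-first/\L{}ukasiewicz order, and each cycle draws a red polygon whose diameter on the circle is controlled by the size of the associated subtree. Consequently $S_c(f_n^w)$ can be written as the image, under the a.s.\ continuous ``face map'' used to build $\bL_\infty^{(\alpha)}$ from the stable excursion in \cite{Kor14, KM17}, of the first $c(1-\nu_0)\tilde B_n$ jumps of the rescaled \L{}ukasiewicz path. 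Passing to the limit this yields a canonical lamination-valued process $(\bL_c^{(\alpha)})_{c \geq 0}$ interpolating between $\bS^1$ and $\bL_\infty^{(\alpha)}$; functional convergence in $\D(\R_+,\bCL(\oD))$ then follows from tightness combined with this continuity, along the lines of \cite{The19}.

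The main obstacle, and what genuinely separates this theorem from \cite{The19}, is the black colouring at time $c=\infty$. For $\alpha<2$, Proposition \ref{prop:largestcycle}(ii) says every macroscopic cycle has length of order $B_n$ and hence fills asymptotically a whole face of $\bL_\infty^{(\alpha)}$, so the limit is $\bL_\infty^{(\alpha),1}$; this amounts to a matching between the large jumps of the limiting L\'evy excursion and the faces of $\bL_\infty^{(\alpha)}$. When $\nu$ has finite variance, cycles have length $o(\tilde B_n)$ and a typical face of $\bL_\infty^{(2)}$ is filled by the superposition of many small black polygons, so the real question is whether the accumulated black area covers the face entirely or leaves it white. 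I would prove that this occurs in an asymptotically independent, Bernoulli$(p_\nu)$ way across faces, with $p_\nu$ equal to the limiting fraction of the circle's arc that is covered by cycle-boundaries; a second-moment computation on the bicolored Galton--Watson tree, comparing the contribution $\ell-1$ (total length taken up on the circle by a cycle) to the contribution $1$ (one black vertex per cycle), should yield the identity $p_\nu=\sigma_\nu^2/(\sigma_\nu^2+1)$, consistent with the transposition case where $\nu$ is supported on $\{1\}$, $\sigma_\nu^2=0$, all faces remain white, and we recover \eqref{eq:cvthe19}.
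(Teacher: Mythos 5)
Your overall architecture (bijection to a two-type tree, scaling limit of the associated monotype structure, separate treatment of the colouring at $c=\infty$) matches the paper's, but there is a genuine error at the heart of your second step. You assert that reading the cycles of $f_n^w$ in order corresponds to exploring the black vertices of $T(f_n^w)$ in depth-first/\L{}ukasiewicz order, so that $S_{c(1-\nu_0)\tilde B_n}(f_n^w)$ is the image of the \emph{first} $c(1-\nu_0)\tilde B_n$ jumps of the rescaled \L{}ukasiewicz path. This is not what the bijection gives. The labels of the black vertices of $T(f_n^w)$ are only constrained to be clockwise-decreasing around each white vertex; given the plane tree, the labelling is uniform among all such constrained labellings (Theorem \ref{thm:btsg}), and a shuffling argument (Lemma \ref{lem:chordconfigtree}) is needed to show this is asymptotically interchangeable with a \emph{uniform} labelling. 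The phase transition at scale $\tilde B_n$ is precisely the statement that among a uniformly sampled set of $c\tilde B_n$ black vertices out of $\sim(1-\nu_0)n$, the ones rooting macroscopic subtrees appear at rate $O(1)$ in $c$. Under a depth-first reading, the first $c\tilde B_n$ cycles would all sit in a vanishing initial portion of the tree and the process would degenerate; so the step "pass to the limit along the first $c(1-\nu_0)\tilde B_n$ jumps of the excursion" would not produce $(\bL_c^{(\alpha)})$. Relatedly, the monotype object whose contour/\L{}ukasiewicz path converges at scale $\tilde B_n$ is not $T(f_n^w)$ itself but its white reduced tree, whose offspring law $\mu$ satisfies $F_\mu=e^{F_\nu-1}$ and hence $\sigma_\mu^2=\sigma_\nu^2+1$; this is where $\tilde B_n$ (as opposed to $B_n$) actually comes from, and your proposal does not identify it.

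The colouring mechanism for finite variance is also misdescribed. A macroscopic face of $\bL_\infty^{(2)}$ is not "filled by the superposition of many small black polygons" whose accumulated area may or may not cover it: it corresponds to a single $\epsilon n$-branching point $u$ of the white reduced tree, and it is black if and only if the two white grandchildren of $u$ carrying the two macroscopic subtrees have the \emph{same} black parent, i.e.\ one single cycle of the factorization spans the face. The value $p_\nu$ is then obtained from the size-biased degree law $\mu_k k(k-1)/\sigma_\mu^2$ at branching points combined with the Poisson--$\nu$ composition structure of grandchildren into children, giving $\sum_i a_i(a_i-1)/k(k-1)$ averaged appropriately, i.e.\ $\sigma_\nu^2/(\sigma_\nu^2+1)$ — your "second-moment" instinct points in the right direction, but "limiting fraction of arc covered by cycle boundaries" is not the right quantity, and the asymptotic independence of the colours across faces is not automatic: it requires an argument (the paper shuffles branching points of comparable size to decouple their degrees from the shape of the tree). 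For $\alpha<2$ your appeal to Proposition \ref{prop:largestcycle}(ii) is likewise not sufficient by itself: knowing that the largest cycle has length of order $B_n$ does not show that, at \emph{each} large node, almost all grandchildren share one black parent (Lemma \ref{lem:pnu<2}), which is what forces every macroscopic face to be black.
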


This process $(\bL_c^{(\alpha)})_{c \in [0,\infty]}$ is a nondecreasing interpolation between the circle and $\bL_\infty^{(\alpha)}$. It is in addition lamination-valued, in the sense that, for all $c \geq 0$, almost surely $\bL_c^{(\alpha)}$ contains no black point.

\begin{rk}
We conjecture that the result of Theorem \ref{thm:mainresult} (I) still holds when $\alpha=2$ and $\nu$ has infinite variance. However our proofs do not directly apply to this case.
\end{rk}

\begin{exs}
In the case $w = \delta^j$ for some $j \geq 1$, the critical equivalent of $\delta^j$ is $\nu \coloneqq \frac{j-2}{j-1} \delta^0 + \frac{1}{j-1} \delta^j$, and $p_\nu=\frac{j-2}{j-1}$. When $j=2$, $p_\nu=0$ and we recover the Brownian triangulation without coloration, as the limit of the lamination obtained from a uniform minimal factorization of the $n$-cycle into transpositions. In the case $j=3$ of factorizations into $3$-cycles, each face of the limiting colored Brownian triangulation is black with probability $1/2$.

In the case of a minimal factorization of the $n$-cycle taken uniformly at random, one obtains the surprising limit value $p_\nu=1-1/\sqrt{5}$.
\end{exs}

\subsection{Construction of the processes $(\bL_c^{(\alpha)})_{c \in [0,\infty]}$}

Let us immediately explain how to construct the limiting processes $(\bL_c^{(\alpha)})_{c \in [0,\infty]}$ which appear in the statement of Theorem \ref{thm:mainresult} and in \eqref{eq:cvthe19}. In order to understand this construction, we define from a (deterministic) càdlàg function $F: [0,1] \rightarrow \R_+$ such that $F(0)=F(1)=0$ a random lamination-valued process $(\bL_c(F))_{c \in [0,\infty]}$. Define the epigraph of $F$ as $\cEG(F) \coloneqq \{ (s,t) \in \R^2, 0 \leq s \leq 1, \, 0 \leq t < F(s) \}$, the set of all points that are under the graph of $F$. Denote by $\cP(F)$ an inhomogeneous Poisson point process on $\cEG(F) \times \R_+$, of intensity 
\begin{align*}
\frac{2 \, ds dt}{d(F,s,t)-g(F,s,t)} \mathds{1}_{(s,t) \, \in \, \cEG(F)} \, dr,
\end{align*}
where $g(F,s,t) \coloneqq \sup \{ s' \leq s, F(s') < t \}$ and $d(F,s,t) \coloneqq \inf \{ s' \geq s, F(s') < t \}$, and $r$ shall be understood as a 'time' coordinate.
For any $c \geq 0$, its restriction to $\R^2 \times [0,c]$ is denoted by $\cP_c(F)$. Now, for $c \geq 0$, define the lamination $\bL_c(F)$ as
\begin{align*}
\bS^{1} \cup \overline{\bigcup\limits_{(s,t) \in \cP_c(F)} \left[e^{-2i\pi g(F,s,t)}, e^{-2i\pi d(F,s,t)}\right]},
\end{align*}
so that each point of $\cP_c(F)$ codes a chord in $\oD$ (see Fig. \ref{fig:Lcconstruction}), and let $\bL_\infty(F)$ be the lamination $\overline{\bigcup_{c \geq 0} \bL_c(F)}$.

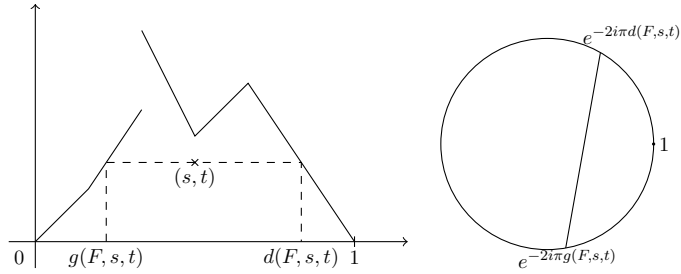
\begin{figure}[!h]
\caption{A càdlàg function $F$, a point $(s,t)$ of its epigraph $\cEG(F)$ and the corresponding chord in $\oD$.}
\label{fig:Lcconstruction}
\center
\begin{tabular}{c c c}
\begin{tikzpicture}[scale=.7, every node/.style={scale=0.7}]
\draw[->] (-.5,0) -- (7,0);
\draw[->] (0,-.5) -- (0,4.5);
\draw (0,0) -- (1,1) -- (2,2.5) (2,4) -- (3,2) -- (4,3) --(6,0);
\draw (6,-.1) -- (6,.1);
\draw (6,-.3) node{$1$};
\draw (-.3, -.3) node{$0$};

\draw (3,1.5) node[scale=3,cross]{};
\draw (3,1.2) node{$(s,t)$};
\draw[dashed] (1.3333,0) -- (1.3333,1.5) -- (5,1.5) -- (5,0);
\draw (1.333,-.3) node{$g(F,s,t)$};
\draw (5,-.3) node{$d(F,s,t)$}; 
\end{tikzpicture}
&
\begin{tikzpicture}[scale=1.4, every node/.style={scale=.7}]
\draw ({1.1*cos(-80)},{1.1*sin(-80)}) node{$e^{-2i\pi g(F,s,t)}$};
\draw ({1.6*cos(-300)},{1.2*sin(-300)}) node{$e^{-2i\pi d(F,s,t)}$};
\draw[fill] (1,0) circle (.01);
\draw (1.1,0) node{$1$};
\draw (0,0) circle (1);
\draw ({cos(-80)},{sin(-80)}) -- ({cos(-300)},{sin(-300)});
\end{tikzpicture}
\end{tabular}
\end{figure}

For $\alpha \in (1,2]$, the process $(\bL_c^{(\alpha)})_{c \in [0,\infty]}$ is constructed this way from the so-called stable height process $H^{(\alpha)}$:

\begin{align*}
\left(\bL_c^{(\alpha)}\right)_{c \in [0,\infty]} = \left(\bL_c\left(H^{(\alpha)}\right)\right)_{c \in [0,\infty]}.
\end{align*}
These stable height processes are random continuous processes on $[0,1]$, and can be defined starting from stable Lévy processes (see Fig. \ref{fig:excursions}, right, for a simulation of $H^{(1.7)}$, and Section \ref{sec:trees} for more background and details). The animated Fig. \ref{fig:animate} is an approximation of the process $(\bL^{(1.8)}_c)_{c \geq 0}$.

In the case $\alpha=2$, the $2$-stable height process happens to be distributed as the normalized Brownian excursion $(\be_t)_{0 \leq t \leq 1}$, which is roughly speaking a Brownian motion between $0$ and $1$, conditioned to reach $0$ at time $1$ and to be nonnegative between $0$ and $1$ (see Fig. \ref{fig:excursions}, left for a simulation of $\be$). It appears in addition that the lamination $\bL_\infty^{(2)}$ coded by $\be$ has the law of Aldous' Brownian triangulation.

\begin{figure}[!h]
\caption{A simulation of the normalized Brownian excursion $\be$ (left) and the $1.7$-stable height process $H^{(1.7)}$ (right).}
\label{fig:excursions}
\center
\begin{tabular}{c c c}
\includegraphics[scale=.22]{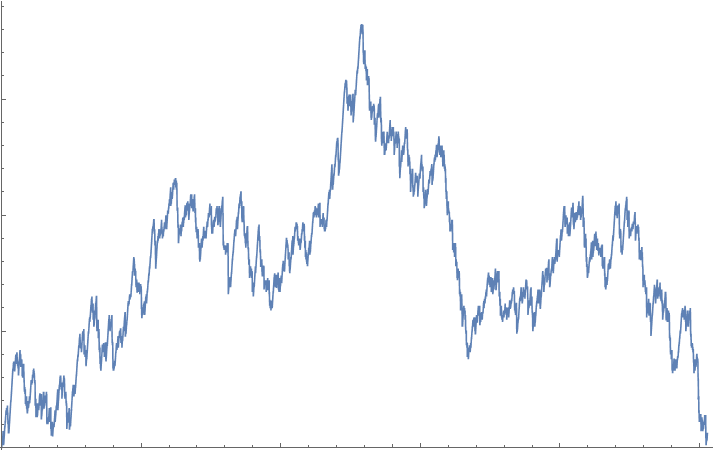} 
&
\begin{tikzpicture}
\draw[white] (0,0) -- (1.5,0);
\end{tikzpicture}
&
\includegraphics[scale=.4]{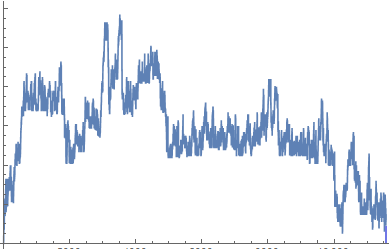}
\end{tabular}
\end{figure}

\begin{figure}[h!]
\center
\caption{The image represents an approximation of the lamination $\bL^{(1.8)}_{15}$. By using Adobe Acrobat and by clicking on the ``play'' button, one can view an approximation of the process $\left(\bL^{(1.8)}_c\right)_{c \geq 0}$.}
\label{fig:animate}
\animategraphics[scale=.7,width=.5\textwidth,controls]{5}{film/moiii}{0}{50}
\end{figure}

\subsection{A bijection with labelled bi-type trees}

The main idea in the proof of Theorem \ref{thm:mainresult} is to use a bijection between the set of minimal factorizations of the $n$-cycle and a certain set of discrete trees with labels on their vertices. Specifically, for $n \geq 1$, denote by $\kU_{n}$ the set of trees $T$ that satisfy the following conditions:

\begin{itemize}
\item $T$ is a bi-type tree, that is, its vertices at even height are colored white and its vertices at odd height are colored black;
\item the root and the leaves of $T$ are white. In other terms, each black vertex necessarily has at least one child;
\item $T$ has $n$ white vertices;
\item black vertices of $T$ are labelled from $1$ to $N^\bullet(T)$, where $N^\bullet(T)$ is the total number of black vertices in the tree. In addition, the labels of the neighbours (parent and children) of each white vertex are sorted in decreasing clockwise order, starting from one of these neighbours, and the labels of the children of the root are sorted in decreasing order.
\end{itemize}
See Fig. \ref{fig:bttree} for an example.

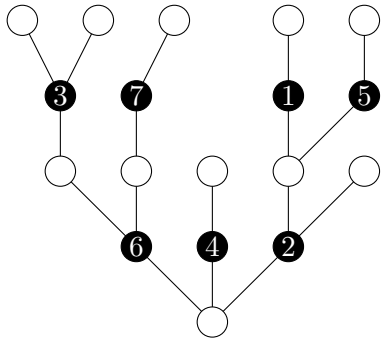
\begin{figure}[!h]
\center
\caption{An element of the set $\kU_{11}$. Its six black vertices are labelled from $1$ to $6$, in clockwise decrasing order around each white vertex. The children of the root are in decreasing order.}
\label{fig:bttree}
\begin{tikzpicture}
\draw (-2.5,4) -- (-2,3) -- (-2,2) -- (-1,1) -- (-1,2) (-1,1) -- (0,0) -- (0,1) (0,0) -- (1,1) -- (2,2) (1,1) -- (1,2) -- (1,3) -- (1,4) (-1.5,4) -- (-2,3) (0,1) -- (0,2) (1,2) -- (2,3) -- (2,4) (-1,2) -- (-1,3) -- (-.5,4);

\draw[fill=white] (0,0) circle (.2);
\draw[fill=white] (-2,2) circle (.2);
\draw[fill=white] (-1,2) circle (.2);
\draw[fill=white] (0,2) circle (.2);
\draw[fill=white] (1,2) circle (.2);
\draw[fill=white] (2,2) circle (.2);
\draw[fill=white] (1,4) circle (.2);
\draw[fill=white] (-2.5,4) circle (.2);
\draw[fill=white] (-1.5,4) circle (.2);
\draw[fill=white] (-.5,4) circle (.2);
\draw[fill=white] (2,4) circle (.2);

\draw[fill=black] (-2,3) circle (.2) node[white]{$3$};
\draw[fill=black] (-1,1) circle (.2) node[white]{$6$};
\draw[fill=black] (-1,3) circle (.2) node[white]{$7$};
\draw[fill=black] (1,1) circle (.2) node[white]{$2$};
\draw[fill=black] (1,3) circle (.2) node[white]{$1$};
\draw[fill=black] (0,1) circle (.2) node[white]{$4$};
\draw[fill=black] (2,3) circle (.2) node[white]{$5$};
\end{tikzpicture}
\end{figure}

\begin{thm}
\label{thm:nkbijection}
For any $n \geq 1$, the sets $\kM_n$ and $\kU_{n}$ are in bijection.
\end{thm}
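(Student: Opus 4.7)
The plan is to exhibit an explicit bijection $\Phi : \kM_n \to \kU_n$ built from the geometric coding in the unit disk. Given $f = (\tau_1, \ldots, \tau_k) \in \kM_n^{(k)}$, I associate a bipartite planar graph $T(f)$ with $n$ white vertices indexed by $\llbracket 1, n \rrbracket$ and $k$ black vertices labelled by the positions $1, \ldots, k$ of the cycles of $f$, placing an edge between the white vertex $e$ and the black vertex $i$ exactly when $e \in \tau_i$. The planar embedding is read off from $S(f)$: the white vertex $e$ sits at $e^{-2i\pi e/n} \in \bS^1$, the black vertex $i$ is placed in the interior of the black polygon coded by $\tau_i$, and the edges are drawn inside $\oD$ without crossings, which is possible because the polygons $S(\tau_1), \ldots, S(\tau_k)$ have disjoint interiors. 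I root $T(f)$ at the white vertex indexed by $n$.

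The first checks are direct: $T(f)$ has $n+k$ vertices and $\sum_i \ell(\tau_i) = n+k-1$ edges by the minimality condition, and it is connected because $\tau_1 \cdots \tau_k = c_n$ acts transitively on $\llbracket 1, n \rrbracket$, so any two elements are linked by a chain of cycles sharing common elements. Hence $T(f)$ is a tree. The bi-type structure is inherited from bipartiteness, and every leaf is white since each $\ell(\tau_i) \geq 2$. The delicate point is the cyclic order constraint: around each white vertex $e$, the labels of its black neighbours must decrease clockwise in cyclic order (and linearly at the root). I would derive this from a geometric nesting property of minimal factorizations: whenever two cycles of $f$ share an element $e$, the one with the larger index is nested closer to $e$ in $S(f)$. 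The natural way to establish this is by induction on $k$, using the standard fact that the last cycle $\tau_k$ shares exactly one element with the union of $\tau_1, \ldots, \tau_{k-1}$.

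For the inverse $\Psi : \kU_n \to \kM_n$, given $T \in \kU_n$ I first recover labels in $\llbracket 1, n \rrbracket$ on the white vertices via a canonical contour traversal: starting at the root (assigned label $n$), walk along the contour in the planar embedding and assign the successive newly-visited white vertices the labels $n-1, n-2, \ldots, 1$. Then, for each black vertex with label $i$, I read the labels of its white neighbours clockwise to define the cycle $\tau_i$ and set $\Psi(T) = (\tau_1, \ldots, \tau_k)$. Minimality of $\Psi(T)$ holds automatically from the edge count of $T$, so the content of $\Psi(T) \in \kM_n$ together with $\Phi$ and $\Psi$ being mutual inverses reduces to the identity $\tau_1 \cdots \tau_k = c_n$.

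The main obstacle is this product identity, equivalently the cyclic-order condition in the forward direction. This is where the decreasing clockwise convention really comes into play, and it encodes the rigid match between planar trees in $\kU_n$ and minimal factorizations. I plan to handle it by induction on $k$: locate in $T$ a black vertex with maximal label $k$ whose white neighbours, apart from the one linking it to the rest of the tree, are all leaves of $T$; check that contracting this star into a single white vertex produces a tree in $\kU_{n - \ell(\tau_k)+1}$; and conclude via the inductive hypothesis together with the relation $\tau_1 \cdots \tau_{k-1} = c_n \tau_k^{-1}$, the right-hand side being itself an $(n-\ell(\tau_k)+1)$-cycle to which the induction applies.
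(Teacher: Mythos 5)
Your construction is genuinely different from the paper's: you take the bipartite \emph{incidence} tree of $f$ (white vertices are the elements $1,\dots,n$, with an edge to the black vertex $i$ whenever $e\in\tau_i$), whereas the paper takes the \emph{dual} tree of the colored lamination $S_{lab}(f)$ (white vertices are the white faces, i.e.\ the arcs of $\bS^1$), and these two trees do not coincide in general. The edge count and connectivity argument showing the incidence graph is a tree is fine, and an incidence-tree bijection of this type can indeed be made to work (it is close in spirit to Du--Liu). However, your proof has a genuine gap.

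The key combinatorial fact you invoke twice --- ``the last cycle $\tau_k$ shares exactly one element with the union of $\tau_1,\dots,\tau_{k-1}$'', equivalently that the black vertex with maximal label is pendant (all but one of its white neighbours are leaves) --- is false. Take $n=4$ and $f=(1\,2)(3\,4)(1\,3)$: reading left to right, $1\mapsto 2$, $2\mapsto 3$, $3\mapsto 4$, $4\mapsto 1$, so this is a minimal factorization of $c_4$, yet the last cycle $(1\,3)$ shares \emph{both} of its elements with the earlier cycles; in the incidence tree (a path $4$--$(3\,4)$--$3$--$(1\,3)$--$1$--$(1\,2)$--$2$) the maximal-label black vertex sits in the middle and has no leaf neighbour. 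Consequently $\tau_1\tau_2=(1\,2)(3\,4)$ is not a single cycle, so the inductive step ``contract the star of $\tau_k$ and apply the hypothesis to the $(n-\ell(\tau_k)+1)$-cycle $c_n\tau_k^{-1}$'' breaks down, both for establishing the clockwise-decreasing label condition in the forward direction and for proving $\tau_1\cdots\tau_k=c_n$ in the backward direction. A correct induction must either peel off a genuinely pendant cycle (which need not be the last one) or, as is standard, work with minimal factorizations of arbitrary permutations cycle by cycle; either way this is the heart of the matter and is missing. A second, related issue: your inverse assigns the element labels $n,n-1,\dots,1$ to white vertices by plain contour first-visit order, but the correct order must depend on the black labels (compare the paper's ``white exploration process'', which visits a white vertex after the subtrees of its black children with labels smaller than its parent's and before the others); the same counterexample shows the naive contour order does not reproduce the identity map. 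For comparison, the paper avoids all of this by reducing to the transposition slicing and importing the Goulden--Yong properties of minimal transposition factorizations (its properties $P_1$--$P_5$), at the price of working with the lamination's dual tree rather than the more natural incidence tree.
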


In Section \ref{sec:minfac}, we provide an explicit bijection between these sets. Roughly speaking, from a minimal factorization $f \in \kM_n$, one constructs $T(f) \in \kU_n$ as the "dual tree" of the colored lamination $S(f)$: its black vertices are in bijection with the cycles of $f$, while its white vertices correspond to white faces of $S(f)$. Theorem \ref{thm:mainresult} is therefore obtained as a corollary of a result on trees, that states the convergence of colored lamination-valued processes coding random bi-type trees (Theorem \ref{thm:cvbitypelam}). It appears indeed that the distribution of the random bi-type tree $T(f_n^w)$ is particularly well understood when $w$ is a weight sequence of stable type.

\paragraph*{Notations}
In the whole paper, $\overset{\P}{\rightarrow}$ denotes the convergence in probability, $\overset{(d)}{\rightarrow}$ the convergence in distribution and $\overset{(d)}{=}$ the equality in distribution. Moreover, a sequence of events $(E_n)_{n \geq 0}$ being given, we say that $E_n$ occurs with high probability if $\P(E_n) \rightarrow 1$ as $n \rightarrow \infty$.

\paragraph*{Outline of the paper}
In Section \ref{sec:trees}, we first define and investigate plane trees and more particularly bi-type trees, which are a cornerstone of the paper as they code minimal factorizations. We suggest two different ways of coding trees by colored laminations-valued processes, and state in particular the convergence of one of these processes coding a particular model of random bi-type trees (Theorem \ref{thm:cvbitypelam}). The proof of this theorem is the main result of Section \ref{sec:particulartrees}, which is devoted to the study of these specific random trees. Finally, in Section \ref{sec:minfac}, we investigate in depth the model of minimal factorizations and explain a natural bijection between the sets $\kM_n$ and $\kU_n$ for all $n$. In addition, we provide the proof of Theorem \ref{thm:mainresult} by showing that the process of colored laminations coded by a random minimal factorization $f_n^w$ of stable type is in some sense also coded by the random bi-type tree $T(f_n^w)$, which is the image of the factorization by the abovementioned bijection.

\paragraph*{Acknowledgements}

I would like to thank Igor Kortchemski for the numerous fruitful discussions that have led to this paper.

\section{Plane trees, bi-type trees and different ways of coding them by laminations}
\label{sec:trees}

In this section, we rigorously define our notion of trees. Then, we describe a certain family of trees, which we call bi-type trees, whose vertices are given a color, either black or white. We finally introduce random models of trees, monotype or bi-type - which we call simply generated trees - and study some of their main properties. We state in particular Theorem \ref{thm:cvbitypelam}, which provides the convergence of a process of colored laminations coding random bi-type trees conditioned by their number of white vertices.

\subsection{Plane trees and their coding by laminations}
\label{ssec:deftree}

\paragraph*{Plane trees.}

We first define \textit{plane trees}, following Neveu's formalism \cite{Nev86}. First, let $\N^* = \left\{ 1, 2, \ldots \right\}$ be the set of all positive integers, and $\mathcal{U} = \cup_{n \geq 0} (\N^*)^n$ be the set of finite sequences of positive integers, with the convention that $(\N^*)^0 = \{ \emptyset \}$.

By a slight abuse of notation, for $k \in \Z_+$, we write an element $u$ of $(\N^*)^k$ as $u=u_1 \cdots u_k$, with $u_1, \ldots, u_k \in \N^*$. For $k \in \Z_+$, $u=u_1\cdots u_k \in (\N^*)^k$ and $i \in \N^*$, we denote by $ui$ the element $u_1 \cdots u_k i \in (\N^*)^{k+1}$ and by $iu$ the element $i u_1 \cdots u_k$. A plane tree $T$ is formally a subset of $\mathcal{U}$ satisfying the following three conditions:

(i) $\emptyset \in T$ ($\emptyset$ is called the root of $T$);

(ii) if $u=u_1\cdots u_n \in T$, then, for all $k \leq n$, $u_1\cdots u_k \in T$ (these elements are called ancestors of $u$, and the set of all ancestors of $u$ is called its ancestral line; $u_1 \cdots u_{n-1}$ is called the \textit{parent} of $u$). The set of ancestors of a vertex $u$ is denoted by $A_u(T)$;

(iii) for any $u \in T$, there exists a nonnegative integer $k_u(T)$ such that, for every $i \in \N^*$, $ui \in T$ if and only if $1 \leq i \leq k_u(T)$ ($k_u(T)$ is called the number of children of $u$, or the outdegree of $u$).

The elements of $T$ are called \textit{vertices}, and we denote by $|T|$ the total number of vertices in $T$. A vertex $u$ such that $k_u(T)=0$ is called a leaf of $T$. The height $h(u)$ of a vertex $u$ is its distance to the root, that is, the unique integer $k$ such that $u \in (\N^*)^k$. We define the height of a tree $T$ as $H(T) = {\sup}_{{u \in T}} \, h(u)$. In the sequel, by tree we always mean plane tree unless specifically mentioned.

The \textit{lexicographical order} $\prec$ on $\mathcal{U}$ is defined as follows:  $\emptyset \prec u$ for all $u \in \mathcal{U} \backslash \{\emptyset\}$, and for $u,w \neq \emptyset$, if $u=u_1u'$ and $w=w_1w'$ with $u_1, w_1 \in \N^*$, then we write $u \prec w$ if and only if $u_1 < w_1$, or $u_1=w_1$ and $u' \prec w'$.  The lexicographical order on the vertices of a tree $T$ is the restriction of the lexicographical order on $\mathcal{U}$.

We do not distinguish between a finite tree $T$, and the corresponding planar graph where each vertex is connected to its parent by an edge of length $1$, in such a way that the vertices with same height are sorted from left to right in lexicographical order.

\paragraph*{Subtrees and nodes}

Let $T$ be a plane tree and $u \in T$ be one of its vertices. We define the subtree of $T$ rooted in $u$ as the set of vertices that have $u$ as an ancestor. This subtree is denoted by $\theta_u(T)$.

One will often consider large nodes in a tree, i.e. vertices whose removal splits the tree into at least two components of macroscopic size (that is, of the same order as the size of $T$) that do not contain the root. Specifically, $a \geq 0$ being fixed, we say that $u \in T$ is an $a$-node of $T$ if there exists an integer $r \leq k_u(T)$ satisfying:
\begin{align*}
\sum_{w \in A_r(u,T)} |\theta_w(T)| \geq a, \sum_{w \in A_{-r}(u,T)} |\theta_w(T)| \geq a,
\end{align*}
where $A_r(u,T)$ denotes the set of the first $r$ children of $u$ in lexicographical order, and $A_{-r}(u,T)$ the set of its other children. In other terms, $u$ is an $a$-node of $T$ if one can split the set of its children into two disjoint subsets made of consecutive children, in such a way that the sum of the sizes of the subtrees rooted in the elements of each of these two subsets is larger that $a$. In what follows, $a$ will be of order $|T|$ (by this, we mean larger than $\epsilon |T|$, for some $\epsilon>0$). We denote by $E_a(T)$ the set of $a$-nodes of the tree $T$.

A particular case of $a$-nodes, for $a>0$, is the case of $a$-branching points. We say that $u \in T$ is an $a$-branching point if there exist two children of $u$, say, $v_1(u)$ and $v_2(u)$, such that $|\theta_{v_1(u)}(T)|\geq a, |\theta_{v_2(u)}(T)|\geq a$. One easily sees that any $a$-branching point is an $a$-node.

\paragraph*{Contour function of a tree, associated lamination-valued process.}

We introduce here some important objects derived from a plane tree. Specifically, a finite plane tree $T$ being given, we define its contour function, which is a walk on the nonnegative integers coding $T$ in a bijective way. In a second time, we construct from this contour function a lamination $\bL(T)$ and a random lamination-valued process $(\bL_u(T))_{u \in [0,\infty]}$ which interpolates between $\bS^1$ and $\bL(T)$. In what follows, $T$ is a plane tree and $n$ denotes its number of vertices.

\emph{The contour function $C(T)$:}
First, it is useful to define the \textit{contour function} $(C_t(T),0\leq t \leq 2n)$ of $T$, which completely encodes the tree. To construct $C(T)$, imagine a particle exploring the tree from left to right at unit speed, starting from the root. For $t \in [0,2n-2]$, denote by $C_t(T)$ the distance of the particle to the root at time $t$. We set in addition $C_t(T)=0$ for $2n-2 \leq t \leq 2n$. See Fig. \ref{fig:arbconlam} for an example. By construction, $C(T)$ is continuous, nonnegative and satisfies $C_0(T)=C_{2n}(T)=0$.

\begin{figure}[!h]
\caption{A tree $T \in \kU_{11}^{(7)}$, its contour function $C(T)$, and the associated lamination $\bL(T)$.}
\label{fig:arbconlam}
\begin{tabular}{c c c}
\begin{tikzpicture}
\draw (1,2) -- (1,1) -- (0,0) -- (-1,1)--(0,2)--(0,3) (-1,1) -- (-2,2);

\draw[fill=black] (0,0) circle (.1);

\draw[fill=black] (1,1) circle (.1);

\draw[fill=black] (-1,1) circle (.1);

\draw[fill=black] (0,2) circle (.1);

\draw[fill=black] (0,3) circle (.1);

\draw[fill=black] (-2,2) circle (.1);

\draw[fill=black] (1,2) circle (.1);
\end{tikzpicture}
&
\begin{tikzpicture}[scale=.5, every node/.style={scale=0.7}]
\draw (0,0) -- (1,1) -- (2,2) -- (3,1) -- (4,2) -- (5,3) -- (6,2) -- (7,1) -- (8,0) -- (9,1) -- (10,2) -- (11,1) -- (12,0);
\draw[->] (0,-.5) -- (0,3.5);
\draw[->] (-.5,0) -- (15,0);
\draw (1,.1) -- (1,-.1);
\draw (1,-.4) node{1};
\draw (2,.1) -- (2,-.1);
\draw (2,-.4) node{2};
\draw (3,.1) -- (3,-.1);
\draw (3,-.4) node{3};
\draw (4,.1) -- (4,-.1);
\draw (4,-.4) node{4};
\draw (5,.1) -- (5,-.1);
\draw (5,-.4) node{5};
\draw (6,.1) -- (6,-.1);
\draw (6,-.4) node{6};
\draw (7,.1) -- (7,-.1);
\draw (7,-.4) node{7};
\draw (8,.1) -- (8,-.1);
\draw (8,-.4) node{8};
\draw (9,.1) -- (9,-.1);
\draw (9,-.4) node{9};
\draw (10,.1) -- (10,-.1);
\draw (10,-.4) node{10};
\draw (11,.1) -- (11,-.1);
\draw (11,-.4) node{11};
\draw (12,.1) -- (12,-.1);
\draw (12,-.4) node{12};
\draw (13,.1) -- (13,-.1);
\draw (13,-.4) node{13};
\draw (14,.1) -- (14,-.1);
\draw (14,-.4) node{14};
\draw (.1,1) -- (-.1,1);
\draw (-.4,1) node{1};
\draw (.1,2) -- (-.1,2);
\draw (-.4,2) node{2};
\draw (.1,3) -- (-.1,3);
\draw (-.4,3) node{3};
\draw (-.4,-.4) node{0};
\draw [dashed](1,1) -- (7,1);
\draw [dashed](4,2) -- (6,2);
\draw [dashed](9,1) -- (11,1);
\end{tikzpicture}
&
\begin{tikzpicture}[scale=1.5, every node/.style={scale=.9}]
\foreach \i in {0,...,13}
{
\draw[auto=right] ({1.1*cos(-(\i)*360/14)},{1.1*sin(-(\i)*360/14)}) node{\i};
\draw[red,fill=red] ({cos(-(\i-1)*360/14)},{sin(-(\i-1)*360/14)}) circle (.02);
}
\draw[red] (0,0) circle (1);
\draw[red] ({cos(-360*0/14)},{sin(-360*0/14)}) -- ({cos(-360*12/14)},{sin(-360*12/14)});
\draw[red] ({cos(-360*7/14)},{sin(-360*7/14)}) -- ({cos(-360*1/14)},{sin(-360*1/14)});
\draw[red] ({cos(-360*4/14)},{sin(-360*4/14)}) -- ({cos(-360*6/14)},{sin(-360*6/14)});
\draw[red] ({cos(-360*9/14)},{sin(-360*9/14)}) -- ({cos(-360*11/14)},{sin(-360*11/14)});
\end{tikzpicture}
\end{tabular}
\end{figure}
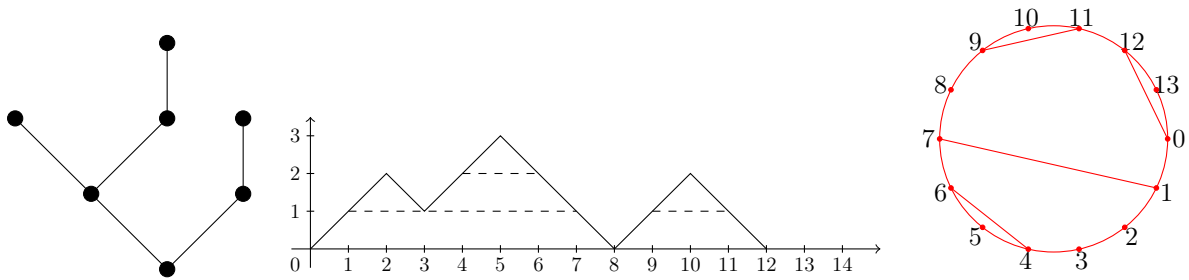

\emph{Chords and faces associated to vertices of $T$.}
We now propose a way of coding a vertex $x$ of $T$ by a chord in $\oD$: define $g(x)$ (resp. $d(x)$) the first (resp. last) time the particle performing this contour exploration is located at $x$, and denote by $c_x(T)$ the chord 
$$c_x(T) \coloneqq \left[e^{-2i\pi g(x)/2n}, e^{-2i\pi d(x)/2n}\right]$$
in $\oD$. We define the lamination associated to the tree $T$
$$\bL(T) \coloneqq \bS^1 \cup \bigcup_{x \in T} c_x(T).$$ One can indeed check that the chords $(c_x(T), x \in T)$ do not cross each other. See Fig. \ref{fig:arbconlam}, right for an example.

\emph{The lamination-valued process $(\bL_u(T))_{u \in [0,\infty]}$.} 
We derive here from $T$ a random nondecreasing lamination-valued process, which interpolates between $\bS^1$ and $\bL(T)$: at each integer time, we add a chord corresponding to a uniformly chosen vertex in the tree. More precisely, let $U_1$ be the root of $T$, and $U_2, \ldots, U_n$ be a uniform random permutation of the $n-1$ other vertices of $T$. Then, for $u \in [0,\infty]$, define
\begin{align*}
\bL_u(T) \coloneqq \bS^1 \cup \bigcup_{i =1}^{\lfloor u \rfloor \wedge n} c_{U_i}(T).
\end{align*} 
Remark notably that, for $u \geq n$, $\bL_u(T) = \bL(T)$.

\paragraph*{Lukasiewicz path of the tree}

We define here an other way to code the tree $T$, called its Lukasiewicz path and denoted by $(W_t(T))_{0 \leq t \leq n}$. It is constructed as follows: start from $W_0(T)=0$ and, for all $i \in \Z_+, \, i \leq n-1$, set $W_{i+1}(T)=W_i(T) + k_{v_i}(T) - 1$, where $v_r$ denotes the $(r+1)$-th vertex of $T$ in lexicographical order. Then, $W$ is the linear interpolation between these integer values. See an example on Fig. \ref{fig:luka}. 

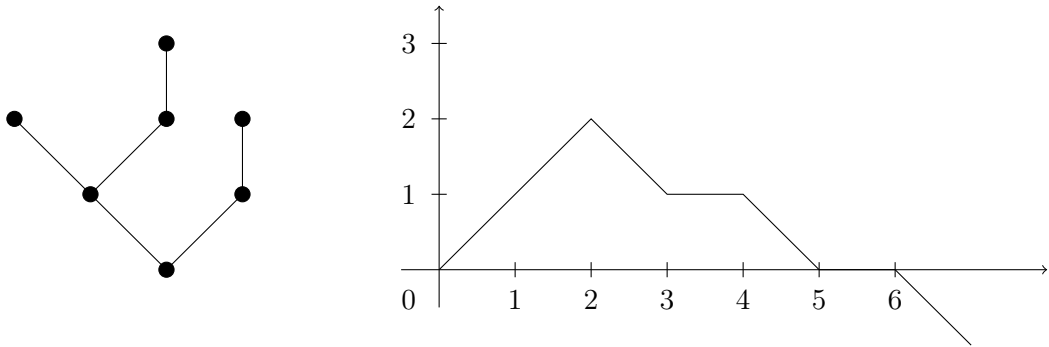
\begin{figure}[!h]
\center
\caption{A tree $T$ and its Lukasiewicz path $W(T)$.}
\label{fig:luka}
\begin{tabular}{c c c}
\begin{tikzpicture}
\draw (1,2) -- (1,1) -- (0,0) -- (-1,1)--(0,2)--(0,3) (-1,1) -- (-2,2);

\draw[white] (0,0)--(0,-1);

\draw[fill=black] (0,0) circle (.1);

\draw[fill=black] (1,1) circle (.1);

\draw[fill=black] (-1,1) circle (.1);

\draw[fill=black] (0,2) circle (.1);

\draw[fill=black] (0,3) circle (.1);

\draw[fill=black] (-2,2) circle (.1);

\draw[fill=black] (1,2) circle (.1);
\end{tikzpicture}
&
\begin{tikzpicture}
\draw[white] (0,0) -- (1,0);
\end{tikzpicture}
&
\begin{tikzpicture}
\draw (0,0) -- (1,1) -- (2,2) -- (3,1) -- (4,1) -- (5,0) -- (6,0) -- (7,-1); 
\draw[->] (0,-.5) -- (0,3.5);
\draw[->] (-.5,0) -- (8,0);
\draw (1,.1) -- (1,-.1);
\draw (1,-.4) node{1};
\draw (2,.1) -- (2,-.1);
\draw (2,-.4) node{2};
\draw (5,.1) -- (5,-.1);
\draw (5,-.4) node{5};
\draw (3,.1) -- (3,-.1);
\draw (3,-.4) node{3};
\draw (4,.1) -- (4,-.1);
\draw (4,-.4) node{4};
\draw (6,.1) -- (6,-.1);
\draw (6,-.4) node{6};
\draw (.1,1) -- (-.1,1);
\draw (-.4,1) node{1};
\draw (.1,2) -- (-.1,2);
\draw (-.4,2) node{2};
\draw (.1,3) -- (-.1,3);
\draw (-.4,3) node{3};
\draw (-.4,-.4) node{0};
\end{tikzpicture}
\end{tabular}
\end{figure}

One can check that $W_n(T)=-1$ and that, for all $t \leq n-1$, $W_t \geq 0$. This walk provides information on the degrees of the vertices of $T$, whereas the contour function rather allows to get information on the global shape of the tree.

\subsection{Bi-type trees}

We now give to a plane tree additional structure, by coloring each of its vertices either black or white - a tree whose vertices are not colored will be called monotype from now on.
We say that a finite plane tree $T$ is a \textit{bi-type tree} (in our context) if its vertices are colored white when their height is even and black when it is odd. In particular, white vertices only have black children and conversely. Notice that the root of a bi-type tree is white by definition. The number of white vertices in a bi-type tree $T$ is denoted by $N^\circ(T)$, and its number of black vertices by $N^\bullet(T)$. See Fig. \ref{fig:whitereduced}, left, for an example of bi-type tree. 

We say that $T$ is a \textit{labelled} bi-type tree if, in addition, its black vertices are labelled from $1$ to $N^\bullet(T)$. Such models of trees have already been studied in the past, notably by Bouttier, Di Francesco and Guitter \cite{BDG04} who establish a bijection between a class of planar maps and a class of labelled bi-type trees which they call mobiles.

Finally, for $n,k \geq 1$, we denote by $\kU_n^{(k)}$ the set of labelled bi-type trees with $n$ white vertices and $k$ black vertices, whose leaves are all white, in which the labels of the black neighbours (parent and children) of each white vertex are sorted in decreasing clockwise order (starting from one of these children), and in which the labels of the children of the root are sorted in decreasing order from left to right. Remark that, then, $\kU_n = \cup_{k \geq 1} \kU_n^{(k)}$.

\paragraph*{The white reduced tree.}

Let $T$ be a bi-type tree. We define the associated monotype \textit{white reduced tree} $T^\circ$, the following way:
\begin{itemize}
\item The vertices of $T^\circ$ are the white vertices of $T$.
\item A vertex $x$ is the child of a vertex $y$ in $T^\circ$ if and only if $x$ is a grandchild of $y$ in the original tree $T$.
\end{itemize}

This reduced tree encompasses the grandparent-grandchild relations between the white vertices in $T$. See Fig. \ref{fig:whitereduced} for a picture of a tree and of the associated white reduced tree. For convenience, we make no distinction between a white vertex of $T$ and the associated vertex of $T^\circ$.

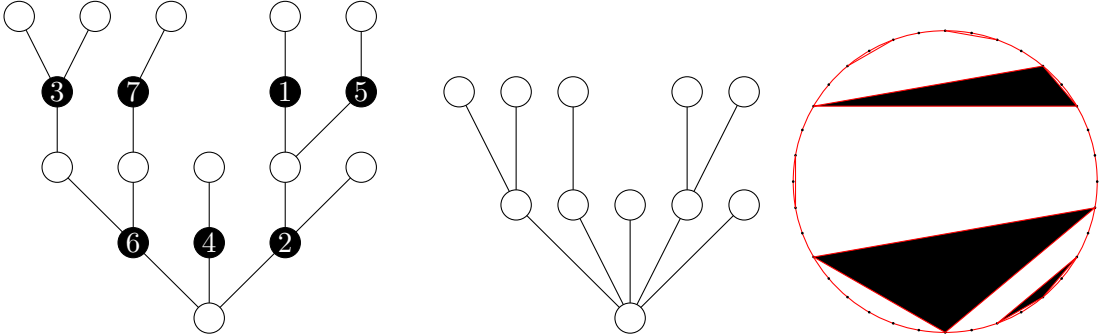
\begin{figure}[!h]
\center
\caption{A labelled bi-type tree $T$, its associated white reduced tree $T^\circ$ and the lamination $\bL^\bullet_6(T)$.}
\label{fig:whitereduced}
\begin{tabular}{c c c c}
\begin{tikzpicture}
\draw (-2.5,4) -- (-2,3) -- (-2,2) -- (-1,1) -- (-1,2) (-1,1) -- (0,0) -- (0,1) (0,0) -- (1,1) -- (2,2) (1,1) -- (1,2) -- (1,3) -- (1,4) (-1.5,4) -- (-2,3) (0,1) -- (0,2) (1,2) -- (2,3) -- (2,4) (-1,2) -- (-1,3) -- (-.5,4);

\draw[fill=white] (0,0) circle (.2);
\draw[fill=white] (-2,2) circle (.2);
\draw[fill=white] (-1,2) circle (.2);
\draw[fill=white] (0,2) circle (.2);
\draw[fill=white] (1,2) circle (.2);
\draw[fill=white] (2,2) circle (.2);
\draw[fill=white] (1,4) circle (.2);
\draw[fill=white] (-2.5,4) circle (.2);
\draw[fill=white] (-1.5,4) circle (.2);
\draw[fill=white] (-.5,4) circle (.2);
\draw[fill=white] (2,4) circle (.2);

\draw[fill=black] (-2,3) circle (.2) node[white]{$3$};
\draw[fill=black] (-1,1) circle (.2) node[white]{$6$};
\draw[fill=black] (-1,3) circle (.2) node[white]{$7$};
\draw[fill=black] (1,1) circle (.2) node[white]{$2$};
\draw[fill=black] (1,3) circle (.2) node[white]{$1$};
\draw[fill=black] (0,1) circle (.2) node[white]{$4$};
\draw[fill=black] (2,3) circle (.2) node[white]{$5$};
\end{tikzpicture}
&
\begin{tikzpicture}
\draw[white] (0,0) -- (0,1);
\end{tikzpicture}
&
\begin{tikzpicture}[scale=1.5]
\draw (-1,1) -- (0,0) -- (-.5,1) -- (-.5,2) (0,1) -- (0,0) -- (.5,1) -- (.5,2) (0,0) -- (1,1) (-1.5,2) -- (-1,1) -- (-1,2) (.5,1) -- (1,2);
\draw[fill=white] (-1,1) circle (.1333);
\draw[fill=white] (0,0) circle (.1333);
\draw[fill=white] (-.5,1) circle (.1333);
\draw[fill=white] (0,1) circle (.1333);
\draw[fill=white] (1,1) circle (.1333);
\draw[fill=white] (.5,1) circle (.1333);
\draw[fill=white] (.5,2) circle (.1333);
\draw[fill=white] (1,2) circle (.1333);
\draw[fill=white] (-1.5,2) circle (.1333);
\draw[fill=white] (-.5,2) circle (.1333);
\draw[fill=white] (-1,2) circle (.1333);
\end{tikzpicture}
&
\begin{tikzpicture}[scale=2]
\draw[red] (0,0) circle (1);
\foreach \k in {1,...,36}
{\draw[fill] ({cos(10*\k},{sin(10*\k)}) circle (.005);}
\draw[red,fill=black] ({cos(-1*10)},{sin(-1*10)}) -- ({cos(-9*10)},{sin(-9*10)}) -- ({cos(-15*10)},{sin(-15*10)}) -- cycle;
\draw[red,fill=black] ({cos(-21*10)},{sin(-21*10)}) -- ({cos(-31*10)},{sin(-31*10)}) -- ({cos(-33*10)},{sin(-33*10)}) -- cycle;
\draw[red,fill=black] ({cos(-23*10)},{sin(-23*10)}) -- ({cos(-25*10)},{sin(-25*10)});
\draw[red,fill=black] ({cos(-27*10)},{sin(-27*10)}) -- ({cos(-29*10)},{sin(-29*10)});
\draw[red,fill=black] ({cos(-5*10)},{sin(-5*10)}) -- ({cos(-3*10)},{sin(-3*10)}) -- ({cos(-7*10)},{sin(-7*10)}) -- cycle;
\draw[red,fill=black] ({cos(-17*10)},{sin(-17*10)}) -- ({cos(-19*10)},{sin(-19*10)});
\end{tikzpicture}
\end{tabular}

\end{figure}

\subsection{Colored laminations constructed from labelled bi-type trees}

We propose here two ways to code a finite labelled bi-type tree $T$ by discrete nondecreasing colored lamination-valued processes. The first one only takes into account white vertices, and is obtained from the contour function of the reduced tree $T^\circ$. The second one is obtained by considering the black vertices of $T$ and their labelling.

\paragraph*{The white process $\left(\bL^\circ_u(T)\right)_{u \in [0,\infty]}$}

The white process of a bi-type tree $T$ is the lamination-valued process presented in Section \ref{ssec:deftree}, applied to the white reduced tree $T^\circ$:
\begin{align*}
\bL^\circ_u(T) \coloneqq \bS^1 \cup \bigcup_{i=1}^{\lfloor u \rfloor \wedge N^\circ(T)} c_{U_i}(T^\circ).
\end{align*}
for any $u \in [0,\infty]$, where we recall that $U_1=\emptyset$ and $U_2, \ldots, U_n$ is a uniform permutation of the other vertices. Remark that this construction is not an injection, as it only depends on $C(T^\circ)$ while different bi-type trees may provide the same white reduced tree. Remark also that this process is only made of laminations, without any black point.

\paragraph*{The black process $\left(\bL^\bullet_u(T)\right)_{u \in [0,\infty]}$}

The black process of a bi-type tree $T$ is derived from the contour function $(C_t(T),0 \leq t \leq 2|T|)$ of the whole labelled bi-type tree $T$. Here, black vertices are coded by faces of a colored lamination, and not by chords as in the white process. More precisely, we define the face $F_x(T)$ associated to a vertex $x$ of $T$ the following way: let $0 \leq t_1 < t_2 < \ldots < t_{k_x(T)+1}$ be the times at which $x$ is visited by the contour function $C(T)$. Then define the associated face as:
$$F_x(T) \coloneqq Conv \left(\bigcup_{j=1}^{k_x(T)+1} \left[e^{-2i\pi t_j/2|T|}, e^{-2i\pi t_{j+1}/2|T|} \right] \right),$$
whose boundary is colored red and whose interior is colored black. In this definition, by convention, $t_{k_x(T)+2} = t_1$ and $Conv(A)$ denotes the convex hull of $A$.

Now, for $u \geq 0$, define
\begin{align*}
\bL^\bullet_u(T) \coloneqq \bS^1 \cup \bigcup_{i=1}^{\lfloor u \rfloor \wedge N^\bullet(T)} F_{V_i}(T).
\end{align*}
where $V_i$ is the black vertex labelled $i$ in $T$. The process $(\bL^\bullet_u(T))_{u \in [0,\infty]}$ is called the black process associated to $T$ (on Fig. \ref{fig:whitereduced} are represented a tree $T \in \kU_{11}^{(7)}$ (left) and the color lamination $\bL^\bullet_6(T)$).

\subsection{Random trees}
\label{ssec:randomtrees}

Let us now define random variables taking their values in the set of finite trees. We first define the so-called monotype simply generated trees, and then extend this framework to bi-type trees. We finally give some useful properties of both models. To avoid ambiguity, random monotype trees will be written with a straight double $\bT$, and random bi-type trees with a curved $\cT$.

\paragraph*{Monotype simply generated trees}

In the monotype case, we mostly rely on the deep survey of Janson \cite{Jan12} about simply generated trees, in which all proofs and further details can be found. Monotype simply generated trees (MTSG in short) were first introduced by Meir and Moon \cite{MM78}, and are random variables taking their values in the space of finite monotype trees. Specifically, for any $n \geq 1$, denote by $\kT_n$ the set of trees with $n$ vertices. Fix $w \coloneqq (w_i)_{i \geq 0} \in \R_+^{\Z_+}$ a weight sequence such that $w_0>0$ and, to a finite tree $T$, associate a weight $W_w(T) := \prod\limits_{x \in T} w_{k_x(T)}$. Then, for each $n \geq 1$, $n \geq 1$, we define the $w$-MTSG $\bT_n^w$ with $n$ vertices as the random variable satisfying, for any tree $T \in \kT_n$,
\begin{align*}
\P \left( \bT_n^w = T \right) = \frac{1}{Z_{n,w}} W_w(T)
\end{align*}
where
\begin{align*}
Z_{n,w} := \sum_{T \in \kT_n} W_w(T) .
\end{align*}
Since, for any $n \geq 1$, the set $\kT_n$ is finite, the tree $\bT_n^w$ is well-defined provided that $Z_{n,w}>0$, which we implicitly assume.

\begin{rk}
Here, weight sequences satisfy $w_0>0$, which was not the case for the weight sequences inducing factorizations of the $n$-cycle, defined in Section \ref{sec:intro}. Indeed, in the case of monotype trees the condition $w_0>0$ ensures that at least one finite tree has positive weight. We will still use the term 'weight sequence' for both.
\end{rk}

Remark that different weight sequences may provide the same simply generated tree:

\begin{lem}
\label{lem:monotypeequiv}
Let $w,w'$ be two weight sequences such that $w_0>0$, $w'_0>0$. Then, the two following assumptions are equivalent:

\begin{itemize}
\item[(i)] For any $n \geq 1$, $\bT_n^w \overset{(d)}{=} \bT_n^{w'}$
\item[(ii)] There exists $q,s>0$ such that, for all $i \geq 0$, $w_i = q s^i w'_i$. 
\end{itemize}
\end{lem}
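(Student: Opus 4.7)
\emph{Direction (ii) $\Rightarrow$ (i).} This is a direct consequence of the identity $\sum_{x \in T} k_x(T) = |T| - 1$, valid for any finite tree (each non-root vertex is a child of exactly one parent). Assuming $w_i = q s^i w'_i$ for all $i \geq 0$ with $q, s > 0$, one computes for every $T \in \kT_n$
\begin{align*}
W_w(T) = \prod_{x \in T} w_{k_x(T)} = \prod_{x \in T}\bigl(q\, s^{k_x(T)}\, w'_{k_x(T)}\bigr) = q^n s^{n-1}\, W_{w'}(T).
\end{align*}
Summing over $\kT_n$ yields $Z_{n,w} = q^n s^{n-1} Z_{n,w'}$, so $W_w(T)/Z_{n,w} = W_{w'}(T)/Z_{n,w'}$ and $\bT_n^w \overset{(d)}{=} \bT_n^{w'}$.

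\emph{Direction (i) $\Rightarrow$ (ii): setup.} First I would check that $w$ and $w'$ share the same support $I := \{i \geq 0 : w_i > 0\} = \{i \geq 0 : w'_i > 0\}$: if some $i \geq 1$ lay in $I(w) \setminus I(w')$, the star tree $T$ consisting of a root with $i$ leaf-children would satisfy $W_w(T) > 0$ and $W_{w'}(T) = 0$, so $\P(\bT_{i+1}^w = T) > 0$ while $\P(\bT_{i+1}^{w'} = T) = 0$, contradicting (i). Setting $r_i := w_i / w'_i > 0$ for $i \in I$, equality of distributions gives, for any two trees $T_1, T_2 \in \kT_n$ both of positive $w$-weight,
\begin{align*}
\prod_{i \in I} r_i^{\, d_i(T_1) - d_i(T_2)} = 1, \qquad d_i(T) := \#\{x \in T : k_x(T) = i\}.
\end{align*}

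\emph{Direction (i) $\Rightarrow$ (ii): core comparison.} The cleanest useful instance comes from comparing, for $k \geq 1$, the star tree of size $k+2$ with degree sequence $(d_0, d_{k+1}) = (k+1, 1)$ and the ``caterpillar'' tree of size $k+2$ (root of outdegree $1$ whose unique child carries $k$ leaves) with degree sequence $(d_0, d_1, d_k) = (k, 1, 1)$. Whenever $\{1, k, k+1\} \subset I$, the identity above specializes to
\begin{align*}
\frac{r_{k+1}}{r_k} = \frac{r_1}{r_0},
\end{align*}
and an immediate induction on $k$ gives $r_i = q s^i$ on $I$ with $q := r_0$ and $s := r_1/r_0$. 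Since $w_i = w'_i = 0$ for $i \notin I$, the identity $w_i = q s^i w'_i$ then extends trivially to all $i \geq 0$.

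\emph{Main obstacle.} The above breaks down when the support $I$ has gaps, typically when $1 \notin I$, so the recurrence cannot even start. My plan to handle this uniformly is a lattice/linear-algebra argument: every integer vector $d(T_1) - d(T_2)$ arising from pairs of trees of the same size lies in the hyperplane $H := \{v \in \R^I : \sum_i v_i = 0 = \sum_i i\, v_i\}$, and by exhibiting enough tree pairs---for instance the ones producing the relation $r_b r_0 = r_a r_{b-a}$ whenever $\{a, b-a, b\} \subset I$, obtained from a star of size $b+1$ versus a root of outdegree $a$ with one non-leaf child of outdegree $b-a$---one checks that these vectors span $H$ over $\R$. The constraint $\sum_i (\log r_i)\, v_i = 0$ for every $v$ in this span then forces $(\log r_i)_{i \in I}$ to lie in $H^\perp$, which is the two-dimensional subspace generated by $(1)_{i \in I}$ and $(i)_{i \in I}$. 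Thus $\log r_i = A + Bi$ on $I$, i.e.\ $r_i = q s^i$ with $q = e^A$, $s = e^B > 0$. The degenerate cases $|I| \leq 2$ are trivial: $H = \{0\}$ imposes no constraint and any two positive values can be matched to a geometric sequence by a suitable choice of $q, s$.
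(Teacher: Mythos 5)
Your direction (ii) $\Rightarrow$ (i) is correct and is exactly the Kennedy/Janson computation the paper cites. For (i) $\Rightarrow$ (ii) the paper takes a slightly different and, as it happens, more economical pair of trees: at each size $i+1$ it compares the \emph{path} (weight $w_0w_1^i$) with the \emph{star} (weight $w_0^iw_i$); since the ratio $W_w(T)/W_{w'}(T)=Z_{i+1,w}/Z_{i+1,w'}$ is constant over positive-weight trees of a given size, the path pins this ratio to $q^{i+1}s^i$ and the star then yields $w_i=qs^iw_i'$ directly, for every $i$, as soon as $w_1'>0$. Note that this needs only $1\in I$, whereas your star-vs-caterpillar comparison additionally requires $w_k>0$, so your ``core comparison'' induction stalls already for $I=\{0,1,3\}$ where the paper's does not. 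Your preliminary observation that the supports must coincide is a worthwhile point that the paper glosses over.

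The genuine gap is in your ``main obstacle'' paragraph: the families of tree pairs you exhibit do \emph{not} span $H$ in general, so the claim ``one checks that these vectors span $H$ over $\R$'' fails as stated. Concretely, take $I=\{0,2,3\}$ (or $\{0,2,5\}$): your second family requires $a,\,b-a\in I\setminus\{0\}$ and $b=a+(b-a)\in I$, and no such triple exists, so neither of your families produces a single nontrivial relation, while $H$ is one-dimensional and a relation genuinely exists (namely $r_0^2r_3=r_1^0r_2^3$ coming from trees with degree counts $(d_0,d_2,d_3)$ and $(d_0+2,d_2-3,d_3+1)$). The framework itself is sound and can be completed: by the cycle lemma, \emph{every} multiset of $n$ nonnegative outdegrees summing to $n-1$ is realized by some plane tree, so for $n$ large the realizable degree-count vectors fill out a large box inside the affine slice $\{\sum_id_i=n,\ \sum_iid_i=n-1\}$, and their pairwise differences span all of $H$; from there your $H^\perp$ argument gives $\log r_i=A+Bi$ on $I$. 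But as written, the spanning step is asserted for a family that does not work, so the proof is incomplete precisely in the degenerate-support case it was designed to handle. (To be fair, the paper's own ``otherwise we just need to slightly adapt the proof'' for $1\notin I$ is no more rigorous; your lattice formulation, once corrected, is the honest way to close that case.)
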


\begin{proof}
The proof that (ii) $\Rightarrow$ (i) is essentially due to Kennedy \cite{Ken75} in a slightly different setting, and can be found in our form in \cite[$(4.3)$]{Jan12}.
In order to prove that (i) $\Rightarrow$ (ii), we proceed by induction on $n$. Assume without loss of generality that $w'_1 > 0$ (otherwise we just need to slightly adapt the proof). There exists a unique couple $(q,s) \in (\R_+^*)^2$ such that $w_0= q w'_0$ and $w_1=q s w'_1$. Now we consider the two different trees with $3$ vertices: one has weight $w_0 w_1^2$ and the other $w_0^2 w_2$, so that $Z_{2,w} = w_0 w_1^2 + w_0^2 w_2>0$, and as well $Z_{2,w'} = w_0' w_1'^2 + w_0'^2 w_2'>0$. Since $\bT_2^w$ and $\bT_2^{w'}$ have the same distribution, $Z_{2,w}^{-1} w_0 w_1^2 = Z_{2,w'}^{-1} w'_0 w_1'^2$, which implies that $Z_{2,w} = q^3 s^2 Z_{2,w'}$ and therefore that $w_2 = q s^2 w'_2$. By induction on $i \geq 2$, we get that $w_i=qs^iw'_i$ for all $i \geq 0$.
\end{proof}

\paragraph*{Galton-Watson trees}

When a weight sequence $\mu$ satisfies $\mu_0>0$ and $\sum_{i \geq 0} \mu_i = 1$, $\mu$ is a probability distribution and we can define a random variable $\bT^\mu$ such that, for any finite tree $T$, 
\begin{align*}
\P \left( \bT^\mu = T \right) = W_\mu(T) = \prod_{x \in T}\mu_{k_x(T)}.
\end{align*}
This variable is now defined on the whole set of finite trees, and not only on the subset of trees of fixed size. In this case, we say that $\bT^\mu$ is a $\mu$-Galton-Watson ($\mu$-GW in short) tree, and that $\mu$ is its offspring distribution. Thus, for any $n \geq 1$, the MTSG $\bT_n^\mu$ is a $\mu$-GW tree conditioned to have $n$ vertices. 

\paragraph*{Stable trees and stable processes}
Recall that the probability law $\mu$ is said to be critical if $\sum_{i \geq 0} i \mu_i= 1$. A particular case of GW trees is when the offspring distribution $\mu$ is critical and in the domain of attraction of an $\alpha$-stable law, for $\alpha \in (1,2]$.

If $\mu$ is in the domain of attraction of an $\alpha$-stable law, the sequence of trees $(\bT_n^\mu)_{n \geq 1}$, restricted to the values of $n$ such that $\P(|\bT^\mu|=n)>0)$, is known to have a scaling limit: these trees, seen as metric spaces for the graph distance and properly renormalized, converge in distribution, for the so-called Gromov-Hausdorff distance, to some random compact metric space, introduced by Duquesne and Le Gall \cite{DLG02} and called the $\alpha$-stable tree, which we denote by $\cT^{(\alpha)}$ (see Fig. \ref{fig:stablearbconlam}, left for a simulation of the $1.5$-stable tree $\cT^{(1.5)}$).

\begin{figure}[!h]
\center
\caption{A simulation of the $1.5$-stable tree $\cT^{(1.5)}$, the stable height process $H^{(1.5)}$ and the process $X^{(1.5)}$.}
\label{fig:stablearbconlam}
\begin{tabular}{c c}
\includegraphics[scale=.5]{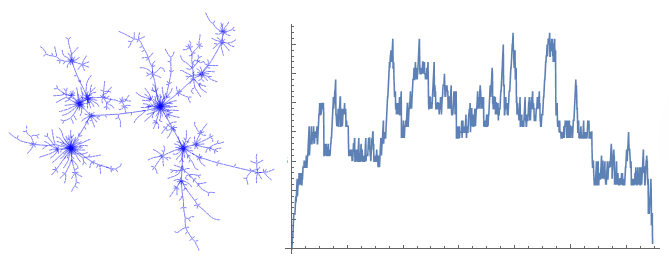}
&
\includegraphics[scale=.35]{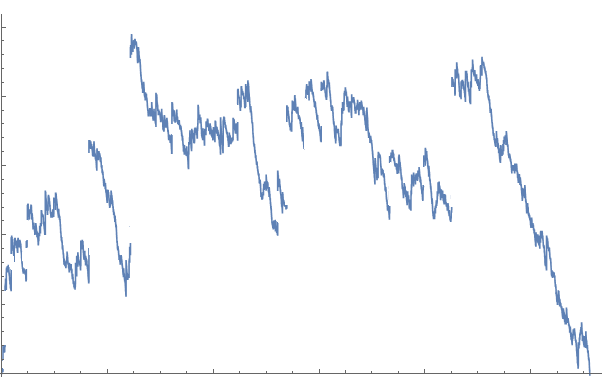}
\end{tabular}
\end{figure}

These trees have recently become a topic of interest for probabilists. In particular, a fundamental result states that, jointly with the convergence of the renormalized trees $(\bT_n^\mu)_{n \geq 1}$ towards $\cT^{(\alpha)}$, their contour functions and Lukasiewicz paths also converge, after renormalization, to some limiting càdlàg random processes $(X_t^{(\alpha)})_{0 \leq t \leq 1}$ and $(H_t^{(\alpha)})_{0 \leq t \leq 1}$ respectively, which can therefore be seen as the analogues of the Lukasiewicz path and the contour function of these stable trees. See Fig. \ref{fig:stablearbconlam} for a simulation of $H^{(1.5)}$ (middle) and $X^{(1.5)}$ (right).

\begin{thm}
\label{thm:cvcontour}
Let $\alpha \in (1,2]$ and let $\mu$ be a probability distribution in the domain of attraction of an $\alpha$-stable law. Let $(B_n)_{n \geq 0}$ be a sequence verifying \eqref{eq:Bn}. Then, in distribution in $\D([0,1], \R)^2$:
\begin{align*}
\left( \frac{1}{B_n} W_{nt}\left(\bT_n^\mu\right), \frac{B_n}{n} C_{2nt}\left(\bT_n^\mu\right) \right)_{t \in [0,1]} \overset{(d)}{\underset{n \rightarrow \infty}{\rightarrow}} \left( X_t^{(\alpha)}, H_t^{(\alpha)} \right)_{t \in [0,1]}
\end{align*}
\end{thm}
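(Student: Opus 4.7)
The plan is to reduce the statement to the well-known invariance principle of Duquesne \cite{Duq03} for the coding functions of conditioned Galton-Watson trees, so my ``proof'' is essentially to assemble the ingredients in the order they naturally appear. First I would recall that, for a $\mu$-GW tree $\bT^\mu$, the Lukasiewicz path $(W_t(\bT^\mu))_{0 \leq t \leq |\bT^\mu|}$ has the distribution of a random walk $S$ with i.i.d.\ increments of law $\nu(\cdot) \coloneqq \mu(\cdot+1)$ on $\{-1,0,1,\ldots\}$, stopped at $\zeta \coloneqq \inf\{t \geq 0,\, S_t = -1\}$, and that $|\bT^\mu|=n$ if and only if $\zeta = n$. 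Hence conditioning on $|\bT^\mu|=n$ amounts to conditioning $S$ on $\{\zeta = n\}$.

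Next I would invoke the classical stable invariance principle: since $\mu$ is critical and in the domain of attraction of an $\alpha$-stable law, the sequence $\nu$ is centred and with tails in the same domain of attraction, so for $(B_n)$ satisfying \eqref{eq:Bn}, $(B_n^{-1} S_{\lfloor nt \rfloor})_{t \geq 0}$ converges in $\D([0,\infty),\R)$ to the spectrally positive $\alpha$-stable Lévy process $Y^{(\alpha)}$ normalized so that \eqref{eq:Bn} holds. The conditioned version (conditioning on $\zeta = n$) converges to the normalized excursion $X^{(\alpha)}$ of this Lévy process above its infimum; this is the content of the Vervaat-type / bridge-to-excursion construction, combined with an absolute continuity argument, carried out by Duquesne \cite{Duq03} (see also \cite{Kor12}). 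This gives the first marginal convergence
\begin{equation*}
\frac{1}{B_n} W_{nt}(\bT_n^\mu) \overset{(d)}{\underset{n\to\infty}{\longrightarrow}} X_t^{(\alpha)}, \qquad t \in [0,1],
\end{equation*}
in $\D([0,1],\R)$.

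To obtain the joint convergence with the contour function, the key tool is the Le Gall--Le Jan identity expressing the height $h(v_k)$ of the $k$-th vertex as a functional of the Lukasiewicz path, namely as the cardinality of $\{j < k : S_j = \min_{j \leq \ell \leq k} S_\ell\}$. This functional is not continuous in the Skorokhod topology when $\alpha < 2$, but Duquesne \cite{Duq03} proves that the rescaled height process $\frac{B_n}{n} H_k(\bT_n^\mu)$ converges jointly with the rescaled Lukasiewicz path to the stable height process $H^{(\alpha)}$, which is itself defined from $Y^{(\alpha)}$ (or its excursion) through an analogous limiting procedure. I would quote this joint convergence directly. The passage from the discrete height function $(H_k(\bT_n^\mu))_{0 \leq k \leq n}$ to the contour function $(C_{2nt}(\bT_n^\mu))_{t \in [0,1]}$ is then a standard time-change argument: the two processes differ by at most $1$ in value on corresponding time scales, and the depth-first traversal induces a deterministic Lipschitz reparametrization from $[0,2n]$ onto $[0,n]$ which, after rescaling by $n$, converges uniformly to the identity. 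A uniform bound on the oscillations of $H$ (equivalently, tightness of the rescaled contour, proved via moment estimates on the height using \eqref{eq:Bn}) allows to transfer the convergence of $H$ to the convergence of $C$ without changing the limit, jointly with $W$.

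The main obstacle is the non-continuity of the height-from-Lukasiewicz functional for $\alpha < 2$: one cannot simply say ``apply the continuous mapping theorem''. The way around is exactly the technical heart of Duquesne--Le Gall's work -- a direct tightness plus finite-dimensional identification argument based on an exponential formula for the local time at the infimum of the Lévy process -- and in my write-up I would cite this as a black box rather than reprove it. Once this is granted, the only remaining work is the comparison between height and contour, which is routine and has been carried out e.g.\ in \cite[Section~2]{DLG02}.
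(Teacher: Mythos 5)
Your proposal is correct and follows essentially the same route as the paper, which does not reprove this statement but attributes it to Marckert--Mokkadem in the exponential-moment case and to Duquesne (as made precise in \cite[Theorem 8.1 (II)]{Kor12}) in general. Your sketch simply unpacks what those references contain — random-walk representation of the Lukasiewicz path, stable invariance principle conditioned on the excursion length, Duquesne's joint convergence of the height process, and the standard height-to-contour comparison — while correctly identifying the non-continuity of the height functional as the step that must be taken from the literature as a black box.
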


This result is due to Marckert and Mokkadem \cite{MaMo03} under the assumption that $\mu$ has a finite exponential moment (that is, $\sum_{i \geq 0} \mu_i e^{\beta i}>0$ for some $\beta>0$). The result in the general case can be deduced from the work of Duquesne \cite{Duq03}, although it is not clearly stated in this form. See \cite[Theorem $8.1$, (II)]{Kor12} (taking $\mathcal{A}=\Z_+$ in this theorem) for a precise statement.

In light of this convergence, investigating properties of these limiting objects allows to obtain information on the shape of a typical realization of the tree $\bT_n^\mu$ for $n$ large. Let us immediately see an example. For $\alpha \in (1,2)$, the limiting process $X^{(\alpha)}$ satisfies the following properties with probability $1$ where, for $s \in (0,1]$, we have set $\Delta_s \coloneqq X^{(\alpha)}_{s} - X^{(\alpha)}_{s-}$.
\begin{itemize}
\item[(H1)] The local minima of $X^{(\alpha)}$ are distinct (that is, for any $0 \leq s < t \leq 1$, there exists at most one $r\in (s,t)$ such that $X^{(\alpha)}_r = \inf_{[s,t]} X^{(\alpha)}$).
\item[(H2)] Let $t$ be a local minimum of $X^{(\alpha)}$ (i.e. $X^{(\alpha)}_t = \min \{ X^{(\alpha)}_u, t-\epsilon \leq u \leq t+\epsilon\}$ for some $\epsilon>0$), and define $s \coloneqq \sup \left\{ r \leq t, X^{(\alpha)}_{r} < X^{(\alpha)}_{t} \right\}$. Then $\Delta_s>0$, and $X^{(\alpha)}_{s-} < X^{(\alpha)}_{t} < X^{(\alpha)}_{s}$.
\item[(H3)] If $s \in (0,1)$ is such that $\Delta_s>0$, then for all $0 \leq \epsilon \leq s$, $\inf_{[s-\epsilon, s]} X^{(\alpha)} < X^{(\alpha)}_{s-}$.
\end{itemize}
These three properties are notably used in \cite{Kor14}, in order to construct the lamination $\bL_\infty^{(\alpha)}$, and are proved in \cite[Proposition $2.10$]{Kor14}. The following lemma, which is a consequence of these properties of $X^{(\alpha)}$, provides useful information about the structure of a large $\mu$-Galton-Watson tree:

\begin{lem}
\label{lem:grossommets}
Let $\alpha<2$, and let $\mu$ be a critical distribution in the domain of attraction of an $\alpha$-stable law. Then:
\begin{itemize}
\item[(i)] For any $\epsilon>0$, there exists $\eta>0$ such that, for all $n$ large enough:
\begin{align*}
\P \left( \exists \, u \in \bT_n^\mu, k_u\left(\bT_n^\mu\right) \geq \eta B_n \right) \geq 1-\epsilon.
\end{align*}

\item[(ii)] For any $\epsilon>0$, there exists $\eta>0$ such that, for $n$ large enough, with probability larger than $1-\epsilon$, any $\epsilon n$-node in $\bT_n^\mu$ has more than $\eta B_n$ children.
\end{itemize}
\end{lem}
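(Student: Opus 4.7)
My approach will combine Theorem \ref{thm:cvcontour} (convergence of the rescaled Lukasiewicz path $W_n(t) := B_n^{-1} W_{\lfloor nt \rfloor}(\bT_n^\mu)$ to $X^{(\alpha)}$) with properties (H1)--(H3). For (i), the identity $W_{i+1}(\bT_n^\mu) - W_i(\bT_n^\mu) = k_{v_i}(\bT_n^\mu) - 1$ yields
\[
\max_{u \in \bT_n^\mu} k_u(\bT_n^\mu) - 1 = B_n \sup_{t \in [0,1]} \Delta W_n(t),
\]
and the ``largest jump'' functional is continuous on $\D([0,1], \R)$ at càdlàg paths whose number of jumps of size $\geq \delta$ is finite for each $\delta > 0$, a property that holds a.s.\ for $X^{(\alpha)}$. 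By the continuous mapping theorem, $\sup_t \Delta W_n \overset{(d)}{\rightarrow} \sup_t \Delta X^{(\alpha)}$, which is a.s.\ strictly positive since the spectrally positive $\alpha$-stable Lévy process has infinitely many positive jumps on $[0,1]$. Choosing $\eta$ so that $\P(\sup \Delta X^{(\alpha)} > 2\eta) > 1 - \epsilon/2$ then yields (i).

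For (ii), the first step is to translate the ``$\epsilon n$-node'' condition into a structural property of the Lukasiewicz path. For $u = v_i$ with $k := k_u(\bT_n^\mu)$ children having subtree sizes $s_1, \ldots, s_k$, one has $W_{i+1} = W_i + k - 1$ and, for each $j \in \llbracket 1, k \rrbracket$, $W_{i+1+s_1+\cdots+s_j} = W_i + k - 1 - j$, a new running minimum of $W$ on $(i, i+1+s_1+\cdots+s_j]$. The $\epsilon n$-node property (with split index $r$) asserts that some such descending ladder time lies in $[i + \epsilon n, S_i - \epsilon n]$, where $S_i := i + |\theta_u|$ is the first return of $W$ below $W_i$. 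After rescaling, $W_n$ has a jump at some $s_n$ close to $i/n$ of size $(k-1)/B_n$, and there exists $t_n \in [s_n + \epsilon, s'_n - \epsilon]$ (with $s'_n := S_i/n \geq s_n + 2\epsilon$) at which $W_n$ attains a new running minimum on $(s_n, t_n]$ with value strictly between $W_n(s_n-)$ and $W_n(s_n)$.

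I will then argue by contradiction. Suppose that for some $\epsilon > 0$ a subsequence contains an $\epsilon n$-node with $k_u \leq \eta_n B_n$, $\eta_n \to 0$. By Skorokhod's representation theorem we may work on a probability space where $W_n \to X^{(\alpha)}$ almost surely in the $J_1$ topology; extracting further, $(s_n, t_n, s'_n) \to (s, t, s')$ with $s' - s \geq 2\epsilon$ and $t \in [s + \epsilon, s' - \epsilon]$. The rescaled jump of $W_n$ at $s_n$ vanishes, so in the limit $X^{(\alpha)}_s = X^{(\alpha)}_{s-}$; yet $t$ must be a local minimum of $X^{(\alpha)}$ on $[s, s']$ with $X^{(\alpha)}_t \in [X^{(\alpha)}_{s-}, X^{(\alpha)}_s]$, forcing $X^{(\alpha)}_t = X^{(\alpha)}_{s-}$. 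Then $s' = \inf\{r > s : X^{(\alpha)}_r \leq X^{(\alpha)}_{s-}\} = s$, contradicting $s' - s \geq 2\epsilon$. Property (H2) moreover ensures more quantitatively that over the almost surely finitely many ``$\epsilon$-nodes'' of $X^{(\alpha)}$ (their disjoint excursion intervals of length $\geq 2\epsilon$ being at most $1/(2\epsilon)$ in $[0,1]$), the minimal jump size is strictly positive, providing the uniform $\eta$.

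The main obstacle will be making the Skorokhod transfer step fully rigorous: the ``$\epsilon n$-node with small degree'' event is not directly a continuous functional on Skorokhod space, since a vanishing rescaled jump of $W_n$ may correspond either to no jump or to a small jump in $X^{(\alpha)}$. The cleanest fix uses property (H3): since $X^{(\alpha)}$ attains values strictly below $X^{(\alpha)}_{s-}$ arbitrarily close before any jump time $s$, the local behavior of $X^{(\alpha)}$ near jump times is tightly controlled and no vanishing jump can support an excursion of macroscopic length, ruling out the pathological limits and closing the argument.
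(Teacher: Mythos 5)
Your overall strategy is the same as the paper's: part (i) via the continuity of the largest-jump functional applied to the convergence of the rescaled Lukasiewicz path, and part (ii) via a Skorokhod-representation contradiction argument that passes the ladder-time structure of an $\epsilon n$-node to the limit process $X^{(\alpha)}$ and then invokes the path properties (H1)--(H3). Part (i) is correct as written.

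In part (ii), however, the decisive step is not justified. From $X^{(\alpha)}_s=X^{(\alpha)}_{s-}$ and $X^{(\alpha)}_t=X^{(\alpha)}_{s-}$ you conclude that $\inf\{r>s:\ X^{(\alpha)}_r\le X^{(\alpha)}_{s-}\}=s$, i.e.\ that the process dips below level $X^{(\alpha)}_{s-}$ immediately after $s$. This is false for arbitrary times of $X^{(\alpha)}$ (it fails at every local-minimum time), and $s$ here is a random time obtained as a subsequential limit, so you cannot appeal to regularity of the half-line at fixed or stopping times. Moreover, the repair you propose via (H3) does not apply: (H3) is a statement about times $s$ with $\Delta_s>0$, whereas the problematic case is precisely $\Delta_s=0$ (the rescaled jump has vanished). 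The correct way to close the argument — and what the paper does — is a two-case analysis. If $\Delta_s=0$: since $X^{(\alpha)}\ge X^{(\alpha)}_t$ on $[s,t]$ and $t$ is a genuine local minimum with $X^{(\alpha)}_t=X^{(\alpha)}_s$, property (H1) (distinctness of local minima) forbids $s$ from also being a local minimum, so the values below $X^{(\alpha)}_t$ must accumulate from the left of $s$, giving $s=\sup\{r\le t:\ X^{(\alpha)}_r<X^{(\alpha)}_t\}$; then (H2) forces $\Delta_s>0$, a contradiction. If instead $\Delta_s>0$, (H3) identifies $s$ as $\sup\{r\le t:\ X^{(\alpha)}_r<X^{(\alpha)}_t\}$ and (H2) then requires $X^{(\alpha)}_{s-}<X^{(\alpha)}_t<X^{(\alpha)}_s$, contradicting $X^{(\alpha)}_t\in\{X^{(\alpha)}_{s-}\}=\{X^{(\alpha)}_s\}$. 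You have all the right ingredients on the table, but the case $\Delta_s=0$ must be handled with (H1) and (H2), not (H3), and the immediate-dip claim should be removed.
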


In other terms, (i) means that, with high probability, there is at least one vertex in $\bT_n^\mu$ whose number of children is of order $B_n$. Furthermore, (ii) states that all $\epsilon n$-nodes of $\bT_n^\mu$ (which appear to correspond to large faces in the associated lamination $\bL(\bT_n^\mu)$) have a number of children of order $B_n$.

\begin{proof}[Proof of Lemma \ref{lem:grossommets}]
The proof of (i) is straightforward: it is known that the set of points $t \in [0,1]$ where $\Delta_t>0$ is dense in $[0,1]$ (for instance, the process satisfies Assumption $(H0)$ in \cite{Kor14}). In particular, almost surely, $M \coloneqq \max \, \{\Delta_t, t \in [0,1] \}>0$. Fix $\epsilon>0$, and take $\eta>0$ such that $M>2\eta$ with probability $\geq 1-\epsilon/2$. Then, by the convergence of Theorem \ref{thm:cvcontour}, for $n$ large enough, the maximum degree in $\bT_n^\mu$ is larger than $\eta B_n$ with probability $\geq 1-\epsilon$.

Let us now prove (ii). For $x$ a vertex of $\bT_n^\mu$, denote by $i(x)$ the position of $x$ in $\bT_n^\mu$ in lexicographical order.  Take $u$ an $\epsilon n$-node in $\bT_n^\mu$. In particular, $W_{i(u)}-W_{i(u)-1} = k_u(\bT_n^\mu)-1$. By definition of an $\epsilon n$-node, its children can be split into two subsets $A_r(u), A_{-r}(u)$ for some $r \geq 1$, such that $\sum_{w \in A_r(u)} |\theta_w(\bT_n^\mu)| \geq \epsilon n$ and $\sum_{w \in A_{-r}(u)} |\theta_w(\bT_n^\mu)| \geq \epsilon n$. Let $v(u) \in \bT_n^\mu$ be the first vertex of $A_{-r}(u)$ in lexicographical order. Then, it is clear by definition of $W$ that
\begin{equation} 
\label{eq:Wiu}
W_{i(u)-1}(\bT_n^\mu) \leq W_{i(v(u))-1}(\bT_n^\mu) \leq W_{i(u)}(\bT_n^\mu).
\end{equation}
Now assume by the Skorokhod representation theorem that the convergence of Theorem \ref{thm:cvcontour} holds almost surely. 
Assume that, for $n$ along a subsequence, there exists an $\epsilon n$-node $u_n$ in $\bT_n^\mu$ such that $W_{i(u_n)}(\bT_n^\mu) - W_{i(u_n)-1}(\bT_n^\mu) = o(B_n)$ as $n \rightarrow \infty$ (that is, $u_n$ has $o(B_n)$ children). Since $[0,1]$ is compact, up to extraction, one can assume in addition that there exists $0<s<t<1$ such that $i(u_n)/n \rightarrow s$ and $i(v(u_n))/n \rightarrow t$. Thus, the limiting process $X^{(\alpha)}$ should satisfy: 
\begin{itemize}
\item[(a')]
$X^{(\alpha)}_{s} = X^{(\alpha)}_{t-}$
\item[(b')] $X^{(\alpha)}_{t-} = \inf_{[t-\epsilon, t+\epsilon]} X^{(\alpha)}$.
\end{itemize}
Indeed, (a') can be deduced from \eqref{eq:Wiu} and the fact that $W_{i(u_n)}(\bT_n^\mu) - W_{i(u_n)-1}(\bT_n^\mu) = o(B_n)$, while (b') 
comes from (a') along with the fact that $W$ is larger than $W_{i(u_n)}(\bT_n^\mu)-1$ on $[i(u_n), i(v(u_n))+2\epsilon n]$ as $|\theta_u(\bT_n^\mu)| \geq 2 \epsilon n$. There are now two possible cases:
\begin{itemize}
\item first, if $\Delta_s=0$, then, as by (H1) the local minima of $X^{(\alpha)}$ are almost surely distinct, $X^{(\alpha)}_s$ is not a local minimum of $X^{(\alpha)}$ (since by (b') $X^{(\alpha)}_{t-}$ is a local minimum). Therefore, $s= \sup \{r \leq t, X^{(\alpha)}_r < X^{(\alpha)}_t \}$ and by (H2) $\Delta_s>0$, which contradicts our assumption;
\item second, if $\Delta_s>0$ then by (H3) $s= \sup \{r \leq t, X^{(\alpha)}_r < X^{(\alpha)}_t \}$; by (H2), it should happen that $X^{(\alpha)}_t<X^{(\alpha)}_s$, which is not the case by (a').
\end{itemize}  

In conclusion, almost surely, there exists $\eta>0$ such that, as $n \rightarrow \infty$, all $\epsilon n$-nodes in $\bT_n^\mu$ have at least $\eta B_n$ children.
\end{proof}

We can now present a first result of convergence concerning the lamination-valued process associated to the trees $\bT_n^\mu, n \geq 1$. As the renormalized contour functions of these trees converge as $n \rightarrow \infty$ to $H^{(\alpha)}$ by Theorem \ref{thm:cvcontour}, it appears that the associated processes of laminations converge towards the $\alpha$-stable lamination-valued process $(\bL_c^{(\alpha)})_{c \in [0,\infty]}$.

\begin{thm}
\label{thm:cvlamgw}
Let $\alpha \in (1,2]$, $\mu$ a distribution in the domain of attraction of an $\alpha$-stable law, and $(B_n)$ satisfying \eqref{eq:Bn}. Then, jointly with the convergence of Theorem \ref{thm:cvcontour}, the following holds in distribution in $\D([0,\infty],\bCL(\oD))$:
\begin{align*}
\left( \bL_{cB_n}\left(\bT_n^\mu\right) \right)_{c \in [0,\infty]} \overset{(d)}{\rightarrow}\left(\bL_c^{(\alpha)}\right)_{c \in [0,\infty]},
\end{align*}
where we recall that the process $(\bL_c^{(\alpha)})_{c \in [0,\infty]}$ is obtained from $H^{(\alpha)}$ by the construction of Section \ref{sec:intro}.
\end{thm}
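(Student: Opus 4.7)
By Skorokhod's representation theorem applied to Theorem~\ref{thm:cvcontour}, I work on a probability space where $\tilde W^{(n)}(t) := B_n^{-1} W_{nt}(\bT_n^\mu)$ and $\tilde C^{(n)}(t) := (B_n/n) C_{2nt}(\bT_n^\mu)$ converge almost surely to $(X^{(\alpha)}, H^{(\alpha)})$ in $\D([0,1],\R)^2$. The uniform random permutation used to define $(\bL_u(\bT_n^\mu))_u$ is taken independently of the tree. A standard Poissonization argument replaces the selection without replacement by attaching to each vertex $x$ an independent exponential clock $\hat T_x$ of rate $B_n/n$: up to an $o_\P(1)$ error in the Hausdorff metric and for each fixed $c$,
\[
\bL_{cB_n}(\bT_n^\mu) = \bS^1 \cup \bigcup_{x:\,\hat T_x \leq c} c_x(\bT_n^\mu).
\]

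\textbf{Identification as a Poisson process on the epigraph.} To each non-root vertex $x \in \bT_n^\mu$ I associate the rectangle $B_x = [g(x)/(2n), d(x)/(2n)] \times [(h(x)-1)B_n/n, h(x)B_n/n)$ in the rescaled epigraph $\cEG(\tilde C^{(n)})$. A direct check using $d(x)-g(x) = 2(|\theta_x(\bT_n^\mu)|-1)$ shows that for any $(s,t) \in B_x$ one has $g(\tilde C^{(n)}, s, t) = g(x)/(2n)$ and $d(\tilde C^{(n)}, s, t) = d(x)/(2n)$, so that the chord built from $(s,t)$ via the formula of Section~\ref{ssec:codingminfac} coincides with $c_x(\bT_n^\mu)$. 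Sampling for each $x$ an auxiliary uniform point $(S_x, T_x) \in B_x$, the point process $\{(S_x, T_x, \hat T_x)\}_x$ is a Poisson process on $\cEG(\tilde C^{(n)}) \times \R_+$ with intensity proportional to $(d(\tilde C^{(n)}, s, t) - g(\tilde C^{(n)}, s, t))^{-1}\mathds{1}_{(s,t) \in \cEG(\tilde C^{(n)})} \, ds\, dt\, dr$, precisely the intensity defining $\cP(F)$ (the constant is fixed by the parametrization). After checking that the ``slanted'' parts of $\cEG(\tilde C^{(n)})$ not covered by the boxes have total area $O(B_n/n)$ and hence contribute negligibly, this identifies $\bL_{cB_n}(\bT_n^\mu)$ with $\bL_{c'}(\tilde C^{(n)})$ up to $o_\P(1)$, for the appropriate rescaling $c'$ of $c$.

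\textbf{Passage to the limit and continuity.} The almost-sure Skorokhod convergence $\tilde C^{(n)} \to H^{(\alpha)}$ entails Hausdorff convergence of $\cEG(\tilde C^{(n)})$ to $\cEG(H^{(\alpha)})$ and, using the analogues for $H^{(\alpha)}$ of the properties (H1)--(H3) stated in the excerpt for $X^{(\alpha)}$, pointwise a.e.\ convergence of $(g(\tilde C^{(n)}, \cdot, \cdot), d(\tilde C^{(n)}, \cdot, \cdot))$ to $(g(H^{(\alpha)}, \cdot, \cdot), d(H^{(\alpha)}, \cdot, \cdot))$. Weak convergence of the Poisson point processes on any compact macroscopic subset of $\cEG(H^{(\alpha)})$ (where $d-g$ is bounded below) then follows from continuity of the intensity. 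The main obstacle, and the technical heart of the proof, is the ``thin'' part of the epigraph where $d(H^{(\alpha)}, s, t) - g(H^{(\alpha)}, s, t)$ is small: the intensity blows up there, but the chords produced are forced to be small as well. To control the Hausdorff distance I would show uniformly that the union of chords produced by such thin excursions can be made arbitrarily small in diameter by choosing a suitable cut-off, relying on a.s.\ regularity estimates for $H^{(\alpha)}$ of the type used in \cite[Section 2]{Kor14}.

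\textbf{From finite-dimensional to functional convergence.} Applying the previous argument to any finite collection $c_1 < \cdots < c_k$ on the coupled Poisson processes yields joint convergence of $(\bL_{c_i B_n}(\bT_n^\mu))_{i \leq k}$ to $(\bL_{c_i}^{(\alpha)})_{i \leq k}$. Since both $c \mapsto \bL_{cB_n}(\bT_n^\mu)$ and $c \mapsto \bL_c^{(\alpha)}$ are nondecreasing càdlàg processes in $c$ for the inclusion of compact subsets of $\oD$, finite-dimensional convergence at a dense countable subset of $[0,\infty]$, together with a.s.\ continuity of the limit at each fixed $c$ (no deterministic atoms in the Poisson process), forces Skorokhod convergence in $\D([0,\infty], \bCL(\oD))$. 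Joint convergence with Theorem~\ref{thm:cvcontour} is automatic from the Skorokhod coupling introduced in the first step.
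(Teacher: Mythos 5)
Your route is sound and, as far as one can tell, is essentially the argument that lies behind the result: the paper itself gives no proof of Theorem \ref{thm:cvlamgw}, disposing of it in one line as an immediate consequence of \cite[Theorem $4.3$ and Proposition $4.3$]{The19}. So there is no in-paper argument to compare against, and your sketch (Skorokhod coupling of the contour function, Poissonization of the uniform sampling, identification of the selected vertices with a point process on the epigraph of the rescaled contour function, a.e.\ convergence of the $(g,d)$ functionals via properties of type (H1)--(H3), and the monotonicity argument upgrading finite-dimensional to functional convergence) is a faithful reconstruction of the externalized proof. One remark: the ``thin part'' of the epigraph is less of an obstacle than you suggest. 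A chord with endpoints $e^{-2i\pi g}$ and $e^{-2i\pi d}$ where $\min(d-g,1-(d-g))\leq\delta$ lies within Hausdorff distance $2\pi\delta$ of $\bS^1$, which is contained in every colored lamination; so the infinitely many Poisson points in the thin region are harmless for the Hausdorff metric and no regularity estimate on $H^{(\alpha)}$ is needed there. The real work is in the macroscopic region $\{d-g\in[\epsilon,1-\epsilon]\}$, where one must also note that your process is not Poisson but binomial-type (at most one point per box, present before time $c$ with probability $\approx cB_n/n$); it is asymptotically Poisson only because the number of boxes meeting that region is $O(n/B_n)$, and this deserves a line.

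The one genuine hole is the sentence ``the constant is fixed by the parametrization.'' Carrying out your own box computation: $B_x$ has area $\frac{d(x)-g(x)}{2n}\cdot\frac{B_n}{n}=\frac{(|\theta_x(\bT_n^\mu)|-1)B_n}{n^2}$, the clock rate is $B_n/n$, and $d(\tilde C^{(n)},s,t)-g(\tilde C^{(n)},s,t)=\frac{|\theta_x(\bT_n^\mu)|-1}{n}$ on $B_x$, so the resulting intensity is exactly $(d-g)^{-1}\,ds\,dt\,dr$ --- whereas the limit $\cP(H^{(\alpha)})$ is defined in Section \ref{sec:intro} with intensity $2(d-g)^{-1}\,ds\,dt\,dr$. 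If that factor of $2$ is not recovered somewhere (from the precise normalization \eqref{eq:Bn} of $B_n$ relative to $H^{(\alpha)}$, or from a convention in the definition of the limit process), your argument proves convergence to $\bL^{(\alpha)}_{c/2}$ rather than $\bL^{(\alpha)}_{c}$, i.e.\ the theorem only up to a deterministic linear time change. This is not a conceptual failure, but it is exactly the kind of constant that ``fixed by the parametrization'' sweeps under the rug, and the proof is not complete until it is checked explicitly against the definitions.
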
 
This result is an immediate consequence of \cite[Theorem $4.3$ and Proposition $4.3$]{The19}, and is a cornerstone of the proof of \eqref{eq:cvthe19}.

To end this section on random monotype trees, we provide a useful tool in the study of Galton-Watson trees called the local limit theorem. It can be seen for instance as a consequence of \cite[Theorem $8.1$, (I)]{Kor12}, taking $\mathcal{A}
=\Z_+$ in the statement:

\begin{thm}[Local limit theorem]
\label{thm:llt}
Let $\mu$ be a critical distribution in the domain of attraction of a stable law, and $(B_n)$ satisfying \eqref{eq:Bn}. Then, there exists a constant $C>0$ such that, for the values of $n$ for which $\P( |\bT^\mu|=n)>0$, 
\begin{align*}
\P \left( |\bT^\mu|=n \right) \sim \frac{C}{n B_n}
\end{align*} 
as $n \rightarrow \infty$.
\end{thm}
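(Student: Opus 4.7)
\medskip
\noindent\textbf{Proof proposal.} The plan is to combine the classical Otter--Dwass cyclic lemma with a local limit theorem for random walks in the domain of attraction of an $\alpha$-stable law. Write $\xi_1,\xi_2,\ldots$ for i.i.d.\ copies of a random variable with law $\mu$, and set $S_n = \xi_1 + \cdots + \xi_n$. The cyclic lemma (equivalently, the Otter--Dwass formula) gives the exact identity
\begin{equation*}
\P\bigl(|\bT^\mu| = n\bigr) \;=\; \frac{1}{n}\,\P(S_n = n-1),
\end{equation*}
for every $n \geq 1$. Indeed, this can be seen by identifying trees with their Lukasiewicz paths $W(T)$ of length $n$ ending at $-1$, noting that $W_n = S_n - n$, and invoking the Cycle Lemma to count the number of cyclic rotations of an excursion-type path among its $n$ cyclic shifts (exactly one among $n$). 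So I would record this as step one.

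Step two is to centre: writing $\tilde{S}_n = S_n - n = \sum_{i=1}^n (\xi_i - 1)$, the identity becomes $\P(|\bT^\mu|=n) = n^{-1}\P(\tilde{S}_n = -1)$. Since $\mu$ is critical and in the domain of attraction of an $\alpha$-stable law with norming sequence $(B_n)$ chosen exactly so that $\tilde{S}_n / B_n$ converges in distribution to a nondegenerate $\alpha$-stable random variable $Y^{(\alpha)}$ (this is precisely the content of the normalization \eqref{eq:Bn}; see \cite{Jan11}). Denote by $p_\alpha$ the density of $Y^{(\alpha)}$, which is continuous, bounded, and strictly positive on $\R$ (in particular at $0$).

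Step three is to apply the Gnedenko--Kolmogorov local limit theorem for triangular arrays in the domain of attraction of a stable law: under our hypotheses, uniformly in $k \in \Z$,
\begin{equation*}
\Bigl| B_n \, \P(\tilde S_n = k) - p_\alpha(k/B_n) \Bigr| \xrightarrow[n\to\infty]{} 0,
\end{equation*}
provided one restricts $k$ to the lattice on which $\tilde S_n$ is supported. Specializing this to $k = -1$ and using that $-1/B_n \to 0$ together with continuity of $p_\alpha$ at $0$ yields $B_n \, \P(\tilde S_n = -1) \to p_\alpha(0)$, and hence
\begin{equation*}
\P\bigl(|\bT^\mu|=n\bigr) \;=\; \frac{1}{n}\P(\tilde S_n = -1) \;\sim\; \frac{p_\alpha(0)}{n B_n},
\end{equation*}
giving the claim with $C = p_\alpha(0) > 0$.

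The part that requires the most care is the third step, specifically the lattice issue. The support of $\mu$ may not be all of $\Z_+$: there could be a span $d \geq 1$ so that $\mu$ is supported on $a + d\Z_+$, in which case $\tilde S_n$ is supported on $n(a-1) + d\Z$ and the event $\{\tilde S_n = -1\}$ can only have positive probability along an arithmetic progression of values of $n$. This is exactly why the statement restricts to the $n$'s for which $\P(|\bT^\mu|=n) > 0$. Properly setting up the Gnedenko local limit theorem in this lattice setting requires identifying the period $d$ and replacing the prefactor $B_n$ by $B_n/d$ in the statement (which only changes the constant $C$). The rest of the proof (asymptotic distribution of $\tilde S_n$, continuity of $p_\alpha$) is standard and causes no difficulty; one can alternatively cite \cite[Theorem~4.2.1]{IbrLin} or the version stated in \cite[Theorem 8.1(I)]{Kor12} with $\mathcal{A} = \Z_+$, as the author does.
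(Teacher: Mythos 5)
Your proposal is correct and is exactly the standard argument (Otter--Dwass formula $\P(|\bT^\mu|=n)=n^{-1}\P(S_n=n-1)$ followed by the Gnedenko local limit theorem for the centred walk, with the span/lattice caveat handled by restricting to admissible $n$) that underlies the result the paper invokes; the paper itself gives no proof but simply cites \cite[Theorem $8.1$ (I)]{Kor12} with $\mathcal{A}=\Z_+$, and that cited theorem is proved precisely along the lines you describe. Your identification of the lattice issue as the only delicate point, and its resolution via the restriction to $n$ with $\P(|\bT^\mu|=n)>0$, is also accurate.
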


In particular, $\P(|\bT^\mu|=n)$ decreases more slowly than some polynomial in $n$.

\paragraph*{Bi-type simply generated trees}

We now define the bi-type analogue of MTSG trees, which we call bi-type simply generated trees (in short, BTSG). Such random bi-type trees notably appear in \cite{LGM10}.

Let $w^\circ, w^\bullet$ be two weight sequences, and impose that $w^\bullet_0=0$ and $w^\circ_0>0$.
For $T$ a bi-type tree, define the weight of $T$ as 
$$W_{\wb,\wn}(T) :=  \prod\limits_{x \in T, x \text{ white}} \wb_{k_x(T)} \times \prod\limits_{y \in T, y \text{ black}} \wn_{k_y(T)}.$$

An integer $n$ being fixed, a $(\wb, \wn)$-BTSG with $n$ white vertices is a random variable $\cT_n^{(\wb, \wn)}$, taking its values in the set $\kBT_n$ of bi-type rooted trees with $n$ white vertices, such that the probability that $\cT_n^{(\wb, \wn)}$ is equal to some bi-type tree $T \in \kBT_n$ is
\begin{align*}
\P\left( \cT_n^{(\wb, \wn)} = T \right) = \frac{1}{Z_{n,\wb,\wn}} W_{\wb,\wn}(T).
\end{align*}
Here, $Z_{n,\wb,\wn} = \sum_{T \in \kBT_n} W_{\wb,\wn}(T)$ is a renormalization constant (as usual, we shall restrict ourselves to the values of $n$ such that $Z_{n,\wb,\wn}>0$). Note that, since we impose the condition $w^\bullet_0=0$, the set $\{ T \in \kBT_n, W_{\wb,\wn}(T)>0 \}$ is finite at $n$ fixed and therefore $Z_{n,\wb,\wn}<\infty$. In addition, the leaves of $\cT_n^{(\wb, \wn)}$ are all white, and the number of black vertices in $\cT_n^{(\wb,\wn)}$ is at most $n-1$.

As in the monotype case, different couples $(\wb,\wn)$ may give the same BTSG.

\begin{lem}[Exponential tilting]
\label{lem:equivalence}
Take two sequences $\wb,\wn$ such that $\wn_0=0$ and $\wb_0>0$. Take $p,q,r,s \in \R_+^*$ such that $qr=1$, and define two new weight sequences $\twb,\twn$ as, for $i \in \Z_+$,
\begin{align*}
\twb_i=pq^i\wb_i, \twn_i=rs^i\wn_i.
\end{align*}
Then, for all $n\geq 1$, $\cT_n^{(\wb, \wn)}$ has the same distribution as $\cT_n^{(\twb, \twn)}$.
\end{lem}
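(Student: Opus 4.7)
The plan is to show that, for any bi-type tree $T \in \kBT_n$, the ratio $W_{\twb, \twn}(T) / W_{\wb, \wn}(T)$ depends only on $n$ and the parameters $p,q,r,s$, not on the structure of $T$. Once this is established, after renormalization by the partition functions $Z_{n,\wb,\wn}$ and $Z_{n,\twb,\twn}$, the two laws on $\kBT_n$ are equal.

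First, I would expand $W_{\twb, \twn}(T)$ using the definitions of $\twb, \twn$:
\begin{align*}
W_{\twb, \twn}(T) = p^{N^\circ(T)} \, r^{N^\bullet(T)} \, q^{\sum_{x \text{ white}} k_x(T)} \, s^{\sum_{y \text{ black}} k_y(T)} \, W_{\wb, \wn}(T).
\end{align*}

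The second step, which is the combinatorial heart of the argument, is to compute the two exponent sums using the bi-type structure. Since white vertices have only black children and conversely, $\sum_{x \text{ white}} k_x(T)$ counts exactly the number of black vertices that have a white parent; as the root is white and every black vertex has a parent (necessarily white), this sum equals $N^\bullet(T)$. Symmetrically, $\sum_{y \text{ black}} k_y(T)$ counts the white vertices whose parent is black, which are all white vertices except the root, giving $N^\circ(T)-1 = n-1$. Substituting and using $N^\circ(T)=n$ yields
\begin{align*}
W_{\twb, \twn}(T) = p^n \, s^{n-1} \, (qr)^{N^\bullet(T)} \, W_{\wb, \wn}(T) = p^n \, s^{n-1} \, W_{\wb, \wn}(T),
\end{align*}
where the last equality uses the hypothesis $qr=1$ to kill the $N^\bullet(T)$-dependence.

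Finally, summing this identity over $T \in \kBT_n$ gives $Z_{n,\twb,\twn} = p^n s^{n-1} Z_{n,\wb,\wn}$, and hence for every $T \in \kBT_n$,
\begin{align*}
\P\left( \cT_n^{(\twb, \twn)} = T \right) = \frac{W_{\twb, \twn}(T)}{Z_{n,\twb,\twn}} = \frac{W_{\wb, \wn}(T)}{Z_{n,\wb,\wn}} = \P\left( \cT_n^{(\wb, \wn)} = T \right),
\end{align*}
which is the claimed equality in distribution. There is no real obstacle here; the only subtle point is the bookkeeping of the two exponent sums, where one must be careful that the root is white and that $\wn_0=0$ ensures every black vertex has at least one white child (so the relations above are identities on $\kBT_n$, and no boundary correction is needed).
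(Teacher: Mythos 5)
Your proposal is correct and follows essentially the same route as the paper: expand the tilted weight, observe that the white outdegrees sum to $N^\bullet(T)$ and the black outdegrees sum to $n-1$, use $qr=1$ to cancel the $N^\bullet(T)$-dependence, and conclude that the partition functions absorb the remaining factor $p^n s^{n-1}$. (The only cosmetic remark: the two exponent identities are pure edge-counting by the color of the parent and hold for any bi-type tree, so the condition $\wn_0=0$ is not actually needed for that step.)
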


In this case, we say that $(\wb,\wn)$ and $(\twb,\twn)$ are two \textit{equivalent} couples of weight sequences (one easily checks that this indeed defines an equivalence relation on the set of couples of weight sequences $(\wb,\wn)$ such that $\wn_0=0$ and $\wb_0>0$).

\begin{proof}
Fix $n \geq 1$. Take $T$ a bi-type tree with $n$ white vertices, and denote by $k$ the number of black vertices in $T$. Then, remark that 

\begin{align*}
W_{\twb,\twn}(T) &= \prod\limits_{x \in T, \, x \text{ white}} \twb_{k_x(T)} \times \prod\limits_{y \in T, \, y \text{ black}} \twn_{k_y(T)}\\
&= \prod\limits_{x \in T, \, x \text{ white}} p q^{k_x(T)}\wb_{k_x(T)} \times \prod\limits_{y \in T, \, y \text{ black}} r s^{k_y(T)} \wn_{k_y(T)}\\
&= p^n q^k r^k s^{n-1} \prod\limits_{x \in T, \, x \text{ white}} \wb_{k_x(T)} \times \prod\limits_{y \in T, \, y \text{ black}} \wn_{k_y(T)}\\
&= p^n s^{n-1} W_{\wb, \wn}(T) \text{ since } qr=1.
\end{align*}
This implies in particular that $Z_{n,\twb,\twn} = p^n s^{n-1} Z_{n,\wb,\wn}$. Thus, for any tree $T \in \kBT_n$,
\begin{align*}
\P \left( \cT_n^{(\twb, \twn)}=T \right) = \P \left( \cT_n^{(\wb, \wn)}=T \right),
\end{align*}
which provides the result.
\end{proof}

\textbf{From now on, unless explicitly mentioned, the tree $\cT_n^{(\wb,\wn)}$ will always be considered as a labelled bi-type tree, whose black vertices are labelled uniformly at random from $1$ to $N^\bullet(\cT_n^{(\wb,\wn)})$.}

Finally, we end this section by stating a bi-type analogue of Theorem \ref{thm:cvlamgw}, in the case of a size-conditioned $(\mu_*,\nu)$-BTSG. Here, $\mu_*$ denotes the Poisson distribution of parameter $1$ (that is, for all $i \geq 0$, $(\mu_*)_i=e^{-1} i!^{-1}$), and $\nu$ is a probability distribution satisfying either one of the following two conditions:
\begin{itemize}
\item[(I)] There exists $\alpha \in (1,2)$ such that $\nu$ is in the domain of attraction of an $\alpha$-stable law.
\item[(II)] $\nu$ has finite variance $\sigma_\nu^2$ (in which case we recall that $\nu$ is in the domain of attraction of a $2$-stable law).
\end{itemize}

\begin{thm}
\label{thm:cvbitypelam}
Let $\nu$ be a probability law satisfying (I) or (II), and let $\wn$ be the weight sequence defined as $\wn_0=0$ and $\wn_i=\nu_i$ for $i \geq 1$. Let $(\tilde{B}_n)$ be a sequence satisfying \eqref{eq:tbn}. Then the following convergence holds in the space $\D(\R_+, \bCL(\oD)) \times \bCL(\oD)$.
\begin{itemize}
\item[(i)] In case (I),
\begin{align*}
\left( \left( \bL^\bullet_{c (1-\nu_0) \tilde{B}_n}\left(\cT_n^{(\mu_*, \wn)}\right) \right)_{0 \leq c < \infty}, \bL_\infty^\bullet\left(\cT_n^{(\mu_*, w^\bullet)}\right) \right) \underset{n \rightarrow \infty}{\overset{(d)}{\rightarrow}} \left( \left(\bL_c^{(\alpha)} \right)_{0 \leq c < \infty}, \bL_\infty^{(\alpha),1}\right).
\end{align*}

\item[(ii)] In case (II),
\begin{align*}
\left( \left( \bL^\bullet_{c (1-\nu_0) \tilde{B}_n}\left(\cT_n^{(\mu_*, \wn)}\right) \right)_{0 \leq c < \infty}, \bL_\infty^\bullet\left(\cT_n^{(\mu_*, \wn)}\right) \right) \underset{n \rightarrow \infty}{\overset{(d)}{\rightarrow}} \left( \left(\bL_c^{(2)} \right)_{0 \leq c < \infty}, \bL_\infty^{(2),p_\nu}\right),
\end{align*}
where $$p_\nu \coloneqq \frac{\sigma_\nu^2}{\sigma_\nu^2+1} \in [0,1).$$
\end{itemize}
\end{thm}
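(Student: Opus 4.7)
My plan is to reduce the statement to the monotype convergence of Theorem~\ref{thm:cvlamgw} applied to the white reduced tree, and then transfer the result back to the process of black faces. Applying Lemma~\ref{lem:equivalence} with $s=1$, $q=1-\nu_0$, $r=1/(1-\nu_0)$ and $p=e^{\nu_0}$, the pair $(\mu_*, w^\bullet)$ is equivalent to $(\tilde w^\circ, \tilde w^\bullet)$, where $\tilde w^\circ$ is Poisson$(1-\nu_0)$ and $\tilde w^\bullet=\nu(\cdot\mid \cdot\geq 1)$; both are probability distributions whose means multiply to $1$, so $\cT_n^{(\mu_*, w^\bullet)}$ has the law of a critical bi-type Galton-Watson tree conditioned on $n$ white vertices. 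Its white reduced tree $T^\circ:=(\cT_n^{(\mu_*, w^\bullet)})^\circ$ is then distributed as $\bT_n^\mu$, where $\mu$ is the compound-Poisson distribution with generating function $z\mapsto e^{G_\nu(z)-1}$ ($G_\nu$ being the PGF of $\nu$). Elementary computations give that $\mu$ is critical, lies in the same domain of attraction as $\nu$, satisfies $\P(\mu>k)\sim\P(\nu>k)$ in case (I) and $Var(\mu)=\sigma_\nu^2+1$ in case (II); in either case the scaling sequence $B_n^\mu$ associated with $\mu$ coincides with $\tilde B_n$. Theorem~\ref{thm:cvlamgw} then yields
\[
\bigl(\bL_{c\tilde B_n}(T^\circ)\bigr)_{c\in[0,\infty]} \overset{(d)}{\rightarrow} \bigl(\bL_c^{(\alpha)}\bigr)_{c\in[0,\infty]}.
\]

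Next I transfer this to the process $\bL^\bullet$ on $T:=\cT_n^{(\mu_*, w^\bullet)}$. The basic algebraic identity $\partial F_v=c_v(T)\cup\bigcup_{w\text{ child of }v} c_w(T)$ for every black vertex $v$ implies that the red skeleton of $\bL^\bullet_\infty(T)$ is exactly $\bL(T)$ and that every face of $\oD\setminus\bL(T)$ corresponds to a unique vertex $u$ of $T$, colored black in $\bL^\bullet_\infty(T)$ if and only if $u$ is black. Since there are $\approx(1-\nu_0)n$ black vertices, at time $c(1-\nu_0)\tilde B_n$ of the black process one has sampled a fraction $\approx c\tilde B_n/n$ of them, which exactly matches the fraction of white chords present in $T^\circ$ at time $c\tilde B_n$. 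A coupling based on the spinal decomposition of the bi-type tree then shows that the arrival times of all macroscopic chords of the red skeleton agree asymptotically between the two processes, which, combined with the previous display, gives the process-level convergence of the first coordinate.

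To identify the limiting coloring I distinguish the two regimes. In case (I), each macroscopic face of $\bL_\infty^{(\alpha)}$ comes from a vertex of $T$ with offspring of order $B_n$; the light Poisson tail of $\mu_*$ forces that vertex to be black, so the face is realized as some $F_v$ and colored black, giving $p=1$. In case (II), each triangle of the Brownian triangulation arises from a binary macroscopic branching at a white vertex $v\in T$ with exactly two macroscopic grandchildren: either both grandchildren share a common black parent $c$, and then the triangle coincides with $F_c$ in the limit and is colored black, or they come from two distinct black children of $v$ each reducing to a degenerate chord-face, in which case the interior of the triangle remains white. A direct moment computation gives $\E\bigl[\sum_{i=1}^{K} X_i(X_i-1)\bigr]=\sigma_\nu^2$ and $\E[N(N-1)]=G_\nu''(1)+1=\sigma_\nu^2+1$, and an exchangeability argument (the identity of the two macroscopic grandchildren is asymptotically uniform over all pairs of grandchildren of $v$) makes the asymptotic probability of the first configuration equal to
\[
\frac{\sigma_\nu^2}{\sigma_\nu^2+1}=p_\nu,
\]
which is the Bernoulli parameter of $\bL_\infty^{(2),p_\nu}$.

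The hardest part of the argument is the transfer described in the second paragraph, together with the identification of the limiting coloring as an asymptotically independent Bernoulli field in case (II). The chord $c_v(T)$ sits on a circle parameterized by $2|T|$ while $c_v(T^\circ)$ uses a parameterization by $2n$, and these differ by a random factor $|T|/n\to 2-\nu_0$, so the two lamination processes are not pointwise close before passing to the limit; controlling simultaneously the arrival times of all macroscopic chords of the black process and proving that distinct triangular faces become asymptotically independent Bernoulli$(p_\nu)$ will rely on a combined use of the spinal decomposition of the bi-type tree and of the explicit Poisson-intensity construction of $\bL_c^{(\alpha)}$ from $H^{(\alpha)}$ given in Section~\ref{ssec:codingminfac}.
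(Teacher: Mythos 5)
Your architecture matches the paper's: reduce to the white reduced tree (a conditioned $\mu$-GW tree with $\mu$ in the same domain of attraction, scaling $\tilde B_n$), apply Theorem~\ref{thm:cvlamgw}, transfer to the black process by a coupling, and identify the coloring parameter; your computation of $p_\nu$ via $\E\bigl[\sum_i X_i(X_i-1)\bigr]=\sigma_\nu^2$ and $\E[N(N-1)]=\sigma_\nu^2+1$ is the same size-biasing cancellation as in Proposition~\ref{prop:computation} and is correct. However, two steps that you defer or assert are precisely where the real work lies, and as written they are gaps. First, the transfer between the black and white processes is not a consequence of matching "fractions" of sampled vertices: you need a specific coupling of the two labellings (the paper labels black vertices through their heaviest white child), a quantitative statement that the label of a black vertex and of its distinguished white child differ by $o(\tilde B_n)$ after rescaling (a centred random-walk estimate), the proportionality of vertex counts in the three components determined by a vertex in $T$ versus $T^\circ$ (Lemma~\ref{lem:proportions}, which is what resolves the $|T|/n$ reparameterization issue you flag), and a check that no macroscopic white chord indexed by an \emph{unmarked} white vertex appears before time $\tilde B_n\log n$. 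None of these is supplied.

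Second, and more seriously, in case (II) the statement requires that the colors of distinct macroscopic faces be asymptotically i.i.d.\ Bernoulli$(p_\nu)$ \emph{and independent of the limiting red lamination}. Your exchangeability computation only gives the marginal probability that one fixed branching point codes a black face; it says nothing about joint independence across branching points, nor about independence from the shape of $\cT_n^\circ$. This is exactly the difficulty the paper's shuffling operation (Definition~\ref{def:epsilonshuffling} and Lemma~\ref{lem:propertiesrandom}) is designed to overcome: the joint degree distribution of the $\epsilon n$-branching points of a conditioned GW tree is not obviously independent of the tree's shape, and the claim that the degree of a uniform $\epsilon n$-branching point is size-biased by $\mu_k k(k-1)$ (your "the two macroscopic grandchildren are asymptotically uniform over all pairs") is itself a nontrivial lemma proved via Kesten's tree and local limit estimates (Lemma~\ref{lem:lienaveckesten}). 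Without an argument for these two points your proof establishes the one-dimensional marginal of the coloring but not the limit law $\bL_\infty^{(2),p_\nu}$.
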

The proof of this theorem, which is quite technical, is postponed to Section \ref{ssec:proof}. Studying this particular family of BTSG is of great interest in our case, since they code in some sense a $w$-minimal factorization of the $n$-cycle, as will be seen in Section \ref{sec:minfac}.

\begin{rk}
Although we state and prove Theorem \ref{thm:cvbitypelam} only in the specific case $w^\circ=\mu_*$ for its connection with minimal factorizations, this result holds in a more general framework. Specifically, let $\nu^\circ$ and $\nu^\bullet$ be two critical probability distributions, and $w^\bullet$ a weight sequence such that $w^\bullet_0=0$, whose critical equivalent is $\nu^\bullet$. Then, Theorem \ref{thm:cvbitypelam} still holds in the following more general cases (I') and (II'):
\begin{itemize}
\item[(I')] $\nu^\bullet$ is in the domain of attraction of an $\alpha$-stable law for $1<\alpha<2$, and $\nu^\circ$ has a finite moment of order $2+2\alpha$. This seemingly strange condition appears when one adapts the proof of Lemma \ref{lem:pnu<2}.
\item[(II')] both $\nu^\circ$ and $\nu^\bullet$ have finite variance. In this framework, $p_\nu$ shall be replaced by a parameter $p$ which depends on both $\nu^\circ$ and $\nu^\bullet$.
\end{itemize}
In these two cases, all further proofs can be easily adapted.
\end{rk}

\begin{rk}
Using the same tools as for the proof of Theorem \ref{thm:cvbitypelam}, one can in fact prove that the following slightly stronger convergence holds in the space $\D(\R_+, \bCL(\oD)) \times \D((0,1], \bCL(\oD))$:
\begin{itemize}
\item[(i)] In case (I),
\begin{align*}
\left( \left( \bL^\bullet_{c (1-\nu_0) \tilde{B}_n}\left(\cT_n^{(\mu_*, \wn)}\right) \right)_{0 \leq c < \infty}, \left(\bL_{dn}^\bullet\left(\cT_n^{(\mu_*, w^\bullet)}\right) \right)_{0 < d \leq 1} \right) \underset{n \rightarrow \infty}{\overset{(d)}{\rightarrow}} \left( \left(\bL_c^{(\alpha)} \right)_{0 \leq c < \infty}, \left( \bL_{\infty,d}^{(\alpha),1} \right)_{0 < d \leq 1} \right).
\end{align*}

\item[(ii)] In case (II),
\begin{align*}
\left( \left( \bL^\bullet_{c (1-\nu_0) \tilde{B}_n}\left(\cT_n^{(\mu_*, \wn)}\right) \right)_{0 \leq c < \infty}, \left( \bL_{dn}^\bullet\left(\cT_n^{(\mu_*, \wn)}\right) \right)_{0 < d \leq 1} \right) \underset{n \rightarrow \infty}{\overset{(d)}{\rightarrow}} \left( \left(\bL_c^{(2)} \right)_{0 \leq c < \infty}, \left( \bL_{\infty,d}^{(2),p_\nu} \right)_{0 < d \leq 1} \right),
\end{align*}
where $$p_\nu \coloneqq \frac{\sigma_\nu^2}{\sigma_\nu^2+1}.$$
\end{itemize}
Here, for $p \in [0,1]$ and $\alpha \in (1,2]$, the process $\left( \bL_{\infty,d}^{(\alpha),p} \right)_{0 < d \leq 1}$ that appears at the limit interpolates in a 'linear' way between the stable lamination $\bL_\infty^{(\alpha)}$ (for $d \downarrow 0$) and the colored stable lamination $\bL_\infty^{(\alpha),p}$ (for $d=1$). To construct this process, start from the stable lamination and sort its faces by decreasing area. Associate to the face $F_i$ labelled $i$ a couple of independent variables $(X_i,Y_i)$, all these couples $(X_i,Y_i)_{i \geq 1}$ being independent. For any $i$, the variable $X_i$ is binomial of parameter $p$, and determines whether the face $F_i$ is colored in $\bL_\infty^{(\alpha),p}$ or not. The variable $Y_i$ is uniform on $[0,1]$: if $X=1$ (that is, $F_i$ is colored at the limit) then $Y_i$ is the time at which it is colored in the process. More rigorously, for any $d \in (0,1]$, 
\begin{align*}
\bL_{\infty,d}^{(\alpha),1} \coloneqq \bL_\infty^{(\alpha)} \cup \bigcup_{\substack{i \geq 1 \\ X_i=1, Y_i \leq d}} F_i^\bullet
\end{align*}
where $F_i^\bullet$ denotes the face $F_i$ colored black. 
In words, we state that the faces that are colored in the limiting lamination $\bL_\infty^{(\alpha),p}$ in case (I) and (II) appear in the process at independent times $Y_i n$, where the $Y_i$'s are uniform on $(0,1]$. In some sense, this marks a second phase transition at scale $n$, in which large black faces begin to appear.
\end{rk}

\section{The particular case of $(\mu_*,w)$-bi-type trees.}
\label{sec:particulartrees}

This section is devoted to the study of particular properties of $(\mu_*,w)$-BTSG trees, where $\mu_*=Po(1)$ and $w$ is a weight sequence of stable type. Indeed, such trees appear as a natural coding of minimal factorizations of stable type, as we will see in Section \ref{ssec:image}.
In particular, we characterize the distribution of the associated white reduced tree, and use it to prove Theorem \ref{thm:cvbitypelam}. \textbf{From now on, as there is no ambiguity, we write $\cT_n$ instead of $\cT_n^{(\mu_*,w)}$, and $\cT_n^\circ$ instead of $\cT_n^{\circ, (\mu_*,w)}$.}

\subsection{Reachable distributions: a study of the white reduced tree.}

Recall that, a weight sequence $(w_i)_{i \geq 1}$ being given, there exists at most one critical probability distribution $\nu$ called the critical equivalent of $w$ such that, for some $s>0$, for all $i \geq 1$, $\nu_i = w_i s^i$. If it is the case, the white reduced tree $\cT_n^\circ$ is distributed as a Galton-Watson tree. The goal of this section is to investigate the possible behaviours of its offspring distribution, and notably for which sequences $w$ this white reduced tree converges to a stable tree. Indeed, in this case, the white process of $\cT_n$ converges by Theorem \ref{thm:cvlamgw}, which helps us prove the convergence of the black process of $\cT_n$.

In the rest of the paper, in the case of a $(\mu_*,w)$-BTSG tree, $\nu$ will always denote the critical equivalent of $w$, and $\mu$ the critical distribution such that $\cT_n^\circ \overset{(d)}{=} \bT_n^\mu$.

Let us state things formally. To a weight sequence $w$, we associate its generating function $F_w: x \rightarrow \sum\limits_{i = 0}^\infty w_i x^i$. In particular, $F_{\mu_*}(x) \coloneqq e^{x-1}$ is defined for any $x \in \R$. It is a simple matter of fact that the white reduced tree $\cT_n^{\circ}$ is distributed as the monotype simply generated tree $\bT_n^{\tilde{w}}$, where $\tilde{w}$ is the weight sequence whose generating function satisfies 
$$F_{\tilde{w}} = F_{\mu_*} \circ F_w := e^{F_w-1}.$$

We say that a critical probability distribution $\mu$ is \textit{reachable} if there exists a weight sequence $(w_i)_{i \geq 1}$ such that, for all $n \geq 1$, $\cT_n^{\circ}$ is a $\mu$-Galton-Watson tree conditioned to have $n$ vertices. In this case, we say that $w$ \textit{reaches} $\mu$. The following theorem states that a large range of distributions are reachable:

\begin{thm}
\label{thm:reachabledistributions}
Let $\alpha \in (1,2)$ and $L$ a slowly varying function. Then, there exists a reachable critical distribution $\mu$ such that
\begin{equation}
\label{eq:stable}
F_\mu(1-u)-(1-u) \underset{\substack{u \downarrow 0 \\ u \neq 0}}{\sim} u^\alpha L\left(u^{-1}\right).
\end{equation}

Let $\alpha=2$ and $\ell>0$. Then there exists a reachable critical distribution $\mu$  such that
\begin{equation}
\label{eq:vf}
F_\mu(1-u)-(1-u) \underset{\substack{u \downarrow 0 \\ u \neq 0}}{\rightarrow} \ell
\end{equation}
if and only if $\ell \geq 1/2$. This is equivalent to saying that $\mu$ has finite variance $2\ell$.

Furthermore, let $w$ be a weight sequence that reaches a critical probability distribution $\mu$. Then the two following points are equivalent:
\begin{itemize}
\item $\mu$ verifies \eqref{eq:stable} or \eqref{eq:vf}.
\item the critical equivalent $\nu$ of $w$ satisfies:
\begin{align*}
F_\nu(1-u)-(1-u) \underset{\substack{u \rightarrow 0 \\ u \neq 0}}{\sim}
\begin{cases}\displaystyle 
 \quad  u^\alpha L\left(u^{-1}\right) \text{ if } \alpha \in (1,2) \\
\displaystyle \quad   u^2 \left( \ell-1/2 \right) \text{ if } \mu \text{ has finite variance.}
\end{cases}  
\end{align*}
\end{itemize}
\end{thm}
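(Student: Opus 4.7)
The plan is to reduce the entire theorem to a single generating-function identity relating $\mu$ and $\nu$. Since $\cT_n^\circ$ is distributed as $\bT_n^{\tilde w}$ with $F_{\tilde w} = e^{F_w - 1}$, if $w$ reaches $\mu$ then by Lemma~\ref{lem:monotypeequiv} we must have $\tilde w_i = q s^i \mu_i$ for some $q, s > 0$, so $F_\mu(x) = e^{F_w(sx) - F_w(s)}$ with $s F_w'(s) = 1$. Writing $F_\nu(x) = \nu_0 + F_w(sx)$ and using $F_\nu(1) = 1$ (whence $F_w(s) = 1 - \nu_0$) gives the clean identity
\[
F_\mu(x) = e^{F_\nu(x) - 1}.
\]

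Armed with this, I would handle the existence claims constructively. For any critical probability distribution $\nu$ on $\Z_+$ with at least one positive mass on $\N^*$, setting $w_i := \nu_i$ for $i \geq 1$ makes $\nu$ the critical equivalent of $w$ with scale $s = 1$, and the identity above shows that the associated $\mu$ is reachable. It is then enough to exhibit a critical probability distribution $\nu$ with the prescribed tail behaviour: for $\alpha \in (1,2)$ and any slowly varying $L$, a critical $\nu$ in the $\alpha$-stable domain with $F_\nu(1-u) - (1-u) \sim u^\alpha L(u^{-1})$ is standard to construct; for $\alpha = 2$ and any $\ell \geq 1/2$, a critical $\nu$ with variance $2\ell - 1 \geq 0$ works (for instance $\nu = \delta_1$ when $\ell = 1/2$, and explicit mixtures such as $(\ell - \tfrac{1}{2})(\delta_0 + \delta_2) + (2 - 2\ell)\delta_1$ for $\ell \in (\tfrac{1}{2}, 1]$, with obvious extensions for larger $\ell$).

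For the threshold $\ell \geq 1/2$ in the $\alpha = 2$ case, I would differentiate $F_\mu = e^{F_\nu - 1}$ twice at $x = 1$ to obtain $\sigma_\mu^2 = 1 + \sigma_\nu^2 \geq 1$, which forces $\ell = \sigma_\mu^2/2 \geq 1/2$ for every reachable $\mu$. The asymptotic equivalence between $\mu$ and $\nu$ stated at the end of the theorem then follows from a second-order Taylor expansion: setting $B(u) := F_\nu(1-u) - (1-u)$, one has $F_\mu(1-u) = e^{-u + B(u)}$, and expanding gives
\[
F_\mu(1 - u) - (1 - u) = B(u) + \tfrac{1}{2}u^2 + O\bigl(u B(u) + B(u)^2\bigr).
\]
When $\alpha \in (1,2)$, $B(u) \asymp u^\alpha L(u^{-1})$ dominates $u^2$ (since $\alpha < 2$) and also dominates the error terms, so $F_\mu(1-u) - (1-u) \sim B(u)$ and conversely; when $\nu$ has finite variance, $B(u) \sim (\ell - \tfrac{1}{2}) u^2$ combines with the $u^2/2$ term to give the desired $\ell u^2$. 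The one technical point—and the main, though still routine, obstacle—is verifying that the $u^2$ correction and the error terms $u B(u)$ and $B(u)^2$ are genuinely negligible against $u^\alpha L(u^{-1})$ in the $\alpha<2$ regime; this reduces to standard estimates on regularly varying functions.
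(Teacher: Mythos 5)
Your proposal is correct and follows essentially the same route as the paper: reduce everything to the identity $F_\mu(x)=e^{F_\nu(x)-1}$ (the paper phrases it as $F_\mu(x)=e^{F_w(sx)-F_w(s)}$ via its Proposition on equivalent weight sequences), construct $\mu$ from a critical $\nu$ with the prescribed tail by taking $w_i=\nu_i$ for $i\geq 1$, obtain the threshold $\ell\geq 1/2$ from $\sigma_\mu^2=\sigma_\nu^2+1$ by differentiating twice at $x=1$, and get the asymptotic correspondence from the second-order expansion of the exponential. The only cosmetic difference is that the paper works with $F_w$ rather than introducing $B(u)$ explicitly; the estimates you flag as the "main obstacle" (that $u^2$, $uB(u)$ and $B(u)^2$ are $o(u^\alpha L(u^{-1}))$ for $\alpha<2$) are exactly the Potter-bound checks the paper performs implicitly.
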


In other terms, let $(B_n)_{n \geq 1}$ be a sequence of positive numbers satisfying \eqref{eq:Bn} for $\nu$, and $(\tilde{B}_n)_{n \geq 1}$ be the sequence constructed from $(B_n)_{n \geq 1}$ as in \eqref{eq:tbn}. Then, $(\tilde{B}_n)$ satisfies \eqref{eq:Bn} for $\mu$. This relation between both is the reason of the scaling in $\tilde{B}_n$ appearing in both Theorems \ref{thm:mainresult} and \ref{thm:cvbitypelam}.

Theorem \ref{thm:reachabledistributions} also means that the only way to reach a distribution in the domain of attraction of a stable law is to start from a weight sequence whose critical equivalent is already in the domain of attraction of a stable law.

Theorem \ref{thm:reachabledistributions} is the consequence of a general result about reachable distributions, which may be of independent interest: a reachable $\mu$ being given, all weight sequences that reach it are closely related.

\begin{prop}
\label{prop:equiv}
Let $\mu$ be reachable and $w$ a weight sequence reaching $\mu$. Then, for any weight sequence $w'$, $w'$ reaches $\mu$ if and only if there exists $s,t>0$ such that
\begin{align*}
F_{w'}(tx) - F_{w'}(t) = F_w(sx) - F_w(s).
\end{align*}
\end{prop}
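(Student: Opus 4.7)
The plan is to translate the claim, via the identity $\mathcal{T}_n^\circ \overset{(d)}{=} \mathbb{T}_n^{\tilde w}$ with $F_{\tilde w} = e^{F_w - 1}$, into an equivalent statement about monotype simply generated trees and then invoke Lemma \ref{lem:monotypeequiv}. By definition, $w$ reaches $\mu$ iff $\mathbb{T}_n^{\tilde w} \overset{(d)}{=} \mathbb{T}_n^\mu$ for every $n\geq1$, and by Lemma \ref{lem:monotypeequiv} this happens iff there exist $q,s>0$ such that $\tilde w_i = q s^i \mu_i$ for all $i\geq 0$. At the level of generating functions,
\[
F_{\tilde w}(x) = q F_\mu(sx) \quad \Longleftrightarrow \quad F_w(x) = \log F_\mu(sx) + C
\]
for some $s>0$ and some constant $C\in\R$. (Note that $\tilde w_0 = e^{w_0-1}>0$ forces $\mu_0>0$, so $\log F_\mu$ is well defined in a neighbourhood of $0$.)

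For the forward implication, suppose both $w$ and $w'$ reach $\mu$. The previous characterization yields $s_0,s_0'>0$ and constants $C,C'\in\R$ with $F_w(u)=\log F_\mu(s_0 u)+C$ and $F_{w'}(u)=\log F_\mu(s_0' u)+C'$. For any $a,b>0$,
\[
F_w(au)-F_w(a)=\log F_\mu(s_0 a u)-\log F_\mu(s_0 a),\qquad F_{w'}(bu)-F_{w'}(b)=\log F_\mu(s_0' b u)-\log F_\mu(s_0' b),
\]
which coincide for all $u$ precisely when $s_0 a = s_0' b$. Choosing $s=s_0'$ and $t=s_0$ yields the identity $F_{w'}(tx)-F_{w'}(t)=F_w(sx)-F_w(s)$.

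For the converse, assume $w$ reaches $\mu$ and that $F_{w'}(tx)-F_{w'}(t)=F_w(sx)-F_w(s)$ holds for some $s,t>0$. Using $F_w(u)=\log F_\mu(s_0 u)+C$, the right-hand side equals $\log F_\mu(s_0 s x)-\log F_\mu(s_0 s)$, so the substitution $y=tx$ gives $F_{w'}(y)=\log F_\mu((s_0 s/t)\,y)+D$ for some constant $D\in\R$. Exponentiating, $F_{\tilde w'}(y)=e^{D-1}F_\mu((s_0 s/t)\,y)$, i.e.\ $\tilde w'_i = e^{D-1}(s_0 s/t)^i \mu_i$. Lemma \ref{lem:monotypeequiv} then gives $\mathbb{T}_n^{\tilde w'}\overset{(d)}{=}\mathbb{T}_n^\mu$ for every $n$, so $w'$ reaches $\mu$.

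The whole argument is essentially a bookkeeping exercise once the reduction to the monotype setting is made; the only real ingredient is the equivalence Lemma \ref{lem:monotypeequiv} combined with the defining relation $F_{\tilde w}=e^{F_w-1}$, which turns the multiplicative exponential tilting of $\tilde w$ into an additive tilting of $F_w$ relative to $\log F_\mu(s\,\cdot\,)$. The main thing to be careful about is matching the free scalars: reaching $\mu$ involves two free constants $(q,s)$, while the statement involves two free constants $(s,t)$, and the correspondence between them is the content of the manipulation above. No technical obstacle arises beyond this matching.
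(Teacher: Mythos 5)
Your proof is correct and follows essentially the same route as the paper: reduce to the monotype setting via $F_{\tilde w}=e^{F_w-1}$, apply Lemma \ref{lem:monotypeequiv} to get $F_\mu(x)=e^{F_w(sx)-F_w(s)}$, and conclude. You are in fact somewhat more explicit than the paper, which leaves the converse implication to the reader after establishing this identity.
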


In particular, the set of weight sequences reaching $\mu$ can be written $\{ w^{(s)}, s \in \R_+^* \}$ defined as: for any $s>0$, any $i \geq 1$, $w^{(s)}_i := w_i s^i$.

\begin{proof}
Let $w$ be a weight sequence reaching $\mu$, and let $\tilde{w}$ be the weight sequence such that $F_{\tilde{w}} = e^{F_w-1}$. Then $\tilde{w}$ shall satisfy for some $q,s>0$, by Lemma \ref{lem:monotypeequiv},

$$\forall x \in [-1,1], F_\mu(x) = q F_{\tilde{w}}(sx) = q e^{F_w(sx)-1}.$$
Applying this for $x=1$, one gets $1=q e^{F_w(s)-1}$, which finally gives:

\begin{align*}
\forall x \in [-1,1], F_\mu(x) = e^{F_w(sx)-F_w(s)}.
\end{align*}
The result directly follows.
\end{proof}

Let us see how it implies Theorem \ref{thm:reachabledistributions}:

\begin{proof}[Proof of Theorem \ref{thm:reachabledistributions}]
We start by proving the first part of this theorem. When $\alpha<2$, one can define $\nu$ a critical distribution satisfying $\nu_k = k^{-1-\alpha} L(k)$, for $k$ large enough. Thus, $F_\nu(1-u)-(1-u) \sim u^\alpha (L(u^{-1}))$ as $u \rightarrow 0$, by e.g. \cite[Theorem $8.1.6$]{BGT89}. Define now the weight sequence $w$ by $w_0=0$ and $w_i=\nu_i$ for $i \geq 1$. In particular, $F_w(1-u)-F_w(1)=F_\nu(1-u)-F_\nu(1)=-u+u^\alpha L(u^{-1}) (1+o(1))$, and $F_w'(1)=1$. Then, one can check that the probability law $\mu$ such that $F_\mu(x)=e^{F_w(x)-F_w(1)}$ is reached by $w$ and is critical. One gets in addition:
\begin{align*}
F_\mu(1-u) = e^{F_w(1-u)-F_w(1)} &= e^{F_\nu(1-u)-1}\\
&= F_\nu(1-u) + \frac{1}{2} \left( (F_\nu(1-u)-1)^2 \right)\\
&= 1-u + u^\alpha L\left( u^{-1} \right) + o(u^\alpha),
\end{align*}
which implies the first part of Theorem \ref{thm:reachabledistributions}.

When $\alpha=2$ and $L(x) \underset{x \rightarrow \infty}{\rightarrow} \ell \geq 1/2$, choose any critical distribution $\nu$ with variance $2\ell-1$ and construct $w$ from $\nu$ the same way. This leads to
\begin{align*}
F_\mu(1-u) =1-u+u^2 \left(\ell-\frac{1}{2} \right) + \frac{u^2}{2} + o(u^2).
\end{align*}

In a second time, we shall prove that any critical reachable distribution has variance greater than $1$. Let $\mu$ be reachable, and take a weight sequence $w$ reaching $\mu$. Then, there exists $s>0$ such that, for all $x \in (-1,1)$, $F_\mu(x)=e^{F_w(sx)-F_w(s)}$. After differentiating once and applying at $x=1$, one gets 
\begin{equation}
\label{eq:expectation}
1=F_\mu'(1)=sF'_w(s).
\end{equation} 
By differentiating twice, one gets, for any $x \in (-1,1)$,
\begin{equation}
\label{eq:variance}
F_\mu''(x) = s^2 \left(F_w''(sx) + \left( F_w'(sx)\right)^2 \right) e^{F_w(sx)-F_w(s)}.
\end{equation}
Assume now that $\mu$ has finite variance $\sigma_\mu^2$. Since $\mu$ is critical, $\sigma^2_\mu=F_\mu''(1)$. Letting $x$ go to $1$ in \eqref{eq:variance}, we get by \eqref{eq:expectation} that $\sigma_\mu^2=s^2 F''_w(s)+1 \geq 1$.

The second part is just a consequence of Lemma \ref{prop:equiv} and the construction of suitable weight sequences in the beginning of this proof.
\end{proof}

Finally, we give a simple criterion for a distribution to be reachable, which may be of independent interest.

\begin{prop}
\label{prop:logcondition}
Let $\mu$ be a critical distribution on $\Z_+$. Then, the following statements are equivalent :
\begin{itemize}
\item[(i)] $\mu$ is reachable
\item[(ii)] All successive derivatives of $\log F_\mu$ at $0$ are nonnegative.
\end{itemize}
\end{prop}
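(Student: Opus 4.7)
The starting point is the reformulation of reachability already visible in the proof of Proposition \ref{prop:equiv}: a weight sequence $w$ (with $w_0=0$, $w_i\geq 0$) reaches $\mu$ if and only if there exists $s>0$ such that
\begin{align*}
\log F_\mu(x) = F_w(sx)-F_w(s)
\end{align*}
for $x$ in a neighbourhood of $0$. Denote $G(x)\coloneqq \log F_\mu(x)$, which is analytic on a neighbourhood of $0$ as soon as $\mu_0>0$ (and one first observes that reachability forces $\mu_0>0$, since otherwise $\bT_n^\mu$ is not defined for any $n$).

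For the direction (i)$\Rightarrow$(ii), I would simply differentiate this identity $k$ times at $x=0$: for every $k\geq 1$,
\begin{align*}
G^{(k)}(0) = s^k F_w^{(k)}(0) = s^k\, k!\, w_k \geq 0,
\end{align*}
which is exactly condition (ii).

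For the reverse implication (ii)$\Rightarrow$(i), the natural candidate is to set $s=1$ and $w_k\coloneqq G^{(k)}(0)/k!$ for $k\geq 1$ (and $w_0=0$). By hypothesis each $w_k$ is nonnegative, so it only remains to check that $F_w(1)<\infty$ and that the identity $F_\mu(x)=e^{F_w(x)-F_w(1)}$ holds on $[0,1]$. The key observation is that the function $h(x)\coloneqq G(x)-G(0)=\sum_{k\geq 1} w_k x^k$ has nonnegative Taylor coefficients, so it is nondecreasing on the real intersection of its disk of convergence with $[0,\infty)$. Since $G(x)\to G(1)=0$ as $x\uparrow 1$, we get $h(x)\uparrow -\log\mu_0<\infty$, and Abel's theorem yields $\sum_{k\geq 1} w_k = -\log\mu_0$. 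Thus $F_w$ is well-defined on $[0,1]$ with $F_w(1)=-\log\mu_0$, and by construction $F_w(x)-F_w(1)=G(x)-G(0)+\log\mu_0=G(x)$, i.e.\ $F_\mu(x)=e^{F_w(x)-F_w(1)}$. This is exactly the condition for $w$ to reach $\mu$, which concludes.

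The main (very mild) subtlety is the convergence step at $x=1$: the derivatives of $G$ at $0$ only give information on the Taylor coefficients, not on summability up to the boundary. The sign assumption together with Abel's theorem is precisely what bridges this gap; this is really the only non-formal input of the proof.
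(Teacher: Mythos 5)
Your proof is correct and follows essentially the same route as the paper's: both directions reduce reachability to the identity $F_\mu(x)=e^{F_w(sx)-F_w(s)}$ via Proposition \ref{prop:equiv}, with (i)$\Rightarrow$(ii) by differentiating at $0$ and (ii)$\Rightarrow$(i) by taking $w_k$ to be the normalized Taylor coefficients of $\log F_\mu$. The only difference is that you carefully justify the summability of $\sum_k w_k$ at $x=1$ via monotonicity and Abel's theorem, a boundary detail the paper's proof leaves implicit.
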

\begin{proof}
By Lemma \ref{prop:equiv}, $\mu$ is reachable if and only if there exists a weight sequence $w$ and $s >0$ such that $F_\mu(x) = e^{F_w(sx)-F_w(s)}$ on $(-1,1)$, i.e. $F_w(sx)=F_w(s)+\log F_\mu(x)$ on this interval. All $w_i$'s are nonnegative, which proves that (i) $\Rightarrow$ (ii). Now assume (ii) and denote by $v_i$ the $i$th derivative of $\log F_\mu$ at $0$. Then, the weight sequence $(w_i)_{i \geq 0}$ defined by $w_i := v_i (i!)^{-1}$ for $i \geq 1$ and $w_0=0$ satisfies $F_\mu=e^{F_w-F_w(1)}$ on $(-1,1)$. Therefore, $w$ reaches $\mu$.
\end{proof}

As a consequence of Proposition \ref{prop:logcondition}, for any $\alpha \in (1,2)$, any slowly varying function $L$, there exists a probability distribution $\mu$ verifying \eqref{eq:stable}, such that all successive derivatives of $F_\mu$ at $0$ are nonnegative. For any $\ell \geq 1$, there exists $\mu$ with variance $\ell$, verifying \eqref{eq:vf}, such that all successive derivatives of $F_\mu$ at $0$ are nonnegative.

\subsection{Compared counting of the vertices in the tree $\cT_n$ and the white reduced tree $\cT_n^{\circ}$}
\label{ssec:relationtreewhitereducedtree}

Before we prove Theorem \ref{thm:cvbitypelam} in the next subsection, we gather results concerning the number of black vertices in different connected components of the tree $\cT_n$, comparing them to the number of white vertices in these connected components. It appears that these quantities are asymptotically proportional, the proportionality constant being the average number of black children of a white vertex. Let us state things properly:

\begin{lem}[Number of black vertices in a BTSG]
\label{lem:nbofcycles}
Let $w$ be a weight sequence of $\alpha$-stable type for some $\alpha \in (1,2]$, and $\nu$ be its critical equivalent. Then, as $n \rightarrow \infty$,
\begin{align*}
\frac{1}{n} N^\bullet\left(\cT_n \right) \overset{\P}{\rightarrow} 1-\nu_0.
\end{align*}
\end{lem}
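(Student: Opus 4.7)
The plan is to reduce the lemma to a concentration statement about a critical two-type Galton--Watson tree (via exponential tilting) and then decompose through the white reduced tree $\cT_n^\circ$.

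First I would apply Lemma \ref{lem:equivalence} with $s=1$; this is legitimate because the critical equivalent $\nu$ satisfies $F_\nu'(1)=1$. Setting $q=1-\nu_0$, $r=1/(1-\nu_0)$ and $p=e^{\nu_0}$ produces the equivalent weight sequences $\twb = \mathrm{Po}(1-\nu_0)$ and $\twn_i = \nu_i/(1-\nu_0)$ for $i \geq 1$, both honest probability distributions on $\Z_+$ and $\Z_+^*$ respectively. The two-step offspring mean $(1-\nu_0)\cdot 1/(1-\nu_0)=1$ shows that $\cT_n$ can be viewed as a critical two-type Galton--Watson tree conditioned on having $n$ white vertices.

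Next I would condition on the white reduced tree $\cT_n^\circ$, which has the law of $\bT_n^\mu$ with $F_\mu = e^{F_\nu - 1}$ by Theorem \ref{thm:reachabledistributions}. Given $\cT_n^\circ$ with degree sequence $(d_x)$, the numbers $(k_x^\bullet)$ of black children at distinct white vertices are conditionally independent. Poisson thinning yields an explicit conditional law: if $\xi_w \sim \mathrm{Po}(1-\nu_0)$ has iid offspring $\tilde\xi^{(i)} \sim \twn$, then the counts $N_j := \#\{i \leq \xi_w : \tilde\xi^{(i)}=j\}$ are independent Poisson with parameter $\nu_j$, so that $k_x^\bullet$ given $d_x = d$ has the law of $\sum_{j \geq 1} N_j$ conditional on $\sum_{j \geq 1} j N_j = d$. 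Crucially, the unconditional law of $S := \sum_j j N_j$ has probability generating function $\prod_j e^{\nu_j(z^j - 1)} = e^{F_\nu(z) - 1} = F_\mu(z)$, so $S$ is distributed according to $\mu$, consistently with the marginal distribution of $d_x$ in $\bT_n^\mu$.

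Finally, setting $f(d) := \E[\sum_j N_j \mid S = d]$, the identities $\E_\mu[f(S)] = \E[\sum_j N_j] = 1 - \nu_0$ and $\E_\mu[S] = 1$ mean that $f$ averages under $\mu$ to $1-\nu_0$. The empirical law of large numbers for degrees in $\bT_n^\mu$ (a consequence of Theorem \ref{thm:cvcontour}) combined with the trivial bound $f(d) \leq d$ yields $\frac{1}{n}\E[N^\bullet(\cT_n) \mid \cT_n^\circ] = \frac{1}{n}\sum_x f(d_x) \to 1 - \nu_0$ in probability. For the conditional variance, the conditional independence of the $k_x^\bullet$'s together with $\mathrm{Var}(k_x^\bullet \mid d_x) \leq d_x^2$ and the analogue of Proposition \ref{prop:largestcycle} for $\bT_n^\mu$ (giving $\max_x d_x = o(n)$ with high probability) imply $\mathrm{Var}(N^\bullet \mid \cT_n^\circ) \leq \max_x d_x \cdot (n-1) = o(n^2)$, and Chebyshev concludes.

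The main obstacle is transferring the averaged identity $\E_\mu[f(S)] = 1-\nu_0$ into the concentration $\frac{1}{n}\sum_x f(d_x) \to 1-\nu_0$ in probability: since $f(d)$ is only $O(d)$, this requires a uniform integrability argument, which I would obtain by splitting at a cutoff $d_0 \to \infty$ and using the deterministic identity $\sum_x d_x = n-1$ to control the contribution of vertices of large degree. The heavy-tailed case $\alpha<2$ demands particular care because a few vertices of very large degree can carry a non-negligible share of $\sum_x d_x$; however $f(d) \leq d$ keeps their contribution to $f$ dominated by the same edge-counting function.
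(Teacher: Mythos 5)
Your proof is correct and follows essentially the same route as the paper's: both condition on the white reduced tree $\bT_n^\mu$, exploit the conditional independence of the black-children counts given the white-grandchildren counts, identify the mean $1-\nu_0$ through the tilted bi-type Galton--Watson representation $\bigl(\mathrm{Po}(1-\nu_0),\,(\nu_i/(1-\nu_0))_{i\geq 1}\bigr)$ of Lemma \ref{lem:equivalence}, and control the heavy tail of the degree distribution via the deterministic identity $\sum_x d_x = n-1$ together with the criticality of $\mu$. Your Poisson-thinning description of the conditional law (independent $N_j\sim\mathrm{Po}(\nu_j)$ with $\sum_j jN_j$ conditioned to equal $d$) makes the identity $\E_\mu[f(S)]=1-\nu_0$ immediate where the paper computes it more indirectly, and your Chebyshev bound with $\max_x d_x = o(n)$ replaces the paper's appeal to a central limit theorem and to the asymptotic normality of the degree counts, but these are streamlinings of the same argument rather than a genuinely different proof.
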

As we will see in the next section, this straightforwardly implies Lemma \ref{lem:intronbcycles}.

\begin{proof}
The idea of the proof is to split the set of black vertices in the tree according to the number of white grandchildren of their parents. Let $N_k^{n,\circ}$ be the number of white vertices in $\cT_n$ that have exactly $k$ white grandchildren. Then, observe two things:
(i) for any fixed $K \in \Z_+$, jointly for $k \in \llbracket 1,K \rrbracket$, we have with high probability
\begin{equation}
\label{eq:nkncirc}
\left| N_k^{n,\circ} - n \mu_k \right| \leq n^{3/4};
\end{equation} 
(ii) conditionally to the fact that a white vertex has $k$ white grandchildren, its number of black children is independent of the rest of the tree, and is distributed as a variable $X_k$ verifying $X_0=0$ almost surely and $1 \leq X_k \leq k$ for all $k \geq 1$.

Indeed, (i) is a consequence of the joint asymptotic normality of the quantities $N_k^{n,\circ}$ (see e.g. \cite[Theorem $6.2$ (iii)]{The18}), while (ii) is clear by definition of the BTSG. Let us see how it implies our result. Fix $\epsilon>0$, and $K\geq 1$ such that $\sum_{k=1}^K k \mu_k \geq 1-\epsilon$. Such a $K$ exists by criticality of $\mu$. By (i) and (ii), a central limit theorem on the variables $X_k,k \leq K$ gives that, with high probability, jointly for any $0 \leq k \leq K$, 
\begin{equation}
\label{eq:nkbullet}
|N_k^{n,\bullet}-n\mu_k \E[X_k]|\leq n^{4/5},
\end{equation}
where $N_k^{n,\bullet}$ denotes the number of black vertices in the tree whose parent has $k$ white grandchildren.
On the other hand, a white vertex $u$ being given, its number of black children is necessarily less than its number of white grandchildren. Thus we get that the total number of black vertices in the tree whose parent has at least $K+1$ white grandchildren satisfies

\begin{align*}
\sum_{k \geq K+1} N_k^{n,\bullet} \leq \sum_{k \geq K+1} k N_k^{n,\circ} = (n-1) - \sum_{k =0}^K k N_k^{n,\circ},
\end{align*}
as $\sum_{k \in \Z_+} k N_k^{n,\circ}$ is the number of white grandchildren in the tree, which is equal to $n-1$ (only the root is not a grandchild of any white vertex). Therefore, applying \eqref{eq:nkncirc} to each $k \leq K$, we get that $\sum_{k \geq K+1} N_k^{n,\bullet} \leq \epsilon n + (K+1) n^{3/4}$ with high probability. Finally, using \eqref{eq:nkbullet}, as $n \rightarrow \infty$,
\begin{align*}
\P \left( \left|\frac{N^\bullet\left(\cT_n\right)}{n} - \sum_{k \in \Z_+} \mu_k \E[X_k] \right| \geq 2 \epsilon \right) \rightarrow 0.
\end{align*}

The only thing left to prove is that
\begin{equation}
\label{eq:Anucomputation}
\sum_{k \geq 0} \, \mu_k \, \E\left[ X_k \right] = 1-\nu_0.
\end{equation}

To this end, we see the tree $\cT_n$ as a bi-type Galton-Watson tree. We define two probability measures $\mub, \mun$ as follows:
\begin{align}
\label{eq:defmubmun}
&\forall i \geq 0, \mub_i = \mu^*_i \, e^{\nu_0} \, (1-\nu_0)^i \nonumber \\
&\mun_0=0 \text{ and } \, \forall i \geq 1, \mun_i = (1-\nu_0)^{-1} \nu_i,
\end{align}
One easily checks that these measures have total mass $1$. A quantity of particular interest is the mean of $\mub$:
\begin{equation}
\label{eq:meanmub}
\sum_{j \geq 1} j \, \mub_j = \sum_{j \geq 1} j \, \mu^*_j \, e^{\nu_0} \, (1-\nu_0)^j = e^{\nu_0-1} \, \sum_{j \geq 1} j \, \frac{(1-\nu_0)^j}{j!} = 1-\nu_0.
\end{equation}

Furthermore, by Lemma \ref{lem:equivalence}, for any $n \geq 1$, $\cT_n^{(\mub, \, \mun)} \overset{(d)}{=} \cT_n^{(\mu^*,w)}$. We can therefore write:
\begin{align*}
\sum_{k \geq 0} \, \mu_k \, \E\left[ X_k \right]  &= \sum_{k \geq 0} \, \mu_k \, \sum_{j \geq 1} j \, \P \left( k_\emptyset\left(\cT_n^{(\mub, \, \mun)}\right)=j \big| k_\emptyset\left(\cT_n^{\circ,(\mub, \, \mun)}\right)=k \right).
\end{align*}
Indeed, by definition, the variable $X_k$ is distributed as the number of black children of $\emptyset$ (or any other white vertex) conditionally to the fact that $\emptyset$ has $k$ white grandchildren.
Now remark that, since $\mub$ and $\mun$ are probability measures, one can define the bi-type Galton-Watson tree $\cT^{(\mub, \, \mun)}$ as in the monotype case, as the random variable on the set of finite bi-type trees satisfying, for any bi-type tree $T$:
\begin{align*}
\P \left( \cT^{(\mub, \, \mun)} = T \right) = \prod\limits_{x \in T, x \text{ white}} \mub_{k_x(T)} \times \prod\limits_{y \in T, y \text{ black}} \mun_{k_y(T)}.
\end{align*}
In particular, the BTSG $\cT_n^{(\mub, \, \mun)}$ is distributed as the tree $\cT^{(\mub, \, \mun)}$ conditioned to have $n$ white vertices. Now recall that $\mu$ is the critical distribution such that $\cT_n^\circ\overset{(d)}{=} \bT_n^\mu$ for all $n \geq 1$. Notably, for all $k$, $\mu_k=\P(k_\emptyset(\cT^{\circ,(\mub, \, \mun)})=k)$. Thus, using the fact that, conditionally to the number of white grandchildren of a white vertex $u$ of $\cT_n$, the number of black children of $u$ is independent of the rest of the tree, we can prove \eqref{eq:Anucomputation}. Here, for convenience, we write $\cT$ for $\cT^{(\mub, \, \mun)}$ and $\cT^\circ$ for $\cT^{\circ,(\mub, \, \mun)}$.

\begin{align*}
\sum_{k \geq 0} \, \mu_k \, \E\left[ X_k \right]  &= \sum_{k \geq 0} \, \mu_k \, \sum_{j \geq 1} j \, \P \left( k_\emptyset\left(\cT\right)=j \big| k_\emptyset\left(\cT^\circ\right)=k \right)\\
&= \sum_{j\geq 1} j \sum_{k \geq 0} \, \P \left( k_\emptyset\left(\cT\right)=j \big| k_\emptyset\left(\cT^\circ\right)=k \right) \P \left(k_\emptyset\left(\cT^\circ\right)=k\right)\\
&= \sum_{j \geq 1} j \, \P \left(k_\emptyset\left(\cT\right)=j\right) = \sum_{j \geq 1} j \, \mub_j,
\end{align*}
which implies \eqref{eq:Anucomputation} by \eqref{eq:meanmub}.
\end{proof}

We now generalize this statement, by investigating the number of black vertices in different components of a tree. This refinement allows us to precisely control the location of large faces in the black process of the tree, and thus to prove Theorem \ref{thm:cvbitypelam}. Specifically, a tree $T$ being given, each vertex $u$ of $T$ induces a partition of the set of vertices of $T$ into three parts: the set $G_1(u,T)$ of vertices that are visited for the first time by the contour function $C(T)$ before $u$, the subtree $G_2(u,T)$ rooted in $u$ and the set $G_3(u,T)$ of the vertices visited for the first time by $C(T)$ after $u$ has been visited for the last time.

\begin{lem}
\label{lem:proportions}
With high probability, jointly for $u \in \cT_n$ a white vertex, as $n \rightarrow \infty$, we have, jointly for $i=1,2,3$:
\begin{align*}
\left|G_i\left(u,\cT_n^{\circ}\right) \right| = \left(1+(1-\nu_0)\right)^{-1} \, \left|G_i(u,\cT_n)\right| + o(n),
\end{align*}
where we recall that we also denote by $u$ the vertex in $\cT_n^{\circ}$ corresponding to $u$. 
\end{lem}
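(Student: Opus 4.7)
The key reduction is to write
\begin{align*}
|G_i(u,\cT_n)| = |G_i(u,\cT_n^\circ)| + N^\bullet(G_i(u,\cT_n)),
\end{align*}
so the target identity is equivalent to
\begin{align*}
N^\bullet(G_i(u,\cT_n)) = (1-\nu_0)\,|G_i(u,\cT_n^\circ)| + o(n),
\end{align*}
uniformly for $u$ a white vertex of $\cT_n$ and $i\in\{1,2,3\}$. The plan is to handle $i=2$ directly via the branching property of BTSGs and then recover $i=1,3$ by decomposing along the ancestral chain of $u$.

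The first step is a uniform strengthening of Lemma \ref{lem:nbofcycles}. By the branching property of $(\mu_*,w)$-BTSGs, for any white $v\in\cT_n$, conditionally on $|\theta_v(\cT_n^\circ)|=k$ the subtree $\theta_v(\cT_n)$ is distributed as a fresh copy of $\cT_k$. Sharpening the polynomial concentration used in the proof of Lemma \ref{lem:nbofcycles} (which yields errors of order $n^{4/5}$ for each fixed size) and combining with a union bound over the heavy vertices $\{v : |\theta_v(\cT_n^\circ)|\geq\epsilon n\}$ (of which there are at most $O(\epsilon^{-1} H(\cT_n^\circ))=o(n)$ in the stable regime), one obtains that, with high probability, for every such heavy $v$,
\begin{align*}
N^\bullet(\theta_v(\cT_n)) = (1-\nu_0)\,|\theta_v(\cT_n^\circ)| + o(n).
\end{align*}
Case $i=2$ follows immediately since $G_2(u,\cT_n)=\theta_u(\cT_n)$; if $|G_2(u,\cT_n^\circ)|<\epsilon n$ the identity is trivial, since both sides are then $O(\epsilon n)$ and $\epsilon$ is arbitrary.

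For $i=1$ (and symmetrically $i=3$), write $u_0^\circ=\emptyset,\ldots,u_{h^\circ}^\circ=u$ for the ancestors of $u$ in $\cT_n^\circ$ and $L_i(u)$ for the children of $u_i^\circ$ in $\cT_n^\circ$ strictly to the left of $u_{i+1}^\circ$. Then
\begin{align*}
G_1(u,\cT_n^\circ) \;=\; \{u_0^\circ,\ldots,u_{h^\circ-1}^\circ\}\;\sqcup\;\bigsqcup_{i,\,v\in L_i(u)}\theta_v(\cT_n^\circ).
\end{align*}
The analogous decomposition of $G_1(u,\cT_n)$ in $\cT_n$ involves in addition the black ancestors of $u$ and the subtrees rooted at black children of white ancestors lying to the left of the ancestral chain. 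Regrouping these contributions and applying the uniform estimate of the first step to every white-rooted subtree (for subtrees smaller than $\epsilon n$, via a CLT-type variance bound exploiting the conditional independence of the ``filling-in'' of black vertices given $\cT_n^\circ$) yields
\begin{align*}
N^\bullet(G_1(u,\cT_n)) \;=\; h^\circ + \sum_i |L_i^\bullet(u)| + (1-\nu_0)\sum_{i,\,v\in L_i(u)}|\theta_v(\cT_n^\circ)| + o(n),
\end{align*}
where $L_i^\bullet(u)$ denotes the set of black children of $u_i^\circ$ in $\cT_n$ lying to the left of the chain.

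To conclude, it remains to show that $h^\circ+\sum_i|L_i^\bullet(u)|=o(n)$ uniformly in $u$. The ancestral height satisfies $h^\circ\leq H(\cT_n^\circ)=O(n/\tilde{B}_n)=o(n)$ in the stable regime. For the second contribution, $\sum_i|L_i^\bullet(u)|\leq \sum_i k_{u_i^\circ}^\bullet(\cT_n)$, which is bounded by the total $\cT_n$-degree of white ancestors of $u$. By Lemma \ref{lem:grossommets} together with the convergence of the Lukasiewicz path, only $O(1)$ vertices of $\cT_n^\circ$ have degree exceeding $\epsilon\tilde{B}_n$ (these contribute at most $O(\tilde{B}_n)=o(n)$ to any ancestral sum), while all other ancestors contribute at most $H(\cT_n^\circ)\cdot\epsilon\tilde{B}_n=O(\epsilon n)$; letting $\epsilon\to 0$ yields the desired $o(n)$ bound. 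The main obstacle throughout is precisely this uniformity in $u$: both the per-subtree estimate and the ancestor-degree bound must hold simultaneously for every white $u$, which is what forces the polynomial-rate concentration in the first step and the careful large-versus-small stratification of the subtree decomposition.
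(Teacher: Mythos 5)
Your reduction to showing $N^\bullet(G_i(u,\cT_n))=(1-\nu_0)\,|G_i(u,\cT_n^\circ)|+o(n)$ uniformly in $u$ is correct, as are several structural observations (the decomposition of $G_1$ along the ancestral line, and the $o(n)$ bound on the ancestral and sibling contributions via the height and the jump structure of the Lukasiewicz path). But the core of the lemma --- the uniform-in-$u$ identification of the black count as $(1-\nu_0)$ times the white count in each region --- is not actually established, and the two devices you offer in its place each have a hole. First, the union bound over the heavy subtrees: there are of order $n/\tilde{B}_n$ white vertices $v$ with $|\theta_v(\cT_n^\circ)|\geq\epsilon n$, so you need a per-subtree failure probability that is $o(\tilde{B}_n/n)$, uniformly over white sizes $m\in[\epsilon n,n]$. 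Lemma \ref{lem:nbofcycles} is a convergence in probability with no rate, and ``sharpening'' it to a quantitative rate is not a routine strengthening: the error in \eqref{eq:nkncirc} comes from a distributional (CLT-type) limit for the degree counts of a conditioned Galton--Watson tree, which by itself gives no tail bound at scale $n^{3/4}$.

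Second, and more fundamentally, your ``CLT-type variance bound exploiting conditional independence given $\cT_n^\circ$'' only controls the fluctuation of $N^\bullet(G_i(u,\cT_n))$ around its conditional mean $\sum_{w}\E[X_{k_w(\cT_n^\circ)}]$, where the sum runs over the white vertices of $G_i(u,\cT_n^\circ)$. It does not show that this conditional mean equals $(1-\nu_0)\,|G_i(u,\cT_n^\circ)|+o(n)$; since $\sum_k\mu_k\E[X_k]=1-\nu_0$, that identification is exactly the statement that the empirical out-degree distribution of $\cT_n^\circ$ \emph{restricted to} $G_i(u)$ is close to $\mu$, uniformly over all $u$ --- an equidistribution-along-the-exploration property that neither conditional independence nor the branching property delivers for the regions $G_1$ and $G_3$ (which are not unions of full subtrees plus an $o(n)$ remainder once you track the degrees rather than just the sizes). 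This is precisely the input the paper takes from \cite{The18}: the functional convergence \eqref{eq:ancienarticle} of the processes $t\mapsto N^k_{2nt}(\cT_n^\circ)$ gives, in one stroke and simultaneously for all $u$ and all $k\leq K$, that $|N^{k,(i)}(u)-\mu_k N^{(i)}(u)|\leq n^{3/4}$; the conditional concentration of the black-children counts (with the truncation $k\leq K$ making the summands bounded, hence with exponential tails surviving a union bound over the $n$ vertices) then finishes the proof. Without \eqref{eq:ancienarticle} or an equivalent maximal/equidistribution statement, your argument does not close.
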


In other terms, the proportions of vertices in $\cT_n$ in lexicographical order respectively before $u$, in the subtree rooted at $u$ and after $u$ are, with high probability, close to the proportions of vertices in $\cT_n^{\circ}$ in lexicographical order before $u$, in the subtree rooted at $u$ and after $u$. This boils down to proving that, in each of these components, the number of black vertices is roughly $(1-\nu_0)$ times the number of white vertices.

\begin{proof}
Fix $\epsilon>0$, and take $K \in \Z_+$ such that $\sum_{k=0}^K k \mu_k \geq 1-\epsilon$.
For $0 \leq k \leq K$, denote by $N^k_{2nt}(\cT_n^{\circ})$, for $0 \leq t \leq 1$, the number of different vertices in $\cT_n^{\circ}$ with $k$ children visited by the contour function $C(\cT_n^\circ)$ before time $2nt$. Then, it is known (see \cite[Theorem $1.1$ (ii)]{The18} for the finite variance case and \cite[Theorem $6.1$ (ii)]{The18} for the infinite variance case) that, uniformly in $k \leq K$:

\begin{equation}
\label{eq:ancienarticle}
\left( \frac{N^k_{2nt}\left( \cT_n^{\circ} \right) - n \mu_k t}{\sqrt{n}} \right)_{0 \leq t \leq 1} \underset{n \rightarrow \infty}{\overset{(d)}{\rightarrow}} \left(C_1 \mathbbm{e}_t + C_2 B_t \right)_{0 \leq t \leq 1},
\end{equation}
where $C_1, C_2$ are constants that only depend on $\mu$, $\be$ is a normalized Brownian excursion and $B$ is a Brownian motion independent of $\be$.

Now, for $u$ a white vertex of $\cT_n$, denote by $N^{k,(1)}(u)$ (resp. $N^{k,(2)}(u)$, $N^{k,(3)}(u)$) the number of different white vertices with $k$ white granchildren in $\cT_n$ visited by $C(\cT_n)$ for the first time before the first visit of $u$ (resp. between the first and last visits of $u$, and after the last visit of $u$). For $1 \leq i \leq 3$, set in addition $N^{(i)}(u) = |G_i(u,\cT_n)| \coloneqq \sum_{k \geq 0} N^{k,(i)}(u)$, the total number of vertices visited by the contour function resp. before the first visit of $u$, between the first and last visits of $u$ and after the last visit of $u$. 
We obtain from \eqref{eq:ancienarticle} that, as $n \rightarrow \infty$:
\begin{align*}
\P \left( \exists u \in \cT_n, u \text{ white}, \exists k \in \llbracket 0,K \rrbracket, \exists i \in \llbracket 1,3 \rrbracket, \left|N^{k,(i)}(u)- \mu_k N^{(i)}(u) \right| \geq n^{3/4} \right) \rightarrow 0.
\end{align*}

Now, on the complement of this event, using the notation $X_k$ of Lemma \ref{lem:nbofcycles}, for any white vertex $u \in \cT_n$ white, a central limit theorem provides:
\begin{equation}
\label{eq:probankbullet}
\P \left( \left| N^{\bullet,k,(i)}(u) - N^{k,(i)}(u) \E[X_k]\right| \geq n^{3/4} \right) = o(1/n),
\end{equation}
where $N^{\bullet,k,(i)}(u)$ denotes the number of black vertices in $G_i(u,\cT_n)$ whose parent has $k$ white grandchildren.
On the other hand, the total number of black vertices in the tree whose parent has more than $K$ white grandchildren is again at most $\epsilon n + (K+1) n^{3/4}$ with high probability by \eqref{eq:nkncirc}. By summing \eqref{eq:probankbullet} over all $k \leq K$, all $1 \leq i \leq 3$ and all white vertices $u \in \cT_n$, and finally by letting $\epsilon \rightarrow 0$, we obtain the result.
\end{proof}

\subsection{Proof of the technical theorem \ref{thm:cvbitypelam}}
\label{ssec:proof}

This whole subsection is devoted to the proof of Theorem \ref{thm:cvbitypelam}. First of all, we explain the structure of this proof: let $\alpha \in (1,2]$, $(w_i)_{i \geq 1}$ be a weight sequence of $\alpha$-stable type, and $\nu$ be its critical equivalent. When $\alpha<2$ or when $\nu$ has finite variance, we prove that the black and white processes coded by the BTSG $\cT_n$ are asymptotically close to each other at the scale $\tilde{B}_n$ (where $(\tilde{B}_n)_{n \geq 1}$ satisfies \eqref{eq:tbn} for $\nu$). Then, we investigate the whole colored lamination $\bL_\infty^\bullet(\cT_n)$, showing that it converges to a random stable lamination whose faces are colored independently with the same probability. The following theorem gathers these different results. Again in this section, as there is no ambiguity, $\cT_n$ stands for $\cT_n^{(\mu_*,w)}$.

\begin{thm}
\label{thm:convergenceblackwhite}
Let $\alpha \in (1,2]$, $w$ be a weight sequence of $\alpha$-stable type, $\nu$ be its critical equivalent, and $(\tilde{B}_n)_{n \geq 1}$ verifying \eqref{eq:tbn} for $\nu$. Then, if $\alpha \in (1,2)$ or if $\nu$ has finite variance:
\begin{itemize}
\item[(i)]
There exists a coupling between the black process and the white process of $\cT_n$ such that:
\begin{align*}
d_{Sk} \left( \left( \bL_{c (1-\nu_0) \tilde{B}_n}^\bullet(\cT_n) \right)_{c \geq 0}, \left( \bL_{c \tilde{B}_n}^\circ(\cT_n) \right)_{c \geq 0} \right) \underset{n \rightarrow \infty}{\overset{\P}{\rightarrow}} 0.
\end{align*}
where $d_{Sk}$ denotes the Skorokhod distance on $\D(\R_+,\bCL(\oD))$.

\item[(ii)]
The white process of $\cT_n$ converges in distribution towards the $\alpha$-stable lamination process:
\begin{align*}
\left( \bL_{c \tilde{B}_n}^\circ(\cT_n) \right)_{c \in [0,\infty]} \underset{n \rightarrow \infty}{\overset{(d)}{\rightarrow}} \left( \bL_c^{(\alpha)} \right)_{c \in [0,\infty]}.
\end{align*}

\item[(iii)]
In distribution, under the coupling of (i) and jointly with convergence (ii),
\begin{align*}
\bL^\bullet_\infty \left( \cT_n\right) \underset{n \rightarrow \infty}{\overset{(d)}{\rightarrow}} \bL_\infty^{(\alpha),p_\nu},
\end{align*}
where $$p_\nu \coloneqq \frac{\sigma_\nu^2}{\sigma_\nu^2+1}.$$
\end{itemize}
\end{thm}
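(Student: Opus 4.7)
I would attack the three statements in the order (ii) $\to$ (i) $\to$ (iii), each building on the previous. For (ii), observe that the white reduced tree $\cT_n^\circ$ is a size-conditioned MTSG with weight sequence $\tilde w$ satisfying $F_{\tilde w}=e^{F_w-1}$. By Theorem \ref{thm:reachabledistributions}, it has the distribution of $\bT_n^\mu$ for a critical $\mu$ in the domain of attraction of an $\alpha$-stable law, with associated scaling sequence $B_n(\mu)$ asymptotically equal to $\tilde B_n$ (via matching tail expansions in the $\alpha<2$ case, and via the identity $\sigma_\mu^2=\sigma_\nu^2+1$ established within the proof of Theorem \ref{thm:reachabledistributions} in the finite variance case). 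Applying Theorem \ref{thm:cvlamgw} to $\bT_n^\mu$ then delivers (ii) directly.

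For (i), the plan is to couple the two orderings driving the two processes. Define $\psi$ sending each black vertex $v$ to its leftmost white child (well defined and injective since $\wn_0=0$). Given a uniform labelling $(V_1,\ldots,V_{N^\bullet(\cT_n)})$ of black vertices, put $W_i:=\psi(V_i)$ and extend to a uniform permutation $(U_j)_{j=1}^n$ of the non-root white vertices by uniformly interleaving with the $n-1-N^\bullet(\cT_n)$ non-image white vertices. Two ingredients then enter. First, Lemma \ref{lem:nbofcycles} combined with standard hypergeometric concentration gives that, with high probability, by time $c\tilde B_n$ in the white process roughly $\lfloor c(1-\nu_0)\tilde B_n\rfloor$ of the $W_i$'s have been processed, synchronising the two time scales. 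Second, for any fixed $C>0$, with high probability the first $\lfloor C(1-\nu_0)\tilde B_n\rfloor$ black vertices in the uniform labelling all have subtrees of size $o(n)$ in $\cT_n$, since the number of black vertices with macroscopic subtree is $O(1)$ and the random subset is of asymptotically negligible size. For such a \emph{small} $v$, Lemma \ref{lem:proportions} applied at $v$'s white children implies that the face $F_v(\cT_n)$ and the chord $c_{\psi(v)}(\cT_n^\circ)$ are both of vanishing diameter and sit in the same arc of $\bS^1$, hence are $o(1)$-Hausdorff close; non-image white vertices processed in the white process are non-first children of the same small black vertices and contribute microscopic chords as well.

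For (iii), I analyse the color of each face of $\bL^{(\alpha)}$. A face is macroscopically black in $\bL^\bullet_\infty(\cT_n)$ iff a single black child of the associated macroscopic vertex $u\in\cT_n$ covers a macroscopic share of the face of $u$ in $\bL(\cT_n^\circ)$. For $\alpha<2$, Lemma \ref{lem:grossommets} combined with the $\alpha$-stable heavy tail of $\mun_i=\nu_i/(1-\nu_0)$ forces exactly one dominant black child per macroscopic $u$, yielding $p_\nu=1$. For $\nu$ of finite variance, each face of the Brownian triangulation corresponds to a branching with two asymptotically independent uniform ``large subtree'' positions among $u$'s children in $\cT_n^\circ$; the probability that these lie in the same block of the iid $\mun$-partition given by $u$'s black children is computable by size-biasing, and using $\sigma_\mu^2=\sigma_\nu^2+1$ it works out to exactly $\sigma_\nu^2/(\sigma_\nu^2+1)$. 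Asymptotic independence of face colorings across distinct branchings follows from the independence of local branching structures in a size-conditioned Galton--Watson tree.

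The hard step will be (iii) in the finite variance case: identifying the correct conditional distribution of $u$'s black-child partition given $u$ is a macroscopic branching, handling the exchangeability of the two ``large subtree'' positions, and verifying the asymptotic independence of face colorings, all require careful local-limit and size-biasing arguments. Part (i) is comparatively routine once one establishes that black vertices processed in any bounded time interval have microscopic subtree size.
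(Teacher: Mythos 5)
Your part (ii) matches the paper's route exactly, and your computation target for (iii) ($p_\nu=\sigma_\nu^2/(\sigma_\nu^2+1)$ via the probability that the two large-subtree positions fall in the same block of the i.i.d.\ $\mun$-partition, using $\sigma_\mu^2=\sigma_\nu^2+1$) is the right one. But part (i) contains a genuine error that breaks the coupling. You assert that the number of black vertices with macroscopic subtree is $O(1)$, so that none of the first $\lfloor C(1-\nu_0)\tilde B_n\rfloor$ labelled black vertices has a subtree of size comparable to $n$. This is false: the set $\{v:|\theta_v(\cT_n)|\geq\epsilon n\}$ is an ancestral subtree whose cardinality is of order $n/\tilde B_n$ (all vertices on the spine above a fixed macroscopic subtree belong to it), not $O(1)$ — what is $O(1)$ is the number of $\epsilon n$-\emph{nodes}, a much smaller set. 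Consequently a $\Theta(1)$ number of macroscopic-subtree black vertices do receive labels $\leq C\tilde B_n$, and these are precisely the vertices producing the macroscopic chords of the nontrivial limit $\bL_c^{(\alpha)}$; if your claim were true the limit process would be the bare circle for every finite $c$, contradicting your own part (ii).

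This error is not cosmetic, because it interacts with your choice of $\psi$ as the \emph{leftmost} white child. For a black vertex $u$ on the spine whose unique macroscopic child subtree is not the leftmost, the face $F_u(\cT_n)$ is Hausdorff-close to one long chord, while $c_{\psi(u)}(\cT_n^\circ)$ is microscopic — so the two processes drift apart by a macroscopic amount at finite times. The paper instead takes $k(u)$ to be the white child of $u$ whose subtree in $\cT_n^\circ$ is \emph{largest}; then for any $u$ that is not an $\epsilon n$-node (which, whp, covers all labels $\leq\tilde B_n\log n$ since there are only $O(\epsilon^{-1})$ such nodes), the children on either side of $k(u)$ carry total mass $<\epsilon n$ each, so $F_u(\cT_n)$ is within $O(\epsilon)$ of the single chord $c_{k(u)}(\cT_n^\circ)$ by Lemma \ref{lem:proportions}. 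One also needs, symmetrically, that no \emph{unmarked} white vertex contributes a macroscopic chord before time $\tilde B_n\log n$: with the largest-subtree choice the unmarked vertices carrying macroscopic chords are necessarily children of $\epsilon n$-branching points, hence $O(\epsilon^{-1})$ in number; with the leftmost-child choice this fails too. Your time-synchronisation step (hypergeometric/random-walk concentration giving $|(1-\nu_0)e^\circ(k(u))-e(u)|=o(\tilde B_n)$) is sound and is what the paper does. Finally, for (iii) in the finite-variance case, be aware that identifying the conditional degree law of a macroscopic branching point and the asymptotic independence across branchings is where the paper expends most of its effort (a shuffling operation plus a Kesten-tree computation of $\P(k_U=k\mid U\in E_{\epsilon n})\to\mu_k k(k-1)/\sigma_\mu^2$); your sketch correctly flags this as the hard step but does not yet supply it.
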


Before jumping into the proof of Theorem \ref{thm:convergenceblackwhite}, let us explain why this theorem is enough to get Theorem \ref{thm:cvbitypelam}.

\begin{proof}[Proof of Theorem \ref{thm:cvbitypelam}]
The proof of Theorem \ref{thm:cvbitypelam} is now straightforward. Indeed, Theorem \ref{thm:convergenceblackwhite} (i) and (ii) imply the convergence of the first marginal in Theorem \ref{thm:cvbitypelam}, that is, the convergence of the black process of $\cT_n$ on any compact of $\R_+$.
The joint convergence of $\bL_\infty^\bullet(\cT_n)$ is finally a consequence of Theorem \ref{thm:convergenceblackwhite} (iii).
\end{proof}

Let us therefore prove Theorem \ref{thm:convergenceblackwhite}.

\subsubsection{Proof of Theorem \ref{thm:convergenceblackwhite} (i)}

We first explain the way of coupling the white and black processes coded by $\cT_n$. To each black vertex $u$, associate its white child $k(u)$ whose subtree in $\cT^\circ_n$ has the largest size (if the largest size is reached by more than one white child, then choose one uniformly at random). Now, start from a uniform labelling of the white vertices. We label the black vertices the following way: give the label $1$ to the black vertex $u_1$ such that $k(u_1)$ has the smallest label among all white vertices of the form $k(u)$; give the label $2$ to $u_2$ such that $k(u_2)$ has the second smallest label, etc. This provides a way of labelling the black vertices of $\cT_n$ from $1$ to $N^\bullet(\cT_n)$, and this labelling is clearly uniform.  See Fig. \ref{fig:coupling} for an example of this coupling. This induces therefore a coupling between the black and white processes $(\bL_u^\circ(\cT_n))_{u \in [0,\infty]}$ and $(\bL_u^\bullet(\cT_n))_{u \in [0,\infty]}$.

\begin{figure}[!h]
\caption{The coupling between labels of black and white vertices in a tree: arrows go from a black vertex $u$ to the white vertex $k(u)$. Left: the coupling between vertices. Middle: a uniform labelling of the white vertices. Right: the induced labelling of the black vertices}
\label{fig:coupling}
\center
\begin{tabular}{c c c}
\begin{tikzpicture}
\draw (-2.5,4) -- (-2,3) -- (-2,2) -- (-1,1) -- (-1,2) (-1,1) -- (0,0) -- (0,1) (0,0) -- (1,1) -- (2,2) (1,1) -- (1,2) -- (1,3) -- (1,4) (-1.5,4) -- (-2,3) (0,1) -- (0,2) (1,2) -- (2,3) -- (2,4) (-1,2) -- (-1,3) -- (-.5,4);

\draw[fill=white] (0,0) circle (.2);
\draw[fill=white] (-2,2) circle (.2);
\draw[fill=white] (-1,2) circle (.2);
\draw[fill=white] (0,2) circle (.2);
\draw[fill=white] (1,2) circle (.2);
\draw[fill=white] (2,2) circle (.2);
\draw[fill=white] (1,4) circle (.2);
\draw[fill=white] (-2.5,4) circle (.2);
\draw[fill=white] (-1.5,4) circle (.2);
\draw[fill=white] (-.5,4) circle (.2);
\draw[fill=white] (2,4) circle (.2);

\draw[fill=black] (-2,3) circle (.2);
\draw[fill=black] (-1,1) circle (.2);
\draw[fill=black] (-1,3) circle (.2);
\draw[fill=black] (1,1) circle (.2);
\draw[fill=black] (1,3) circle (.2);
\draw[fill=black] (0,1) circle (.2);
\draw[fill=black] (2,3) circle (.2);

\draw[dashed,->] (-1,1) to[out=200,in=-90] (-2,1.8);
\draw[dashed,->] (-2,3.2) to[out=-90,in=180] (-1.7,4);
\draw[dashed,->] (-1,3.2) to[out=-90,in=180] (-.7,4);
\draw[dashed,->] (0,1) to[out=135,in=-135] (-.16,1.92);
\draw[dashed,->] (1,1) to[out=135,in=-135] (.84,1.92);
\draw[dashed,->] (1,3) to[out=135,in=-135] (.84,3.92);
\draw[dashed,->] (2,3) to[out=135,in=-135] (1.84,3.92);
\end{tikzpicture}
&
\begin{tikzpicture}[every node/.style={scale=0.8}]
\draw (-2.5,4) -- (-2,3) -- (-2,2) -- (-1,1) -- (-1,2) (-1,1) -- (0,0) -- (0,1) (0,0) -- (1,1) -- (2,2) (1,1) -- (1,2) -- (1,3) -- (1,4) (-1.5,4) -- (-2,3) (0,1) -- (0,2) (1,2) -- (2,3) -- (2,4) (-1,2) -- (-1,3) -- (-.5,4);

\draw[fill=white] (0,0) circle (.2) node{$3$};
\draw[fill=white] (-2,2) circle (.2) node{$8$};
\draw[fill=white] (-1,2) circle (.2) node{$4$};
\draw[fill=white] (0,2) circle (.2) node{$6$};
\draw[fill=white] (1,2) circle (.2) node{$2$};
\draw[fill=white] (2,2) circle (.2) node{$11$};
\draw[fill=white] (1,4) circle (.2) node{$1$};
\draw[fill=white] (-2.5,4) circle (.2) node{$9$};
\draw[fill=white] (-1.5,4) circle (.2) node{$5$};
\draw[fill=white] (-.5,4) circle (.2) node{$10$};
\draw[fill=white] (2,4) circle (.2) node{$7$};

\draw[dashed,->] (-1,1) to[out=200,in=-90] (-2,1.8);
\draw[dashed,->] (-2,3.2) to[out=-90,in=180] (-1.7,4);
\draw[dashed,->] (-1,3.2) to[out=-90,in=180] (-.7,4);
\draw[dashed,->] (0,1) to[out=135,in=-135] (-.16,1.92);
\draw[dashed,->] (1,1) to[out=135,in=-135] (.84,1.92);
\draw[dashed,->] (1,3) to[out=135,in=-135] (.84,3.92);
\draw[dashed,->] (2,3) to[out=135,in=-135] (1.84,3.92);

\draw[fill=black] (-2,3) circle (.2);
\draw[fill=black] (-1,1) circle (.2);
\draw[fill=black] (-1,3) circle (.2);
\draw[fill=black] (1,1) circle (.2);
\draw[fill=black] (1,3) circle (.2);
\draw[fill=black] (0,1) circle (.2);
\draw[fill=black] (2,3) circle (.2);
\end{tikzpicture}
&
\begin{tikzpicture}
\draw (-2.5,4) -- (-2,3) -- (-2,2) -- (-1,1) -- (-1,2) (-1,1) -- (0,0) -- (0,1) (0,0) -- (1,1) -- (2,2) (1,1) -- (1,2) -- (1,3) -- (1,4) (-1.5,4) -- (-2,3) (0,1) -- (0,2) (1,2) -- (2,3) -- (2,4) (-1,2) -- (-1,3) -- (-.5,4);

\draw[fill=white] (0,0) circle (.2);
\draw[fill=white] (-2,2) circle (.2);
\draw[fill=white] (-1,2) circle (.2);
\draw[fill=white] (0,2) circle (.2);
\draw[fill=white] (1,2) circle (.2);
\draw[fill=white] (2,2) circle (.2);
\draw[fill=white] (1,4) circle (.2);
\draw[fill=white] (-2.5,4) circle (.2);
\draw[fill=white] (-1.5,4) circle (.2);
\draw[fill=white] (-.5,4) circle (.2);
\draw[fill=white] (2,4) circle (.2);

\draw[fill=black] (-2,3) circle (.2) node[white]{$3$};
\draw[fill=black] (-1,1) circle (.2) node[white]{$6$};
\draw[fill=black] (-1,3) circle (.2) node[white]{$7$};
\draw[fill=black] (1,1) circle (.2) node[white]{$2$};
\draw[fill=black] (1,3) circle (.2) node[white]{$1$};
\draw[fill=black] (0,1) circle (.2) node[white]{$4$};
\draw[fill=black] (2,3) circle (.2) node[white]{$5$};
\end{tikzpicture}
\end{tabular}
\end{figure}
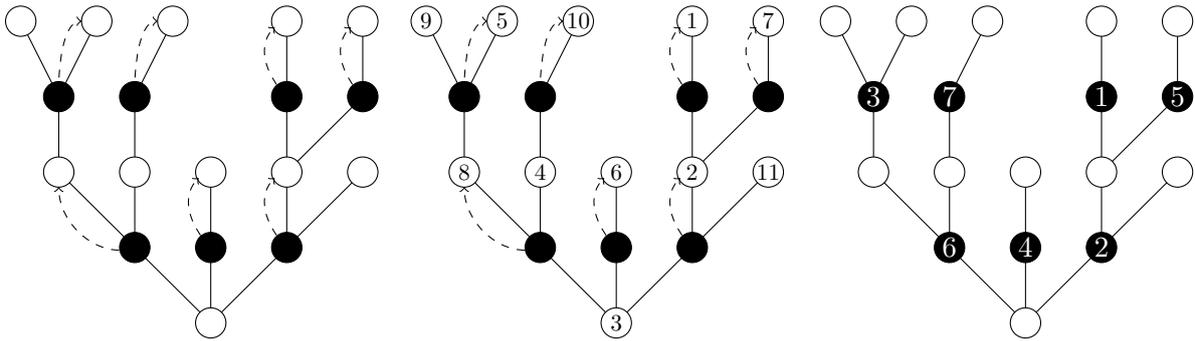

We claim that, under this coupling, Theorem \ref{thm:convergenceblackwhite} (i) holds. To this end, we prove that the following two events hold with high probability: 
\begin{itemize}
\item[(a)] first, uniformly for $u$ a black vertex in $\cT_n$ with label $\leq \tilde{B}_n \log n$, the distance between $\bS^1 \cup F_u(\cT_n)$ (in $\bL^\bullet(\cT_n)$) and $\bS^1 \cup c_{k(u)}(\cT_n^\circ)$ (in $\bL^\circ(\cT_n)$) goes to $0$; \item[(b)] uniformly for each black vertex $u$ with label $e(u) \leq \tilde{B}_n \log n$, 
\begin{equation}
\label{eq:obtilden}
\left| (1-\nu_0) \, e^\circ(k(u))-e(u) \right| = o(\tilde{B}_n),
\end{equation}
where $e^\circ(x)$ is the label of the white vertex $x$.
\end{itemize} 
Roughly speaking, (a) proves that faces of the black process are close (one by one) to some chords of the white process, and (b) that each face roughly appears at the same time as the associated chord, in the time-rescaled processes.

Under these two events, the Skorokhod distance between the black and the white processes up to time $\tilde{B}_n \log n$, rescaled in time by a factor $\tilde{B}_n$, goes to $0$ as $n \rightarrow \infty$. Indeed, by \eqref{eq:obtilden}, if one rescales by this factor $\tilde{B}_n$, asymptotically the face $F_u(\cT_n)$ and the chord $c_{k(u)}(\cT_n^\circ)$ appear at the same time up to $o(1)$, uniformly for $u$ a black vertex with label $\leq \tilde{B}_n \log n$. The only thing left to prove is that no other large white chord appears in the white process before time $\tilde{B_n} \log n$. To see this, remark that, at $\epsilon>0$ fixed, if a chord $c_v(\cT^\circ_n)$ has length larger than $\epsilon$, where $v$ is a white vertex that is not of the form $k(u)$ for some black vertex $u$, then necessarily the parent of $v$ in $\cT_n^\circ$ is an $\epsilon n$-branching point. The number of white vertices $v$ such that $|\theta_v(\cT_n^\circ)| \geq \epsilon$ and such that the parent of $v$ in $\cT_n^\circ$ is an $\epsilon n$-branching point is bounded by $\epsilon^{-1}$, independently of $n$. Hence, with high probability none of them has a label less than $\tilde{B}_n \log n$, and all large white chords in the white process that appear before time $\tilde{B_n} \log n$ are of the form $c_{k(u)}(\cT^\circ_n)$ for some black vertex $u$. This implies Theorem \ref{thm:convergenceblackwhite} (i).

W now prove (a) and (b). In what follows, we call \textit{marked vertices} the white vertices of the form $k(u)$ for some black vertex $u \in \cT_n$.

In order to prove (a), we mostly rely on Lemma \ref{lem:proportions}. Fix $\epsilon>0$ and take $u$ a black vertex in $\cT_n$ with label $\leq \tilde{B}_n \log n$. Then, with high probability, $u$ is not a black $\epsilon n$-node of $\cT_n$. Indeed, there are at most $2n$ vertices in total in $\cT_n$, and thus at most $2\epsilon^{-1}$ $\epsilon n$-nodes in this tree. Assume that it is not an $\epsilon n$-node. Then, if all chords of the boundary of $F_u$ have lengths $<\epsilon$, with high probability $c_{k(u)}$ has length less than $2\epsilon$ by Lemma \ref{lem:proportions}. Now assume that one of the chords in the boundary of $F_u$, which we denote by $c_*$, has length greater than $\epsilon$. As $u$ is not an $\epsilon n$-node of $\cT_n$, there are at most two such chords in the boundary of $F_u$ and therefore $d_H(c_*,F_u)<2 \pi \epsilon$. In addition, again by Lemma \ref{lem:proportions}, with high probabliity $d_H(c_*, c_{k(u)}) < 2 \pi \epsilon$. Furthermore, this holds jointly for all $u$ with label $\leq \tilde{B}_n \log n$.

In order to prove (b), the idea is to code the location of marked vertices (corresponding to the children of each black vertex having the largest subtree, which are fixed and do not depend on the labelling on the white vertices; they are white vertices that are targets of an arrow on Fig. \ref{fig:coupling} left and middle) in lexicographical order by a walk on $\R$, and then use well-known results about the behaviour of random walks.

First, remark that, by Lemma \ref{lem:nbofcycles}, with high probability there are $N^\bullet(\cT_n) \coloneqq (1-\nu_0) n (1+o(1))$ black vertices in the tree $\cT_n$. Therefore, among the $n$ white vertices in the tree, $(1-\nu_0) n (1+o(1))$ of them are marked, and the fact that a vertex is marked does not depend on the labelling. Moreover, the labels of these white vertices are uniformly chosen among all $N^\bullet(\cT_n)$-tuples of distinct integers between $1$ and $n$. 

Thus, the problem boils down to the following: there are $n$ white vertices, among which $(1-\nu_0) n (1+o(1))$ are marked. We want to prove that, with high probability, uniformly in $c \leq \log n$, among the first $c \tilde{B}_n$ white vertices (for the order of the labels), there are $c (1-\nu_0) \tilde{B}_n (1+o(1))$ marked ones.

To prove it, denote by $q_x$ the number of marked vertices among the first $x$ ones. It is clear that, uniformly for $k \leq \tilde{B}_n \log n$, uniformly for $N \geq (1-\nu_0) n/2$, conditionally to $N^\bullet(\cT_n) = N$: 
\begin{equation}
\label{eq:binasymp}
\P\left( q_{\tilde{B}_n \log n} = k \right) = \frac{\P \left( B_1=k \right) \P\left(B_2=N-k\right)}{\P \left(B_3=N\right)} \sim \P\left( B_1=k \right)
\end{equation}
as $n \rightarrow \infty$, where $B_1= Bin(\lfloor \tilde{B}_n \log n \rfloor, 1-\nu_0), B_2= Bin(n-\lfloor \tilde{B}_n \log n \rfloor, 1-\nu_0), B_3= Bin(n, 1-\nu_0)$. Remark that $N^\bullet(\cT_n) \geq (1-\nu_0) n/2$ with high probability, so that \eqref{eq:binasymp} holds with high probability. Furthermore, conditionally to the value $k$ of $q_{\tilde{B}_n \log n}$, the set of marked vertices is uniformly distributed among all possible subsets of $k$ of these $\tilde{B}_n \log n$ white vertices.

Finally, notice that the quantity $(1-\nu_0) \, e^\circ(k(u))-e(u)$, for $u$ the white vertex labelled $i$, can be seen as the value at time $i$ of a specific random walk, constructed from the labelling of the vertices in $\cT_n$. More precisely, denote by $(S_i)_{0 \leq i \leq \tilde{B}_n \log n}$ the walk defined as follows: it starts from the value $S_0=0$ and, for $1 \leq i \leq \tilde{B}_n \log n$, $S_i-S_{i-1}=-1$ if the white vertex labelled $i$ is of the form $k(u)$ for some black vertex $u$ (that is, the vertex is marked), and $S_i-S_{i-1} = (1-\nu_0)/\nu_0$ otherwise. Then, one can check that conditionally to the value $k$ of $q_{\tilde{B}_n \log n}$, this walk is distributed as a random walk $(S'_i, \, 0 \leq i \leq \tilde{B}_n \log n)$ starting from $0$ with i.i.d. jumps, the jumps being $-1$ with probability $1-\nu_0$ and $(1-\nu_0)/\nu_0$ with probability $\nu_0$, conditioned to have $k$ "$-1$" jumps. In particular, the expectation of each jump of $S'$ is $0$. By the so-called local limit theorem (see \cite[Theorem $4.2.1$]{IL71} for a statement and proof), the maximum of the absolute value of this walk is of order $\sqrt{\tilde{B}_n \log n} = o(\tilde{B}_n)$. Using \eqref{eq:binasymp}, the maximum of the absolute value of $(S_i)_{0 \leq i \leq \tilde{B}_n \log n}$ is also of order $\sqrt{\tilde{B}_n \log n}$ with high probability. Finally, remark that, for any white vertex $u$ labelled $i \leq \tilde{B}_n \log n$, the value $S_i$ of the walk at time $i$ is exactly $(1-\nu_0) \, e^\circ(k(u)) - e(u)$ by construction. This proves the result.

\bigskip

\subsubsection{Proof of Theorem \ref{thm:convergenceblackwhite} (ii)}

To prove this, we use the fact that the white reduced tree $\cT_n^\circ$ is a $\mu$-GW tree conditioned to have $n$ vertices, where - by Theorem \ref{thm:reachabledistributions} - $\mu$ is a critical probability distribution in the domain of attraction of an $\alpha$-stable law. Hence, Theorem \ref{thm:convergenceblackwhite} (ii) directly follows from \cite[Theorem $3.3$ and Proposition $4.3$]{The19}, and is used under this form in \cite{The19} to study the model of minimal factorizations of the $n$-cycle into transpositions.

\bigskip

We now prove the third part of Theorem \ref{thm:convergenceblackwhite}. We separately treat the two cases when $\alpha<2$ and when $\nu$ has finite variance. 

\subsubsection{Proof of Theorem \ref{thm:convergenceblackwhite} (iii), when $\alpha<2$}

In the whole paragraph, $(\tilde{B}_n)_{n \geq 1}$ is a sequence that satisfies \eqref{eq:tbn} for $\nu$. In particular, as $n \rightarrow \infty$,
\begin{equation}
\label{eq:bnslowlyvar}
\tilde{B}_n \sim n^{1/\alpha} \ell(n)
\end{equation}
for some slowly varying function $\ell$. 

We prove here that, jointly with the convergence of Theorem \ref{thm:convergenceblackwhite} (ii), the sequence $(\bL^\bullet_\infty(\cT_n))_{n \geq 1}$ converges towards the colored stable lamination $\bL_\infty^{(\alpha),1}$, whose red part is $\bL_\infty^{(\alpha)}$ (which denotes here the limit of the process $(\bL^\circ_\infty(\cT_n))_{n \geq 1}$ by Theorem \ref{thm:convergenceblackwhite} (ii)), and whose faces are all colored black. In order to see it, we prove that with high probability in the tree $\cT_n$, for any white $\epsilon n$-node $u$ of $\cT_n$, almost all grandchildren of $u$ have the same black parent. To this end, we rely on the following lemma, inspired from \cite[Section $5$, Lemma $5$]{LGM10}:

\begin{lem}
\label{lem:pnu<2}
There exists a small $\delta>0$ such that, for any $\eta>0$, with high probability, for any white vertex $u \in \cT_n$ having at least $\eta \tilde{B}_n$ white grandchildren, all of them but at most $\tilde{B}_n n^{-\delta}$ have the same black parent.
\end{lem}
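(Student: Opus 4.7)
The plan is to adapt the argument of \cite[Section 5, Lemma 5]{LGM10}, combining the exponential tilting of Lemma \ref{lem:equivalence} with a single-big-jump estimate for heavy-tailed sums. By Lemma \ref{lem:equivalence}, we may replace $(\mu_*, w)$ by the probability pair $(\mub, \mun)$ defined in \eqref{eq:defmubmun}; in particular $\mun_i \propto \nu_i$ for $i \geq 1$, so that $\mun$ inherits from $\nu$ the heavy tail $\P(\mun \geq t) \asymp t^{-\alpha} L(t)$ for some slowly varying $L$, and $\tilde{B}_n$ is the corresponding normalizing scale. Under the unconditioned bi-type Galton--Watson tree $\cT^{(\mub,\mun)}$, a white vertex $u$ has a $\mub$-distributed number $J$ of black children, and conditionally on $J=j$, the numbers of white grandchildren $(X_1,\ldots,X_j)$ of $u$ through each black child are i.i.d.\ $\mun$-distributed. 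Conditioning on $|\cT^{\circ}|=n$ distorts local probabilities by at most an $O(1)$ factor, via the local limit theorem (Theorem \ref{thm:llt}) applied to $\cT_n^{\circ}$.

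Next, since $\mub$ is a tilted Poisson distribution with super-exponential tails, a direct union bound shows that with high probability every white vertex in $\cT_n$ has at most $(\log n)^2$ black children; we restrict to this event. The core estimate is then a single-big-jump bound: for $(X_1,\ldots,X_j)$ i.i.d.\ $\mun$, any $j \leq (\log n)^2$, any $k \geq \eta \tilde{B}_n$ and $y = \tilde{B}_n n^{-\delta} \leq k/2$,
\begin{equation*}
\P\bigl( \textstyle\max_i X_i < k-y \,\bigm|\, \sum_i X_i = k \bigr) \lesssim j^{1+\alpha}\, y^{-\alpha} L(y).
\end{equation*}
Indeed, on the event $\{\max_i X_i < k-y\}$ the two largest among the $X_i$'s have sizes $\geq (k-y)/j$ and $\geq y/(j-1)$ respectively, so the numerator is at most $C j^2 \P(X_1 \geq y/j)\P(X_1 \geq k/j)$, while the denominator is $\gtrsim j\, k^{-1-\alpha}L(k)$ by Nagaev-type local estimates for stable-domain sums in the one-big-jump regime $k \gg j^{1/\alpha}$ (satisfied here since $k \geq \eta \tilde{B}_n \gg (\log n)^{2/\alpha}$).

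Since $\tilde{B}_n^{\alpha} \asymp n L(n)$, the right-hand side above is of order $n^{\alpha\delta -1}$ up to polylog and slowly varying factors. As the total number of white grandchildren in $\cT_n$ equals $n-1$, there are at most $n/(\eta \tilde{B}_n)$ white vertices with at least $\eta \tilde{B}_n$ grandchildren, so a union bound yields a global failure probability of order $n^{\alpha\delta - 1/\alpha}$ up to a slowly varying factor, which vanishes provided $\delta < 1/\alpha^2$. Any fixed $\delta < 1/4$ then works uniformly over $\alpha \in (1,2)$. The main obstacle is the core single-big-jump estimate itself: obtaining Nagaev-type lower and upper bounds with sufficient uniformity in $j$ and $k$, and carefully controlling the effect of conditioning on $|\cT_n^{\circ}|=n$ on such local events, requires care even though all ingredients are classical.
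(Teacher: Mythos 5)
Your strategy (tilt to $(\mub,\mun)$, restrict to white vertices with $O((\log n)^2)$ black children, then show that conditionally on having many white grandchildren one black child must carry almost all of them) is a genuinely different route from the paper's, but the core conditional estimate is where the proof lives and, as stated, it has a real gap — in fact two. First, the denominator requires a \emph{local} lower bound $\P(\sum_{i=1}^j X_i = k) \gtrsim j\,k^{-1-\alpha}L(k)$, uniformly in $k \geq \eta \tilde{B}_n$ and $j \leq (\log n)^2$. The hypothesis on $\nu$ is only membership in the domain of attraction of an $\alpha$-stable law, which controls the tail sums $\P(X \geq t)$ but not the point probabilities $\nu_k$: these may vanish or oscillate (e.g.\ lacunary support), in which case no Nagaev-type local estimate holds and the conditional probability is not controlled by your ratio. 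Second, even granting the local estimates, your own numerator/denominator computation does not yield $j^{1+\alpha}y^{-\alpha}L(y)$: with $\P(X\geq t)\asymp t^{-\alpha}L(t)$ one gets
\begin{equation*}
\frac{j^2\,\P(X\geq k/j)\,\P(X\geq y/j)}{j\,k^{-1-\alpha}L(k)} \;\asymp\; j^{1+2\alpha}\,k\,y^{-\alpha}L(y),
\end{equation*}
the extra factor $k$ coming from $\P(X\geq k/j)\big/\P(\sum_i X_i=k)\asymp j^{\alpha-1}k$. Since the numbers of white grandchildren of the relevant vertices sum to at most $n-1$, the union bound then produces a quantity of order $n\,y^{-\alpha}\asymp n^{\alpha\delta}$ up to slowly varying and polylogarithmic factors, which does not tend to $0$ for any $\delta>0$. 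Recovering the heuristic bound $j^{1+\alpha}y^{-\alpha}L(y)$ would require keeping the constraint $\sum_i X_i=k$ in the numerator and invoking a local \emph{upper} bound as well — again unavailable under the stated hypotheses.

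The paper's proof sidesteps both issues by never conditioning on the number of white grandchildren. If a white vertex $u$ has $k$ black children, $M\geq\eta\tilde{B}_n$ white grandchildren, and none of its black children has more than $M-\tilde{B}_n n^{-\delta}$ of them, then at least \emph{two} black children each have at least $\tilde{B}_n n^{-\delta}/k$ white children; the probability of this joint event is at most $\mu_*(k)\binom{k}{2}\,\nu\bigl([\tilde{B}_n n^{-\delta}/k,\infty)\bigr)^2$, with no local probabilities appearing — only the tail function, which is exactly what the domain-of-attraction hypothesis controls (via $\nu([R,\infty))\leq KR^{-\alpha+\delta}$). The key gain is the \emph{square} of a tail probability of order $n^{\alpha\delta-1}$, so that the union bound over all $n$ white vertices gives $O\bigl(n^{1+2\alpha\delta}\tilde{B}_n^{2\delta-2\alpha}\bigr)=o(1)$ as soon as $2\delta(\alpha+1/\alpha)<1$. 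Your conditional approach only ever pays one such tail factor per vertex, which is why it is forced into the delicate local large-deviation territory; I would recommend reworking the argument along the unconditional two-large-children route.
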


Let us immediately see how this implies the convergence of Theorem \ref{thm:convergenceblackwhite} (iii) in this case. The key remark, which is straightforward by construction, is that all faces with a 'large' area in the colored lamination are coded by large nodes in the tree $\cT_n$ (either black or white). More precisely, for any $r>0$, there exists $\epsilon>0$ such that all faces of area larger than $r$ in $\bL_\infty^\bullet(\cT_n)$ are coded by $\epsilon n$-nodes of $\cT_n$. In addition, if a black vertex is a $\rho n$-node of $\cT_n$, then, by Lemma \ref{lem:proportions}, with high probability its white parent is an $(1-\nu_0) \rho n/2$-node of the reduced tree $\cT_n^\circ$. This allows us to focus only on white $\epsilon n$-nodes of $\cT_n^\circ$. 

\begin{proof}[Proof of Theorem \ref{thm:convergenceblackwhite} (iii)]
We use the fact that with high probability all large white nodes in original tree have a large number of white grandchildren.
Let us fix $\epsilon>0$, and take $\eta>0$ such that, with probability $\geq 1-\epsilon$, all white $\epsilon n$-nodes in $\cT^\circ_n$ have at least $\eta \tilde{B}_n$ white grandchildren in $\cT_n$ (such an $\eta$ exists by Lemma \ref{lem:grossommets} (ii)). Denote by $K_{\epsilon}(\cT^\circ_n)$ the (random) number of $\epsilon n$ nodes in $\cT_n^\circ$. Remark that there are at most $\epsilon^{-1}$ of them, and denote them by $a_1, \ldots, a_{K_{\epsilon}(\cT_n)}$ in lexicographical order.

Let us focus on $a_1$. Take $\delta>0$ such that, by Lemma \ref{lem:pnu<2}, with high probability all white grandchildren of $a_1$ except at most $\tilde{B}_n n^{-\delta}$ have the same black parent, which we denote by $b_1$. Set now $S_\epsilon(a_1) \coloneqq  \{ u \text{ granchild of } a_1, |\theta_u(\cT_n)|\geq \epsilon n \}$, the subset of grandchildren of $a_1$ whose subtree in $\cT_n$ has size more than $\epsilon n$. Then $|S_\epsilon(a_1)| \leq \lfloor 2\epsilon^{-1} \rfloor$, and with high probability all elements of $S_\epsilon(a_1)$ are children of $b_1$. Now define from these points the face $\tilde{F}_{a_1}(\cT_n)$, as
\begin{align*}
\tilde{F}_{a_1}(\cT_n) = \bS^1 \cup c_{a_1}(\cT_n) \cup \bigcup_{u \in S_\epsilon(a_1)} c_u (\cT_n),
\end{align*}
whose connected component having $c_{a_1}$ in its boundary and not containing $1$ is colored black. In other terms, this face does only take into account the subtrees of size larger than $\epsilon n$ rooted in grandchildren of $a_1$.

Then, using Lemma \ref{lem:proportions} jointly for each point of $S_\epsilon(a_1)$, it is clear that, with high probability,
\begin{align*}
d_H \left( F_{b_1}(\cT_n), \tilde{F}_{a_1}(\cT_n) \right) \leq 2 \pi \epsilon.
\end{align*}
On the other hand, by construction,
\begin{align*}
d_H \left( \tilde{F}_{a_1}(\cT_n), F'_{a_1}(\cT^\circ_n) \right) \leq 2 \pi \epsilon,
\end{align*}
where $F'_{a_1}(\cT^\circ_n)$ is the colored lamination defined as $$F'_{a_1}(\cT_n^\circ) \coloneqq \bS^1 \cup c_{a_1}(\cT_n^\circ) \cup \bigcup_{u \text{ granchild of } a_1} c_u(\cT_n^\circ)$$ 
in which the face of $\bL^\circ_\infty(\cT_n)$ whose boundary contains $c_{a_1}$ and all chords $c_u$ for $u$ a grandchild of $a_1$ is colored black. In other words, the large face of $\bL_\infty^\bullet(\cT_n)$ coded by $b_1$ is close to the large face of $\bL_\infty^\circ(\cT_n)$ bounded by the chords coded by $a_1$ and its grandchildren, and colored black. In addition, the same holds for $a_2, \ldots, a_{K_\epsilon(\cT_n)}$. Since $\bL_\infty^\circ(\cT_n)$ converges in distribution towards the $\alpha$-stable lamination $\bL_\infty^{(\alpha)}$, $\bL_\infty^\bullet(\cT_n)$ converges in distribution towards $\bL_\infty^{(\alpha),1}$.
\end{proof}

We now prove Lemma \ref{lem:pnu<2}.

\begin{proof}[Proof of Lemma \ref{lem:pnu<2}]
The proof is inspired from \cite[Section $5$, Lemma $5$]{LGM10}. Fix $\delta>0$ such that $2 \delta (\alpha+1/\alpha)<1$. Take $\eta>0$, and $n$ large enough so that $\eta \tilde{B}_n > 2 \tilde{B}_n n^{-\delta}$. For $u$ a white vertex of $\cT_n$, for any $k,M \geq 1$, define the following event $E(u,k,M)$: $u$ has $k$ black children, a number $M \geq \eta \tilde{B}_n$ of white grandchildren and simultaneously none of its black children has more than $M - \tilde{B}_n n^{-\delta}$ white children. This implies that at least two among its black children have more than $\tilde{B}_n n^{-\delta}/k$ white children. 

Therefore, for any white vertex $u$, uniformly in $M \geq \eta \tilde{B}_n$ and $k \geq 2$, one gets:
\begin{align*}
\P\left(E(u,k,M) \, \big| \, k_u(\cT_n^\circ)=M \right) \leq \mu_*(k) \binom{k}{2} \nu \left([\tilde{B}_n n^{-\delta}/k, \infty) \right)^2.
\end{align*}
On the other hand, by usual properties of the domain of attraction of stable laws (see e.g. \cite{Fel08}, Corollary $XVII.5.2$), there exists a constant $K>0$ such that, for all $R>0$, $\nu ([R, \infty)) \leq K R^{-\alpha+\delta}$. Hence, the probability that there exists a white vertex $u$ in $\cT_n$ with more than $\eta \tilde{B}_n$ white grandchildren and such that $E(u,k,M)$ holds for some $k \geq 2$, $M \geq \eta \tilde{B}_n$ is less than
\begin{align*}
n \sum_{k=2}^\infty \mu_*(k) \binom{k}{2} \nu \left([\tilde{B}_n n^{-\delta}/k, \infty) \right)^2 \leq n \left(\sum_{k=2}^\infty \mu_*(k) \binom{k}{2} k^{2\alpha-2\delta} \right) \tilde{B}_n^{-2\alpha+2\delta} n^{2\alpha \delta} = O \left( n^{1+2\alpha \delta} \tilde{B}_n^{2\delta-2\alpha} \right).
\end{align*}
Using \eqref{eq:bnslowlyvar} and the definition of $\delta$, $n^{1+2\alpha \delta} \tilde{B}_n^{2\delta-2\alpha} \leq n^{2\delta(\alpha+1/\alpha)-1} \ell(n)^{2\delta-2\alpha}$ for some slowly varying function $\ell$. It is finally well-known that, for any $\epsilon>0$, for $n$ large enough, $\ell(n) \in (n^{-\epsilon}, n^{\epsilon})$, by the so-called Potter bounds (see e.g. \cite[Theorem $1.5.6$]{BGT89} for a precise statement and a proof).
Thus, $n^{1+2\alpha \delta} \tilde{B}_n^{2\delta-2\alpha} = o(1)$ as $n \rightarrow \infty$, which proves our result.
\end{proof}

\subsubsection{Proof of Theorem \ref{thm:convergenceblackwhite} (iii), when $\nu$ has finite variance}

The case with finite variance is different. Indeed, in this case, it may happen that $0 < p_\nu < 1$, and the coloration of the limiting Brownian triangulation is not trivial.
We prove that, still, each face of the limiting object is colored black independently with the same probability $p_\nu$.

Let us first recall some notation. In what follows, for $\mu$ a critical distribution, $\bT^\mu$ denotes a $\mu$-GW tree, and, for any $i \geq 1$, $\bT_i^\mu$ denotes a $\mu$-GW tree conditioned to have exactly $i$ vertices. $\emptyset$ always denotes the root of the tree, and $K_u(T)$ denotes the set of children of $u$ in $T$.

Fix $\epsilon>0$. When $\mu$ has finite variance, for $n$ large, $\epsilon n$-nodes in $\bT^\mu_n$ are in fact $\epsilon n/2$-branching points, which we recall are vertices such that two of their children are the root of a subtree of size $\geq \epsilon n/2$:
\begin{lem}
\label{lem:bp}
With high probability as $n \rightarrow \infty$, jointly for all $\epsilon n$-nodes $u$ of $\bT_n^\mu$, there exist $v_1(u), v_2(u)$ two children of $u$ such that 
\begin{align*}
|\theta_{v_1(u)}(\bT_n^\mu)|\geq \epsilon n/2, \qquad |\theta_{v_2(u)}(\bT_n^\mu)|\geq \epsilon n/2, \\
\text{ and } \, \sum_{w \in K_u(\bT_n^\mu), w \neq v_1(u),v_2(u)}|\theta_{w}(\bT_n)| = o(n).
\end{align*}
\end{lem}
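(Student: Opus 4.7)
The plan is to leverage the convergence of the rescaled contour function of $\bT_n^\mu$ to a constant multiple of the normalized Brownian excursion $\be$ (Theorem~\ref{thm:cvcontour} applied with $\alpha=2$, which covers the finite-variance case), together with the well-known almost-sure binariness of the Brownian CRT. Invoking the Skorokhod representation theorem, I would assume that $(B_n/n)\, C_{2nt}(\bT_n^\mu) \to \tilde{c}\, \be_t$ holds almost surely, uniformly in $t \in [0,1]$, for a suitable constant $\tilde{c}>0$.

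The central ingredient is the following almost-sure property of $\be$: for every $\gamma>0$, a.s. there do not exist four times $t_1<t_2<t_3<t_4$ in $[0,1]$ with $\be_{t_j}=\inf_{[t_1,t_4]}\be$ for $j=1,\ldots,4$ and $t_{j+1}-t_j\geq\gamma$ for $j=1,2,3$. Such a configuration would produce a point of the Brownian CRT with at least four distinct pre-images in the excursion, contradicting the a.s. fact that every branching point has exactly three pre-images. A standard tightness/compactness argument combined with the uniform convergence of the rescaled contours transfers this to a discrete statement: for every $\gamma>0$, with probability tending to $1$, no vertex of $\bT_n^\mu$ admits four visits of its contour function at its own height (with the contour staying above that height in between) pairwise separated by at least $\gamma n$ time units.

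I would then conclude by contradiction. Suppose the lemma fails: then for some $\delta>0$, along a subsequence and on an event of positive limiting probability, there is an $\epsilon n$-node $u_n$ of $\bT_n^\mu$ for which either (Case 1) at most one child has subtree of size $\geq\epsilon n/2$, or (Case 2) two such children $v_1(u_n), v_2(u_n)$ exist but the remaining children have subtrees summing to at least $\delta n$. In Case 1, a splitting witnessing the $\epsilon n$-node property forces one side of the split to consist only of children with subtree $<\epsilon n/2$ and total mass $\geq\epsilon n$; a greedy selection among the visits of $u_n$ to itself in the corresponding contour region yields four visits of $u_n$ pairwise separated by $\geq\epsilon n/3$. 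In Case 2, the four visits bracketing the explorations of $v_1(u_n)$ and $v_2(u_n)$ are pairwise separated by $\geq\epsilon n$ within each bracket, and the excess mass $\geq\delta n$, lying in at least one of the three regions (before $v_1$, between $v_1$ and $v_2$, or after $v_2$) with contribution $\geq\delta n/3$, creates an additional gap $\geq 2\delta n/3$ between two of these (or neighbouring) visits.

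Both cases contradict the discrete binariness statement, completing the proof. The main obstacle is this case analysis, which extracts four macroscopically separated contour visits of $u_n$ from the combinatorial data of its children's subtree sizes: although elementary, it requires carefully distinguishing the positions of the \emph{extra mass} relative to $v_1(u_n)$ and $v_2(u_n)$, and a small amount of bookkeeping to verify that the four chosen visits are genuinely distinct and pairwise separated by macroscopic amounts (with explicit constants depending only on $\epsilon$ and $\delta$, not on $n$).
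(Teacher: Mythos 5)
Your argument is correct and follows essentially the same route as the paper, which disposes of this lemma in one sentence by citing the convergence of the rescaled contour function together with the a.s. distinctness of local minima of the Brownian excursion — equivalently, the a.s. binariness of the CRT (no point with four preimages) that you make explicit and then transfer to the discrete tree by Skorokhod coupling and compactness. One small correction in Case 1: a single side of the split (total mass $\geq \epsilon n$, each child's subtree $< \epsilon n/2$) only guarantees \emph{three} macroscopically separated visits of $u_n$ (two consecutive blocks of mass $\geq \epsilon n/4$ each), so the fourth separated visit must be supplied by the opposite side of the split, which carries mass $\geq \epsilon n$ by the definition of an $\epsilon n$-node; with that adjustment the case analysis goes through.
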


In other therms, if the tree splits at the level of $u$ into at least two macroscopic components, then with high probability it splits into exactly two of them. This is a well-known fact, direct consequence of the convergence of Theorem \ref{thm:cvcontour} and the fact that the local minima of the normalized Brownian excursion are almost surely distinct. Thus, exactly two children of each $\epsilon n$-node are the root of a 'large' subtree, while the sum of the sizes of all other subtrees rooted in a child of this node is $o(n)$. Therefore, investigating $\epsilon n$-nodes boils down to investigating $\epsilon n$-branching points.

In order to prove that faces are asymptotically colored in an i.i.d. way, remark that, a white $\epsilon n$-branching point of $\cT_n^\circ$ being given, there are two possible cases: either its two white grandchildren with a large subtree $v_1(u), v_2(u)$ have the same black parent (see Fig. \ref{fig:twocases}, top-left) which provides a large black face in the lamination; or they have two different black parents (see Fig. \ref{fig:twocases}, top-right) which provides a large white face. Finally, remark that the event that $v_1(u), v_2(u)$ have the same black parent, conditionally to the number of white grandchildren of $u$, is independent of the rest of the tree.

The proof therefore has two different steps. We first prove that the distribution of the colors of the faces asymptotically does not depend on the shape of the tree (this means that it is asymptotically independent of the colored lamination-valued process $(\bL_{c \tilde{B}_n}^\bullet(\cT_n))_{0 \leq c \leq M}$ stopped at any finite time $M$). This step is done by shuffling branching points in the tree, in such a way that the shape of the tree is not changed much. In a second time, we prove that the distribution of the colors of the largest faces in the final lamination indeed converges towards i.i.d. random variables, and compute the asymptotic probability that a large face is colored black.

Let us first define a transformation on bi-type trees, which allows to introduce additional randomness in the degree distribution of white branching points without changing the overall shape of this tree. The image $\tilde{\cT}_n$ of the random tree $\cT_n$ by this transformation shall be distributed as $\cT_n$, and their black processes shall in addition be close with high probability. Furthermore, $\bL_\infty^\bullet(\tilde{\cT}_n)$ shall be close to $\bL_\infty^{(\alpha),p}$ for some $p \in [0,1]$, which proves Theorem \ref{thm:convergenceblackwhite} (iii).

The idea of the transformation is to randomize a small part of the tree $\cT_n$, so that the whole black process $(\bL_c^\bullet(\cT_n))_{c \geq 0}$ does not change much. To this end, we associate to each 'large' face of $\bL_\infty^\bullet(\cT_n)$ a white branching point of $\cT_n^\circ$: the vertex coded by this face if the face is white, and the parent of this vertex if it is black. Then, $\epsilon>0$ being given, one shuffles some well-chosen branching points in the tree, so that white $\epsilon n$-branching points of $\cT_n^\circ$ are still $\epsilon n$-branching points after this shuffling, but the coloration of the face that they code is randomized. Indeed, although we are able to compute the limiting joint distribution of the degrees of the branching points in a conditioned GW-tree, it is not clear at first sight that this distribution is asymptotically independent of the shape of the tree. This transformation allows us to prove it, by shuffling a large number of $\rho n$-branching points (for $0 < \rho < \epsilon$) with the $\epsilon n$-branching points of the initial tree. 

Let us state it properly. For $\epsilon>\eta>0$ two constants, we define the set $\kBT_n^{\epsilon,\eta}$ as the set of bi-type trees $T_n$ with $n$ white vertices, such that there exists a white vertex $u \in T_n$ satisfying $|\theta_u(T_n^\circ)| \in (\eta n, \epsilon n)$. For any tree $T_n \in \kBT_n^{\epsilon,\eta}$, we define a shuffling operation.

\begin{defi}[The shuffling operation]
\label{def:epsilonshuffling}
Fix three constants $\epsilon>\eta>\rho>0$ and take $T_n \in \kBT_n^{\epsilon,\eta}$. We construct the shuffled tree $T_n^{\epsilon,\eta,\rho}$ as follows: take $u$ a white vertex of $T_n$ such that $|\theta_u(T^\circ_n)| \in (\eta n,\epsilon n)$. Let $E \coloneqq E_{\epsilon n}(T^\circ_n) \cup E_{\rho n}(\theta_u(T^\circ_n))$, the set made of all white $\epsilon n$-branching points of $T_n^\circ$ and all white $\rho n$-branching points of the white subtree rooted in $u$ (remark that there is no $\epsilon n$-branching point in this subtree, by definition of $u$). Since $\rho<\epsilon$, $\epsilon n$-branching points are also $\rho n$-branching points and thus $|E| \leq \rho^{-1}$ (notice that $|E|$ is random anyway). Let $U_1,\ldots, U_{|E|}$ be the elements of $E$, sorted in lexicographical order. For each $i \leq |E|$, denote by $v_1(U_i), v_2(U_i)$, in lexicographical order, the two grandchildren of $U_i$ whose subtrees are the largest (in case of equality, arbitrarily pick two that are larger than all others). 
Define the tree $T_n^{\epsilon,\eta,\rho}$ from $T_n$ as follows: denote by $S(U_i)$ the part of the subtree $\theta_{U_i}(T_n) \backslash U_i$, where one also "cuts" the edges between $v_1(U_i)$, $v_2(U_i)$ and its black parent(s). See Fig. \ref{fig:twocases} for an example. We now take $\sigma$, a permutation of $\llbracket 1, |E| \rrbracket$ chosen uniformly at random, and exchange the $S(U_i)$'s according to $\sigma$, reattaching the half-edges which lead to $v_1(U_i), v_2(U_i)$ to $S_{\sigma(i)}$. In addition, each black vertex keeps its original label. See Fig. \ref{fig:twocases} for an example of this shuffling of $S_i$'s.
\end{defi}

We claim that, for any $\epsilon>\eta>\rho>0$, any $c \leq \tilde{B}_n \log n$, with high probability the Hausdorff distance between $\bL_c^\bullet(T_n)$ and $\bL^\bullet_c(T^{\epsilon,\eta,\rho}_n)$ is bounded from above by the following quantity:
\begin{align*}
C_\rho(T_n) \coloneqq \frac{4\pi}{n} \sum_{u \in E_{\rho n}(T^\circ_n)} \sum_{v \in K^{(-2)}_u(T^\circ_n)} |\theta_v(T^\circ_n)|,
\end{align*}
where, for any $u \in T_n$, $K^{(-2)}_u(T^\circ_n)$ denotes the union of the set of children $v$ of $u$ in $T_n^\circ$ whose subtree $\theta_v(T_n^\circ)$ has size less than $\rho n$.

\begin{lem}
\label{lem:Crho}
Let $\epsilon > \eta > \rho>0$, and take a tree $T_n \in \kBT_n^{\epsilon,\eta}$. Then, with high probability, uniformly for $0 \leq c \leq \log n$:
\begin{align*}
d_H\left( \bL_{c\tilde{B}_n}^\bullet(T_n), \bL_{c\tilde{B}_n}^\bullet(T^{\epsilon, \eta, \rho}_n) \right) \leq C_\rho(T_n).
\end{align*}
\end{lem}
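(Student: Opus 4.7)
The proof is face by face. The shuffling preserves the labels of the black vertices, so for each $v$ with label $\leq c\tilde{B}_n$ it suffices to compare $F_v(T_n)$ with $F_v(T_n^{\epsilon,\eta,\rho})$, or to find a nearby face of the other lamination. Let $M := \sum_{U_i \in E} |S(U_i)|$ denote the total number of vertices displaced. Since $E \subset E_{\rho n}(T_n^\circ)$ and the bi-type alternation of colors controls the vertex count of $S(U_i)$ in $T_n$ by that of the corresponding white sub-subtrees in $T_n^\circ$, one obtains
\begin{equation*}
M \leq 2 \sum_{u \in E_{\rho n}(T_n^\circ)} \sum_{w \in K^{(-2)}_u(T_n^\circ)} |\theta_w(T_n^\circ)|,
\end{equation*}
so that $2\pi M/n \leq C_\rho(T_n)$.

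For a vertex $v \notin \bigcup_i S(U_i)$ (which covers both the ``backbone'' of $T_n$ and the preserved subtrees hanging below $v_1(U_i)$ and $v_2(U_i)$), the local tree structure at $v$ is unchanged, and its contour visit times shift by at most $2M$: the shift is a partial sum of the signed differences in vertex count at each shuffled position, which sum to zero, and therefore each partial sum is bounded by $M$. Via the scaling $t \mapsto e^{-2i\pi t/2n}$ this becomes a Euclidean displacement of at most $2\pi M/n$ on each polygon vertex of $F_v$, and so $d_H(F_v(T_n), F_v(T_n^{\epsilon,\eta,\rho})) \leq 2\pi M/n \leq C_\rho(T_n)$.

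The delicate case is $v \in S(U_j)$: here $v$ is moved bodily to a distant part of the disk, so $F_v(T_n)$ and $F_v(T_n^{\epsilon,\eta,\rho})$ typically lie in unrelated regions. The key observation is that the ``structural'' black vertices inside $S(U_j)$---the black parents of $v_1(U_j)$ and $v_2(U_j)$, whose subtrees in $T_n$ contain the preserved large grandchildren---are at most $2|E| \leq 2\rho^{-1}$ specific vertices across all $U_j \in E$. As labels are chosen uniformly in $\{1,\dots,N^\bullet(T_n)\}$, with $N^\bullet(T_n)$ of order $n$ with high probability by Lemma \ref{lem:nbofcycles}, and $c\tilde{B}_n \leq \tilde{B}_n \log n = o(n)$, with high probability none of these structural vertices receives a label $\leq c\tilde{B}_n$ for any $c \in [0, \log n]$. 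Under this event, every relevant $v \in S(U_j)$ has its entire subtree in $T_n$ inside $S(U_j)$, so that $F_v(T_n)$ is inscribed in an arc of $\bS^1$ of angular size at most $2\pi |S(U_j)|/n$ and lies at Euclidean distance at most $2\pi |S(U_j)|/n$ from $\bS^1$; the same bound holds for $F_v(T_n^{\epsilon,\eta,\rho})$. Since $\bS^1 \subset \bL^\bullet_{c\tilde{B}_n}(T_n) \cap \bL^\bullet_{c\tilde{B}_n}(T_n^{\epsilon,\eta,\rho})$, this case contributes at most $2\pi \max_j |S(U_j)|/n \leq 2\pi M/n \leq C_\rho(T_n)$ to the Hausdorff distance.

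Combining the two cases yields $d_H \leq C_\rho(T_n)$, uniformly for $c \in [0, \log n]$. The principal obstacle is the second case: one must rule out the existence of a black vertex inside some $S(U_j)$ that simultaneously has a small label and a macroscopic subtree in $T_n$, for otherwise the shuffling could move its large face to a distant region of the disk. The uniform random labelling of the black vertices together with $c\tilde{B}_n = o(n)$ makes this event negligible, which is precisely what restricts the time horizon to $c \leq \log n$ in the statement.
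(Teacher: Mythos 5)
Your proof is correct and follows essentially the same strategy as the paper's: bound the displacement of every face by the total number of vertices moved by the shuffling (which is controlled by $C_\rho(T_n)$), and note that the only faces that could be displaced macroscopically are those coded by the at most $2|E|$ black vertices carrying a large subtree inside some $S(U_i)$, which with high probability receive no label $\leq \tilde{B}_n \log n$ under the uniform labelling. You simply make explicit the contour-time-shift estimate and the ``small face near the circle'' estimate that the paper compresses into ``the result follows.''
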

Notice that this is not true for all $c$, and in particular not for $c=\infty$, as colors of large faces may be changed by the transformation of Definition \ref{def:epsilonshuffling}.

\begin{proof}
By shuffling a certain subset of $E_{\rho n}(T^\circ_n)$ as stated in Definition \ref{def:epsilonshuffling}, one moves subtrees rooted in children and grandchildren in $T_n$ of a white $\rho n$-branching point of $T_n^\circ$. In particular, using the fact that the number of black vertices in a subtree of $T_n$ is less than the number of white vertices in this subtree, the total number of vertices moved by the shuffling operation is at most $\sum_{u \in E_{\rho n}(T^\circ_n)} \sum_{v \in K^{(-2)}_u(T_n)} 2 |\theta_v(T^\circ_n)|$. 
Furthermore, with high probability, up to time $\tilde{B}_n \log n$ there is no black face of area larger whose color is changed between both colored lamination-valued processes. Indeed, there are at most $2 |E|$ black vertices with a subtree of size larger than $2 \epsilon n$ in $T_n$ that are moved by these operations. Thus, with high probability none of them has a label $\leq \tilde{B}_n \log n$.
The result follows.
\end{proof}

\begin{figure}[!h]
\caption{Top: the two possible cases for a white branching point $u$ of the tree $T_n$: either the two larger subtrees of grandchildren of $u$ have the same black parent (left), or two different black parents (right). The part that is (possibly) shuffled by the transformation of Definition \ref{def:epsilonshuffling} is in green (resp. red).
Bottom: after having switched the green and red parts, in the tree $T_n^{{\epsilon,\eta,\rho}}$. Remark that the set of degrees of the vertices stays the same on top and bottom.}
\label{fig:twocases}
\center
\begin{tabular}{c c c}

\begin{tikzpicture}
\draw[green] (-1.2,1) node{$S(u)$};
\draw[green, fill=green!10] plot [smooth cycle] coordinates  {({-.2*sqrt(2)/2},{.2*sqrt(2)/2}) (-.7,.3) (-.7,1.7) (0,2.5) (.7,1.7) (.7,.3) ({.2*sqrt(2)/2},{.2*sqrt(2)/2})};
\draw (0,0) -- (0,1) -- (-1,2); 
\draw (0,1) -- (1,2);
\draw[dashed] (0,1) -- (.5,1.25) -- (.5,1.05) -- cycle;
\draw[dashed] (0,1) -- (-.2,2) -- (.2,2) -- cycle;
\draw[dashed] ({.2*sqrt(2)/2},{.2*sqrt(2)/2}) -- (.6,.6) -- (.3,.6) -- cycle;
\draw[dashed] ({-.2*sqrt(2)/2},{.2*sqrt(2)/2}) -- (-.6,.6) -- (-.3,.6) -- cycle;
\draw[dashed] ({-1-.2*sqrt(2)/2},{2+.2*sqrt(2)/2}) -- (-1.5,2.5);
\draw[dashed] ({-1+.2*sqrt(2)/2},{2+.2*sqrt(2)/2}) -- (-.5,2.5);
\draw[dashed] ({1-.2*sqrt(2)/2},{2+.2*sqrt(2)/2}) -- (.5,2.5);
\draw[dashed] ({1+.2*sqrt(2)/2},{2+.2*sqrt(2)/2}) -- (1.5,2.5);
\draw (0,-.5) node{$u$};
\draw (-1.5,1.6) node{$v_1(u)$};
\draw (1.5,1.6) node{$v_2(u)$};
\draw (-1,2) -- (-1.3,3) -- (-.7,3) -- cycle;
\draw (1,2) -- (.7,3) -- (1.3,3) -- cycle;
\draw[fill=white] (0,0) circle (.2);
\draw[fill=white] (1,2) circle (.2);
\draw[fill=white] (-1,2) circle (.2);
\draw[fill=black] (0,1) circle (.2);
\end{tikzpicture}
&
\begin{tikzpicture}
\draw[white] (0,0) -- (2,0);
\draw (1,0) node{Two vertices in the tree $T_n$};
\end{tikzpicture}
&
\begin{tikzpicture}
\draw[red] (0,2) node{$S(u')$};
\draw[red, fill=red!10] plot [smooth cycle] coordinates  {(-1.5,1.5) (1.5,1.5) (1.3,.3) (-1.3,.3)};
\draw[dashed] (0,0) -- (-1,.5);
\draw[dashed] (0,0) -- (0,1);
\draw[dashed] (-1,1) -- (-.5,1.3);
\draw[dashed] (1,2) -- (2,3);
\draw (-1,2) -- (-1,1) -- (0,0) -- (1,1) -- (1,2);
\draw (-1,2) -- (-1.3,3) -- (-.7,3) -- cycle;
\draw (1,2) -- (.7,3) -- (1.3,3) -- cycle;
\draw (0,-.5) node{$u'$};
\draw (-1.5,1.6) node{$v_1(u')$};
\draw (1.5,1.6) node{$v_2(u')$};
\draw[fill=white] (0,0) circle (.2);
\draw[fill=white] (1,2) circle (.2);
\draw[fill=white] (-1,2) circle (.2);
\draw[fill=black] (1,1) circle (.2);
\draw[fill=black] (-1,1) circle (.2);
\end{tikzpicture}
\\
\begin{tikzpicture}
\draw[red] (0,2) node{$S(u')$};
\draw[red, fill=red!10] plot [smooth cycle] coordinates  {(-1.5,1.5) (1.5,1.5) (1.3,.3) (-1.3,.3)};
\draw[dashed] ({-1-.2*sqrt(2)/2},{2+.2*sqrt(2)/2}) -- (-1.5,2.5);
\draw[dashed] ({-1+.2*sqrt(2)/2},{2+.2*sqrt(2)/2}) -- (-.5,2.5);
\draw[dashed] ({1-.2*sqrt(2)/2},{2+.2*sqrt(2)/2}) -- (.5,2.5);
\draw[dashed] ({1+.2*sqrt(2)/2},{2+.2*sqrt(2)/2}) -- (1.5,2.5);
\draw[dashed] (0,0) -- (-1,.5);
\draw[dashed] (0,0) -- (0,1);
\draw[dashed] (-1,1) -- (-.5,1.3);
\draw (-1,2) -- (-1,1) -- (0,0) -- (1,1) -- (1,2);
\draw (-1,2) -- (-1.3,3) -- (-.7,3) -- cycle;
\draw (1,2) -- (.7,3) -- (1.3,3) -- cycle;
\draw (0,-.5) node{$u$};
\draw (-1.5,1.6) node{$v_1(u)$};
\draw (1.5,1.6) node{$v_2(u)$};
\draw[fill=white] (0,0) circle (.2);
\draw[fill=white] (1,2) circle (.2);
\draw[fill=white] (-1,2) circle (.2);
\draw[fill=black] (1,1) circle (.2);
\draw[fill=black] (-1,1) circle (.2);
\end{tikzpicture}
&
\begin{tikzpicture}

\draw[white] (0,0) -- (2,0);
\draw (1,0) node{The same vertices, exchanged in $T_n^{\epsilon,\eta,\rho}$};
\end{tikzpicture}
&
\begin{tikzpicture}
\draw[green] (-1.2,1) node{$S(u)$};
\draw[green, fill=green!10] plot [smooth cycle] coordinates  {({-.2*sqrt(2)/2},{.2*sqrt(2)/2}) (-.7,.3) (-.7,1.7) (0,2.5) (.7,1.7) (.7,.3) ({.2*sqrt(2)/2},{.2*sqrt(2)/2})};
\draw (0,0) -- (0,1) -- (-1,2); 
\draw (0,1) -- (1,2);
\draw[dashed] (0,1) -- (.5,1.25) -- (.5,1.05) -- cycle;
\draw[dashed] (0,1) -- (-.2,2) -- (.2,2) -- cycle;
\draw[dashed] ({.2*sqrt(2)/2},{.2*sqrt(2)/2}) -- (.6,.6) -- (.3,.6) -- cycle;
\draw[dashed] ({-.2*sqrt(2)/2},{.2*sqrt(2)/2}) -- (-.6,.6) -- (-.3,.6) -- cycle;
\draw[dashed] (1,2) -- (2,3);
\draw (0,-.5) node{$u'$};
\draw (-1.5,1.6) node{$v_1(u')$};
\draw (1.5,1.6) node{$v_2(u')$};
\draw (-1,2) -- (-1.3,3) -- (-.7,3) -- cycle;
\draw (1,2) -- (.7,3) -- (1.3,3) -- cycle;
\draw[fill=white] (0,0) circle (.2);
\draw[fill=white] (1,2) circle (.2);
\draw[fill=white] (-1,2) circle (.2);
\draw[fill=black] (0,1) circle (.2);
\end{tikzpicture}
\end{tabular}
\end{figure}
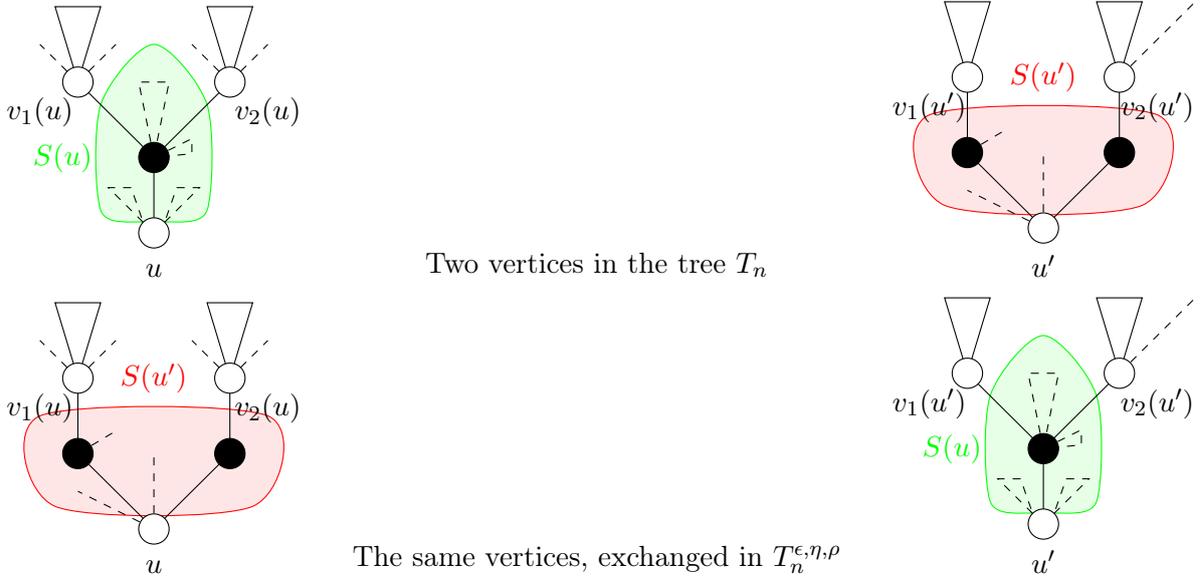

The idea is now to apply the transformation of Definition \ref{def:epsilonshuffling} to the tree $\cT_n$. It appears that one can choose the parameters $\eta$ and $\rho_n$ (depending on $n$) carefully, so that the colored lamination-valued process associated to $\cT_n^{\epsilon, \eta, \rho_n}$ converges in distribution towards $((\bL_c^{(2)})_{c \geq 0}, \bL_\infty^{(2), p_\nu})$ for some $p_\nu \in [0,1]$.

\begin{lem}
\label{lem:propertiesrandom}
Fix $\epsilon>0$ and set $\eta = \epsilon/6$. The following holds:
\begin{itemize}
\item[(i)] For all $n \geq 1$, for all $\rho >0$ such that $\rho<\eta$, conditionally to the fact that $\cT_n$ belongs to $\kBT_n^{\epsilon,\eta}$, $\cT_n^{\epsilon,\eta,\rho} \overset{(d)}{=} \cT_n$.
\item[(ii)] With high probability, $\cT_n$ belongs to $\kBT_n^{\epsilon,\eta}$.
\item[(iii)] 
Recall that $\mu$ is defined as the probability measure such that $\cT_n^\circ$ is a $\mu$-GW conditioned to have $n$ vertices. Define $K_\epsilon(\cT^\circ_n)$ as the (random) number of white $\epsilon n$-branching points in $\cT^\circ_n$, and label them $U_1, \ldots, U_{K_\epsilon(\cT^\circ_n)}$ in lexicographical order. Assume that $\cT_n$ belongs to $\kBT_n^{\epsilon, \eta}$. Then, for any $\epsilon'>0$, one can find $\rho>0$ such that, as $n \rightarrow \infty$, uniformly in $1 \leq j \leq \epsilon^{-1}$, uniformly in $k_1, \ldots, k_j \geq 1$:
\begin{align*}
\left| \P \left( \bigcup_{i=1}^{j} \left\{ k_{U_i}(\cT^{\circ,\epsilon,\eta,\rho}_n)=k_i \right\} \Big| K_\epsilon(\cT^\circ_n) = j \right)-  (\sigma_\mu^2)^{-j} \prod_{i=1}^{j} \mu_k k(k-1) \right| \leq \epsilon' + o(1),
\end{align*}
the $o(1)$ depending only on $n$.
\end{itemize}
\end{lem}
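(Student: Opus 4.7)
For (i), I would check that the shuffling preserves the joint multiset of white and black vertex degrees: the branching point $U_i$ together with its two distinguished grandchildren $v_1(U_i),v_2(U_i)$ and the subtrees rooted at the latter remain at position $U_i$, only the attached structure $S(U_i)$ being permuted among the $U_i\in E$ by the uniform $\sigma$. Hence $W_{\mu_*,\wn}(\cT_n^{\epsilon,\eta,\rho})=W_{\mu_*,\wn}(\cT_n)$. Partitioning the relevant set of trees into fibers indexed by the ``skeleton data'' $\bigl(E,(U_i,v_1(U_i),v_2(U_i))_{i},(\theta_{v_j(U_i)})_{i,j}\bigr)$, one sees that for each fixed $\sigma$ the map $T\mapsto T^{\epsilon,\eta,\rho}$ is a bijection onto a fiber of the same $W_{\mu_*,\wn}$-weight; averaging over the uniform $\sigma$ therefore preserves the conditional law on $\kBT_n^{\epsilon,\eta}$. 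The uniform labelling of black vertices travels with the $S(U_i)$, so it remains uniform. For (ii), I would use $\cT_n^\circ\overset{(d)}{=}\bT_n^\mu$ with $\mu$ critical of finite variance (Theorem \ref{thm:reachabledistributions}) together with the convergence of the rescaled contour function of $\cT_n^\circ$ to $\be$ (Theorem \ref{thm:cvcontour}); the event $\cT_n\in\kBT_n^{\epsilon,\eta}$ corresponds to the existence of a subexcursion of $\be$ of relative length in $(\epsilon/6,\epsilon)$, which has probability $1$.

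For (iii), the decisive identity is that $U_i$ keeps its two grandchildren $v_1(U_i),v_2(U_i)$ and receives the local structure of $U_{\sigma(i)}$, so that
\begin{equation*}
k_{U_i}\bigl(\cT_n^{\circ,\epsilon,\eta,\rho}\bigr)=k_{U_{\sigma(i)}}\bigl(\cT_n^\circ\bigr).
\end{equation*}
Conditionally on $\cT_n$, the vector $(k_{U_i}(\cT_n^{\circ,\epsilon,\eta,\rho}))_{i=1}^{|E|}$ is therefore a uniform rearrangement of the multiset $(k_U(\cT_n^\circ))_{U\in E}$. Since $|E_{\epsilon n}(\cT_n^\circ)|\le\epsilon^{-1}$ is bounded while $|E_{\rho n}(\theta_u(\cT_n^\circ))|$ tends to $\infty$ in probability as $\rho\downarrow 0$ (using the convergence of $\cT_n^\circ$ to the CRT and the density of its branch points), reading off $j\le\epsilon^{-1}$ entries of a uniformly shuffled sequence of size $|E|$ is, up to an error $O(j^2/|E|)$ coming from the without-replacement correction, the same as drawing $j$ i.i.d.\ samples from the empirical degree measure
\begin{equation*}
Q_n^\rho(k)\coloneqq \frac{1}{|E_{\rho n}(\theta_u(\cT_n^\circ))|}\sum_{U\in E_{\rho n}(\theta_u(\cT_n^\circ))}\mathds{1}_{\{k_U(\cT_n^\circ)=k\}}.
\end{equation*}

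The main obstacle will be showing that $Q_n^\rho\to Q_\infty$ in total variation in probability, as $n\to\infty$ and then $\rho\downarrow 0$, where $Q_\infty(k)\coloneqq \mu_k k(k-1)/\sigma_\mu^2$ (this is indeed a probability measure, since criticality gives $\E[X(X-1)]=\sigma_\mu^2$). The heuristic is that the event that a vertex of degree $k$ is a $\rho n$-branching point has probability proportional to $\binom{k}{2}\P(|\bT^\mu|\ge\rho n)^2$ in the limit, so the degree of a $\rho n$-branching point is asymptotically size-biased by $k(k-1)$. To turn this into a proof I would combine the local limit theorem (Theorem \ref{thm:llt}) with a spinal decomposition of $\cT_n^\circ$ around a marked vertex to compute $\P(k_U=k\mid U\in E_{\rho n}(\theta_u(\cT_n^\circ)))$ to leading order in $n$, and then run a second-moment computation, exploiting the asymptotic independence of distinct subtrees, to upgrade the first-moment estimate into convergence in probability. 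Finite variance of $\mu$ provides uniform integrability of the size-biased distribution, controlling the tail in $k$ uniformly in $\rho$ and thus passing from pointwise to total-variation convergence, which is precisely what the product form on the right-hand side of (iii) requires; the bounded contribution of the $\le\epsilon^{-1}$ vertices in $E_{\epsilon n}(\cT_n^\circ)$ to $Q_n^\rho$ is negligible when $\rho$ is small, and the claimed bound $\epsilon'+o(1)$ follows upon choosing $\rho$ small and then $n$ large.
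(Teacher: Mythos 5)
Your proposal follows essentially the same route as the paper: (i) via weight-preservation of the shuffling map, (ii) via the a.s. uniqueness of local minima of the limiting Brownian excursion, and (iii) via the key identity $k_{U_i}(\cT_n^{\circ,\epsilon,\eta,\rho})=k_{U_{\sigma(i)}}(\cT_n^\circ)$, which reduces the claim to the concentration of the empirical degree distribution of the many $\rho n$-branching points of $\theta_u(\cT_n^\circ)$ around $\mu_k k(k-1)/\sigma_\mu^2$. That concentration is precisely the content of the paper's Lemma \ref{lem:lienaveckesten}, established there by the very ingredients you name (local limit theorem, a Kesten/spine decomposition for the first moment, and conditional independence of subtrees given their sizes for the joint law), so your outline is correct, with the heaviest computation left at the level of a sketch.
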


Let us see how it implies Theorem \ref{thm:convergenceblackwhite} (iii). First, by Lemma \ref{lem:propertiesrandom} (ii) and (iii), for any $M>0$ one can choose $\rho_M>0$ such that, for $n$ large enough, uniformly for $j \leq \epsilon^{-1}$, uniformly for any $k_1, \ldots, k_j \in \Z_+$:
\begin{align*}
\left| \P \left( \bigcup_{i=1}^j \left\{ k_{U_i}\left(\cT_n^{\circ, \epsilon, \epsilon/6,\rho_M}\right)=k_i \right\} \Big| K_\epsilon=j \right) - \left( \sigma_\mu^2 \right)^{-j} \prod_{i=1}^j \mu_{k_i} k_i(k_i-1) \right| < M^{-1}.
\end{align*}
On the other hand, at $\rho>0$ fixed, Lemma \ref{lem:bp} implies that $C_\rho(\cT_n) \overset{\P}{\rightarrow} 0$ in probability, as $n \rightarrow \infty$. Therefore, by diagonal extraction, one can find a sequence of parameters $(M_n)_{n \geq 1}$ such that the tree $\tilde{\cT}_n \coloneqq \cT_n^{\epsilon, \epsilon/6, \rho_{M_n}}$ satisfies the following conditions (using Lemma \ref{lem:Crho} to get (H2)):
\begin{itemize}
\item[(H1)] For all $n \geq 0$, $\tilde{\cT}_n \overset{(d)}{=} \cT_n$.
\item[(H2)] In probability,
\begin{align*}
\underset{0 \leq c \leq \log n}{\sup} d_H \left( \bL_{c\tilde{B}_n}^\bullet(\cT_n), \bL_{c\tilde{B}_n}^\bullet(\tilde{\cT}_n) \right) \overset{\P}{\rightarrow} 0.
\end{align*}
\item[(H3)] Uniformly for any $j \leq \epsilon^{-1}$, uniformly for any $k_1, \ldots, k_j \in \Z_+$
\begin{align*}
\P \left( \bigcup_{i=1}^j \left\{ k_{U_i}\left(\tilde{\cT}_n^\circ\right)=k_i \right\} \Big| K_\epsilon\left(\tilde{\cT}_n^\circ\right)=j \right) \underset{n \rightarrow \infty}{\rightarrow} \left( \sigma_\mu^2 \right)^{-j} \prod_{i=1}^j \mu_{k_i} k_i(k_i-1),
\end{align*}
where we recall that $U_i$ denotes the $i$-th $\epsilon n$-branching point of $\tilde{\cT}^\circ_n$.
\end{itemize}
Properties (H2) and (H3) mean in particular that the joint degree distribution of the $\epsilon n$-branching points in $\tilde{\cT}^\circ_n$ is asymptotically independent of the shape of the tree. We can now use this transformation, and notably (H3), to compute the value of the parameter $p_\nu$. To this end, we use the fact that, the number of white grandchildren of an $\epsilon n$-branching point $u$ being given equal to $k \geq 2$, the event that $v_1(u), v_2(u)$ have the same black parent is independent of the rest of the tree. Thus, by (H3), all faces that correspond to $\epsilon n$-branching points of $\cT_n$ in the limiting lamination are colored black in an i.i.d. way, with probability $p_\nu \in [0,1]$ given by the following proposition:

\begin{prop}
\label{prop:computation}
If $\nu$ has finite variance, then $p_\nu$ has the form:
\begin{align*}
p_\nu = \frac{\sigma_\nu^2}{\sigma_\nu^2+1}
\end{align*}
\end{prop}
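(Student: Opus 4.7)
The plan is to combine the size-biased degree distribution at white $\epsilon n$-branching points (given by (H3) of Lemma~\ref{lem:propertiesrandom}) with a direct exchangeability computation for the sibling structure of large grandchildren. I would first use Lemma~\ref{lem:equivalence} to represent $\cT_n$ as a $(\mub,\mun)$-BTSG with $\mub$ and $\mun$ as in \eqref{eq:defmubmun}; a short calculation shows that $\mub$ is precisely the Poisson law of parameter $1-\nu_0$. The number of white grandchildren of a white vertex is then the compound sum $X = Y_1 + \cdots + Y_M$ with $M \sim \mub$ and $Y_i$ i.i.d.\ of law $\mun$, and this is the offspring law $\mu$ of the white reduced tree $\cT_n^\circ$. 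A Wald-type variance identity yields
\begin{align*}
\sigma_\mu^2 \;=\; \E[M]\,\mathrm{Var}(\mun) + \mathrm{Var}(M)\,\E[\mun]^2 \;=\; \sigma_\nu^2 + 1.
\end{align*}

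I would next fix an $\epsilon n$-branching point $u$ of $\cT_n^\circ$ with $k_u(\cT_n^\circ)=k$ and condition on the \emph{sibling partition} $(n_1,\ldots,n_m)$ of its $k$ white grandchildren, where $n_i$ is the number of grandchildren sharing the $i$-th black child of $u$ (so $\sum_i n_i = k$). Given this partition and the global constraint $|\cT_n|=n$, the $k$ subtrees of $\cT_n$ rooted at these grandchildren are i.i.d.\ $(\mub,\mun)$-BTSGs conditioned only on the sum of their sizes, hence their sizes are \emph{exchangeable}. By Lemma~\ref{lem:bp}, exactly two of these subtrees have size $\geq \epsilon n/2$ with probability tending to $1$; by exchangeability the index of this top pair is uniform over the $\binom{k}{2}$ unordered pairs, so
\begin{align*}
\P\!\left(v_1(u),v_2(u) \text{ share a black parent}\,\Big|\,k_u=k,(n_i)\right) \;=\; \frac{\sum_i n_i(n_i-1)}{k(k-1)}.
\end{align*}
Averaging over the sibling partition gives the conditional probability $\E[\sum_i Y_i(Y_i-1)\mid X=k]/(k(k-1))$.

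Combining this with the size-biased weighting from (H3) and using $\mu_k = \P(X=k)$, the sum telescopes:
\begin{align*}
p_\nu \;=\; \sum_{k \geq 2} \frac{\mu_k\, k(k-1)}{\sigma_\mu^2} \cdot \frac{\E\!\left[\sum_i Y_i(Y_i-1)\mid X=k\right]}{k(k-1)} \;=\; \frac{1}{\sigma_\mu^2}\,\E\!\left[\sum_{i=1}^M Y_i(Y_i-1)\right].
\end{align*}
A second application of Wald's identity gives $\E\bigl[\sum_{i=1}^M Y_i(Y_i-1)\bigr] = \E[M]\,\E_{\mun}[Y(Y-1)] = (1-\nu_0)\cdot \sum_y y(y-1)\nu_y/(1-\nu_0) = \sigma_\nu^2$, using criticality of $\nu$ to cancel the $\sum y\nu_y=1$ term. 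Hence $p_\nu = \sigma_\nu^2/\sigma_\mu^2 = \sigma_\nu^2/(\sigma_\nu^2+1)$.

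The main obstacle is justifying the exchangeability step under the global conditioning $|\cT_n|=n$: although the $k$ grandchild subtrees are no longer independent, they remain exchangeable in joint distribution, which is exactly what is needed to conclude that the two largest are a uniform unordered pair among the $k$ grandchildren. A secondary technical point is to promote the pointwise-in-$k$ identity to the full sum: since the size-biased weights $\mu_k k(k-1)/\sigma_\mu^2$ sum to $1$ and the conditional probability is bounded by $1$, dominated convergence combined with (H3) (which controls the joint degree distribution uniformly for $j \leq \epsilon^{-1}$ branching points) handles the passage to the limit.
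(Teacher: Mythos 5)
Your proposal is correct and follows essentially the same route as the paper: the size-biased weighting from (H3), the $(\mub,\mun)$ representation with $\mub=\mathrm{Po}(1-\nu_0)$, the combinatorial count $\sum_i a_i(a_i-1)/(k(k-1))$ for the probability that $v_1(u),v_2(u)$ share a black parent given the sibling partition, and the final cancellation via criticality of $\nu$ and $\sigma_\mu^2=\sigma_\nu^2+1$ all match the paper's computation. The only cosmetic differences are that you obtain $\sigma_\mu^2=\sigma_\nu^2+1$ by a Wald-type identity rather than from the generating-function relation \eqref{eq:variance}, and that you justify the uniformity of the top pair by exchangeability under the size conditioning, where the paper instead passes to the unconditioned tree $\cT^{(\mub,\mun)}$ to get outright independence given the grandchild count.
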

Roughly speaking, to get this expression, we split according to the number $k$ of white children of a white branching point in $\cT^\circ_n$, thus computing the conditional probability given $k$ that such a white branching point codes a black face in $\bL_\infty^\bullet(\cT_n)$.

\begin{rk}
As said in Section \ref{sec:intro}, in the case $w = \delta^j$ for some $j \geq 2$, $\nu \coloneqq \frac{j-2}{j-1} \delta^0 + \frac{1}{j-1} \delta^j$, and this formula simplifies in $p_\nu=\frac{j-2}{j-1}$.
\end{rk}

\begin{proof}[Proof of Proposition \ref{prop:computation}]

According to (H3), $p_\nu$ is the limit as $n \rightarrow \infty$ of the sequence $(p_\nu^{(n)})_{n \geq 1}$, where:
\begin{align*}
p^{(n)}_\nu = \left(\sigma_\mu^2\right)^{-1} \sum_{k = 2}^{n-1} \mu_k k(k-1) \P \left( E(\emptyset) \Big| k_{\emptyset}(\cT^\circ_n) = k \right),
\end{align*}
where $E(u)$ is the event that $v_1(u), v_2(u)$ have the same black parent. Indeed, notice that, conditionally to having $k$ white grandchildren, the number $j$ of black children of a vertex is independent of the rest of the tree. Recall from \eqref{eq:defmubmun} the definition of $\mub$ and $\mun$, which are two probability measures satisfying $\cT_n \overset{(d)}{=} \cT_n^{(\mub, \, \mun)}$ for all $n \geq 1$. By construction of the tree, conditioning by $k_{\emptyset}(\cT^\circ_n) = k$ is the same as conditioning by $k_{\emptyset}(\cT^{\circ,(\mub, \, \mun)}) = k$. Hence,
\begin{equation}
\label{eq:pnuuu}
p_\nu = \left(\sigma_\mu^2\right)^{-1} \sum_{k = 2}^{\infty} \mu_k k(k-1) \P \left( E(\emptyset) \Big| k_{\emptyset}(\cT^\circ) = k \right),
\end{equation}
where we write $\cT$ instead of $\cT^{(\mub, \,\mun)}$ by convenience.

Finally, $j$ and $k$ being fixed, what is left to compute is the probability that the two grandchildren of $\emptyset$ with the largest subtrees rooted on them have the same black parent.

In order to compute $\P(E(\emptyset) | k_\emptyset(\cT^\circ)=k)$, remark that there are $k(k-1)$ possibilities for the locations of $v_1(\emptyset)$ and $v_2(\emptyset)$. Assuming that $u$ has $j$ black children, who respectively have $a_1,\ldots, a_j$ white children, the number of possible locations for $(v_1(\emptyset), v_2(\emptyset))$ such that they have the same black parent is $\sum_{i=1}^{j} a_i(a_i-1)$. More precisely, at $k$ fixed:

\begin{align*}
\P \left( E(\emptyset) \big| k_\emptyset(\cT_n^\circ)=k \right) &= \sum_{j=1}^k \P \left( E(\emptyset) \big| k_\emptyset(\cT)=j, k_\emptyset(\cT^\circ)=k \right)\\
&= \sum_{j=1}^k \sum_{\substack{a_1 + \cdots + a_j = k\\ a_1, \ldots, a_j \geq 1}} \P\left( G(a_1, \ldots, a_j) \big| k_\emptyset(\cT^\circ)=k \right) \P\left(E(\emptyset) \big| G(a_1, \ldots, a_j) \right)
\end{align*}
where $G(a_1, \ldots, a_j)$ is the event that $\emptyset$ has $j$ black children, who respectively have $a_1, \ldots, a_j$ white children. Thus, one just computes:
\begin{align*}
\P\left(E(\emptyset) \big| G(a_1, \ldots, a_j) \right) = \frac{1}{k(k-1)} \sum_{i=1}^j a_i(a_i-1)
\end{align*}
and
\begin{align*}
\P\left( G(a_1, \ldots, a_j) \big| k_\emptyset(\cT^\circ)=k \right) = \left(\P \left( k_\emptyset(\cT^\circ)=k \right) \right)^{-1} \mub_j \prod_{i=1}^j \mun_{a_i} = \frac{1}{\mu_k} \mub_j \prod_{i=1}^j \mun_{a_i},
\end{align*}
Hence, one gets for \eqref{eq:pnuuu}:
\begin{align*}
p_\nu &= \frac{1}{\sigma^2_\mu} \, \sum_{k = 2}^{\infty} \sum_{j=1}^k \, \mub_j \, \sum_{\substack{a_1,\ldots, a_j \geq 1 \\ \sum a_i=k}} \, \prod_{i=1}^j \mun_{a_i} \, \left( \sum_{i=1}^j a_i(a_i-1) \right)\\
&= \frac{1}{\sigma^2_\mu} \, \sum_{j = 1}^{\infty} \mub_j \, \sum_{a_1,\ldots, a_j \geq 1} \, \prod_{i=1}^j \mun_{a_i} \, \left( \sum_{i=1}^j a_i(a_i-1) \right) = \frac{1}{\sigma^2_\mu} \, \sum_{j = 1}^{\infty} \mub_j \, \E \left[ \sum_{i=1}^j X_i(X_i-1) \right],
\end{align*}
where the $X_i$'s are i.i.d. random variables of law $\mun$. By definition of $\mun$ and independence of the $X_i$'s, the expectation on the right-hand side is equal to $j \, (1-\nu_0)^{-1} \, \sigma_\nu^2$ since $\nu$ is critical. Thus, checking from \eqref{eq:variance} that $\sigma_\mu^2 = \sigma_\nu^2+1$, one gets:
\begin{align*}
p_\nu = \frac{1}{1-\nu_0} \, \frac{\sigma_\nu^2}{\sigma_\nu^2+1} \, \sum_{j=1}^\infty j \, \mub_j \, = \, \frac{\sigma_\nu^2}{\sigma_\nu^2+1}
\end{align*}
by \eqref{eq:meanmub}.
\end{proof}

We finally need to prove the technical lemma \ref{lem:propertiesrandom}.

\paragraph*{Proof of Lemma \ref{lem:propertiesrandom} (i) and (ii)} The image of any bi-type tree $T_n$ by the transformation of Definition \ref{def:epsilonshuffling} has the same weight as $T_n$, which implies (i).
In order to prove (ii), just observe that, if no subtree of the white reduced tree $\cT^\circ_n$ has size between $\epsilon n/6$ and $\epsilon n$, then its contour function attains at least twice the same local minimum, at two times at which it visits the same white vertex. More precisely, there exists $t_1 < t_2 < t_3 < t_4 \in [0,1]$ such that $C_{2nt_1}(T^\circ_n) = C_{2nt_2}(\cT^\circ_n) = C_{2nt_3}(\cT^\circ_n) = C_{2nt_4}(\cT^\circ_n)$, $C_{2ns}(\cT^\circ_n) \geq C_{2nt_1}(\cT^\circ_n)$ for all $s \in [t_1,t_4]$ and $t_2-t_1, t_3-t_2, t_4-t_3$ are all larger than $\epsilon/6$. The white vertex $u$ visited at these four times satisfies $|\theta_u(\cT^\circ_n)|\geq \epsilon n$, but for any of its children $v$, $|\theta_u(\cT^\circ_n)| < \epsilon n$ (such a vertex $u$ necessarily exists). But with high probability as $n \rightarrow \infty$ this does not occur. Indeed, by Theorem \ref{thm:cvcontour}, $C(\cT^\circ_n)$ converges after renormalization towards the Brownian excursion, whose local minima are almost surely unique.

\paragraph*{Proof of Lemma \ref{lem:propertiesrandom} (iii)}

The third part of this lemma focuses on the distribution of the degree of branching points in the white reduced tree $\cT_n^\circ$. Our main tool is therefore the following proposition, which computes the asymptotic distribution of the number of children of a branching point, in a large monotype size-conditioned tree. Recall that, a distribution $\mu$ being fixed, $\bT$ denotes a $\mu$-GW tree and, for any $n \geq 1$, $\bT_n$ denotes a $\mu$-GW tree conditioned to have $n$ vertices.

\begin{lem}
\label{lem:lienaveckesten}
Fix $\epsilon>0$, and let $\mu$ be a critical distribution in the domain with finite variance $\sigma_\mu^2$. Then:
\begin{itemize}
\item[(i)] For any $n \geq 1$, any $i \geq 2 \epsilon n+1$, any $k \geq 2$,
\begin{align*}
&\P \left( |\bT|=i, \emptyset \in E_{\epsilon n}(\bT), k_\emptyset(\bT)=k \right) = \\
& \qquad \qquad \qquad\mu_k \binom{k}{2} \sum_{q=0}^{i-1-2\epsilon n} \P( |\cF_{k-2}|= q) \sum_{t=\epsilon n}^{i-1-q-\epsilon n} \P(|\bT|=t) \P(|\bT|=i-1-q-t),
\end{align*}
where $\cF_j$ is a forest of $j$ i.i.d. $\mu$-Galton-Watson trees.

\item[(ii)]
Let $U$ be a uniform vertex in $\bT_n$. Then, for any $k \geq 2$:
\begin{align*}
\P \left( k_U(\bT_n) = k \Big| U \in E_{\epsilon n}(\bT_n) \right) \underset{n \rightarrow\infty}{\rightarrow} \mu_k \, k(k-1) \, (\sigma_\mu^2)^{-1},
\end{align*}
where we recall that $E_{\epsilon n}(T)$ denotes the number of $\epsilon n$-branching points in a tree $T$.

\item[(iii)]
Let $K_\epsilon(\bT_n)$ be the (random) number of $\epsilon n$-branching points in $\bT_n$, and denote them by $U_1, \ldots, U_{K_\epsilon(\bT_n)}$ in lexicographical order. Then, for all $j \geq 0$, all $k_1, \ldots, k_j \in \Z_+$:
\begin{align*}
\P \left(\bigcup_{i=1}^j \left\{ k_{U_i}(\bT_n) = k_i \right\} \big|  K_\epsilon(\bT_n) = j \right) \underset{n \rightarrow \infty}{\rightarrow} \left( \sigma_\mu^2\right)^{-j}\prod_{i=1}^j \mu_{k_i} k_i(k_i-1).
\end{align*}
\end{itemize}
\end{lem}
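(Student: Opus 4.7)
The proof has three parts, each building on the previous one. For part (i), I would apply the branching property at the root. Conditional on $k_\emptyset(\bT)=k$, which has probability $\mu_k$, the $k$ subtrees rooted at the children of $\emptyset$ are independent $\mu$-GW trees. To realize the event $\{\emptyset\in E_{\epsilon n}(\bT),\,|\bT|=i\}$, one distinguishes an (ordered) pair of children whose subtrees have sizes $t$ and $i-1-q-t$, both at least $\epsilon n$ (yielding the combinatorial factor $\binom{k}{2}$ together with the inner sum over $t$), while the $k-2$ remaining children form an independent forest $\cF_{k-2}$ of total size $q$; this recovers the claimed formula. The mild overcounting that arises when three or more children happen to have subtrees of size $\geq \epsilon n$ is negligible thanks to Lemma \ref{lem:bp}.

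For part (ii), I would begin by writing, via exchangeability,
\begin{align*}
\P(k_U(\bT_n)=k,\,U\in E_{\epsilon n}(\bT_n)) = \frac{1}{n\,\P(|\bT|=n)}\sum_{u\in\mathcal{U}}\P(u\in\bT,\, k_u=k,\, u\in E_{\epsilon n}(\theta_u(\bT)),\,|\bT|=n).
\end{align*}
At each position $u$, I would decompose $\bT$ into the ``upper part'' $\sigma_u(\bT)$ (the tree obtained by removing the strict descendants of $u$, while keeping $u$ as a leaf) and the subtree $\theta_u(\bT)$. Conditional on $u\in\bT$, the branching property gives that these two pieces are independent and that $\theta_u(\bT)$ is itself a $\mu$-GW tree. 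Using the size identity $|\bT|=|\sigma_u|+|\theta_u|-1$, the numerator factorizes as
\begin{align*}
\sum_i \Phi(n-i+1)\cdot\P(|\bT|=i,\,\emptyset\in E_{\epsilon n}(\bT),\,k_\emptyset(\bT)=k),
\end{align*}
where $\Phi(m):=\sum_u \P(u\in\bT,\,|\sigma_u(\bT)|=m)$ does not depend on $k$. Plugging in the formula of (i), the conditional probability in (ii) takes the form
\begin{align*}
\frac{\mu_k\binom{k}{2}\,R_k(n)}{\sum_{k'\geq 2}\mu_{k'}\binom{k'}{2}\,R_{k'}(n)},
\end{align*}
where $R_k(n)$ is the convolutional quantity from (i) averaged against $\Phi$. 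The heart of the argument is to show that $R_k(n)/R_2(n)\to 1$ as $n\to\infty$, in a sense uniform enough in $k$ to justify swapping limit and summation. This follows from the local limit theorem (Theorem \ref{thm:llt}): the law $\P(|\cF_{k-2}|=\cdot)$ is concentrated on a scale of order $\sqrt{k-2}$, which is negligible compared to the $n$-scale on which the remaining double convolution $\P(|\bT|=t)\P(|\bT|=i-1-q-t)$ varies. Passing to the limit and using the identity $\sum_{k\geq 2}\mu_k k(k-1)=\sigma_\mu^2$ (valid for critical $\mu$ with finite variance) then yields the claimed value $\mu_k k(k-1)/\sigma_\mu^2$.

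For part (iii), I would iterate the decomposition used in (ii). Conditionally on $K_\epsilon(\bT_n)=j$, Lemma \ref{lem:bp} ensures that with high probability each $\epsilon n$-BP has exactly two ``large'' children, so the tree decomposes into $2j+1$ macroscopic blocks connected at the $U_i$'s in a specific genealogical configuration. Conditioning further on this configuration and on the sizes of the blocks, the one-vertex argument of (ii) can be applied locally at each $U_i$: each BP contributes a factor $\mu_{k_i}\binom{k_i}{2}$ depending only on its own degree, while the auxiliary forests $\cF_{k_i-2}$ are again absorbed into the size conditioning and produce only vanishing fluctuations. The product form $(\sigma_\mu^2)^{-j}\prod_i \mu_{k_i}k_i(k_i-1)$ then follows. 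The principal technical difficulty, both in (ii) and in (iii), lies precisely in the uniform control in $k$ (resp. in $(k_1,\ldots,k_j)$) required to exchange limit and summation: since the spread of $\P(|\cF_{k-2}|=\cdot)$ grows like $\sqrt{k}$, one must leverage the finite second moment of $\mu$ to truncate at a large $k$ and bound the resulting tail contributions uniformly in $n$.
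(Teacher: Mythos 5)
Your overall strategy coincides with the paper's: part (i) by the branching property at the root, with the $\binom{k}{2}$ choice of two large children and the residual forest $\cF_{k-2}$ (and the same remark that the overcounting of configurations with three or more large children is controlled via Lemma \ref{lem:bp}); part (ii) by a many-to-one decomposition at a uniform vertex --- your $\Phi(m)$ is exactly the paper's $A_i=\sum_{j}\P(|Cut_{U^*_j}(\bT^*)|=n-i)$ written through Kesten's tree --- followed by the reduction to showing that the convolutional quantity $R_k(n)$ is asymptotically independent of $k$, with tightness in $k$ coming from $\sum_k\mu_k k(k-1)=\sigma_\mu^2<\infty$; part (iii) by localizing (ii) at each branching point using the conditional independence of subtrees given their sizes.

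One justification in your part (ii) is, however, factually wrong. The total progeny $|\cF_{k-2}|$ of a forest of $k-2$ independent \emph{critical} GW trees is not concentrated on a scale of order $\sqrt{k-2}$: a single critical GW tree already has $\E[|\bT|]=\infty$ (since $\P(|\bT|=m)\asymp m^{-3/2}$ by Theorem \ref{thm:llt}), and $|\cF_j|$ has typical size of order $j^2$ with a heavy tail $\asymp j\,m^{-3/2}$, lying in the domain of attraction of a $1/2$-stable law. What actually makes the argument work --- and what the paper uses --- is only the tightness of $|\cF_{k-2}|$ at \emph{fixed} $k$: the mass of the sum over $q$ concentrates on $q\le\log n$, where the local limit theorem makes $B_{i,q}=\sum_t\P(|\bT|=t)\P(|\bT|=i-1-q-t)$ essentially constant in $q$, whence $R_{k_1}(n)\sim R_{k_2}(n)$ for fixed $k_1,k_2$. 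For the uniform control in $k$, no quantitative control of the spread of $|\cF_{k-2}|$ is available or needed: the paper simply bounds $\P(|\cF_{k-2}|\le n)\le 1$ to get a $k$-uniform upper bound $R_k(n)\le Cn^{-2}\sum_i A_i$, and then needs a matching lower bound for small $k$, obtained from a subadditivity argument showing $\sum_{i\ge 2\epsilon n}A_i\le 3\sum_{i\ge \eta n}A_i$ with $\eta=1/2+\epsilon$. This lower bound is the genuinely nontrivial ingredient of the no-loss-of-mass step and is absent from your sketch; without it the truncation in $k$ does not close.
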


The proof of this lemma is postponed to the end of this section. Let us see how it implies Lemma \ref{lem:propertiesrandom} (iii).
To this end, $\epsilon$ and $\eta \coloneqq \epsilon/6$ being fixed, we adapt the parameter $\rho$ in order to control the distribution of the degrees of the $\epsilon n$-branching points. Let $u$ be the vertex chosen in the transformation of Definition \ref{def:epsilonshuffling}. By Lemma \ref{lem:lienaveckesten} (iii), we know how the degrees of the $\rho n$-branching points in $\theta_u(\cT^\circ_n)$ behave. Therefore, by choosing $\rho$ so that the number of $\rho n$-branching points in $\theta_u(\cT^\circ_n)$ is much larger than the number of $\epsilon n$-branching points in $\cT_n$ with high probability, we can control the degree distribution of these $\epsilon n$-branching points of $\tilde{\cT}^\circ_n$, after the shuffling operation.

Specifically, for any $q \geq 1$, we define $\rho_q$ such that, uniformly for $\ell \in (\eta n, \epsilon n)$ satisfying $Z_{\ell,\mu_*,w}>0$, with probability larger than $1-1/q$ there are at least $q$ $\rho_q n$-branching points in $\cT_{\ell}^\circ$. The existence of such a $\rho_q$, for $q \geq 1$, is a consequence of the convergence of Theorem \ref{thm:cvcontour}, and the fact that the set of local minima of the normalized Brownian excursion is dense in $[0,1]$ with probability $1$.

Now for any $\epsilon'>0$, Lemma \ref{lem:lienaveckesten} (iii) ensures that one can choose $q>0$ such that with high probability, uniformly in $k$, for $n$ large enough,

\begin{align*}
\left|N_q^{(k)}\left(\theta_u\left(\cT^\circ_n\right)\right) - \mu_k k(k-1) \, \left(\sigma_\mu^2\right)^{-1} N_q\left(\theta_u\left(\cT^\circ_n\right)\right)\right| \leq \epsilon' N_q\left(\theta_u\left(\cT^\circ_n\right)\right).
\end{align*}
Here, $N_q(T)$ denotes the number of $\rho_q n$-branching points in $T$ and $N_q^{(k)}(T)$ denotes the number of $\rho_q n$-branching points who have $k$ black children. In other terms, the proportion of $\rho_q n$-branching points in $\theta_u(\cT_n)$ that have $k$ children is asymptotically proportional to $\mu_k k(k-1)$. As there are at most $\epsilon^{-1}$ $\epsilon n$-branching points in $\cT_n$, Lemma \ref{lem:propertiesrandom} (iii) follows.

Let us finish with the proof of Lemma \ref{lem:lienaveckesten}.

\begin{proof}[Proof of Lemma \ref{lem:lienaveckesten} (i)]
To prove (i), remark that, for any $k \geq 2$, any $i \geq 2\epsilon n+1$,
\begin{align*}
\P \left( |\bT|=i, \emptyset \in E_{\epsilon n}(\bT), k_\emptyset(\bT)=k \right) &= \P \left( k_\emptyset(\bT)=k \right) \P \left( |\bT|=i, \emptyset \in E_{\epsilon n}(\bT) \Big| k_\emptyset(\bT)=k \right)
\end{align*}
The right-hand side can be estimated thanks to Lemma \ref{lem:bp}, through the formula:
\begin{align*}
\P \left( |\bT|=i, \emptyset \in E_{\epsilon n}(\bT) \Big| k_\emptyset(\bT)=k \right) = \sum_{1 \leq a < b \leq k} \P\left( |\bT|=i, B_{\epsilon, a, b} \Big| k_\emptyset(\bT)=k  \right)
\end{align*}
where $B_{\epsilon,a,b}$ is the event that the subtrees rooted in the $a$-th and $b$-th children of $\emptyset$ have size $\geq \epsilon n$. Thus, since $\P(k_\emptyset(\bT)=k)=\mu_k$:
\begin{align*}
\P \left( |\bT|=i, \emptyset \in E_{\epsilon n}(\bT), k_\emptyset(\bT)=k \right) &= \mu_k \binom{k}{2} \sum_{\substack{t_1 \geq \epsilon n \\ t_2 \geq \epsilon n \\ t_1+t_2 \leq i-1}} \P(|\bT|=t_1) \P(|\bT|=t_2) \P\left( |\cF_{k-2}|= i-1-t_1-t_2 \right),
\end{align*}
where $\cF_j$ is a forest of $j$ i.i.d. $\mu$-GW trees.

Separating according to the value $q \coloneqq i-1-t_1-t_2$, the right-hand side is equal to
\begin{align*}
\mu_k \binom{k}{2} \sum_{q=0}^{i-1-2\epsilon n} \P( |\cF_{k-2}|= q) \sum_{t=\epsilon n}^{i-1-q-\epsilon n} \P(|\bT|=t) \P(|\bT|=i-1-q-t).
\end{align*}
\end{proof}

\begin{proof}[Proof of Lemma \ref{lem:lienaveckesten} (ii)]
Let $U$ be a uniform vertex of $\bT_n$. Then: 
\begin{align*}
\P \left(U \in E_{\epsilon n} (\bT_n), k_{U}(\bT_n)=k \right) &= \frac{1}{n} \sum_{j \geq 1} \E \left[ \sum_{\substack{u \in \bT_n \\ |u|=j}} \mathds{1}_{u \in E_{\epsilon n}(\bT_n)} \mathds{1}_{k_u(\bT_n)=k} \right]\\
&= \frac{1}{n} \P \left( |\bT|=n \right)^{-1} \sum_{j \geq 1} \sum_{i=2\epsilon n}^n \E \left[ \sum_{\substack{u \in \bT \\ |u|=j}} F_{n-i}(Cut_u(\bT)) G_{i,k}(\theta_u(\bT)) \right],
\end{align*}
where $F_{n-i}(T) = \mathds{1}_{|T|=n-i}$ and $G_{i,k}(T)= \mathds{1}_{|T|=i} \mathds{1}_{\emptyset \in E_{\epsilon n}(T)} \mathds{1}_{k_\emptyset(T)=k}$. Here, for $T$ a tree and $u$ a vertex of $T$, $Cut_u(T)$ denotes the tree $T\backslash \theta_u(T)$, obtained by cutting $T$ at the level of $u$ (not keeping $u$). 

In order to investigate this quantity, let us now define $\bT^*$, the so-called local limit of the conditioned Galton-Watson trees $\bT_n$: $\bT^*$ is a random variable taking its values in the set of infinite trees, and satisfies, for all $r \geq 1$,
\begin{align*}
B_r(\bT_n) \overset{(d)}{\rightarrow} B_r(\bT^*),
\end{align*}
where, a tree $T$ (finite or infinite) being given, $B_r(T)$ denotes the ball of radius $r$ around the root of $T$ for the graph distance - that is, all edges have length $1$. The structure of this tree $\bT^*$, called Kesten's tree, is known: it has a unique infinite branch, on which independent nonconditioned $\mu$-GW trees are planted. See Fig. \ref{fig:kestentree} for an illustration, and \cite{Kes86} for more background. Information on the large tree $\bT_n$ can therefore be deduced from the properties on $\bT^*$. In particular, by estimations \textit{à la} Lyons-Pemantle-Peres (see \cite[Section $3$]{Duq08}), we obtain that, for any $j \geq 0$,
\begin{align*}
\E \left[ \sum_{\substack{u \in \bT \\ |u|=j}} F_{n-i}(Cut_u(\bT)) G_{i,k}(\theta_u(\bT)) \right] = \E \left[ F_{n-i}\left(Cut_{U^*_j}(\bT^*) \right) \right] \E \left[ G_{i,k}(\bT) \right].
\end{align*} 
where $U_j^*$ denotes the unique vertex of the infinite branch of $\bT^*$ at height $j$.
We can now use Lemma \ref{lem:lienaveckesten} (i) to obtain an expression of $\E[G_{i,k}(\bT)]$:
\begin{equation}
\label{eq:equationcompliquee}
\sum_{j \geq 0} \sum_{i=2\epsilon n}^n \E \left[ \sum_{\substack{u \in \bT \\ |u|=j}} F_{n-i}(Cut_u(\bT)) G_{i,k}(\theta_u(\bT)) \right] = \, \mu_k \binom{k}{2} \sum_{i=2\epsilon n}^n A_i \sum_{q=0}^{i-1-2\epsilon n} \P( |\cF_{k-2}|= q)  B_{i,q},
\end{equation}
where, for $i \in \llbracket 2\epsilon n, n \rrbracket, A_i = \sum_{j \geq 0} \P(|Cut_{U^*_j}(\bT^*)|=n-i)$ and, for $q \in \llbracket 0,i-1-2\epsilon n \rrbracket$, 
\begin{align*}
B_{i,q}= \sum_{t=\epsilon n}^{i-1-q-\epsilon n} \P(|\bT|=t) \P(|\bT|=i-1-q-t).
\end{align*}

\begin{figure}[!h]
\center
\caption{Kesten's infinite tree $\cT^*$. On the infinite branch (in the middle), independent $\mu$-GW are planted.}
\label{fig:kestentree}
\begin{tikzpicture}[scale=1, every node/.style={scale=0.7}]
\draw (0,0) -- (0,5);
\draw[dashed] (0,5) -- (0,6);
\foreach \x in {0,...,5}
\draw[fill] (0,\x) circle (.05);
\draw (0,0) -- (1,1);
\draw (0,2) -- (-1,3) (0,2) -- (-2,3);
\draw (0,3) -- (-1,4) (2,4) -- (0,3) -- (1,4);
\draw (0,4) -- (1,5);
\draw[fill=gray!20] (1,1.35) circle (.35) node{$GW$}; 
\draw[fill=gray!20] (-2,3.35) circle (.35) node{$GW$}; 
\draw[fill=gray!20] (-1,3.35) circle (.35) node{$GW$}; 
\draw[fill=gray!20] (-1,4.35) circle (.35) node{$GW$}; 
\draw[fill=gray!20] (1,4.35) circle (.35) node{$GW$}; 
\draw[fill=gray!20] (2,4.35) circle (.35) node{$GW$}; 
\draw[fill=gray!20] (1,5.35) circle (.35) node{$GW$}; 
\end{tikzpicture}
\end{figure}
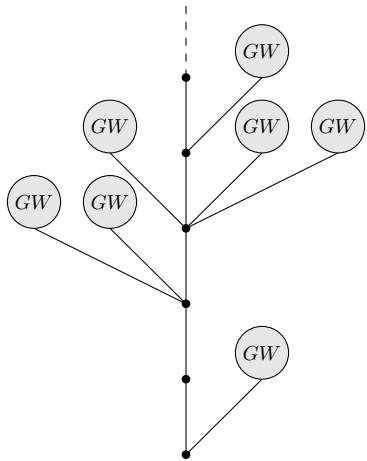

Set, for $n, k \in \Z_+$,
\begin{align*}
R^{(n)}_k \coloneqq \sum_{i=2\epsilon n}^n A_i \sum_{q=0}^{i-1-2\epsilon n} \P( |\cF_{k-2}|= q)  B_{i,q}.
\end{align*} 
In order to prove Lemma \ref{lem:lienaveckesten} (ii), we show two things: 
\begin{itemize}
\item[(a)] there is no loss of mass as $n$ grows, in the sense that, for any $\gamma>0$, there exists $K \in \Z_+$ such that, for any $n$ large enough,
\begin{align*}
\sum_{k > K} \mu_k \binom{k}{2} R^{(n)}_k \leq \gamma\sum_{k \leq K} \mu_k \binom{k}{2} R^{(n)}_k.
\end{align*}
In other words, the degree of a uniform $\epsilon n$-branching point in $\bT_n$ is tight;
\item[(b)] uniformly for $k_1,k_2$ on a compact subset of $\Z_+$, as $n \rightarrow \infty$:
\begin{align*}
R^{(n)}_{k_1} \sim R^{(n)}_{k_2}.
\end{align*}
\end{itemize}

By (a),(b) and \eqref{eq:equationcompliquee}, we conclude that, for all $k \geq 2$, $\P \left( k_U(\bT_n) = k \, \Big| \, U \in E_{\epsilon n}(\bT_n) \right)$ is asymptotically proportional to $\mu_k \binom{k}{2}$. This implies Lemma \ref{lem:lienaveckesten} (ii).

We finish with the proofs of (a) and (b). Let us first prove (a). By the local limit theorem \ref{thm:llt}, as $\mu$ has finite variance, there exists two constants $C>c>0$ depending only on $\epsilon$ and $\mu$ such that, for any $i \in \llbracket 2\epsilon n, n \rrbracket$, any $0 \leq q \leq i-1-2\epsilon n$, 

\begin{equation}
\label{eq:biq}
c (i-1-2\epsilon n - q) n^{-3} \leq B_{i,q} \leq C (i-1-2\epsilon n - q) n^{-3}.
\end{equation}
Thus, for any $k \in \Z_+$, 
\begin{equation}
\label{eq:bornesuprkn}
R^{(n)}_k \leq C n^{-3} \sum_{i=2\epsilon n}^n A_i \, n \, \P( | \cF_{k-2} | \leq n) \leq C n^{-2} \sum_{i=2\epsilon n}^n A_i.
\end{equation}

Now, take $\eta = 1/2+\epsilon \in (2\epsilon,1)$. We claim that 
\begin{equation}
\label{eq:subadd}
\sum_{i=2\epsilon n}^n A_i \leq 3 \sum_{i=\eta n}^n A_i,
\end{equation}
which we prove later. Then, for any $i \geq \eta n$, by \eqref{eq:biq}, 
\begin{align*}
\sum_{q=0}^{i-1-2\epsilon n} \P( |\cF_{k-2}|= q)  B_{i,q} &\geq \sum_{q=0}^{\eta n-2\epsilon n} \P( |\cF_{k-2}|= q)  B_{i,q} \geq c n^{-3} \, (\eta - 2 \epsilon) \, n \, \P\left( |\cF_{k-2}| \leq (\eta-2\epsilon) n \right).\\
\end{align*}
At $k$ fixed, for $n$ large enough, this quantity is larger than $c \, n^{-2} \, (\eta/2-\epsilon)$, and by \eqref{eq:subadd} $$R_k^{(n)} \geq \sum_{i=\eta n}^n A_i \sum_{q=0}^{i-1-2\epsilon n} \P( |\cF_{k-2}|= q)  B_{i,q} \geq \frac{c}{3} (\eta/2-\epsilon) n^{-2} \sum_{i=2\epsilon n}^n A_i.$$

Using \eqref{eq:bornesuprkn} and the fact that $\sum_{k>K} \mu_k \binom{k}{2} \rightarrow 0$ as $K \rightarrow \infty$, this implies (a) and ensures the tightness of the degree of a uniform $\epsilon n$-branching point.

The only thing left to prove is that, indeed, $\sum_{i=2\epsilon n}^n A_i \leq 2 \sum_{i=\eta n}^n A_i$. To this end, remark that, by definition, for any $\delta \in (0,1)$,
\begin{align*}
\sum_{i=\lfloor \delta n \rfloor}^n A_i& = \sum_{i=\lfloor \delta n \rfloor}^n \sum_{j \geq 0} \P\left(|Cut_{U^*_j}(\bT^*)| = n-i \right) = \sum_{j \geq 0} \P\left( |Cut_{U^*_j}(\bT^*)| \leq n-\lfloor \delta n \rfloor \right)\\
&= \E\left[ \sup \left\{ j \geq 0, |Cut_{U^*_j}(\bT^*)| \leq n-\lfloor \delta n \rfloor \right\} \right] = \E\left[ \inf \left\{ j \geq 1, |Cut_{U^*_j}(\bT^*)| > n-\lfloor \delta n \rfloor \right\} \right] - 1.
\end{align*}
Now, by definition of the tree $\bT^*$, for any $j \geq 1$, $|Cut_{U^*_j}(\bT^*)|$ is the sum of $j$ i.i.d. random variables. In particular, the sequence $(u_r)_{r \geq 0}$ defined as 
\begin{align*}
u_r = \E\left[ \inf \left\{ j \geq 1, |Cut_{U^*_j}(\bT^*)| > r \right\} \right]
\end{align*}
is clearly subadditive, in the sense that, for all $r_1,r_2 \geq 0$, $u_{r_1+r_2} \leq u_{r_1} + u_{r_2}$. On the other hand this sequence is increasing and goes to $+\infty$. This proves \eqref{eq:subadd}, since $n-2\epsilon n = 2 (n-\eta n)$.

\bigskip

To prove (b), just remark that, at $k$ fixed, the mass of $R^{(n)}_k$ is asymptotically concentrated on small values of $q$. Indeed, by \eqref{eq:biq}, uniformly for $i \geq 2\epsilon n+1+\log n$,
\begin{align*}
\sum_{q=\log n}^{i-1-2\epsilon n} \P \left( |\cF_{k-2}| = q \right) B_{i,q} \leq C \, n^{-3} (i-1-2\epsilon n - \log n) \P \left( |\cF_{k-2}| \geq \log n \right) = o\left( \sum_{q=0}^{\log n} \P \left( |\cF_{k-2}| = q \right) B_{i,q} \right), 
\end{align*}
since, by \eqref{eq:biq} again,
\begin{align*}
\sum_{q=0}^{\log n} \P ( |\cF_{k-2}| = q ) B_{i,q} \geq c \, n^{-3} \, (i-1-2\epsilon n-\log n) \P (|\cF_{k-2}| \leq \log n) \geq \frac{c}{2} \, n^{-3} \, (i-1-2\epsilon n-\log n)
\end{align*}
for $n$ large enough. 
Now, by Theorem \ref{thm:llt}, for any $i \geq 2 \epsilon n+1$, there exists a constant $\tilde{C}_i$ such that, as $n \rightarrow \infty$, uniformly for $q \leq \log n$, $B_{i,q} \sim \tilde{C}_i n^{-3}$. Thus, for any $k \geq 2$ fixed, 
\begin{align*}
R_k^{(n)} \sim n^{-3} \sum_{i=2\epsilon n}^n A_i \tilde{C}_i \sum_{q=0}^{\log n} \P (|\cF_{k-2}| = q) \sim n^{-3} \sum_{i=2\epsilon n}^n A_i \tilde{C}_i.
\end{align*}
In particular, this implies (b).
\end{proof}

\begin{proof}[Proof of Lemma \ref{lem:lienaveckesten} (iii)]
In order to check (iii), one only needs to see that, conditionally to its size, a subtree of $\bT_n$ is independent of the rest of the tree. Therefore, taking $u$ an $\epsilon n$-branching point of $\bT_n$, the subtrees $\theta_{v_1(u)}(\bT_n)$ and $\theta_{v_2(u)}(\bT_n)$, conditionally to their sizes (which are larger than $\epsilon n$ by definition) are independent of the rest of the tree. Hence, using repeatedly Lemma \ref{lem:lienaveckesten} (ii) on these subtrees, one obtains (iii).
\end{proof}

\section{A bijection between minimal factorizations and a set of bi-type trees}
\label{sec:minfac}

In this section, we first discuss some properties of minimal factorizations of the $n$-cycle, and specify a way to code them by bi-type trees with $n$ white vertices. In a second time, we use this bijection to code a $w$-minimal factorization of the $n$-cycle $f_n^w$ by a random labelled BTSG $T(f_n^w)$, with some constraints on the labels of its black vertices. This BTSG with constraints is of particular interest, as we prove that the process $(S_u(f_n^w))_{u \in [0,\infty]}$ is asymptotically close to the black process of this tree:

\begin{thm}
\label{thm:mainthmsection4}
Let $\alpha \in (1,2]$. Let $w$ be a sequence of $\alpha$-stable type, $\nu$ its critical equivalent and $(\tilde{B}_n)_{n \geq 1}$ satisfying \eqref{eq:tbn} for $\nu$. Then, if $\alpha<2$ or if $\nu$ has finite variance, the face configuration process obtained from $f_n^w$ is close in distribution to the process associated to a $(\mu_*,w)$-BTSG with uniformly labelled black vertices. More precisely, there exists a coupling of $f_n^w$ and $\cT_n^{(\mu_*,w)}$ such that, in probability:
\begin{align*}
d_{Sk}\left(\left( S_{c \tilde{B}_n}(f_n^w) \right)_{c \in [0,\infty]}, \left( \bL_{c \tilde{B}_n}^\bullet \left(\cT_n^{(\mu_*,w)} \right) \right)_{c \in [0,\infty]} \right) \overset{\P}{\underset{n \rightarrow \infty}{\rightarrow}} 0,
\end{align*}
where $d_{Sk}$ denotes the Skorokhod distance on $\D([0,\infty],\bCL(\oD))$.
\end{thm}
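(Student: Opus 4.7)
The strategy is to transport the problem from minimal factorizations to labelled bi-type trees via the bijection $T: \kM_n \to \kU_n$ of Theorem~\ref{thm:nkbijection}, and then invoke the convergence already established for the tree $\cT_n^{(\mu_*, w)}$ in Theorem~\ref{thm:cvbitypelam}. The first thing to verify, by direct inspection of the explicit construction of the bijection in Section~\ref{sec:minfac}, is that the two geometric representations coincide: for every $f = (\tau_1, \ldots, \tau_k) \in \kM_n$, the $r$-th cycle $\tau_r$ corresponds to a unique black vertex $V_r$ of $T(f)$ carrying label $r$, and the face $S(\tau_r) \subset \oD$ drawn from the support of $\tau_r$ coincides with the face $F_{V_r}(T(f))$ built from the contour function of $T(f)$. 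In particular $S_c(f) = \bL^\bullet_c(T(f))$ for every $c \geq 0$, so it suffices to establish the statement with $S_{c\tilde{B}_n}(f_n^w)$ replaced by $\bL^\bullet_{c\tilde{B}_n}(T(f_n^w))$.

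The second step is to identify the distribution of $T(f_n^w)$. The weight $W_w(f) = \prod_r w_{\ell(\tau_r)-1}$ transports through the bijection into $\prod_b w_{k_b(T(f))}$, the product of $w$ evaluated at the black out-degrees. Summing this weight over the labellings of a fixed unlabelled bi-type tree $T^u$ that respect the $\kU_n$-constraint (decreasing clockwise cyclic order of the black neighbours around each white vertex, treated linearly at the root) introduces a white-vertex combinatorial factor proportional to $\prod_u 1/k_u(T^u)!$, which up to overall normalization equals the Poisson weight $\prod_u \mu_*(k_u(T^u))$. Hence the unlabelled tree underlying $T(f_n^w)$ has the same law as the unlabelled $(\mu_*, w)$-BTSG $\cT_n^{(\mu_*, w)}$; conditionally on this shared skeleton, the labels of $T(f_n^w)$ are uniform on the set of constrained labellings, while those of $\cT_n^{(\mu_*, w)}$ are uniform among all $N^\bullet!$ permutations.

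The final and most delicate step is to construct a coupling of the two labellings on the common unlabelled skeleton and to show that the induced black processes are close in probability, in the Skorokhod topology of $\D([0,\infty], \bCL(\oD))$, at scale $\tilde{B}_n$. Starting from the uniform labelling used in $\cT_n^{(\mu_*, w)}$, the plan is to resample independently at each white vertex $u$ a uniform cyclic starting position among its black neighbours and reorder the labels of these neighbours in decreasing clockwise order from that position, yielding a constrained labelling with the correct marginal law. The main obstacle is to prove that this local resampling displaces the label of any \emph{macroscopic} black vertex (whose associated face contributes nontrivially to the Hausdorff metric) by only $o(\tilde{B}_n)$ with high probability. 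At any finite rescaled time, such macroscopic faces correspond to black vertices attached to $\epsilon n$-nodes of the tree, of which there are only finitely many by Lemma~\ref{lem:grossommets} and Lemma~\ref{lem:proportions}; the remaining black vertices are attached to white vertices of degree whose distribution has sufficiently light tails, so that a union bound together with label-displacement estimates (in the spirit of the local limit theorem \ref{thm:llt}) yields the desired $o(\tilde{B}_n)$ bound uniformly in time on compact intervals, completing the proof.
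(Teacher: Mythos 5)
Your overall architecture (transport via the bijection, identify the law of $T(f_n^w)$, then decouple the labelling constraint) is the paper's, and your step 2 is essentially Theorem \ref{thm:btsg}. But there are two genuine gaps. First, the claim that $S_c(f)=\bL^\bullet_c(T(f))$ exactly is false: $S(\tau_r)$ is drawn on the $n$-th roots of unity $e^{-2i\pi e_j/n}$ indexed by the support of $\tau_r$, whereas $F_{V_r}(T(f))$ is drawn at the points $e^{-2i\pi t_j/2|T(f)|}$ given by contour-visit times of a tree with $n+N^\bullet(T(f))$ vertices. These are different discretizations of the circle, and showing the two faces are uniformly close (Theorem \ref{thm:relationprocesstree} via Lemma \ref{lem:facesproches}) requires both a height bound on $T(f_n^w)$ and the fact that black and white vertices occur in asymptotically fixed proportion in each of the three components determined by a vertex (Lemma \ref{lem:proportions}). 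That is a probabilistic statement about the conditioned BTSG, not a deterministic identity, and it must be proved.

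Second, and more seriously, your coupling does not work as described. If you take the uniform labelling and merely re-sort the labels of the black children of each white vertex into decreasing clockwise order \emph{in place} (without moving subtrees), you do get the right marginal law, but a macroscopic black vertex $b$ with a small original label $j\leq c\tilde B_n$ will typically have siblings whose labels are of order $n$; after sorting, label $j$ lands on whichever sibling occupies the corresponding clockwise position, which generically has a microscopic subtree. So the face appearing at time $j$ changes completely — the label of a macroscopic black vertex is displaced by order $n$, not $o(\tilde B_n)$. This is exactly the failure mode the paper flags for the ``$K_n=0$'' shuffling. The paper's resolution is a two-operation shuffle with a tuned threshold $K_n$: labels below $K_n$ carry their subtrees with them (so the face at time $j$ keeps its size profile), labels above $K_n$ are permuted in place, and one must then control the resulting displacement of face \emph{locations} via the maximum-possible-displacement Lemma \ref{lem:MPDCMPD}, together with Lemma \ref{lem:pnu<2} when $\alpha<2$ (take $K_n=n$; almost all grandchildren of a large node share one black parent) and the choice $\sqrt n\ll K_n\ll n/N_{\delta n}$ in the finite-variance case (so that subtree-moving never hits a $\delta n$-node). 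None of this is present in your sketch, and the appeal to ``light tails plus a union bound'' does not substitute for it. You also leave the $c=\infty$ coordinate of the statement untreated, which in the paper requires a separate argument (maximality of the limiting lamination plus invariance of face colours under both operations).
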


This theorem, which we prove later in the section, directly implies Theorem \ref{thm:mainresult}:

\begin{proof}[Proof of Theorem \ref{thm:mainresult}]
The proof of the main result in this paper, Theorem \ref{thm:mainresult}, is just a consequence of Theorem \ref{thm:cvbitypelam} and Theorem \ref{thm:mainthmsection4}.
\end{proof}

The principal tool in the proof of Theorem \ref{thm:mainthmsection4} is a operation that we perform on the white vertices of $T(f_n^w)$, which consists in shuffling its black children in two different ways, in order to lift the constraints on this labelling. The aim is to obtain at the end a tree distributed as $\cT_n^{(\mu_*,w)}$ (that is, its black vertices are uniformly labelled), whose black process is close in probability to the one of $T(f_n^w)$. See Section \ref{ssec:shuffling} for details.

\subsection{Coding a minimal factorization by a colored lamination}
\label{ssec:minfactree}

In a first time, our aim is to prove Theorem \ref{thm:nkbijection} by showing an explicit bijective way to code a factorization of $\kM_n$ by a tree of $\kU_n$. We do it in two steps, first coding a minimal factorization by a colored lamination of $\oD$ and then coding it by a bi-type tree. 

It is to note that our bijection is close to the one presented by Du and Liu \cite{DL15}, who investigate minimal factorizations of a given cycle. Notably, what they call $S-[d]$ bipartite graphs is exactly what we call the "dual tree" of the factorization. In their paper, Du and Liu use this bipartite graph as a tool to show a bijection between minimal factorizations and a new family of trees which they call multi-noded rooted trees; we prefer studying the bipartite graph (or bi-type tree in our case) directly, as its structure allows to use the machinery of random trees and seems more adapted in our setting.

The first step consists in adapting the bijection introduced by Goulden and Yong \cite{GY02} to code minimal factorizations into transpositions by monotype trees with labelled vertices. In our broader framework, we code general minimal factorizations by bi-type trees with $n$ white vertices and labelled black vertices. We check in a first time that we can code a minimal factorization of the $n$-cycle by a colored lamination, as explained in Section \ref{ssec:codingminfac}. For this, we need to be able to define the face associated to a cycle appearing in the factorization.

For $n \geq 1$, we say that a cycle $\tau \in \kC_n$ is increasing if it can be written as $(e_1 \, e_2 \, \cdots \, e_{\ell(\tau)})$, where $e_1 < e_2 < \ldots < e_{\ell(\tau)}$.

\begin{prop}
\label{prop:increasingcycles}
Let $n \geq 1$. Then any cycle appearing in a minimal factorization of the $n$-cycle is increasing.
\end{prop}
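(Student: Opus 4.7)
The plan is to prove the proposition by strong induction on $k$, the number of factors, by first showing that in any minimal factorization $\tau_1 \cdots \tau_k = c_n$ the first cycle $\tau_1$ must be increasing, and then decomposing the remaining factorization into strictly shorter minimal sub-factorizations. The case $k=1$ is immediate since $\tau_1 = c_n = (1,2,\ldots,n)$, so I fix $k \geq 2$. Writing $\mathrm{supp}(\tau_1)=\{a_1<a_2<\cdots<a_p\}$ with $p=\ell(\tau_1)\geq 2$, I encode the cyclic order of $\tau_1$ as $\tau_1 = (a_{\pi(1)}, a_{\pi(2)}, \ldots, a_{\pi(p)})$ for some permutation $\pi \in \kS_p$ (determined up to cyclic shift of its argument); the main step aims to show that $\pi$ is itself a cyclic shift.

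The first ingredient is the counting identity $c(\sigma) = p$, where $\sigma := \tau_1^{-1} c_n = \tau_2 \cdots \tau_k$ and $c(\cdot)$ denotes the total number of cycles (including fixed points). The upper bound $c(\sigma) \leq c(c_n) + (p-1) = p$ follows from the classical fact that multiplication by a cycle of length $p$ shifts the cycle count by at most $p-1$, obtained by decomposing $\tau_1$ as a product of $p-1$ transpositions. The lower bound $c(\sigma) \geq p$ follows from the general inequality $\sum_{i=2}^k (\ell(\tau_i)-1) \geq n - c(\sigma)$ applied to the factorization $\tau_2 \cdots \tau_k = \sigma$, combined with the identity $\sum_{i=2}^k (\ell(\tau_i)-1) = n - p$ deduced from the minimality condition $\sum_{i=1}^k(\ell(\tau_i)-1)=n-1$.

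The heart of the argument is then a direct computation of the cycles of $\sigma$. The support of $\tau_1$ partitions the discrete circle $\{1,\ldots,n\}$ into $p$ arcs $A_1, \ldots, A_p$, where $A_r$ is the set of integers strictly between $a_r$ and $a_{r+1}$ in cyclic order (setting $a_{p+1} := a_1$ on the circle). Using the convention $\alpha \beta = \beta \circ \alpha$ one gets $\sigma(x) = c_n(\tau_1^{-1}(x))$, so $\sigma(x) = x+1 \bmod n$ on non-support points and $\sigma(a_{\pi(j)}) = a_{\pi(j-1)}+1 \bmod n$ on support points. Iterating $\sigma$ from $a_{\pi(j)}$, one first lands at $a_{\pi(j-1)}+1$ and then increments through the arc $A_{\pi(j-1)}$ until reaching the next numerically succeeding support point $a_{\pi(j-1)+1}$. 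The index of this new support point in the $\pi$-labelling is $\phi(j) := \pi^{-1}(\pi(j-1)+1 \bmod p)$, and hence cycles of $\sigma$ are in bijection with orbits of the auxiliary permutation $\phi \in \kS_p$. The constraint $c(\sigma)=p$ forces $\phi = \mathrm{id}$, i.e.\ $\pi(j) \equiv \pi(j-1)+1 \pmod p$ for all $j$, so $\pi$ is a cyclic shift and $\tau_1 = (a_1, a_2, \ldots, a_p)$ is increasing.

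Once $\tau_1$ is known to be increasing, the $p$ cycles of $\sigma$ are explicitly the increasing cycles $\sigma_j$ supported on $A_{j-1} \cup \{a_j\}$, each of size at most $n-1$. A standard decomposition lemma (a minimal factorization of a product of cycles with pairwise disjoint supports splits into minimal factorizations of each factor) then partitions the remaining cycles $\tau_2, \ldots, \tau_k$ into groups, each group providing a minimal factorization of some $\sigma_j$ with strictly fewer than $k$ factors. After applying the order-preserving bijection $\mathrm{supp}(\sigma_j) \to \{1, \ldots, |\mathrm{supp}(\sigma_j)|\}$, each sub-factorization becomes a minimal factorization of a full cycle, so the induction hypothesis forces every remaining $\tau_i$ to be increasing on its support. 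The principal obstacle is the third-paragraph computation: one must patiently track how $\sigma$ shuttles between arcs and support points in order to identify the ``jump'' permutation $\phi$, and then observe that the rigidity of the equality $c(\sigma)=p$ leaves no freedom for $\pi$ beyond cyclic shifts.
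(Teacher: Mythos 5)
Your proof is correct, but it takes a genuinely different route from the paper's. The paper proves the proposition by reduction to the transposition case: it introduces the \emph{transposition slicing} $\tilde f$ of $f$ (each cycle $(d_1\,d_2\cdots d_\ell)$ with $d_1$ the minimum of its support is replaced by $(d_1 d_2)(d_1 d_3)\cdots(d_1 d_\ell)$), observes that $\tilde f$ is a minimal factorization of $c_n$ into transpositions, and then invokes Goulden--Yong's Theorem $2.2$(iii) asserting that the chords of $S_{lab}(\tilde f)$ are labelled in increasing clockwise order around each vertex; reading this order around the vertex $e^{-2i\pi d_1^{(i)}/n}$ yields $d_1^{(i)}<\cdots<d_{\ell(\tau_i)}^{(i)}$. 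Your argument is instead a self-contained induction on the number of factors, whose engine is a cycle-counting rigidity: the two classical bounds force $c(\tau_1^{-1}c_n)=\ell(\tau_1)$, and your explicit computation of the cycles of $\tau_1^{-1}c_n$ via the jump permutation $\phi(j)=\pi^{-1}(\pi(j-1)+1 \bmod p)$ correctly identifies $c(\tau_1^{-1}c_n)$ with the number of orbits of $\phi$ (every cycle of $\sigma$ meets the support of $\tau_1$, since $\sigma$ increments off that support), so that $c(\sigma)=p$ forces $\phi=\mathrm{id}$ and hence $\pi$ to be a cyclic shift. The trade-off: the paper's proof is three lines but outsources all the work to \cite{GY02}, whereas yours is longer but elementary and needs only two standard facts you correctly flag as classical (the subadditivity bound $\sum(\ell(\tau_i)-1)\geq n-c(\sigma)$, and the splitting of a minimal factorization of a permutation along the supports of its disjoint cycles, both provable by the usual connected-components count on the graph spanned by the supports of the $\tau_i$). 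A side benefit of your approach is that it exhibits explicitly the recursive structure $\tau_1^{-1}c_n=\sigma_1\cdots\sigma_p$ with each $\sigma_j$ the increasing cycle on $A_{j-1}\cup\{a_j\}$, which is essentially the combinatorial skeleton of the tree bijection $\Psi_n$ constructed later in Section \ref{sec:minfac}.
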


To prove Proposition \ref{prop:increasingcycles}, we make use of what we call the \textit{transposition slicing} of a minimal factorization, which is roughly speaking a decomposition into transpositions of the factorization:

\begin{defi}
Let $n,k \geq 1$ and $f \coloneqq (\tau_1,\ldots,\tau_k) \in \kM_n^{(k)}$. We define from $f$ a factorization $\tilde{f}$ of the $n$-cycle into transpositions as follows: for $1 \leq i \leq k$, let us write the cycle $\tau_i$ as $(d^{(i)}_1 \ldots d^{(i)}_{\ell(\tau_i)})$, where $d^{(i)}_1$ is the minimum of the support of $\tau_i$. Now remark that $\tau_i$ can be written as the product of $(\ell(\tau_i)-1)$ transpositions: $(d_1^{(i)} d_2^{(i)}) \, (d_1^{(i)} d_3^{(i)}) \cdots (d_1^{(i)} d_{\ell(\tau_i)}^{(i)})$. By replacing all cycles of $f$ by their decomposition into transpositions, we obtain a factorization $\tilde{f}$ of the $n$-cycle into transpositions, which we call the \textit{transposition slicing} of $f$.
\end{defi}

See Fig. \ref{fig:transpositionslicing} for an example.
It is clear that, if $f$ is a minimal factorization of the $n$-cycle, then its transposition slicing $\tilde{f}$ is made of $n-1$ transpositions, and hence is a minimal factorization of the $n$-cycle into transpositions. This allows to translate results on $\tilde{f}$ (mostly taken from \cite{GY02}) into results on $f$.

\begin{figure}
\caption{The labelled colored laminations $S_{lab}(f)$ and $S_{lab}(\tilde{f})$, for $f \coloneqq (5678)(23)(125)(45)$. Constructing the second one from the first one just consists in triangulating each black face, starting from the smallest of its vertices.}
\label{fig:transpositionslicing}
\center
\begin{tabular}{c c c}
\begin{tikzpicture}[scale=.8,rotate=-45]
\draw[red] (0,0) circle (3);

\foreach \k in {1,...,8}
{\draw ({3.25*cos(360*(\k-1)/8)},{-3.25*sin(360*(\k-1)/8)}) node{\k};}

\draw[red,fill=black] ({3*cos(360*0/8)},{-3*sin(360*0/8)}) -- ({3*cos(360*4/8)},{-3*sin(360*4/8)}) -- ({3*cos(360/8)},{-3*sin(360/8)}) -- cycle;

\draw[red] ({3*cos(360*4/8)},{-3*sin(360*4/8)}) -- node[black]{4} ({3*cos(360*3/8)},{-3*sin(360*3/8)});

\draw[red] ({3*cos(360*1/8)},{-3*sin(360*1/8)}) -- node[black]{2} ({3*cos(360*2/8)},{-3*sin(360*2/8)});

\draw[red,fill=black] ({3*cos(360*7/8)},{-3*sin(360*7/8)}) -- ({3*cos(360*4/8)},{-3*sin(360*4/8)}) -- ({3*cos(360*5/8)},{-3*sin(360*5/8)}) -- ({3*cos(360*6/8)},{-3*sin(360*6/8)}) -- cycle ;

\draw[fill=black] (-.7,1.7) circle (.3) node[white]{1};

\draw[fill=black] (1.2,-.7) circle (.3) node[white]{3};

\draw[fill=black] (1.1,-2.5) circle (.3) node[white]{2};

\draw[fill=black] (-2.5,-1) circle (.3) node[white]{4};
\end{tikzpicture}
&
\begin{tikzpicture}
\draw[white] (0,0) -- (1,0);
\end{tikzpicture}
&
\begin{tikzpicture}[scale=.8,rotate=-45]
\draw[red] (0,0) circle (3);

\foreach \k in {1,...,8}
{\draw ({3.25*cos(360*(\k-1)/8)},{-3.25*sin(360*(\k-1)/8)}) node{\k};}

\draw[red] ({3*cos(360*4/8)},{-3*sin(360*4/8)}) -- node[circle,white,fill=black]{6} ({3*cos(360*0/8)},{-3*sin(360*0/8)}) -- node[circle,white,fill=black]{5} ({3*cos(360/8)},{-3*sin(360/8)});

\draw[red] ({3*cos(360*4/8)},{-3*sin(360*4/8)}) -- node[circle,white,fill=black]{7} ({3*cos(360*3/8)},{-3*sin(360*3/8)});

\draw[red] ({3*cos(360*1/8)},{-3*sin(360*1/8)}) -- node[circle,white,fill=black]{4} ({3*cos(360*2/8)},{-3*sin(360*2/8)});

\draw[red] ({3*cos(360*7/8)},{-3*sin(360*7/8)}) -- node[circle,white,fill=black]{3} ({3*cos(360*4/8)},{-3*sin(360*4/8)}) -- node[circle,white,fill=black]{1} ({3*cos(360*5/8)},{-3*sin(360*5/8)});

\draw[red] ({3*cos(360*6/8)},{-3*sin(360*6/8)}) -- node[circle,white,fill=black]{2} ({3*cos(360*4/8)},{-3*sin(360*4/8)});
\end{tikzpicture}
\end{tabular}
\end{figure}
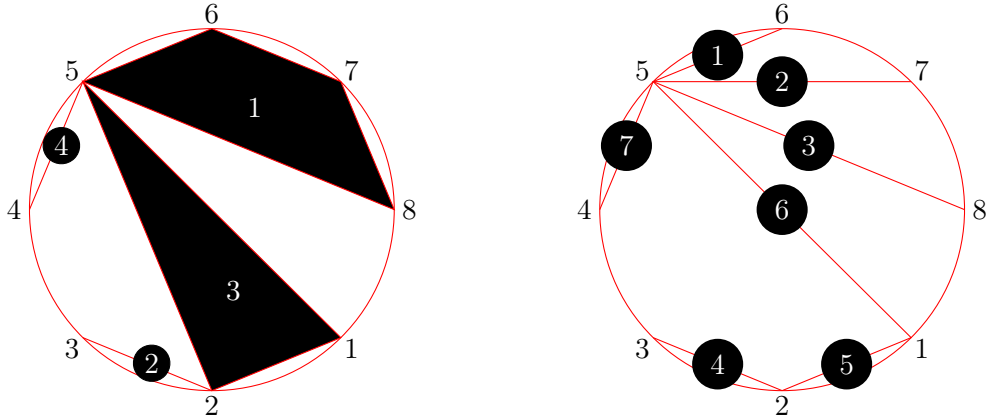

\begin{proof}[Proof of Proposition \ref{prop:increasingcycles}]
Let $n,k \geq 1$ and $f \coloneqq (\tau_1, \ldots, \tau_k) \in \kM_n^{(k)}$. We first construct a lamination, denoted by $S_{lab}(\tilde{f})$, by giving labels to the faces of $S(\tilde{f})$ (which are in fact all chords): draw for the $i$-th transposition of $\tilde{f}$, say $(a_i \, b_i)$, the chord $[e^{-2i\pi a_i/n}, e^{-2i\pi b_i/n}]$, and label it $i$. This provides a  lamination in which all chords are labelled. Furthermore, \cite[Theorem $2.2$ (iii)]{GY02} states that chords in $S_{lab}(\tilde{f})$ are labelled in increasing clockwise order around each vertex $e^{-2i j \pi/n}, 1 \leq j \leq n$ (remark that it is sorted in decreasing clockwise order in \cite{GY02}: indeed their coding is slightly different from ours, since they label the $n$-th roots of unity decreasingly clockwise. Nonetheless our result is an easy consequence of theirs).

Thus, for any $i \leq k$, around the vertex $e^{-2i\pi d_1^{(i)}/n} \in \oD$, the chords of $S_{lab}(\tilde{f})$ are labelled in clockwise increasing order (see an example on Fig. \ref{fig:transpositionslicing}, right). This implies that $d_1^{(i)} < d_2^{(i)} < \ldots < d_{\ell(i)}^{(i)}$ and the result follows.
\end{proof}

We can therefore code a factorization $f$ by a colored lamination with labelled faces, by labelling the black faces of $S(f)$ from $1$ to $k$ (where $k$ denotes the number of cycles that appear in $f$) in the order in which they appear. See Fig. \ref{fig:transpositionslicing}, left for an example. We denote this labelled colored lamination by $S_{lab}(f)$.

\begin{prop}
\label{prop:properties}
Let $n,k \geq 1$ and $f \in \kM_n^{(k)}$. Then $S_{lab}(f)$ satisfies the following properties:
\begin{itemize}
\item[$P_1$:] It has $k$ black faces and $n$ white faces (with the convention that chords corresponding to a cycle of length $2$ are considered to be black faces).
\item[$P_2$:] A black (resp. white) face has only white (resp. black) neighbouring faces (with the same convention).
\item[$P_3$] The set of black faces of $S_{lab}(f)$ obeys a noncrossing tree-like structure. Specifically, the chords only meet at their endpoints and form a connected graph; in addition, there is no cycle of chords of length $\geq 1$ containing at most one edge of each black face.
\item[$P_4$:] Around each $n$-th root of unity, black faces are labelled in increasing clockwise order.
\end{itemize}
\end{prop}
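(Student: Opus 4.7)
My strategy is to reduce the four properties to the corresponding (known) properties of the labelled lamination $S_{lab}(\tilde{f})$ associated to the transposition slicing $\tilde{f}$ of $f$. Recall that $\tilde{f}$ is a minimal factorization of $c_n$ into $n-1$ transpositions, and by the Goulden--Yong results \cite{GY02} applied to $\tilde{f}$, the chords of $S_{lab}(\tilde{f})$ are pairwise noncrossing (meeting only at endpoints), they form a spanning tree on the $n$ roots of unity, and around every vertex $e^{-2i\pi j/n}$ the labels of the chords incident to it appear in clockwise increasing order (this is exactly the fact used in the proof of Proposition \ref{prop:increasingcycles}).

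The geometric link between $S_{lab}(f)$ and $S_{lab}(\tilde{f})$ is as follows. Fix a cycle $\tau_i$ of $f$, of length $\ell_i$, written $(d_1^{(i)} \, d_2^{(i)} \cdots d_{\ell_i}^{(i)})$ with $d_1^{(i)} < d_2^{(i)} < \cdots < d_{\ell_i}^{(i)}$. The transposition slicing replaces $\tau_i$ by the $\ell_i-1$ transpositions $(d_1^{(i)} \, d_j^{(i)})$, $2 \leq j \leq \ell_i$, which all emanate from the vertex $d_1^{(i)}$, have consecutive labels in $\tilde{f}$, and, by the clockwise ordering of labels around $d_1^{(i)}$ in $S_{lab}(\tilde{f})$, occupy an angular \emph{fan} of consecutive chords at $d_1^{(i)}$. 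The polygon drawn in $S(f)$ for $\tau_i$ has vertex set $\{d_1^{(i)}, \ldots, d_{\ell_i}^{(i)}\}$ (the same set of vertices as the fan) and its closure covers exactly the union of this fan together with the $\ell_i-1$ regions lying between consecutive fan-chords and the circle. Consequently, replacing each such fan in $S(\tilde{f})$ by its polygonal boundary (and colouring its interior black) produces $S(f)$. Properties $P_1$--$P_4$ will all follow from analysing this local surgery, vertex by vertex and cycle by cycle.

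From here the four points are obtained as follows. For the noncrossing part of $P_3$ and for $P_2$: chords inside a single polygon do not cross because they form the boundary of a convex sub-polygon inscribed in $\oD$; chords coming from two different cycles are supported on vertex sets that are either disjoint or linked through the spanning tree structure of $\tilde{f}$'s chord graph, so their polygons are geometrically nested or disjoint and again do not cross; and a chord of $S_{lab}(f)$ is a side of exactly one cycle polygon (otherwise two fans in $S(\tilde{f})$ would share an interior chord, violating the tree structure), so one of its two sides is the black interior of that polygon while the other side lies in a white face. Property $P_1$ is then a straightforward Euler count: with $V=n$ (roots of unity), $E = n + \sum_i \ell(\tau_i) = n + (n-1+k) = 2n+k-1$ edges (arcs plus chords), and the noncrossing property just established, we get $F = n+k+1$, so $n+k$ bounded faces, of which $k$ are the cycle polygons (black) and the remaining $n$ are white. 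The tree-like condition in $P_3$ follows from the surgery: the chord graph of $S(\tilde{f})$ is a spanning tree on the $n$ roots of unity, and replacing each star of $\ell_i-1$ chords by the corresponding $\ell_i$-cycle polygon boundary preserves connectivity and adds exactly one independent cycle per $\tau_i$, so the only cycles in the chord graph of $S_{lab}(f)$ are the $k$ polygon boundaries, each of which uses $\ell_i \geq 2$ of its own edges. Finally $P_4$ is inherited from the Goulden--Yong clockwise labelling: around a vertex $v$, the chords of $S(\tilde{f})$ incident to $v$ split into clockwise-consecutive blocks, each block consisting of the chords of a single cycle $\tau_i$ (since such chords carry consecutive labels in $\tilde{f}$), and each such block collapses in $S(f)$ to a single black face touching $v$ and carrying the label of $\tau_i$; the clockwise order of these blocks translates into clockwise increasing labels for the black faces at $v$.

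The main obstacle in executing this plan cleanly is Step 2, i.e.\ making rigorous the geometric statement ``the polygon of $\tau_i$ fits exactly inside the fan of its $\ell_i-1$ slicing transpositions at $d_1^{(i)}$'' and, relatedly, ensuring that polygons of distinct cycles interact only through shared vertices and never through chord crossings. Once this local picture is in place, $P_2$--$P_4$ are essentially bookkeeping consequences of the known properties of $S_{lab}(\tilde{f})$, and $P_1$ reduces to the Euler formula computation above.
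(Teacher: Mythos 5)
Your proposal follows essentially the same route as the paper: both reduce $P_1$--$P_4$ to the Goulden--Yong properties of the labelled lamination $S_{lab}(\tilde{f})$ of the transposition slicing (noncrossing chords forming a spanning tree on the $n$-th roots of unity, with labels increasing clockwise around each vertex), and both obtain $P_2$, $P_3$, $P_4$ by the same bookkeeping: a chord bounds exactly one black polygon, since otherwise a transposition would repeat or a cycle of chords would appear in $S_{lab}(\tilde{f})$, and the slicing chords of a given $\tau_i$ incident to a fixed root of unity occupy a clockwise-consecutive block of labels. The one genuinely different step is $P_1$: the paper counts white faces by putting them in bijection with the $n$ arcs between consecutive roots of unity (each white face contains exactly one arc, again by the tree structure of $S_{lab}(\tilde{f})$), whereas you use Euler's formula. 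Both work, but note that your edge count $E=n+\sum_i \ell(\tau_i)$ implicitly treats the single chord of a transposition as a doubled edge bounding a degenerate $2$-gon; this is consistent with the convention stated in $P_1$ but should be made explicit, since a literal count of distinct segments gives $\ell(\tau_i)-1$ chords for a $2$-cycle. Two further small slips in your description of the surgery, neither fatal: the closed polygon of $\tau_i$ is the union of the $\ell_i-2$ triangles $d_1^{(i)}d_j^{(i)}d_{j+1}^{(i)}$ lying between consecutive fan chords, so it does not reach the circle between its vertices and does not cover regions ``between consecutive fan-chords and the circle'', and the number of such gaps is $\ell_i-2$, not $\ell_i-1$.
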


These properties can be easily deduced from the results of \cite[Section $2$]{GY02}, which straightforwardly imply that they are satisfied by $S_{lab}(\tilde{f})$. In particular, the chords of $S_{lab}(\tilde{f})$ form a tree and are labelled in increasing clockwise order around each vertex (see Fig. \ref{fig:transpositionslicing}, right, for an example).

\begin{proof}
Let us first check $P_1$. It is clear by definition that $S_{lab}(f)$ has $k$ black faces. Now remark that each white face contains exactly one arc of the form $\wideparen{(e^{-2i\pi a/n},e^{-2i\pi(a+1)/n})}$ (where $a \in \Z)$ in its boundary, since the chords of $S_{lab}(\tilde{f})$ form a tree (see \cite[Theorem $2.2$ (i)]{GY02}). As there are exactly $n$ such arcs, $P_1$ is satisfied.

To prove $P_2$, notice that two white faces cannot be neighbours, as they need a chord to separate them, which belongs to a black face. In addition, one can check that two black faces cannot have a chord in common in their boundaries; otherwise either the same transposition would appear twice in $\tilde{f}$, or there would be a cycle of chords in $S_{lab}(\tilde{f})$. None of these configurations can happen, which proves $P_2$.

$P_3$ follows from a similar argument, again using the fact that the chords of $S_{lab}(\tilde{f})$ form a tree.

To prove $P_4$, let $a$ be an $n$-th root of unity and $F,F'$ two consecutive black faces around $a$ in clockwise order. Then there exist two chords $c$ (resp. $c'$) in their respective boundaries in $S_{lab}(\tilde{f})$ having $a$ as an endpoint, and corresponding to a transposition that appears in the cycle of $f$ coded by $F$ (resp. $F'$). By \cite[Theorem $2.2$]{GY02}, the labels of $c$ and $c'$ are sorted in increasing clockwise order around $a$. By definition of $\tilde{f}$, so are the labels of $F$ and $F'$.
\end{proof}

One can check in addition that, if a labelled colored lamination satisfies these four properties, then it also satisfies:

{\itshape
\begin{itemize}
\item[$P_5$:] Let $F$ be a white face of $S_{lab}(f)$. By $P_4$ , $F$ has exactly one arc in its boundary, of the form $\wideparen{(e^{-2i\pi a/n},e^{-2i\pi(a+1)/n})}$ for some $a \in \Z$. Then, the labels of its neighbouring black faces are sorted in decreasing clockwise order around $F$, starting from this unique arc.
\end{itemize}}

For $n,k \geq 1$, we now define $\kK_n^{(k)}$ the set of labelled colored lamination satisfying properties $P_1$ to $P_4$. In addition, we set $\kK_n = \cup_{1 \leq k \leq n-1} \kK_n^{(k)}$. Then the following holds:

\begin{thm}
\label{thm:bij1}
Let $n,k \geq 1$. The map 
$$\Phi_n^{(k)}: \left\{
\begin{array}{l}
 \kM_n^{(k)} \rightarrow \kK_n^{(k)} \\
  f \mapsto S_{lab}(f)
\end{array}
\right.$$
is a bijection.
\end{thm}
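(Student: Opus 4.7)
The map $\Phi_n^{(k)}$ is well-defined by Proposition \ref{prop:properties}. Injectivity follows directly from Proposition \ref{prop:increasingcycles}: given $S_{lab}(f) = L$, each $\tau_i$ is the unique increasing cycle on the vertices of the $i$-th black face of $L$, and the black-face labels recover the order of the cycles.

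For surjectivity, I plan to exhibit an inverse $\Psi: \kK_n^{(k)} \to \kM_n^{(k)}$. Given $L \in \kK_n^{(k)}$, set $\Psi(L) = (\tau_1, \ldots, \tau_k)$ where $\tau_i$ is the increasing cycle on the vertices of the $i$-th black face. The identity $\sum_i(\ell(\tau_i) - 1) = n - 1$ follows from Euler's formula applied to the planar graph with vertices the $n$-th roots of unity, edges the $n$ circular arcs together with the $\sum_i \ell(\tau_i)$ chords of $L$, and faces the $n + k$ inner faces of $L$ (by $P_1$) plus the outer face: $n - (n + \sum_i \ell(\tau_i)) + (n + k + 1) = 2$, hence $\sum_i \ell(\tau_i) = n + k - 1$. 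It remains to check $\tau_1 \cdots \tau_k = c_n$, which I do by reducing to the Goulden–Yong bijection between minimal factorizations of $c_n$ into transpositions and labelled non-crossing trees on the $n$-th roots of unity (with labels in increasing clockwise order around each vertex).

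Define the \emph{transposition refinement} $\tilde{L}$ of $L$: for the $i$-th black face of $L$ with vertices $a_1 < \cdots < a_{\ell(\tau_i)}$, replace the polygon by the $\ell(\tau_i) - 1$ fan chords $[e^{-2i\pi a_1/n}, e^{-2i\pi a_j/n}]$ for $j = 2, \ldots, \ell(\tau_i)$, labelled consecutively $\lambda_{i-1}+1, \ldots, \lambda_{i-1}+\ell(\tau_i)-1$, where $\lambda_j := \sum_{r \leq j}(\ell(\tau_r) - 1)$. The resulting $\tilde{L}$ has $n-1$ non-crossing chords, and is connected (any chord-path in $L$ can be converted into a chord-path in $\tilde{L}$ by rerouting through the apex of each black face traversed), hence forms a tree on the $n$-th roots of unity. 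The crucial step is the verification of the increasing-clockwise label condition at every vertex $a$: by $P_4$, the black faces of $L$ incident to $a$ appear in increasing-label clockwise order, and by $P_2$ they are separated by white sectors containing no chord, so the chords of $\tilde{L}$ incident to $a$ occupy disjoint angular blocks ordered by face label; within each such block (which is nontrivial only when $a$ is the minimum vertex of the corresponding black face), the fan chords target $a_2, a_3, \ldots$ in strictly increasing clockwise order from the apex $a = a_1$, hence their labels are also increasing clockwise. The Goulden–Yong bijection then identifies $\tilde{L}$ with a unique minimal factorization of $c_n$ into $n-1$ transpositions, which by construction is the transposition slicing of $\Psi(L)$; this forces $\tau_1 \cdots \tau_k = c_n$ and $\Psi(L) \in \kM_n^{(k)}$, with $\Phi_n^{(k)}(\Psi(L)) = L$ by direct comparison of the two labelled colored laminations face by face.

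The main obstacle is the label verification for $\tilde{L}$ at vertices incident to several black faces of $L$: the argument must reconcile the intra-face fan ordering (which depends crucially on the apex being the smallest vertex) with the inter-face ordering dictated by $P_4$, and the alternation property $P_2$ is essential in preventing angular interleaving of chords coming from distinct black faces around any vertex.
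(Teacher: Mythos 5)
Your proposal is correct, but the surjectivity argument takes a genuinely different route from the paper's. The paper, after reading off the candidate cycles $c_i$ from the black faces exactly as you do, verifies $c_1\cdots c_k = c_n$ \emph{directly}: it tracks each $j \in \llbracket 1,n\rrbracket$ through the product and uses $P_4$ together with $P_5$ to see that the local arrangement of labelled black faces around the vertex $e^{-2i\pi j/n}$ forces $\sigma(j)=j+1 \bmod n$; membership in $\kM_n^{(k)}$ (in particular the minimality count) is then asserted without further detail. You instead reduce to the inverse Goulden--Yong correspondence by building the fan refinement $\tilde{L}$, checking it is a labelled non-crossing spanning tree with clockwise-increasing labels, and letting the block structure of the labels recover the $\tau_i$ as products of consecutive transpositions. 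This costs you the extra verification of the tree and label conditions for $\tilde{L}$ (your analysis of the apex fan versus the inter-face ordering from $P_4$ and $P_2$ is the right one), but it buys two things the paper leaves implicit: an explicit derivation of $\sum_i(\ell(\tau_i)-1)=n-1$ via Euler's formula, and a proof that is structurally the exact inverse of how Propositions \ref{prop:increasingcycles} and \ref{prop:properties} were obtained (namely through the transposition slicing), so that the appeal to \cite{GY02} is made only once and in one place. Two small points to tidy: for a cycle of length $2$ the "face" is a single chord, so your edge count $\sum_i\ell(\tau_i)$ requires the bigon (doubled-edge) convention in the Euler computation; and the claim that $\tilde{L}$ spans all $n$ roots of unity deserves a word (it follows from $P_1$ and $P_3$, since a white face with no arc in its boundary would yield a forbidden cycle of chords, whence each of the $n$ white faces carries exactly one of the $n$ arcs and no vertex is chord-free).
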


As a corollary, the map $\Phi_n: \kM_n \rightarrow \kK_n, f \mapsto S_{lab}(f)$ is also a bijection.

\begin{proof}
Let $f$ be an element of $\kM_n^{(k)}$. By Proposition \ref{prop:properties}, $S_{lab}(f)$ is an element of $\kK_n^{(k)}$, and therefore $\Phi_n^{(k)}$ is well defined. It is also clearly an injection. Let us now take $L \in \kK_n^{(k)}$. We prove that there exists a minimal factorization $f \in \kM_n^{(k)}$ such that $L=S_{lab}(f)$. To this end, for $i \in \llbracket 1,k \rrbracket$, denote by $F_i$ the black face of $L$ labelled $i$, and denote by $\ell(i)$ the number of chords in its boundary. These chords connect $\exp(-2i\pi a_1/n), \ldots, \exp(-2i\pi a_{\ell(i)}/n)$ so that $1 \leq a_1<a_2<\cdots<a_{\ell(i)} \leq n$. Let $c_i \coloneqq (a_1 \, a_2 \, \cdots \, a_{\ell(i)})$, and consider the product $\sigma \coloneqq c_1 c_2 \cdots c_k$.
By $P_4$ and $P_5$, it is clear that, for all $j \in \llbracket 1,n \rrbracket$, $\sigma(j)=j+1 \mod n$. Thus, $\sigma$ is the $n$-cycle. In addition, $f \coloneqq (c_1, c_2, \ldots, c_k)$ is an element of $\kM_n^{(k)}$, which satisfies $L = S_{lab}(f)$. The result follows.
\end{proof}

\subsection{Coding a minimal factorization by a bi-type tree}

We now construct from $S_{lab}(f)$ a tree $T(f)$. We then prove that, for $n,k \geq 1$, the map $$\Psi_n^{(k)}: f \rightarrow T(f)$$is a bijection from $\kM_n^{(k)}$ to the set of bi-type trees $\kU_n^{(k)}$. To this end, we rely on Theorem \ref{thm:bij1}, proving in fact that the mapping $\Psi^{(k)}_n \circ (\Phi_n^{(k)})^{-1}$ is bijective. As a corollary, $$\Psi_n: \kM_n \rightarrow \kU_n, f \rightarrow T(f)$$is also a bijection.

Let $n,k \geq 1$ and take $f := (\tau_1, ..., \tau_k) \in \kM_n^{(k)}$ a minimal factorization of the $n$-cycle. To $f$, we associate the graph $T(f)$, constructed as the \textit{dual graph} of $S_{lab}(f)$: black vertices correspond to black faces of $S_{lab}(f)$, while white vertices correspond to its white faces. Specifically, put a white vertex in each white face of $S_{lab}(f)$, and a black vertex in each of its black faces (including faces of perimeter $2$, which correspond to transpositions in $f$). Now, draw an edge between two vertices whenever the boundaries of the corresponding faces share a chord. Finally, root this graph at the white vertex corresponding to the white face whose boundary contains the arc $\wideparen{1,e^{-2i\pi/n}}$, and give to each black vertex of $T(f)$ the label of the corresponding face in $S_{lab}(f)$. See an example on Fig. \ref{fig:bijection}. 

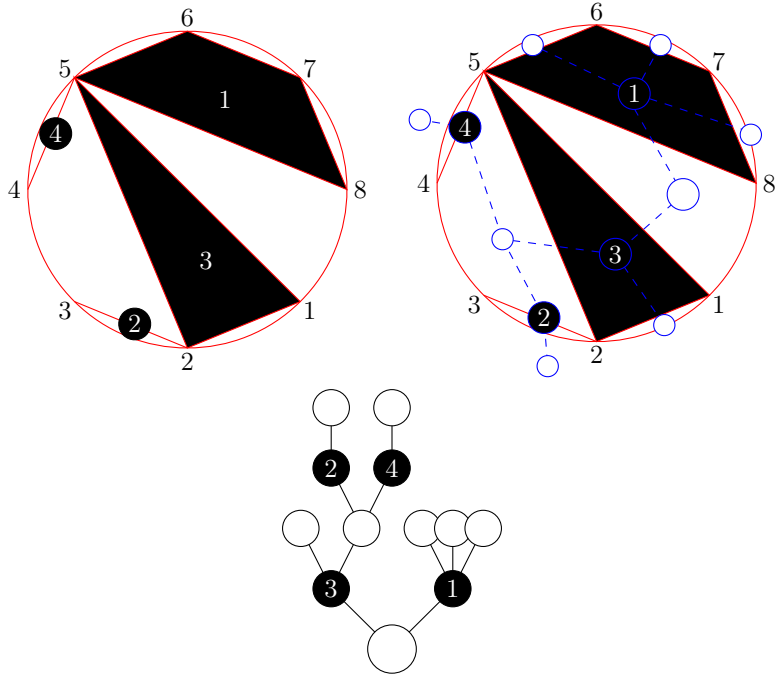
\begin{figure}
\center
\caption{An application of the bijection $\Psi_8$ to the minimal factorization $f \coloneqq (5678)(23)(125)(45) \in \kM_8$. Top-left: the colored lamination $S_{lab}(f)$. Top-right: the same lamination, with its dual tree $T(f)$ drawn in blue. The larger white vertex is its root. Bottom: the dual tree $T(f)$.}
\label{fig:bijection}
\begin{tabular}{c c}
\begin{tikzpicture}[scale=.7, every node/.style={scale=.9}, rotate=-45]
\draw[red] (0,0) circle (3);

\foreach \k in {1,...,8}
{\draw ({3.25*cos(360*(\k-1)/8)},{-3.25*sin(360*(\k-1)/8)}) node{\k};}

\draw[red,fill=black] ({3*cos(360*0/8)},{-3*sin(360*0/8)}) -- ({3*cos(360*4/8)},{-3*sin(360*4/8)}) -- ({3*cos(360/8)},{-3*sin(360/8)}) -- cycle;

\draw[red] ({3*cos(360*4/8)},{-3*sin(360*4/8)}) -- node[black]{4} ({3*cos(360*3/8)},{-3*sin(360*3/8)});

\draw[red] ({3*cos(360*1/8)},{-3*sin(360*1/8)}) -- node[black]{2} ({3*cos(360*2/8)},{-3*sin(360*2/8)});

\draw[red,fill=black] ({3*cos(360*7/8)},{-3*sin(360*7/8)}) -- ({3*cos(360*4/8)},{-3*sin(360*4/8)}) -- ({3*cos(360*5/8)},{-3*sin(360*5/8)}) -- ({3*cos(360*6/8)},{-3*sin(360*6/8)}) -- cycle ;

\draw[fill=black] (-.7,1.7) circle (.3) node[white]{1};

\draw[fill=black] (1.2,-.7) circle (.3) node[white]{3};

\draw[fill=black] (1.1,-2.5) circle (.3) node[white]{2};

\draw[fill=black] (-2.5,-1) circle (.3) node[white]{4};
\end{tikzpicture}
&
\begin{tikzpicture}[scale=.7, every node/.style={scale=.9}, rotate=-45]
\draw[red] (0,0) circle (3);

\foreach \k in {1,...,8}
{\draw ({3.25*cos(360*(\k-1)/8)},{-3.25*sin(360*(\k-1)/8)}) node{\k};}

\draw[red,fill=black] ({3*cos(360*0/8)},{-3*sin(360*0/8)}) -- ({3*cos(360*4/8)},{-3*sin(360*4/8)}) -- ({3*cos(360/8)},{-3*sin(360/8)}) -- cycle;

\draw[red] ({3*cos(360*4/8)},{-3*sin(360*4/8)}) -- node[black]{4} ({3*cos(360*3/8)},{-3*sin(360*3/8)});

\draw[red] ({3*cos(360*1/8)},{-3*sin(360*1/8)}) -- node[black]{2} ({3*cos(360*2/8)},{-3*sin(360*2/8)});

\draw[red,fill=black] ({3*cos(360*7/8)},{-3*sin(360*7/8)}) -- ({3*cos(360*4/8)},{-3*sin(360*4/8)}) -- ({3*cos(360*5/8)},{-3*sin(360*5/8)}) -- ({3*cos(360*6/8)},{-3*sin(360*6/8)}) -- cycle ;

\draw[blue,dashed]  (-.7,1.7) -- (-2.7,1) (-.7,1.7) -- (-1,2.7) (-.7,1.7) -- (1.5,2.8) (-.5,1.5) -- (1.3,1) -- (1.2,-.7) (1.2,-.7) -- (-.5,-2) -- (1.1,-2.5) -- (1.8,-3.1) (-.5,-2) -- (-2.5,-1) -- (-3.2,-1.5);

\draw[blue,dashed] (2.8,-1) -- (1.2,-.7);

\draw[blue,fill=black] (-.7,1.7) circle (.3) node[white]{1};

\draw[blue,fill=black] (1.2,-.7) circle (.3) node[white]{3};

\draw[blue,fill=black] (1.1,-2.5) circle (.3) node[white]{2};

\draw[blue,fill=black] (-2.5,-1) circle (.3) node[white]{4};

\draw[blue,fill=white] (1.3,1) circle (.3);

\draw[blue,fill=white] (-.5,-2) circle (.2);

\draw[blue,fill=white] (-2.7,1) circle (.2);

\draw[blue,fill=white] (1.4,2.7) circle (.2);

\draw[blue,fill=white] (-1,2.7) circle (.2);

\draw[blue,fill=white] (1.8,-3.1) circle (.2);

\draw[blue,fill=white] (-3.2,-1.5) circle (.2);

\draw[blue,fill=white] (2.8,-1) circle (.2);

\end{tikzpicture}
\\
\multicolumn{2}{c}{
\begin{tikzpicture}[scale=.8, every node/.style={scale=.9}]
\draw (-1,4) -- (-1,3) -- (-.5,2) -- (-1,1) -- (0,0) -- (1,1) -- (.5,2);
\draw (-.5,2) -- (0,3) -- (0,4);
\draw (-1.5,2) -- (-1,1);
\draw (1,2) -- (1,1) -- (1.5,2);
\draw[fill=white] (0,0) circle (.4);
\draw[fill=black] (-1,1) circle (.3) node[white]{3};
\draw[fill=black] (1,1) circle (.3) node[white]{1};
\draw[fill=white] (-1.5,2) circle (.3);
\draw[fill=white] (-.5,2) circle (.3);
\draw[fill=white] (.5,2) circle (.3);
\draw[fill=white] (1,2) circle (.3);
\draw[fill=white] (1.5,2) circle (.3);
\draw[fill=black] (-1,3) circle (.3) node[white]{2};
\draw[fill=black] (0,3) circle (.3) node[white]{4};
\draw[fill=white] (0,4) circle (.3);
\draw[fill=white] (-1,4) circle (.3);
\end{tikzpicture}
}
\end{tabular}
\end{figure}

\begin{lem}
\label{lem:lembij}
Let $n,k \geq 1$ and $f \in \kM_n^{(k)}$. Then $T(f) \in \kU_n^{(k)}$.
\end{lem}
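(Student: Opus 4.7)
The plan is to verify, one by one, each of the defining conditions of $\kU_n^{(k)}$ for $T(f)$, using the properties $P_1$--$P_5$ of the labelled colored lamination $S_{lab}(f)$ established in Proposition~\ref{prop:properties} (and its consequence $P_5$).

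I would first handle the combinatorial structure. By $P_1$, $T(f)$ has $n$ white and $k$ black vertices, hence $n+k$ vertices in total. A direct edge count---each chord of $S_{lab}(f)$ bordering two two-dimensional faces contributes one edge to $T(f)$, while each transposition chord (a degenerate black face of perimeter $2$) contributes two edges, one per white face on its two sides---combined with the minimality condition $\sum_{i=1}^k (\ell(\tau_i)-1) = n-1$, yields exactly $n+k-1$ edges; connectedness is immediate from the planar structure of $S_{lab}(f)$, so $T(f)$ is a tree. Property $P_2$ guarantees that neighbours in $T(f)$ have opposite colours, and since the root is white by construction, white vertices sit at even heights and black vertices at odd heights. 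Every black face of $S_{lab}(f)$ has at least two sides adjacent to white faces (whether it is a polygon with $\ell \geq 3$ chords or a transposition chord), so every black vertex of $T(f)$ has degree $\geq 2$ and the leaves of $T(f)$ are all white; finally, the labels $1, \ldots, k$ of the black vertices are inherited directly from the labels of the black faces of $S_{lab}(f)$.

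The remaining and central step is the clockwise monotonicity of labels around each white vertex. Fix a white vertex $w$ of $T(f)$, corresponding to a white face $F$ of $S_{lab}(f)$ with its unique boundary arc given by $P_5$. Traversing $\partial F$ in the sense prescribed by the planar embedding of $T(f)$ and starting from this arc, the successive black neighbours $B^{(1)}, \ldots, B^{(r)}$ appear in a well-defined cyclic order. At each interior corner $v$ of $F$ where two consecutive chords of $\partial F$ meet, these two chords correspond to two consecutive neighbours $B^{(j)}, B^{(j+1)}$. Applying $P_4$ to the clockwise ordering of black faces around $v$---restricted to the sector of $v$ complementary to $F$, in which $B^{(j)}$ and $B^{(j+1)}$ appear respectively as the first and last black faces of the sweep---one deduces the required monotonic comparison between their labels. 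Applied to the root face, whose distinguished arc is $\wideparen{1, e^{-2i\pi/n}}$, this yields the decreasing left-to-right ordering of the children of the root.

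The main obstacle is the bookkeeping of orientations: one has to relate carefully ``clockwise around the corner $v$'' (as used in $P_4$) to ``clockwise around the face $F$'' at $v$, and then to ``clockwise around the dual vertex $w$'' in the planar embedding of $T(f)$ inherited from $S_{lab}(f)$. Once this chain of conventions is fixed consistently---so that the decreasing direction in $\kU_n^{(k)}$ matches the increasing direction of $P_4$ with the correct sign---the local argument at each corner of $F$ reduces to a direct application of $P_4$, yielding the desired monotonicity.
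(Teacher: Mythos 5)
Your proof is correct and follows essentially the same route as the paper: verify the defining conditions of $\kU_n^{(k)}$ one by one from the properties $P_1$--$P_5$ of $S_{lab}(f)$. The only cosmetic differences are that you establish acyclicity via an edge count ($n+k-1$ edges from the minimality condition) where the paper argues directly that each chord separates the disk, and that you re-derive the $P_5$-type ordering from $P_4$ at the corners rather than simply invoking $P_5$; both variants are sound.
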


\begin{proof}
Let us check all properties of $\kU_n^{(k)}$ one by one. First, $T(f)$ is clearly connected by construction. Moreover, since each chord splits the unit disk into two disjoint connected components, $T(f)$ is necessarily a tree.
By the property $P_1$, $T(f)$ has exactly $k$ black vertices and $n$ white vertices, and by $P_2$ all neighbours of a white vertex are black and conversely. The root of $T(f)$ is white by construction, and it is therefore a bi-type tree. In addition, a leaf of $T(f)$ has only one neighbour, and hence necessarily corresponds to a face that has an arc in its boundary. In particular this face is white, and thus all leaves of $T(f)$ are white. Finally, by $P_5$, the labels of the neighbours of each white face are sorted in decreasing clockwise order and the labels of the children of the root are decreasing from left to right. In conclusion, $T(f) \in \kU_n^{(k)}$.
\end{proof}

Remark notably that the degree of a black vertex in $T(f)$ corresponds to the length of the corresponding cycle in $f$. This mapping is a bijection, as stated in the following theorem.

\begin{thm}
\label{thm:nbijection}
For any $n,k \geq 1$, the map
\begin{align*}
\Psi_n^{(k)} : \, & \kM_n^{(k)} \rightarrow \kU_n^{(k)} \\
& f \mapsto T(f)
\end{align*}
is a bijection.
\end{thm}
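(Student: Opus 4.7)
The plan is to leverage Theorem \ref{thm:bij1}. Factoring $\Psi_n^{(k)} = D_n^{(k)} \circ \Phi_n^{(k)}$, where $D_n^{(k)}: \kK_n^{(k)} \to \kU_n^{(k)}$ sends a labelled colored lamination to its dual graph, it suffices to show that $D_n^{(k)}$ is a bijection. That $D_n^{(k)}(L) \in \kU_n^{(k)}$ for any $L \in \kK_n^{(k)}$ follows from the argument of Lemma \ref{lem:lembij}, which only invokes the properties $P_1$--$P_5$ of labelled colored laminations (and $P_5$ was shown to be a consequence of $P_1$--$P_4$).

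For injectivity of $D_n^{(k)}$, I would show that any $L \in \kK_n^{(k)}$ can be recovered from $T \coloneqq D_n^{(k)}(L)$. By property $P_1$ and the tree-like structure $P_3$, each white face of $L$ contains in its boundary exactly one arc of the form $\wideparen{(e^{-2i\pi a/n}, e^{-2i\pi(a+1)/n})}$ with $a \in \llbracket 0, n-1 \rrbracket$. This provides a canonical bijection between white vertices of $T$ and the set $\{0, \ldots, n-1\}$: the root of $T$ corresponds to $a=0$ by definition, and since the chords of $L$ form a non-crossing connected graph, a clockwise contour exploration of the planar tree $T$ visits white vertices in the same cyclic order as their arcs appear clockwise on $\bS^1$. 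Combined with the labels carried by the black vertices of $T$ and its planar embedding, this reconstructs the position of every chord of $L$ and the label of every black face, hence $L$ itself.

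For surjectivity, given $T \in \kU_n^{(k)}$, I would construct an explicit preimage $L(T) \in \kK_n^{(k)}$. Assign to each white vertex of $T$ one of the $n$ arcs of $\bS^1$ between consecutive $n$-th roots of unity via the clockwise contour exploration described above (the root receiving $\wideparen{(1,e^{-2i\pi/n})}$). For each black vertex $v$ of $T$ with label $i$, draw the polygon whose vertices are the $n$-th roots of unity attached to the white neighbours of $v$, taken in the clockwise planar order around $v$, color its interior in black and label it by $i$. Then $L(T)$ is defined as the union of $\bS^1$, these black polygons (with their labels) and the resulting white faces. One then checks that $D_n^{(k)}(L(T)) = T$, which yields surjectivity.

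The main obstacle is verifying that $L(T)$ actually lies in $\kK_n^{(k)}$. The count of $n$ white faces and $k$ black faces ($P_1$) is immediate from the definition of $\kU_n^{(k)}$ and the bi-type structure, and $P_2$ follows from the bipartition of the vertices of $T$. Non-crossing of the black polygons (which implies the tree-like structure $P_3$) reduces to the fact that two crossing chords in $\oD$ would force a pair of edges of $T$ to cross in its planar embedding, contradicting planarity. The delicate point is property $P_4$: the labels of the black faces around each vertex $e^{-2i\pi a/n}$ must be increasing in clockwise order. This translates, through the duality, into the decreasing clockwise ordering of labels of black neighbours around each white vertex in $\kU_n^{(k)}$ (together with the special decreasing condition at the root, which accounts for the unique arc $\wideparen{(1,e^{-2i\pi/n})}$). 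A careful local argument in a neighbourhood of each arc $\wideparen{(e^{-2i\pi a/n}, e^{-2i\pi(a+1)/n})}$, reading the black faces incident to $e^{-2i\pi a/n}$ off the contour of $T$, will conclude.
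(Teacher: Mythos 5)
Your overall strategy is the same as the paper's: reduce to Theorem \ref{thm:bij1} and invert the dual-graph map $D_n^{(k)}$ by reconstructing, for each $T \in \kU_n^{(k)}$, the labelled colored lamination whose dual is $T$. The well-definedness and the verification of $P_1$--$P_3$ are fine. But there is a genuine gap at the heart of both your injectivity and your surjectivity arguments: the correspondence between the white vertices of $T$ and the $n$ arcs of $\bS^1$ is \emph{not} given by the clockwise contour exploration of the plane tree $T$. That exploration depends only on the plane-tree structure, whereas the correct assignment depends on the labels of the black vertices. Concretely, take $f=(5678)(23)(125)(45)\in\kM_8$ as in Fig.~\ref{fig:treelamination}: the white vertex $b$ dual to the face containing the arc from $e^{-2i\pi\cdot 3/8}$ to $e^{-2i\pi\cdot 4/8}$ has black parent labelled $3$ and a black child labelled $2$, and the white grandchild of $b$ through that child corresponds to the arc from $e^{-2i\pi\cdot 2/8}$ to $e^{-2i\pi\cdot 3/8}$, i.e.\ it comes \emph{before} $b$ in clockwise order on the circle, even though the contour exploration reaches $b$ first. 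Indeed, property $P_5$ forces the arc of the face dual to $b$ to sit in the corner of $b$ from which the labels of its black neighbours decrease clockwise, so the whole subtree hanging from the child labelled $2<3$ must be swept before the arc of $b$ is reached. With your arc assignment the resulting configuration violates $P_4$/$P_5$, so it is not in $\kK_n^{(k)}$ and the ``careful local argument'' you defer cannot be completed as stated.

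The missing ingredient is precisely a label-dependent exploration order: a white vertex $b$ whose black parent has label $a$ must receive its arc \emph{after} all white vertices lying in subtrees rooted at black children of $b$ with label $<a$, and \emph{before} those lying in subtrees rooted at black children with label $>a$. This is the ``white exploration process'' used in the paper's proof of Theorem \ref{thm:nbijection}. Once the arcs are assigned by this rule, the rest of your plan (unique drawing of the black faces from the leaves towards the root, membership of the result in $\kK_n^{(k)}$, and the appeal to Theorem \ref{thm:bij1}) does go through and essentially coincides with the paper's argument.
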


Notice that, by Lemma \ref{lem:lembij}, $\Psi_n^{(k)}$ is well-defined from $\kM_n^{(k)}$ to $\kU_n^{(k)}$.

\begin{proof}[Proof of Theorem \ref{thm:nbijection}]
We rely here on \cite[Section $3$]{GY02}, where the Goulden-Yong bijection and its inverse are constructed. Let us construct as well the inverse of the map $\Psi_n^{(k)}$. Fixing a tree $T \in \kU_n^{(k)}$, we shall construct a lamination $L \coloneqq S_{lab}(f)$ associated to a minimal factorization $f$, such that $T=T(f)$. 
To this end, we define a way of exploring white vertices of the tree $T$, which we call its white exploration process. This process induces a way of labelling the white vertices, in the order in which they are explored. The white exploration process is defined the following way: we start from the root which receives label $1$, and explore the subtrees rooted in its white grandchildren from left to right. The rule is that, in order to explore a subtree of $T$ rooted in a white vertex $b$ whose black parent has label $a$, one first explores the subtrees rooted in a black child of $b$ with label $<a$ (if there are some, from left to right), then visits the vertex $b$, and finally explores the subtrees rooted in a black child of $b$ with label $>a$ if there are some, from left to right, starting from the leftmost of these subtrees. Exploring a subtree rooted in a black vertex juste consists in exploring the subtrees rooted in its white children, from left to right. An example is given on Fig. \ref{fig:treelamination}, top-right.

Let us now construct a colored lamination $L$ whose dual tree is exactly $T$. The idea (which is the main interest of this white exploration process) is that the white vertex labelled $k$ shall correspond to the white face whose boundary contains the arc $(\wideparen{e^{-2i(k-1)\pi/n},e^{-2ik\pi/n}})$ (so that the root corresponds to the arc $(\wideparen{1,e^{-2i\pi/n}})$). The colored lamination $L$ is constructed by drawing the faces that correspond to black vertices of $T$, and giving them the label of the associated vertices. See Fig. \ref{fig:treelamination} for an example. There is a unique way of drawing such a colored lamination. To see this, remark that there is only one way to draw the face corresponding to a black vertex whose children are all leaves, and that this drawing does not depend on the label of the white parent of this black vertex. Thus, there is only one way to draw all these faces from the leaves to the root, which gives $L$. Furthermore, $L$ belongs to $\kK_n^{(k)}$ by construction. Thus, by Theorem \ref{thm:bij1}, there exists $f \in \kM_n$ such that $L=S_{lab}(f)$. Hence, $f$ satisfies $T=T(f)$, and $\Psi_n^{(k)}$ is a bijection.
\end{proof}

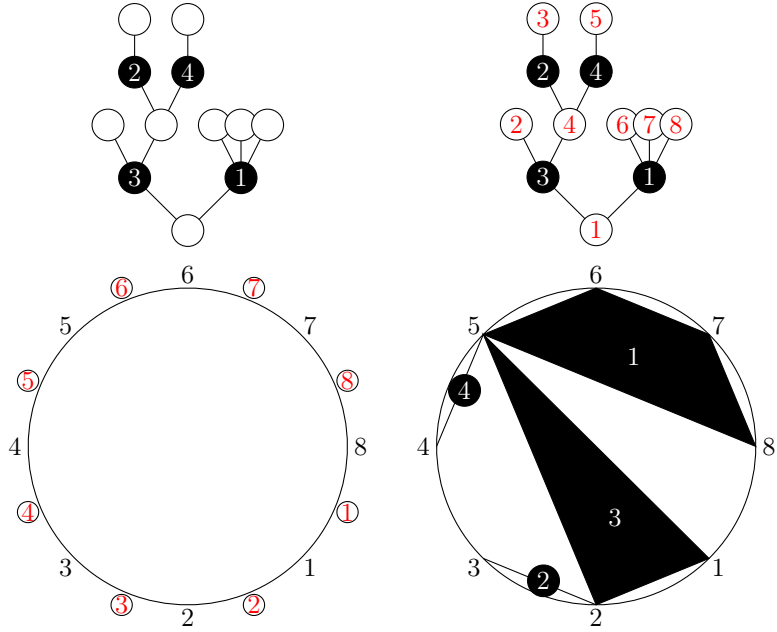
\begin{figure}
\center
\caption{An example of the inverse bijection $\Phi_8 \circ (\Psi_8)^{-1}$. Top-left: a tree $T \in \kU_8$. Top-right: the tree $T$ with labels on its white vertices, following the white exploration process. Bottom-left: the locations of the arcs corresponding to these white vertices, on the circle. Bottom-right: the associated colored lamination. We recover from this: $\Psi_8^{-1}(T)=(5678)(23)(125)(45)$.}
\label{fig:treelamination}
\begin{tabular}{c c}
\begin{tikzpicture}[scale=.7, every node/.style={scale=.9}]
\draw (-1,4) -- (-1,3) -- (-.5,2) -- (-1,1) -- (0,0) -- (1,1) -- (.5,2);
\draw (-.5,2) -- (0,3) -- (0,4);
\draw (-1.5,2) -- (-1,1);
\draw (1,2) -- (1,1) -- (1.5,2);
\draw[fill=white] (0,0) circle (.3);
\draw[fill=black] (-1,1) circle (.3) node[white]{3};
\draw[fill=black] (1,1) circle (.3) node[white]{1};
\draw[fill=white] (-1.5,2) circle (.3);
\draw[fill=white] (-.5,2) circle (.3);
\draw[fill=white] (.5,2) circle (.3);
\draw[fill=white] (1,2) circle (.3);
\draw[fill=white] (1.5,2) circle (.3);
\draw[fill=black] (-1,3) circle (.3) node[white]{2};
\draw[fill=black] (0,3) circle (.3) node[white]{4};
\draw[fill=white] (0,4) circle (.3);
\draw[fill=white] (-1,4) circle (.3);
\end{tikzpicture}
&
\begin{tikzpicture}[scale=.7, every node/.style={scale=.9}]
\draw (-1,4) -- (-1,3) -- (-.5,2) -- (-1,1) -- (0,0) -- (1,1) -- (.5,2);
\draw (-.5,2) -- (0,3) -- (0,4);
\draw (-1.5,2) -- (-1,1);
\draw (1,2) -- (1,1) -- (1.5,2);
\draw[fill=white] (0,0) circle (.3) node[red]{1};
\draw[fill=black] (-1,1) circle (.3) node[white]{3};
\draw[fill=black] (1,1) circle (.3) node[white]{1};
\draw[fill=white] (-1.5,2) circle (.3) node[red]{2};
\draw[fill=white] (-.5,2) circle (.3) node[red]{4};
\draw[fill=white] (.5,2) circle (.3) node[red]{6};
\draw[fill=white] (1,2) circle (.3) node[red]{7};
\draw[fill=white] (1.5,2) circle (.3) node[red]{8};
\draw[fill=black] (-1,3) circle (.3) node[white]{2};
\draw[fill=black] (0,3) circle (.3) node[white]{4};
\draw[fill=white] (0,4) circle (.3) node[red]{5};
\draw[fill=white] (-1,4) circle (.3) node[red]{3};
\end{tikzpicture}
\\
\begin{tikzpicture}[scale=.7, every node/.style={scale=.9}, rotate=-45]
\draw (0,0) circle (3);
\foreach \k in {1,...,8}
{\draw ({3.25*cos((360*(\k-2)+180)/8)},{-3.25*sin((360*(\k-2)+180)/8)}) circle (.2) node[red]{\k};}
\foreach \k in {1,...,8}
{\draw ({3.25*cos(360*(\k-1)/8)},{-3.25*sin(360*(\k-1)/8)}) node{\k};}
\end{tikzpicture}
&
\begin{tikzpicture}[scale=.7, every node/.style={scale=.9}, rotate=-45]
\draw (0,0) circle (3);
\foreach \k in {1,...,8}
{\draw ({3.25*cos(360*(\k-1)/8)},{-3.25*sin(360*(\k-1)/8)}) node{\k};}
\draw[fill=black] ({3*cos(360*0/8)},{-3*sin(360*0/8)}) -- ({3*cos(360*4/8)},{-3*sin(360*4/8)}) -- ({3*cos(360/8)},{-3*sin(360/8)}) -- cycle;
\draw ({3*cos(360*4/8)},{-3*sin(360*4/8)}) -- node[black]{4} ({3*cos(360*3/8)},{-3*sin(360*3/8)});
\draw ({3*cos(360*1/8)},{-3*sin(360*1/8)}) -- node[black]{2} ({3*cos(360*2/8)},{-3*sin(360*2/8)});
\draw[fill=black] ({3*cos(360*7/8)},{-3*sin(360*7/8)}) -- ({3*cos(360*4/8)},{-3*sin(360*4/8)}) -- ({3*cos(360*5/8)},{-3*sin(360*5/8)}) -- ({3*cos(360*6/8)},{-3*sin(360*6/8)}) -- cycle ;
\draw[fill=black] (-.7,1.7) circle (.3) node[white]{1};
\draw[fill=black] (1.2,-.7) circle (.3) node[white]{3};
\draw[fill=black] (1.1,-2.5) circle (.3) node[white]{2};
\draw[fill=black] (-2.5,-1) circle (.3) node[white]{4};
\end{tikzpicture}
\end{tabular}
\end{figure}

\subsection{Image of a random weighted minimal factorization}
\label{ssec:image}

We now investigate random weighted minimal factorizations of the $n$-cycle. Take $(w_i)_{i \geq 1}$ a weight sequence, and remember that $f_n^w$ is a minimal factorization of the $n$-cycle chosen proportionally to its weight: $\P (f_n^w=f) \propto \prod\limits_{i=1}^{k(f)} w_{\ell(\tau_i)-1}$, where $k(f)$ is the number of cycles in $f$. Then, it appears that the random tree $T(f_n^w)$ (which is the image of $f_n^w$ by $\Psi_n$) is a BTSG. In what follows, as in the previous section, $\mu_*$ denotes the Poisson distribution of parameter $1$.

\begin{thm}
\label{thm:btsg}
Let $w$ be a weight sequence. Then the plane tree $T(f_n^w)$, forgetting about the labels, has the law of the unlabelled version of $\cT_n^{(\mu_*,w)}$. In addition, this plane tree being fixed, the labelling of its black vertices is uniform among all labellings from $1$ to their total number $N^\bullet(T)$, satisfying the condition that the labels of all neighbours of a given white vertex are clockwise decreasing, and that the labels of the children of the root are decreasing from left to right.
\end{thm}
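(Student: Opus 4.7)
The plan is to use the bijection $\Psi_n : \kM_n \to \kU_n$ from Theorem \ref{thm:nbijection}, which identifies $f_n^w$ with a random labelled bi-type tree, and then count valid labellings of a given plane tree.

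First I would observe that under $\Psi_n$ the weight $W_w(f) = \prod_i w_{\ell(\tau_i)-1}$ only depends on the unlabelled plane tree $T_0$ underlying $T(f)$: a cycle $\tau$ of length $\ell$ is coded by a black vertex of degree $\ell$ in the dual tree, hence with exactly $\ell - 1$ children, so $W_w(f) = \prod_{y \text{ black}} w_{k_y(T_0)}$. This already gives the second part of the statement, since conditionally on $T_0$, every element of $\kU_n$ with underlying plane tree $T_0$ receives the same probability under $\Psi_n^{-1}$, hence the labelling is uniform among the valid ones.

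For the first part, let $N(T_0)$ denote the number of elements of $\kU_n$ whose underlying plane tree is $T_0$. I would next establish the combinatorial identity
$$N(T_0) = \frac{k!}{\prod_{x \text{ white}} k_x(T_0)!}, \qquad k := N^{\bullet}(T_0),$$
by observing that a valid labelling is equivalent to a partition of $\{1, \ldots, k\}$ into groups, one per white vertex, of prescribed sizes $k_x(T_0)$: at each non-root white vertex $v$ of degree $d(v)$, the cyclic decreasing condition picks exactly one cyclic arrangement out of the $(d(v)-1)!$ cyclic orderings of the $d(v)$ labels sitting around $v$, and the rigid decreasing order at the root plays the analogous role there. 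Because each black vertex belongs to exactly one children-group (that of its white parent), these local constraints are independent and combine to give the multinomial coefficient above.

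With $N(T_0)$ in hand, I would then match the plane tree marginal of $T(f_n^w)$ with the $(\mu_*, w)$-BTSG. Since $\mu_*(j) = e^{-1}/j!$ gives $\prod_x \mu_*(k_x(T_0)) = e^{-n}/\prod_x k_x(T_0)!$, one can rewrite
$$\P\bigl(T(f_n^w) \text{ has plane tree } T_0\bigr) \; = \; \frac{N(T_0)\prod_{y} w_{k_y(T_0)}}{Y_{n,w}} \; \propto \; \prod_{x} \mu_*(k_x(T_0)) \prod_y w_{k_y(T_0)},$$
where the $k!$ absorbed from $N(T_0)$ is exactly the factor accounting for the passage between the uniform-labelling convention adopted for $\cT_n^{(\mu_*,w)}$ in Section \ref{ssec:randomtrees} and the valid-labelling convention on $\kU_n$; Lemma \ref{lem:equivalence} can be invoked to make this normalization precise. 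The resulting identification of plane tree marginals finishes the proof.

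The main technical obstacle is the counting formula for $N(T_0)$: a priori, the cyclic constraints at neighbouring white vertices share black labels (since each black vertex is simultaneously a child of one white and a neighbour of its other-endpoint white), so one must argue that they act on disjoint data. The partition-into-children-groups description of a valid labelling is what makes this work; this is also the combinatorial reason why the Poisson distribution $\mu_*$ appears on the white side of the associated BTSG.
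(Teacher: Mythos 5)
Your overall route is the same as the paper's: push $f_n^w$ through the bijection $\Psi_n$ of Theorem \ref{thm:nbijection}, observe that $W_w(f)$ depends only on the underlying plane tree, count the valid labellings of a fixed plane tree $T_0$, and match the resulting plane-tree marginal with the $(\mu_*,w)$-BTSG. Your derivation of the second assertion (conditional uniformity of the labelling given the shape) is correct, and your partition-into-children-blocks description of a valid labelling is the right combinatorial picture: the block of labels assigned to the children of each white vertex determines the labelling uniquely, and every assignment is realizable, which gives $N(T_0)=N^{\bullet}(T_0)!\bigl(\prod_{x\text{ white}}k_x(T_0)!\bigr)^{-1}$.

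The gap is in your final display, where the factor $k!=N^{\bullet}(T_0)!$ coming from $N(T_0)$ is declared to be ``absorbed'' by a change of labelling convention or by Lemma \ref{lem:equivalence}. This factor is \emph{not} constant over the plane trees with $n$ white vertices arising from $\kM_n$: the number of black vertices ranges from $1$ to $n-1$. It therefore cannot be folded into the normalisation $Y_{n,w}$, and Lemma \ref{lem:equivalence} cannot produce it either, since exponential tilting only multiplies the weight of $T_0$ by $p^n(qr)^{N^{\bullet}(T_0)}s^{n-1}$, i.e.\ by a geometric function of $N^{\bullet}(T_0)$, never by a factorial. As written, your computation yields a plane-tree law proportional to $N^{\bullet}(T_0)!\,\prod_x\mu_*(k_x(T_0))\prod_yw_{k_y(T_0)}$, which coincides with the BTSG law only when $N^{\bullet}$ is deterministic (as in the transposition case $w=\delta^2$, where $N^{\bullet}\equiv n-1$ and the issue disappears). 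Be aware that the paper's own proof asserts the count of valid labellings to be $n!\bigl(\prod_xk_x(T_0)!\bigr)^{-1}$, for which the proportionality is immediate because $n!$ is a genuine constant; your count and that one differ by the non-constant factor $n!/N^{\bullet}(T_0)!$, so they cannot both be right, and the truth of the first assertion hinges on which it is. A small case is decisive: for $n=3$ the path-shaped bi-type tree (white root, black child, white grandchild, black great-grandchild, white leaf) has exactly $2$ valid labellings of its $2$ black vertices, matching $2!/(1!\,1!\,0!)$ and not $3!/(1!\,1!\,0!)=6$. Until you either repair the absorption step or reconcile the two counts, the identification of the plane-tree marginal with $\cT_n^{(\mu_*,w)}$ is not established.
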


Notice that Lemma \ref{lem:intronbcycles} is an immediate corollary of Theorem \ref{thm:btsg} and Lemma \ref{lem:nbofcycles}: the number of cycles in a typical $w$-factorization of the $n$-cycle is of order $(1-\nu_0) \, n$. To see this, just remark that the number of cycles in a minimal factorization $F$ is exactly the number of black vertices in the tree $T(F)$. Theorem \ref{thm:btsg} also implies Proposition \ref{prop:largestcycle}: 

\begin{proof}[Proof of Proposition \ref{prop:largestcycle}]

It is clear, by the abovementioned bijection, that the maximum length of a cycle in a minimal factorization $F$ is the maximum degree of a black vertex in $T(F)$. Thus, by Theorem \ref{thm:btsg}, we need to study the maximum degree of a black vertex in the conditioned BTSG $\cT_n^{(\mu_*,w)}$. Let $\nu$ be as usual the critical equivalent of $w$. If $\nu$ has finite variance, then by the convergence of Theorem \ref{thm:cvcontour} the maximum degree of a white vertex in $\cT_n^{\circ,(\mu_*,w)}$ (that is, the maximum number of grandchildren of a white vertex in $\cT_n^{(\mu_*,w)}$) is $o(\sqrt{n})$ with high probability. Thus, the maximum degree of a black vertex in $\cT_n^{(\mu_*,w)}$ is necessarily $o(\sqrt{n})$ as well.

If $\alpha<2$, then for any $\epsilon>0$, again by the convergence of Theorem \ref{thm:cvcontour}, there exists $C>0$ such that, with probabiltiy larger than $1-\epsilon$, the maximum degree of a white vertex in $\cT_n^{\circ,(\mu_*,w)}$ is less than $C \tilde{B}_n$. Thus the maximum degree of a black vertex is also less than $C \tilde{B}_n$ with probability larger than $1-\epsilon$.
To prove the lower bound on this quantity, take $\epsilon>0$ and $\eta$ such that, with probability larger than $1-\epsilon$, there exists a white vertex $u$ in $\cT_n^{(\mu_*,w)}$ with at least $\eta\tilde{B}_n$ white grandchildren. Such an $\eta$ exists by Lemma \ref{lem:grossommets} (i). Then, by Lemma \ref{lem:pnu<2}, one can choose $\delta>0$ such that, with high probability, all white grandchildren of $u$ except at most $\tilde{B}_n n^{-\delta}$ of them have the same black parent $b(u)$. Hence, for $n$ large enough, with probability larger than $1-2\epsilon$, $b(u)$ has at least $\eta\tilde{B}_n/2$ children. The result follows. 
\end{proof}

\begin{proof}[Proof of Theorem \ref{thm:btsg}]
Take $T$ a bi-type tree with $n$ white vertices, whose leaves are all white. Then, the number of labellings of the black vertices that are clockwise decreasing around each white vertex is exactly 
$$n! \left(\prod\limits_{x \in T, \, x \,\text{ white}} k_x(T)!\right)^{-1}.$$ 
Furthermore, by Theorem \ref{thm:nbijection}, given such a labelling of $T$, exactly one minimal factorization is coded by the tree $T$ labelled this way, and this factorization has weight $ \prod\limits_{y \in T, y \,\text{ black}} \wn_{k_y(T)}$. Finally, 
$$\P \left( T(f_n^w) = T \right) \propto \prod\limits_{x \in T, x \,\text{ white}} \frac{1}{k_x(T)!} \prod\limits_{y \in T, y \,\text{ black}} w_{k_y(T)}.$$ The result follows.
\end{proof}

\paragraph*{Equivalence of weighted minimal factorizations}

An other way to understand Lemma \ref{lem:equivalence}, in the light of the bijection $\Psi_n$, is to remark that the weight sequence $w$ is not uniquely defined by the distribution of $f_n^w$. The following lemma characterizes the families of weight sequences that give birth to the same random minimal factorization:

\begin{lem}[Equivalent sequences]
Let $w$ be a weight sequence and $s >0$. Define $w^{(s)}$ the weight sequence verifying, for any $i \geq 1$, $w^{(s)}_i = w_i s^i$. Then, for any $n \geq 1$, $f_n^{w^{(s)}}$ has the same distribution as $f_n^w$.
\end{lem}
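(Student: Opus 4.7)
The plan is to observe that the weight of any minimal factorization under $w^{(s)}$ differs from its weight under $w$ by a global multiplicative constant that depends only on $n$, not on the factorization itself. More precisely, for any $f = (\tau_1,\ldots,\tau_k) \in \kM_n^{(k)}$, I would compute
\begin{align*}
W_{w^{(s)}}(f) = \prod_{i=1}^k w^{(s)}_{\ell(\tau_i)-1} = \prod_{i=1}^k w_{\ell(\tau_i)-1} \, s^{\ell(\tau_i)-1} = s^{\sum_{i=1}^k(\ell(\tau_i)-1)} \, W_w(f).
\end{align*}

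The crucial step is then to invoke the minimality condition $\sum_{i=1}^k (\ell(\tau_i)-1) = n-1$, which is built into the definition of $\kM_n^{(k)}$, and which equally holds for every $f \in \kM_n$ (since $\kM_n$ is the disjoint union of the $\kM_n^{(k)}$). This gives $W_{w^{(s)}}(f) = s^{n-1} W_w(f)$ for every $f \in \kM_n$, with the constant $s^{n-1}$ not depending on $f$.

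Summing over $\kM_n$ yields $Y_{n,w^{(s)}} = s^{n-1} Y_{n,w}$, so that for any $f \in \kM_n$,
\begin{align*}
\P\left(f_n^{w^{(s)}} = f\right) = \frac{W_{w^{(s)}}(f)}{Y_{n,w^{(s)}}} = \frac{s^{n-1} W_w(f)}{s^{n-1} Y_{n,w}} = \P\left(f_n^w = f\right),
\end{align*}
which proves the claim. There is no genuine obstacle here: the result is a direct consequence of the minimality constraint forcing the total degree shift to be a factorization-independent quantity, so the rescaling by $s^i$ cancels between numerator and normalization constant.
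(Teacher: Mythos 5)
Your proof is correct and follows exactly the same route as the paper's: compute $W_{w^{(s)}}(f)=s^{\sum_i(\ell(\tau_i)-1)}W_w(f)=s^{n-1}W_w(f)$ via the minimality condition, deduce $Y_{n,w^{(s)}}=s^{n-1}Y_{n,w}$, and cancel the constant in the ratio. Nothing is missing.
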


\begin{proof}
Take $n, k \geq 1$. For any $f \coloneqq (\tau_1,\ldots,\tau_k) \in \kM_n$, we have:
\begin{align*}
W_{w^{(s)}}(f) &= \prod_{i=1}^k w_{\ell(\tau_i)-1} s^{\ell(\tau_i)-1} = s^{\sum_{i=1}^k \left(\ell(\tau_i)-1 \right)} \times \prod_{i=1}^k w_{\ell(\tau_i)-1} = s^{n-1} W_w(f)
\end{align*}
by definition of $\kM_n$. Recalling that we have defined $Y_{n,v} = \sum_{f \in \kM_n} W_v(f)$ for any weight sequence $v$, this implies that $Y_{n,w^{(s)}} = s^{n-1} Y_{n,w}$ and therefore that, for $f \in \kM_n$,
\begin{align*}
\P\left( f_n^{w^{(s)}} = f \right) = \frac{W_{w^{(s)}}(f)}{Y_{n,w^{(s)}}} = \frac{W_w(f)}{Y_{n,w}} = \P\left( f_n^w = f \right).
\end{align*}
The result follows.
\end{proof}

Remember that we say that two weight sequences $w,w'$ are equivalent if, for all $n \geq 1$, $\cT_n^{(\mu_*,w)} \overset{(d)}{=} \cT_n^{(\mu_*,w')}$. One can check, the same way as in Lemma \ref{lem:monotypeequiv}, that the sequences $w^{(s)}, s>0$ are the only sequences equivalent to $w$. Indeed, recalling the notation of Lemma \ref{lem:equivalence}, the white weight sequence $w^\circ$ is the same (equal to $\mu_*$) in both trees $\cT_n^{(\mu_*,w)}$ and $\cT_n^{(\mu_*,w')}$, and thus the parameters $p$ and $q$ shall be equal to $1$. Since we impose the condition $qr=1$, the only parameter that is allowed to vary is $s$. It is therefore natural to obtain a family indexed by only one parameter $s$.

\subsection{Relation between the colored lamination-valued processes and the tree coding a minimal factorization}

In order to prove Theorem \ref{thm:mainthmsection4}, we start by proving that, when $w$ is of stable type (for $\alpha<2$ or for $\nu$ with finite variance), the colored lamination-valued constructed from $f_n^w$ is close with high probability to the black process of the associated tree $T(f_n^w)$. 

\begin{thm}
\label{thm:relationprocesstree}
Let $\alpha \in (1,2]$, $w$ be a factorization of $\alpha$-stable type and $\nu$ be its critical equivalent. Then, if $\alpha<2$ or if $\nu$ has finite variance, in $\D([0,+\infty],\bCL(\oD))$, in probability, as $n \rightarrow \infty$,

\begin{align*}
d_{Sk}\left( \left(S_{u}(f_n^w) \right)_{u \in [0,\infty]}, \left(\bL_{u}^\bullet \left(T(f_n^w)\right) \right)_{u \in [0,\infty]} \right) \overset{\P}{\rightarrow} 0,
\end{align*}
where $d_{Sk}$ denotes the Skorokhod distance on $\D([0,\infty],\bCL(\oD))$.
\end{thm}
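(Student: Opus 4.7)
The approach is to couple the two processes through the bijection $\Psi_n$ of Theorem \ref{thm:nbijection}. By construction, both processes start from $\mathbb{S}^1$ and, at each integer time $i \leq N^\bullet(T(f_n^w))$, add a face attached to the same combinatorial object, namely the cycle $\tau_i$ of $f_n^w$ in the first process and the black vertex $V_i$ of $T(f_n^w)$ (identified with $\tau_i$ via $\Psi_n$) in the second. Since jumps occur at identical times, the Skorokhod distance is bounded by $\sup_{i \leq N^\bullet(T)} d_H(S(\tau_i) \cup \mathbb{S}^1, F_{V_i}(T(f_n^w)) \cup \mathbb{S}^1)$, and it will be enough to show this supremum tends to $0$ in probability.

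Next I would reduce to a finite number of ``large'' faces. For any $\epsilon > 0$, the number of $\epsilon n$-nodes of $T(f_n^w)$ is deterministically at most $\epsilon^{-1}$; for a black vertex $v$ that is not an $\epsilon n$-node, at most one white child roots a macroscopic subtree, so both $F_v \cup \mathbb{S}^1$ and the corresponding $S(\tau_i) \cup \mathbb{S}^1$ lie within $O(\epsilon)$-Hausdorff distance of a common configuration ($\mathbb{S}^1$ together with at most one long chord). It remains to compare, for each of the $O(\epsilon^{-1})$ large black vertices $v = V_i$ with cycle $\tau_i = (e_1 < \cdots < e_\ell)$, the positions of the corners of the two faces: those of $S(\tau_i)$ lie at $e^{-2i\pi e_j/n}$, while those of $F_v(T(f_n^w))$ lie at $e^{-2i\pi t_j/(2|T|)}$ with $t_j$ the $j$-th first-visit time of $v$ in the contour. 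By the inverse bijection constructed in the proof of Theorem \ref{thm:nbijection} via the white-exploration process, $e_j$ is exactly the white-exploration label of the $j$-th white neighbour of $v$ in the appropriate clockwise order, and one has $t_{j+1} - t_j = 2|\theta_{w_j}(T)|$ while $e_{j+1} - e_j$ equals the number of white vertices in the subtree rooted at the $j$-th white child $w_j$ of $v$.

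The theorem therefore reduces to the estimate $|t_j/(2|T|) - e_j/n| = o(1)$ uniformly over the finitely many corners of large faces. This rests on two ingredients. First, by Lemma \ref{lem:nbofcycles} we have $|T|/n \to 2-\nu_0$ in probability, and refinements in the spirit of Lemma \ref{lem:proportions}, applied via Theorem \ref{thm:btsg} which identifies the (unlabelled) plane tree $T(f_n^w)$ with $\cT_n^{(\mu_*,w)}$, ensure that every subtree of macroscopic size $\geq \delta n$ has a black-to-white ratio within $o(1)$ of $1-\nu_0$ with high probability, while smaller subtrees collectively contribute only an $O(\delta)$ error. Second, one must reconcile the white-exploration order (entering $e_j$) with the lex/contour order of white vertices (entering $t_j$): the uniformity-among-admissible-labellings of the black vertices granted by Theorem \ref{thm:btsg}, together with a concentration and exchangeability argument, should show that the cumulative re-orderings induced by the white-exploration rule (which shifts subtrees of black children with label below $a$ to before their parent) move only $o(n)$ white vertices uniformly along the contour. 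This second step is the expected main obstacle, since it requires exploiting the precise joint law of the Poisson offspring at white vertices and the uniform admissible labelling at black vertices to keep track of cumulative discrepancies. Combining the two ingredients then yields the desired control on each large face, and hence the theorem.
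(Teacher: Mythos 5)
Your overall strategy is the same as the paper's: both processes jump at the same integer times, so everything reduces to a uniform comparison of the face $S(\tau_j)$ with the face $F_{V_j}(T(f_n^w))$, corner by corner, i.e.\ to the estimate $|t_i/(2|T|)-e_i/n|=o(1)$; and your first ingredient (macroscopic subtrees have black-to-white ratio $1-\nu_0+o(1)$, so that contour time in $T(f_n^w)$ and white-vertex count are proportional) is exactly the role played by Lemma \ref{lem:proportions} in the paper. However, there are two problems. First, the reduction to ``finitely many large faces'' does not work as stated: a black vertex $v$ that is not an $\epsilon n$-node can still have exactly one white child carrying a subtree of size $\geq \epsilon n$, so that both $S(\tau_j)$ and $F_{V_j}$ contain a macroscopic chord whose two endpoints must still be matched; and there can be of order $H(T(f_n^w))$ such vertices (all the black vertices along the ancestral line of a deep macroscopic subtree), not $O(\epsilon^{-1})$. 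So the corner estimate is genuinely needed uniformly over \emph{all} faces, which is precisely what the paper's Lemma \ref{lem:facesproches} provides; your claim that the two faces of a non-node lie near a ``common'' one-chord configuration silently presupposes the very estimate you later restrict to large faces.

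Second, and more importantly, the step you yourself flag as the main obstacle --- reconciling the white-exploration (arc) order, which determines the $e_i$, with the contour order, which determines the $t_i$ --- is left unproved, and the mechanism you propose (a concentration/exchangeability argument over the uniform admissible labelling of black vertices) is not the right one. The resolution is deterministic and elementary: in both orders every subtree of $T(f_n^w)$ is explored as a contiguous block, and the blocks are nested identically; the two orders differ only in where a white vertex $b$ sits \emph{inside its own block} (at the start of it in the contour order, after the subtrees of its low-labelled black children in the white exploration). Consequently, for a fixed black vertex $v$, the only white vertices that can change sides relative to $v$ between the two orders are the white ancestors of $v$, so each corner count is off by at most $h(v)\leq H(T(f_n^w))$, which is $o(n)$ with high probability by Theorem \ref{thm:cvcontour}. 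This is exactly condition (i) in the paper's definition of $Z_n^g$ and the ``except for ancestors of $u_j$'' clause in the proof of Lemma \ref{lem:facesproches}; no use of the law of the labelling is needed. As written, your proposal therefore has a genuine gap at its central step, even though the quantity you say must be $o(n)$ is indeed the right one.
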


To prove it, for $g: \Z_+ \rightarrow \R_+$, denote by $Z_n^g$ the set of minimal factorizations $f$ of the $n$-cycle satisfying two conditions: (i) $H(T(f)) \leq g(n)$;
(ii) there exists a constant $A > 0$ such that, for any white vertex $u$ of $T(f)$, taking the notation of the proof of Lemma \ref{lem:proportions}, for any $1 \leq i \leq 3$, we have 
$$\big| |G_i(u, T^\circ(f))| - A |G_i(u,T(f))| \big| \leq g(n).$$ 

For $f$ a factorization of the $n$-cycle into $k$ cycles for $1 \leq j \leq k$, denote by $F_j$ the face of $S_{lab}(f)$ labelled $j$, and by $u_j$ the black vertex of $T(f)$ labelled $j$. Recall that $F_{u_j}(T(f))$ denotes the face coding $u_j$ in the black process of $T(f)$. Then the following holds:

\begin{lem}
\label{lem:facesproches}
Let $g: \Z_+ \rightarrow \R_+$. Then there exists a constant $C>0$ such that, uniformly in $j$, as $n \rightarrow \infty$, uniformly for $f \in Z_n^g$,
\begin{align*}
d_H\left( F_j, F_{u_j}\left(T(f) \right) \right) \leq C g(n)/n,
\end{align*}
\end{lem}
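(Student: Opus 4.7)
The strategy is to match the vertices of the two convex polygons $F_j$ and $F_{u_j}(T(f))$ cyclically and bound the arc-distance between matched vertices. Writing $\tau_j = (e_1, \ldots, e_m)$ with $e_1 < \cdots < e_m$ and $m = \ell(\tau_j)$ (valid by Proposition \ref{prop:increasingcycles}), the face $F_j$ has vertices $e^{-2i\pi e_k/n}$, $k = 1, \ldots, m$. Under the bijection $\Psi_n$, the vertex $u_j$ has exactly $m-1$ children, so the contour function $C(T(f))$ visits $u_j$ exactly $m$ times, at times $t_1 < \cdots < t_m$, and $F_{u_j}(T(f))$ is the convex polygon with vertices $e^{-2i\pi t_k/(2|T(f)|)}$. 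Both vertex sequences are in clockwise order on $\bS^1$ (starting from $1$), so by convexity $d_H(F_j, F_{u_j}(T(f))) \leq 2\pi \max_k |t_k/(2|T(f)|) - e_k/n|$, and the lemma reduces to bounding this maximum by $O(g(n)/n)$, uniformly in $k$ and in $j$.

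To estimate $t_k$, I rely on the standard contour identities $t_1 = 2|G_1(u_j, T(f))| - h(u_j)$ and $t_{k+1} - t_k = 2|\theta_{v_k}(T(f))|$, where $v_1, \ldots, v_{m-1}$ are the children of $u_j$ in lex order. Setting $N_k := |G_1(u_j, T(f))| + \sum_{i < k} |\theta_{v_i}(T(f))|$, this gives $t_k = 2 N_k - h(u_j)$, hence $t_k/(2|T(f)|) = N_k/|T(f)| + O(g(n)/n)$ by condition (i) of $Z_n^g$ (which forces $h(u_j) \leq g(n)$). Next, the identity $|G_1(v_{k-1}, T(f))| + |\theta_{v_{k-1}}(T(f))| = N_k + 1$ (valid for $k \geq 2$; the case $k=1$ is handled symmetrically using the white parent $v_0$ of $u_j$) lets me apply condition (ii) of $Z_n^g$ at the white vertex $v_{k-1}$, yielding $M_k := |G_1(v_{k-1}, T^\circ(f))| + |\theta_{v_{k-1}}(T^\circ(f))| = A(N_k + 1) + O(g(n))$ with $A = n/|T(f)|$. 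Dividing by $n$ and combining with the previous step gives $t_k/(2|T(f)|) = M_k/n + O(g(n)/n)$.

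The final step is the geometric identification $M_k = e_k + O(g(n))$. Under $\Psi_n$, each white vertex of $T(f)$ corresponds to a white face of $S_{lab}(f)$ carrying a unique arc on $\bS^1$, and the arcs in the cyclic interval from $1$ clockwise to $e^{-2i\pi e_k/n}$ number exactly $e_k$. On the tree side, $M_k$ counts the white vertices in $G_1(v_{k-1}, T^\circ(f)) \cup \theta_{v_{k-1}}(T^\circ(f))$, i.e., all white vertices visited by the contour of $T^\circ(f)$ up through the last visit of $v_{k-1}$. The corresponding white faces are precisely those occupying the cyclic interval above, so $M_k$ equals $e_k$ up to the discrepancy between the white exploration process (defining $\Psi_n^{-1}$) and the lex order on $T^\circ(f)$. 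Combining with the previous paragraph yields $t_k/(2|T(f)|) = e_k/n + O(g(n)/n)$, completing the argument.

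The main technical obstacle is this last identification of $M_k$ with $e_k$. The white exploration process used in defining $\Psi_n^{-1}$ reorders white vertices relative to lex order on $T^\circ(f)$: a white vertex $b$ with black parent of label $a$ is visited \emph{after} the subtrees of $b$'s black children whose labels lie below $a$, whereas lex order visits $b$ before any of its descendants. However, this reordering only propagates along ancestor chains in $T(f)$, so the position of any individual white vertex shifts by at most $O(H(T(f)))$ between the two orderings. Under condition (i) of $Z_n^g$, $H(T(f)) \leq g(n)$, so the cumulative error is $O(g(n))$, which matches the desired $O(g(n)/n)$ bound after division by $n$.
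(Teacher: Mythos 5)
Your argument follows essentially the same route as the paper's proof: both rest on the correspondence between white vertices of $T(f)$ and arcs of $\bS^1$ furnished by the bijection $\Psi_n$, use condition (ii) of $Z_n^g$ to convert vertex counts in $T(f)$ into vertex counts in $T^\circ(f)$, and absorb the contribution of the ancestors of $u_j$ into the $g(n)$ error via the height bound of condition (i). (The paper phrases this by partitioning $T(f)$ into the blocks $S_1,\dots,S_{\ell_j+1}$ delimited by the successive visits of $u_j$ and comparing the white-vertex counts of the partial unions with the $a_i$'s; your cumulative formulation via $N_k$ and $M_k$ is equivalent, and is in fact more explicit about where condition (ii) is applied, namely at the white children $v_{k-1}$ of $u_j$ rather than at the black vertex $u_j$ itself.)

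The one step whose justification does not hold as written is the last one: it is \emph{not} true that ``the position of any individual white vertex shifts by at most $O(H(T(f)))$ between the two orderings.'' In the white exploration process, a white vertex $b$ whose black parent carries label $a$ is visited only after the subtrees of all its black children with label $<a$ have been explored; if one of those subtrees is macroscopic, the exploration index of $b$ differs from its lexicographic index by $\Theta(n)$, not $O(H(T(f)))$. What is true --- and what you actually need --- is a statement about initial segments rather than individual positions: both orders explore each subtree $\theta_w(T^\circ(f))$ ($w$ white) as a contiguous block and visit sibling subtrees in the same left-to-right order, the only discrepancy being where each white vertex is inserted among the blocks of its own children. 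Consequently the set of white vertices with exploration index $\le e_k$ and the set $G_1\bigl(v_{k-1},T^\circ(f)\bigr)\cup\theta_{v_{k-1}}\bigl(T^\circ(f)\bigr)$ can differ only on ancestors of $v_{k-1}$, of which there are at most $g(n)$ by condition (i); this yields $M_k=e_k+O(g(n))$ and repairs the step. This is precisely how the paper handles it (the correspondence between white vertices of $S_i$ and arcs holds ``except for ancestors of $u_j$''). With that correction, your proof is complete.
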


This straightforwardly implies Theorem \ref{thm:relationprocesstree}: 

\begin{proof}[Proof of Theorem \ref{thm:relationprocesstree}]
By Lemma \ref{lem:proportions} and Theorem \ref{thm:cvcontour}, there exists $g: \Z_+ \rightarrow \R_+$ such that $g(n)=o(n)$ and $f_n^w \in Z_n^g$ with high probability. Thus, with high probability, jointly for all $j \leq N^\bullet(T(f_n^w))$, $d_H(F_j, F_{u_j}(T(f_n^w)) \rightarrow 0$ as $n \rightarrow \infty$. This implies Theorem \ref{thm:relationprocesstree}.
\end{proof} 

\begin{proof}[Proof of Lemma \ref{lem:facesproches}]
This proof is a straight adaptation of \cite[Lemma $4.4$]{The19}, which investigates the case of a minimal factorization into transpositions. Let $f \coloneqq (\tau_1, \ldots, \tau_k) \in \kM_n^{(k)}$, and fix $1 \leq j \leq k$. Denote by $\ell_j$ the length of the cycle $\tau_j$, and write $\tau_j$ as $(a_1 \cdots a_{\ell_j})$, with $1 \leq a_1 < \cdots < a_{\ell_j} \leq n$. By definition, the face $F_j$ connects the points $e^{-2i \pi a_1/n}, \ldots, e^{-2i \pi a_{\ell_j}/n} \in \bS^1$.
The lengths of the arcs delimited by $1$ and these $\ell_j$ points are therefore, in clockwise order, $2\pi a_1/n, 2\pi(a_2-a_1)/n, \ldots, 2\pi(a_{\ell_j} - a_{\ell_j-1})/n, 2\pi (n-a_{\ell_j})/n$.

Now, let us consider the vertex $u_j$. It induces a partition of the set of vertices of $T(f)$ into $\ell_j+1$ subsets: the set $S_1$ of vertices visited by the contour function before the first visit of $u_j$, the set $S_2$ of vertices visited between the first and the second visit of $u_j$, etc. up to $S_{\ell_j+1}$, the set of vertices visited for the first time after the last visit of $u_j$. Let us denote by $N^\circ(S_i)$ the number of vertices of $S_i$ that are white, and remark that the interval between two consecutive visits of $u_j$ exactly corresponds to the exploration of a subtree rooted in a white child of $u_j$. By the second point in definition of $Z_n^g$, it is clear that $|N^\circ(S_1)-na_1|\leq g(n), |N^\circ(S_2)-n(a_2-a_1)| \leq g(n), \ldots, |N^\circ(S_{\ell_j})-n(a_{\ell_j}-a_{\ell_j-1})| \leq g(n), |N^\circ(S_{\ell_j+1})-(n-a_{\ell_j})|\leq g(n)$.

In order to control the locations of the associated faces in the unit disk, we follow the proof of \cite[Lemma $4.4$]{The19}: remark that,  for all $i$, the white vertices of $S_i$ exactly correspond to white faces of $S_{lab}(f)$ whose boundary contains an arc between $e^{-2i\pi a_{i-1}/n}$ and $e^{-2i\pi a_{i}/n}$, except for ancestors of $u_j$ which may correspond to arcs either between $\exp(-2i\pi a_{\ell_j}/n)$ and $1$, or between $1$ and $e^{-2i\pi a_1/n}$. By the first point in the definition of $Z_n^g$, $u_j$ has at most $g(n)$ ancestors. 
This implies that $F_j$ and $F_{u_j}(T(f))$ are at distance less than $4 \pi g(n)/n$, jointly for all $j \leq N^\bullet(T(f_n^w))$.
\end{proof}

\subsection{A shuffling operation}
\label{ssec:shuffling}

By Theorem \ref{thm:relationprocesstree}, in order to prove Theorem \ref{thm:mainthmsection4}, we now only need to study the process $\left(\bL_u^\bullet \left(T(f_n^w)\right) \right)_{u \in [0,\infty]}$. The main obstacle in this study is the constraint on the labelling of black vertices in $T(f_n^w)$ (recall that the labels are clockwise decreasing around each white vertex, and decreasing from left to right around the root). To get rid of this constraint, we define a shuffling operation on the vertices of a bi-type tree, adapted from \cite[Section $4.4$]{The19}.

\begin{defi}
\label{def}
Fix $n,k \geq 1$. Let $T$ be a plane bi-type tree with $n$ white vertices and $k$ black vertices labelled from $1$ to $k$, and let $K \in \Z_+$. We define the shuffled tree $T^{(K)}$ as follows: starting from the root of $T$, we perform one of the following two operations on each white vertex of $T$. For consistency, we put the constraint that the operation shall be performed on a white vertex before being performed on its grandchildren.
\begin{itemize}
\item Operation $1$: for a white vertex such that the labels of its black children are all $> K$, we uniformly shuffle these labels (without touching the corresponding subtrees). See Fig. \ref{fig:shuffling} (a).
\item Operation $2$: for a white vertex such that at least one of its black children has a label $\leq K$, we uniformly shuffle these labelled children and keep the subtrees on top of each of them. See Fig. \ref{fig:shuffling} (b).
\end{itemize}
\end{defi}

\begin{figure}[!h]
\center
\caption{Examples of the shuffling operation. The operation is different in both cases, since in the second case the vertex labelled $9$ has a child with label $4 \leq K$.}
\label{fig:shuffling}
\begin{tabular}{c c c}
\begin{tikzpicture}
\draw[white] (0,0) -- (1,0);
\end{tikzpicture}
&
\begin{tikzpicture}[scale=.7, every node/.style={scale=.7}]
\draw (-2,3) -- (0,1.5) -- (-1,3);
\draw (0,3) -- (0,1.5) -- (1,3);
\draw (0,1.5) -- (2,3);
\draw[fill=black] (-2,3.3) circle (.3) node[white]{7};
\draw[fill=black] (-1,3.3) circle (.3) node[white]{4};
\draw[fill=black] (0,3.3) circle (.3) node[white]{28};
\draw[fill=black] (2,3.3) circle (.3) node[white]{12};
\draw[fill=black] (1,3.3) circle (.3) node[white]{16};
\draw (0,1.2) circle (.3);
\draw (-2,3.6) -- (-2.3,5) -- (-1.7,5) -- cycle;
\draw (-1,3.6) -- (-1.3,4.5) -- (-.7,4.5) -- cycle;
\draw (0,3.6) -- (-.3,5.5) -- (.3,5.5) -- cycle;
\draw (1,3.6) -- (.7,4) -- (1.3,4) -- cycle;
\draw (2,3.6) -- (1.7,6) -- (2.3,6) -- cycle;
\draw (-2,5.5) node{$T_1$};
\draw (-1,5) node{$T_2$};
\draw (0,6) node{$T_3$};
\draw (1,4.5) node{$T_4$};
\draw (2,6.5) node{$T_5$};
\end{tikzpicture}
&
\begin{tikzpicture}[scale=.7, every node/.style={scale=.7}]
\draw (-2,3) -- (0,1.5) -- (-1,3);
\draw (0,3) -- (0,1.5) -- (1,3);
\draw (0,1.5) -- (2,3);
\draw[fill=black] (-2,3.3) circle (.3) node[white]{28};
\draw[fill=black] (-1,3.3) circle (.3) node[white]{4};
\draw[fill=black] (0,3.3) circle (.3) node[white]{16};
\draw[fill=black] (2,3.3) circle (.3) node[white]{7};
\draw[fill=black] (1,3.3) circle (.3) node[white]{12};
\draw (0,1.2) circle (.3);
\draw (-2,3.6) -- (-2.3,5) -- (-1.7,5) -- cycle;
\draw (-1,3.6) -- (-1.3,4.5) -- (-.7,4.5) -- cycle;
\draw (0,3.6) -- (-.3,5.5) -- (.3,5.5) -- cycle;
\draw (1,3.6) -- (.7,4) -- (1.3,4) -- cycle;
\draw (2,3.6) -- (1.7,6) -- (2.3,6) -- cycle;
\draw (-2,5.5) node{$T_1$};
\draw (-1,5) node{$T_2$};
\draw (0,6) node{$T_3$};
\draw (1,4.5) node{$T_4$};
\draw (2,6.5) node{$T_5$};
\end{tikzpicture}
\\
\multicolumn{3}{c}{(a) Shuffling of a labelled plane tree when $K=3$: Operation $1$ is performed.}\\

\begin{tikzpicture}
\draw[white] (0,0) -- (1,0);
\end{tikzpicture}
&
\begin{tikzpicture}[scale=.7, every node/.style={scale=.7}]
\draw (-2,3) -- (0,1.5) -- (-1,3);
\draw (0,3) -- (0,1.5) -- (1,3);
\draw (0,1.5) -- (2,3);
\draw[fill=black] (-2,3.3) circle (.3) node[white]{7};
\draw[fill=black] (-1,3.3) circle (.3) node[white]{4};
\draw[fill=black] (0,3.3) circle (.3) node[white]{28};
\draw[fill=black] (2,3.3) circle (.3) node[white]{12};
\draw[fill=black] (1,3.3) circle (.3) node[white]{16};
\draw (0,1.2) circle (.3);
\draw (-2,3.6) -- (-2.3,5) -- (-1.7,5) -- cycle;
\draw (-1,3.6) -- (-1.3,4.5) -- (-.7,4.5) -- cycle;
\draw (0,3.6) -- (-.3,5.5) -- (.3,5.5) -- cycle;
\draw (1,3.6) -- (.7,4) -- (1.3,4) -- cycle;
\draw (2,3.6) -- (1.7,6) -- (2.3,6) -- cycle;
\draw (-2,5.5) node{$T_1$};
\draw (-1,5) node{$T_2$};
\draw (0,6) node{$T_3$};
\draw (1,4.5) node{$T_4$};
\draw (2,6.5) node{$T_5$};
\end{tikzpicture}
&
\begin{tikzpicture}[scale=.7, every node/.style={scale=.7}]
\draw (-2,3) -- (0,1.5) -- (-1,3);
\draw (0,3) -- (0,1.5) -- (1,3);
\draw (0,1.5) -- (2,3);
\draw[fill=black] (-2,3.3) circle (.3) node[white]{12};
\draw[fill=black] (-1,3.3) circle (.3) node[white]{28};
\draw[fill=black] (0,3.3) circle (.3) node[white]{4};
\draw[fill=black] (2,3.3) circle (.3) node[white]{16};
\draw[fill=black] (1,3.3) circle (.3) node[white]{7};
\draw (0,1.2) circle (.3);
\draw (1,3.6) -- (.7,5) -- (1.3,5) -- cycle;
\draw (0,3.6) -- (-.3,4.5) -- (.3,4.5) -- cycle;
\draw (-1,3.6) -- (-1.3,5.5) -- (-.7,5.5) -- cycle;
\draw (2,3.6) -- (2.3,4) -- (1.7,4) -- cycle;
\draw (-2,3.6) -- (-1.7,6) -- (-2.3,6) -- cycle;
\draw (1,5.3) node{$T_1$};
\draw (0,4.8) node{$T_2$};
\draw (-1,5.8) node{$T_3$};
\draw (2,4.3) node{$T_4$};
\draw (-2,6.3) node{$T_5$};
\end{tikzpicture}
\\
\multicolumn{3}{c}{(b) Shuffling of the same tree when $K=5$: Operation $2$ is performed.}
\end{tabular}
\end{figure}

The main interest of this shuffling operation is that, for any $K$, $T^{(K)}(f_n^w)$ has the law of $\cT_n$, which we recall is defined as the $(\mu_*, w)$-BTSG tree conditioned to have $n$ vertices, whose black vertices are labelled uniformly at random from $1$ to $N^\bullet(\cT_n)$. Furthermore, for a well-chosen sequence $(K_n)_{n \geq 1}$ of values of $K$ (depending on $n$), the black processes associated to $T(f_n^w)$ and $T^{(K_n)}(f_n^w)$ are asymptotically close. The choice of this sequence is important. Indeed, if $K_n=0$, we uniformly shuffle the labels of the children of each white vertex, and in particular the labels of large faces may be given to small ones, which completely changes the structure of the colored process. On the other hand, if $K_n=n$, then the subtrees on top of the children of branching points may be swapped, so that the structure of the underlying tree is changed.

\begin{lem}
\label{lem:chordconfigtree}
\begin{itemize}
\item[(i)] For any $K_n \geq 0$, any weight sequence $w$, the black vertices of the tree $T^{(K_n)}(f_n^w)$ are labelled uniformly at random:
\begin{align*}
T^{(K_n)}(f_n^w) \overset{(d)}{=} \cT_n.
\end{align*}

\item[(ii)] If $\alpha<2$ or if the critical equivalent $\nu$ of $w$ has finite variance, there exists a sequence $(K_n)_{n \geq 1}$ such that the black process of the initial tree is close in probability to the black process of the tree with shuffled vertices:
\begin{align*}
d_{Sk}\left(\left( \bL_{c \tilde{B}_n}^\bullet \left(T(f_n^w)\right) \right)_{c \in [0,\infty]}, \left( \bL_{c \tilde{B}_n}^\bullet \left(T^{(K_n)}(f_n^w)\right) \right)_{c \in [0,\infty]} \right) \underset{n \rightarrow \infty}{\overset{\P}{\rightarrow}} 0,
\end{align*}
where $\tilde{B}_n$ satisfies \eqref{eq:tbn} for $\nu$ and we recall that $d_{Sk}$ denotes the Skorokhod distance on $\D([0,\infty],\bCL(\oD))$.
\end{itemize}
\end{lem}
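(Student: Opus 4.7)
For part (i), we plan to use Theorem \ref{thm:btsg} as the starting point: it asserts that the plane tree underlying $T(f_n^w)$ (forgetting labels) already has the distribution of the unlabelled $\cT_n$, and that conditionally on this plane tree, the labelling is uniform among those satisfying the cyclic-decreasing constraint at each non-root white vertex and the decreasing constraint at the root. It therefore suffices to show that applying the shuffling operation of Definition \ref{def} transforms this conditional distribution into the uniform distribution over all labellings of the black vertices, without altering the marginal distribution of the plane tree, and this for any choice of $K$. We plan to argue top-down at each white vertex $x$: when operation 1 is performed, permuting uniformly the $k_x$ labels among $x$'s children (without touching the subtrees) replaces the unique constrained arrangement (which is fully determined by the parent's label and the multiset of children's labels) by a uniform one over the $k_x!$ possibilities; when operation 2 is performed, jointly permuting children, labels and subtrees both uniformizes the local labelling and preserves the marginal law of the plane tree, because in the $(\mu_*,w)$-BTSG the children of $x$ are exchangeable conditional on the multiset of their subtrees. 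A short induction on the height of white vertices then assembles these local effects into global uniformity.

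For part (ii), we select $K_n$ with $\tilde{B}_n \ll K_n \ll n$, for instance $K_n = \tilde{B}_n \log n$, and we bound separately the effect of the two operations on the process $\bigl(\bL^\bullet_{c \tilde{B}_n}(T(f_n^w))\bigr)_{c \in [0,C]}$ for an arbitrary fixed $C>0$. Operation 1 contributes nothing at all to this restricted process: by definition it only permutes labels strictly greater than $K_n$, which for $n$ large exceed $C \tilde{B}_n$, so the set of black vertices with label $\leq u$ and their positions in the tree are untouched for every $u \leq C \tilde{B}_n$. Operation 2 is applied at a white vertex $x$ only if $x$ has at least one black child with label $\leq K_n$, hence at most $K_n$ times in total across the tree, and the technical work is to bound the cumulative displacement of the first $\lfloor C \tilde{B}_n \rfloor$ faces caused by these shufflings.

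The main obstacle is precisely this last estimate, since a priori permuting the children of a single white vertex may displace every face corresponding to a descendant. We plan to handle this by showing that, with high probability, each of these at most $K_n$ operation-2 vertices has only $O(1)$ children with subtree of size $\geq \delta n$ (which, depending on the regime, follows from properties (H1)-(H3) of the limiting stable process in the case $\alpha<2$ and from Lemma \ref{lem:bp} in the finite-variance case), while the remaining siblings carry subtrees of size $< \delta n$; permuting the latter moves any descendant face by at most $O(\delta)$ angularly. A careful bookkeeping along ancestral lines --- ruling out with high probability the bad configurations in which several operation-2 vertices with a large-subtree sibling stack on a common ancestral chain and would cumulatively displace some face by $\Omega(1)$ --- will then show that the total Hausdorff displacement uniform in $c \in [0,C]$ is $O(\delta)$, and sending $\delta \to 0$ after $n \to \infty$ concludes.
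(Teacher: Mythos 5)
Part (i) is fine and is essentially the paper's argument (the paper states it in one line; your top--down induction is a reasonable way to make it precise). The problems are in part (ii), where your treatment of Operation $2$ has a genuine hole. You argue that each Operation-$2$ vertex has $O(1)$ children carrying a subtree of size $\geq \delta n$ and that permuting the \emph{remaining} siblings displaces descendant faces by $O(\delta)$; but Operation $2$ permutes \emph{all} the children uniformly, so at a $\delta n$-node the two (or more) macroscopic children can be transposed with each other, displacing every descendant face by an amount of order the size of the swapped subtrees, i.e.\ $\Omega(1)$ and not $O(\delta)$. This is exactly the configuration the paper has to rule out, and it does so by two different mechanisms: in the finite-variance case it chooses $K_n$ (by diagonal extraction, so that $K_n \gg \sqrt{n}$ but $N_{\delta n}(T(f_n^w))\,K_n = o(n)$) precisely so that with high probability \emph{no} $\delta n$-node has a child with label $\leq K_n$, hence Operation $2$ is never performed there; in the case $\alpha<2$ it takes $K_n = n$ and invokes Lemma \ref{lem:pnu<2} to show that at a white $\delta n$-node essentially all of the mass sits above a single black child, so the permutation is harmless. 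Your fixed choice $K_n = \tilde{B}_n\log n$ does not guarantee the first property (one only knows $N_{\delta n} = o(\sqrt n)$, not $o(\sqrt n/\log n)$), and you never make the second argument, so the macroscopic swaps are simply not controlled.

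Two further gaps. First, the cumulative displacement along an ancestral line: a non-$\delta n$-node ancestor contributes at most $O(\delta)$, but the number of ancestors of a vertex is of order $n/B_n \to \infty$, so ``each step is $O(\delta)$'' proves nothing; one needs the actual sum of the off-path subtree sizes over all non-$\delta n$-node ancestors to vanish, which is the content of the paper's Lemma \ref{lem:MPDCMPD} and is proved by passing to the continuum stable tree and a compactness argument. Your ``careful bookkeeping along ancestral lines'' names the difficulty but supplies no mechanism for it. Second, the statement is a convergence in $\D([0,\infty],\bCL(\oD))$, including the endpoint $c=\infty$; restricting to $c\in[0,C]$ for fixed $C$ does not suffice. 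The paper handles the regime $c \geq K_n/\tilde{B}_n$ separately (its \eqref{eq:morethansqrtn}), using that by time $K_n$ the red part is already close to the maximal lamination $\bL^{(2)}_\infty$ and that neither operation changes whether the two large grandchildren subtrees of a branching point share a black parent, hence the colours of the large faces at $c=\infty$ agree. You would need to add an argument of this kind.
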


Remark that Theorem \ref{thm:mainthmsection4} is an easy corollary of this lemma.
In addition, notice that Lemma \ref{lem:chordconfigtree} (i) is immediate. Indeed, Operations $1$ and $2$ do not change the law of the underlying unlabelled tree, while the labelling of the black vertices after the shuffling operations is uniform.

The proof of Lemma \ref{lem:chordconfigtree} (ii) is the object of the next subsection. 

\subsection{Proof of the technical lemma \ref{lem:chordconfigtree} (ii)}
\label{ssec:proof2}

In order to prove Lemma \ref{lem:chordconfigtree} (ii), we need to quantify the distance between the locations of faces with the same label in both labelled colored laminations $\bL_\infty^\bullet(T(f_n^w))$ and $\bL_\infty^\bullet(T^{(K_n)}(f_n^w))$. For any $i \in \llbracket 1,N^\bullet(T(f_n^w)) \rrbracket$, denote by $F_i$ (resp. $F'_i$) the face corresponding to the vertex labelled $i$ in $T(f_n^w)$ (resp. $T^{(K_n)}(f_n^w)$). 

One can remark that, for $i \leq K_n$, if a vertex has label $i \leq K_n$, then Operation $2$ is performed on its parent and hence it keeps its subtree on top of it, whose size determines the lengths of the chords of the face $F_i$. Therefore, for all $i$, the lengths of the chords in the boundaries of $F_i$ and $F'_i$ are the same (which means that $F_i$ and $F'_i$ are the same up to rotation). In particular, if the boundary of $F_i$ contains no chord of length larger than $\epsilon$, so does the boundary of $F'_i$, and conversely. Thus, we only have to focus on the locations of the faces that have large chords in their boundary, which correspond to vertices of the tree that are the root of a large subtree.

The idea is the following: when one shuffles vertices following Definition \ref{def:epsilonshuffling}, the location of a face $F_i$ associated to a vertex $u$ with given label $i\leq K_n$ is impacted only by the subset of ancestors of $u$ in $T(f_n^w)$ on which Operation $2$ is performed; indeed, performing Operation $1$ on a vertex that is not $u$ does not change the underlying unlabelled tree. We first investigate the maximum possible displacement of $F_i$ induced by Operation $2$ on ancestors of $u$ that are not $\delta n$-nodes (Lemma \ref{lem:MPDCMPD}), at $\delta>0$ fixed. Then, we show that the way Operation $2$ is performed on ancestors that are not $\delta n$-nodes does not affect much the colored lamination-valued process. In the finite variance case, it is possible to choose $K_n$ in such a way that, with high probability, Operation $2$ is never performed on any $\delta n$-node. When $\alpha<2$, we take $K_n=n$ so that Operation $2$ is performed on all vertices, and we prove that this still does not affect much the colored lamination.

Fix $\epsilon >0$. Let us fix $\delta \in (0,\epsilon)$, and define, for $T$ a monotype tree with $n$ vertices and $u \in T$ an $\epsilon n$-node, the $\delta$-maximum possible displacement of $u$ as:

\begin{align*}
MPD_\delta(u,T) = \frac{1}{n} \sum_{v \in A_u^\delta(T)} \sum_{\substack{w \in K_v(T) \\ w \notin A_u(T)}} |\theta_w(T)|,
\end{align*}
where $A_u^\delta(T)$ denotes the set of ancestors of $u$ in $T$ that are not $\delta n$-nodes. Recall that $K_v(T)$ denotes the set of children of $v$ in $T$, and $A_u(T)$ the set of ancestors of $u$ in $T$. The quantity $MPD_\delta(u,T)$ takes into account the sizes of the subtrees rooted in children of ancestors of $u$ that are not $\delta n$-nodes. See Fig. \ref{fig:MPDdelta} for an example.

\begin{lem}
\label{lem:MPDCMPD}
Almost surely, jointly with the convergence of Theorem \ref{thm:cvcontour}:
\begin{equation*}
\lim_{n \rightarrow \infty} \sup_{\substack{z \in T^\circ(f_n^w) \\ \left|\theta_z\left(T^\circ(f_n^w)\right)\right|\geq \epsilon n}} MPD_\delta \left(z,T^\circ(f_n^w)\right) \underset{\delta \downarrow 0}{\longrightarrow} 0 .
\end{equation*}
\end{lem}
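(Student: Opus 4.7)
}

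The strategy is to reduce the discrete claim to an almost sure statement about the limiting $\alpha$-stable tree, using the convergence of Theorem~\ref{thm:cvcontour} (under a Skorokhod coupling so that the convergence is a.s.). First I would record the following purely deterministic observation about the discrete tree $T$: for any $\epsilon n$-node $u$ with $|\theta_u(T)|\geq \epsilon n$ and any ancestor $v$ of $u$ which is not a $\delta n$-node, if $u'$ denotes the child of $v$ on the path to $u$, then both side-sums $L_v := \sum_{w\prec u'} |\theta_w(T)|$ and $R_v := \sum_{w\succ u'} |\theta_w(T)|$ are strictly less than $\delta n$. Indeed, testing the splits $r=p(u')-1$ and $r=p(u')$ in the definition of a $\delta n$-node, and using $|\theta_{u'}(T)|\geq |\theta_u(T)|\geq \epsilon n>\delta n$, one side of the split is automatically $>\delta n$, so the other side must be $<\delta n$. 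In particular, each individual subtree $\theta_w(T)$ appearing in $n\cdot MPD_\delta(u,T)$ has size $<\delta n$, and the whole $MPD_\delta(u,T)$ is a sum of "small" subtree masses hanging off the spine.

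Next I would use Theorems \ref{thm:btsg} and \ref{thm:reachabledistributions} to identify $T^\circ(f_n^w)$ with a $\mu$-Galton--Watson tree conditioned to have $n$ vertices, with $\mu$ critical and in the domain of attraction of the $\alpha$-stable law. Under the Skorokhod coupling for Theorem \ref{thm:cvcontour}, we may assume the rescaled Lukasiewicz path $B_n^{-1}W_{n\cdot}(T^\circ(f_n^w))$ converges almost surely to $X^{(\alpha)}$. The key point is that both quantities entering the bound have natural continuous analogues: the $\epsilon n$-nodes correspond (in the limit) to times $s\in(0,1)$ such that both $X^{(\alpha)}_s - \inf_{[0,s]}X^{(\alpha)}$ and $\inf_{[s,1]}X^{(\alpha)}-X^{(\alpha)}_s$ are $\geq \epsilon$, and the sum of side-subtree masses $(L_v+R_v)/n$ along the spine corresponds to the mass of excursions of $X^{(\alpha)}$ above its running infimum outside the interval coding $\theta_u$.

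The last step, letting $\delta\downarrow 0$, rests on the following observation. The total side-subtree mass $\sum_{v \in A_u\setminus\{u\}} (L_v+R_v) = n-|\theta_u(T)|-h(u)$ is uniformly bounded by $n$; splitting according to whether $v$ is a $\delta n$-node or not, the $\delta n$-node contribution consists (passing to the limit) of a countable family of strictly positive jumps of $X^{(\alpha)}$ (for $\alpha<2$) or of macroscopic splits of the Brownian excursion (for $\alpha=2$), summing to the total spine mass of all subtrees. Writing the non-$\delta n$-node contribution as the complement, one reduces the whole supremum to a continuous functional of $X^{(\alpha)}$ of the form ``sum of excursion masses $<\delta$ along the spine of an $\epsilon$-node'', which goes to $0$ as $\delta\downarrow 0$ by dominated convergence, since the total excursion mass is a.s. finite and the integrand tends pointwise to $0$.

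\paragraph*{Main obstacle.} The delicate step is the uniformity in the supremum over \emph{all} $\epsilon n$-nodes $u$ simultaneously: one has to be sure that $MPD_\delta$ is bounded above by a continuous functional of $X^{(\alpha)}$ which is jointly continuous in $u$ (or rather in the corresponding time $s(u)\in[0,1]$) and in the shape of $X^{(\alpha)}$. For $\alpha<2$ this is somewhat subtle because the spine of the stable tree has countably many branching points and the "non-$\delta n$-node" condition does not simply translate into an individual subtree being small --- one must control it through the splitting criterion, which is where the uniform bound $L_v,R_v<\delta n$ of Step~1 is essential and is precisely what allows the continuous comparison to go through.
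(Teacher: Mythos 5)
Your plan follows essentially the same route as the paper's proof: under a Skorokhod coupling for Theorem \ref{thm:cvcontour} one transfers $MPD_\delta$ to the limiting stable tree, where the mass hanging off the ancestral line of a vertex with macroscopic subtree decomposes over countably many branch points, each of which is a $\delta$-node for $\delta$ small enough, so the non-$\delta$-node contribution vanishes as $\delta\downarrow 0$; the paper packages this as a compactness/contradiction argument with a continuous functional $CMPD_\delta(u,\cT^{(\alpha)})$, which is also exactly how it resolves the uniformity over all such vertices that you single out as the main obstacle, while you phrase the last step as monotone convergence. One slip to correct: the hypothesis $|\theta_z(T^\circ(f_n^w))|\geq\epsilon n$ is a condition on durations, so its continuum analogue is $d(z)-g(z)\geq\epsilon$ for the visit times of the height process, not the condition that $X^{(\alpha)}_s-\inf_{[0,s]}X^{(\alpha)}$ and $\inf_{[s,1]}X^{(\alpha)}-X^{(\alpha)}_s$ exceed $\epsilon$ --- values of the Lukasiewicz path scale like $B_n$, not $n$, and encode numbers of siblings along the spine rather than subtree masses.
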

Note that here, $\epsilon$ is fixed while $\delta<\epsilon$ goes to $0$.
Roughly speaking, whether Operation $2$ is performed or not on vertices that are not $\delta n$-nodes does not impact much the associated colored lamination-valued process, as the locations of large faces do not change much.

\begin{figure}[!h]
\caption{A representation of the quantity $MPD_\delta(u,T)$, in a given tree $T$, for some vertex $u \in T$. The hatched part is $\theta_u(T)$. $MPD_\delta(u,T)$ is the sum of the sizes of the three plain subtrees on the left of the ancestral line of $u$, divided by $|T|$. Indeed, $a_1$ and $a_2$ are elements of $A_u^\delta(T)$. The dashed subtrees are not counted in $MPD_\delta(u,T)$, because they are rooted in  children of $\delta n$-nodes (namely, $\emptyset$ and $a_3$).} 
\label{fig:MPDdelta}
\center
\begin{tikzpicture}
\draw (0,0) -- (0,4);
\draw[pattern=north west lines] plot [smooth cycle] coordinates  {(0,4) (-.5,5) (.5,5)};
\draw[fill] (0,0) circle (.03);
\draw[fill] (0,1) circle (.025);
\draw[fill] (0,2) circle (.025);
\draw[fill] (0,3) circle (.025);
\draw[fill] (0,4) circle (.025);
\draw (.5,1) -- (0,0) -- (1,1);
\draw (0,2) -- (-1,3);
\draw (0,3) -- (1,4) (-1,4) -- (0,3) -- (-.5,4);
\draw (-.5,2) -- (0,1) -- (-1,2);
\draw[fill=blue!20] plot [smooth cycle] coordinates  {(-1,3) (-1.2,3.3) (-.8,3.3)};
\draw[dashed,fill=red!20] plot [smooth cycle] coordinates  {(.5,1) (.8,1.5) (.2,1.5)};
\draw[dashed,fill=red!20] plot [smooth cycle] coordinates  {(1,1) (.9,1.5) (1.2,1.5)};
\draw[dashed,fill=red!20] plot [smooth cycle] coordinates  {(-1,4) (-1.2,4.3) (-.8,4.3)};
\draw[dashed,fill=red!20] plot [smooth cycle] coordinates  {(-.5,4) (-.3,4.3) (-.7,4.3)};
\draw[dashed,fill=red!20] plot [smooth cycle] coordinates  {(1,4) (1.3,4.4) (.7,4.4)};
\draw[fill=blue!20] plot [smooth cycle] coordinates  {(-.5,2) (-.6,2.1) (-.4,2.1)};
\draw[fill=blue!20] plot [smooth cycle] coordinates  {(-1,2) (-1.1,2.1) (-.9,2.1)};
\draw (.2,3.8) node{$u$};
\draw (.3,2.8) node{$a_3$};
\draw (.3,1.8) node{$a_2$};
\draw (-.2,.8) node{$a_1$};
\draw (-.2,-.2) node{$\emptyset$};
\end{tikzpicture}
\end{figure}
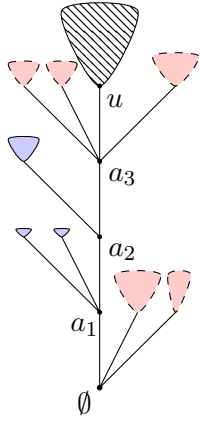

Let us immediately check that this indeed implies Lemma \ref{lem:chordconfigtree} (ii). We prove it in two different ways, depending whether $\alpha<2$ or $\nu$ has finite variance.

\begin{proof}[Proof of Lemma \ref{lem:chordconfigtree} (ii) when $\alpha<2$.]

In this case, let us take $K_n=n$ for all $n \geq 1$, which corresponds to the worst case in which Operation $2$ is performed on each white vertex of $T(f_n^w)$. We prove that doing this does not change much the underlying lamination, and Lemma \ref{lem:chordconfigtree} (ii) therefore follows by Lemma \ref{lem:MPDCMPD}. Fix $\delta>0$. The idea is that, roughly speaking, almost all grandchildren of any white $\delta n$-node $u$ of $T^\circ(f_n^w)$ have the same black parent, so that there is only one black child of $u$ that is the root of a big subtree. Thus, if this child always keeps its subtree on top of it, the colored lamination does not change much.

To state things properly, fix $q>0$ and take $u \in T^\circ(f_n^w)$ a white $\delta n$-node. Then, by Lemma \ref{lem:grossommets} (ii), there exists $\eta>0$ such that, with probability larger that $1-q$, $u$ has at least $\eta \tilde{B}_n$ white grandchildren in $T(f_n^w)$. On this event, by Lemma \ref{lem:pnu<2}, with high probability, there exists $r>0$ such that all of its white grandchildren, except at most $\tilde{B}_n n^{-r}$ of them, have the same black parent $b_u$. This implies that, with high probability, the sum of the sizes of the subtrees rooted in one of these (at most) $\tilde{B}_n n^{-r}$ grandchildren is $o(n)$. Indeed, for any $K \geq 1$, with high probability the $K$ white grandchildren of $u$ with the largest subtrees on top of them are all children of $b_u$. Thus, with high probability, shuffling the siblings of $b_u$ in any way does only change the location of the large face corresponding to $b_u$ by a distance $o(n)$. Since there is only a finite number of $\delta n$-nodes in $T^\circ(f_n^w)$, by letting $q$ go to $0$, the result follows: uniformly for $1 \leq i \leq N^\bullet(T(f_n^w))$, using Lemma \ref{lem:MPDCMPD}:
\begin{align*}
d_H\left(F_i, F'_i \right)=o(n).
\end{align*}
\end{proof}

\begin{proof}[Proof of Lemma \ref{lem:chordconfigtree} (ii) when $\nu$ has finite variance.]

In this case, we exactly follow the proof of \cite[Lemma $4.6$]{The19}.
To begin with, let us explain how to choose the sequence $(K_n)_{n \geq 1}$. Remark that, by the convergence of the Lukasiewicz path of $T^\circ(f_n^w)$ renormalized by a factor $\sqrt{n}$ (Theorem \ref{thm:cvcontour}) towards the normalized Brownian excursion which is almost surely continuous, with high probability the maximum degree of a vertex in the tree is $o(\sqrt{n})$. Furthermore, there are at most $\delta^{-1}$ $\delta n$-nodes in the tree, which proves that the number $N_{\delta n}(T(f_n^w))$ of children of $\delta n$-nodes is $o(\sqrt{n})$ with high probability. Thus, one can choose $(K_n^{(\delta)})_{n \geq 1}$ such that $K_n^{(\delta)} \gg \sqrt{n}$ and $N_{\delta n}(T(f_n^w)) \times K_n^{(\delta)} = o(n)$. Thus, by diagonal extraction, one can choose $(K_n)_{n \geq 1}$ such that $K_n \gg \sqrt{n}$ and, for any $\delta>0$, $N_{\delta n}(T(f_n^w)) \times K_n = o(n)$. Let us take such a sequence. We prove Lemma \ref{lem:chordconfigtree} (ii) in two steps. On one hand, $K_n$ is small enough, so that:
\begin{equation}
\label{eq:lessthann}
d_{Sk}\left( \left(\bL^\bullet_{c \tilde{B}_n}(T(f_n^w))\right)_{c \leq K_n/\tilde{B}_n}, \left(\bL^\bullet_{c \tilde{B}_n}(T^{(K_n)}(f_n^w))\right)_{c \leq K_n/\tilde{B}_n}\right) \overset{\P}{\rightarrow} 0.
\end{equation} 

On the other hand, $K_n$ is large enough, so that: 
\begin{equation}
\label{eq:morethansqrtn}
d_{Sk}\left( \left(\bL^\bullet_{c\tilde{B}_n}(T(f_n^w))\right)_{c \in [K_n/\tilde{B}_n,\infty] }, \left(\bL^\bullet_{c\tilde{B}_n}(T^{(K_n)}(f_n^w))\right)_{c \in [K_n/\tilde{B}_n,\infty]}\right) \overset{\P}{\rightarrow} 0.
\end{equation}

To show \eqref{eq:lessthann}, we prove that, for any $\delta>0$ fixed, for this choice of $(K_n)_{n \geq 1}$, with high probability Operation $2$ is not performed on any $\delta n$-node. For this, let $p_n$ be the probability that there exists an $\delta n$-node of $T^\circ(f_n^w)$ with a child of label $\leq K_n$. Then, conditionally to the values of $N^\bullet(T(f_n^w))$ and $N_{\delta n}(T(f_n^w))$,
\begin{align*}
p_n = 1-\frac{\binom{N^\bullet(T(f_n^w))-N_{\delta n}(T(f_n^w))}{K_n}}{\binom{N^\bullet(T(f_n^w))}{K_n}} \leq 1-\left( 1-\frac{N_{\delta n}(T(f_n^w))}{N^\bullet(T(f_n^w)) - K_n} \right)^{K_n}.
\end{align*}
Now, by Lemma \ref{lem:nbofcycles} and Lemma \ref{lem:chordconfigtree} (i), with high probability, $N^{\bullet}(T(f_n^w)) \overset{(d)}{=} N^\bullet(\cT_n) \geq (1-\nu_0) \, n/2$. Thus, with high probability,
\begin{align*}
p_n \leq 1-\left( 1 - \frac{N_{\delta n}(T(f_n^w))}{(1-\nu_0) \, n/2-K_n} \right)^{K_n} \sim \frac{2 K_n N_{\delta n}(T(f_n^w))}{(1-\nu_0) n}
\end{align*}
which converges in probability to $0$. Hence, with high probability Operation $2$ is not performed on any $\delta n$-node. The faces appearing in the colored lamination-valued processes until time $K_n$ therefore do not code $\delta n$-nodes, and \eqref{eq:lessthann} follows by Lemma \ref{lem:MPDCMPD}.

To prove \eqref{eq:morethansqrtn}, remark that, by Theorem \ref{thm:convergenceblackwhite} (i), the red part of $(\bL_{K_n}^\bullet(T^{(K_n)}(f_n^w))_{n \geq 1}$ converges in distribution towards the Brownian triangulation, which is maximum in the set of laminations of the disk. In addition, by \eqref{eq:lessthann}, $d_H(\bL^\bullet_{K_n}(T(f_n^w)), \bL^\bullet_{K_n}(T^{(K_n)}(f_n^w))) \rightarrow 0$ with high probability as $n \rightarrow \infty$, and both red parts converge to the same Brownian triangulation. Therefore, the only thing that we need to prove is that, for any $\epsilon>0$, faces corresponding to the same white $\epsilon n$-node in both white reduced trees have the same color. This is clear, since the fact that the two large subtrees rooted in white grandchildren of a given white vertex have the same black parent or not is not affected by Operation $1$ nor by Operation $2$. The result follows.
\end{proof}

We finally prove Lemma \ref{lem:MPDCMPD}:

\begin{proof}[Proof of Lemma \ref{lem:MPDCMPD}]
To study the asymptotic behaviour of $\sup_{u \in T^\circ(f_n^w)} MPD_\delta(u,T^\circ(f_n^w))$, we define the continuous analogue of this quantity on the stable tree $\cT^{(\alpha)}$. 
Recall that $H^{(\alpha)}$ is seen as the contour function of $\cT^{(\alpha)}$, in the following sense: there exists a coupling between $\cT^{(\alpha)}$ and $H^{(\alpha)}$ such that, if a particle explores the tree starting from its root from left to right as in the discrete case, so that the exploration ends at time $1$, then almost surely the distance between the particle and the root as time passes is coded by $H^{(\alpha)}$.
Therefore, for any $u \in \cT^{(\alpha)}$, one can define the subtree of $\cT^{(\alpha)}$ rooted at $u$, $\theta_u(\cT^{(\alpha)})$, as the set of points visited between the first and last visits of $u$ by $H^{(\alpha)}$. 

Mimicking the notations of Section \ref{ssec:deftree}, for any $x \in \cT^{(\alpha)}$, define $g(x)$ (resp. $d(x)$) the first (resp. last) time at which the vertex $x$ is visited by $H^{(\alpha)}$. Then we simply define the size of the subtree $\theta_u(\cT^{(\alpha)})$ as $|\theta_u(\cT^{(\alpha)})|=d(u)-g(u)$. In particular, if one denotes by $\emptyset$ the root of $\cT^{(\alpha)}$, $|\theta_\emptyset(\cT^{(\alpha)})|=1$.

Let us also define the analogue of $\delta n$-nodes in this continuous setting, which we will call $\delta$-nodes of $\cT^{(\alpha)}$: for $\delta>0$, we say that $u$ is a $\delta$-node of $\cT^{(\alpha)}$ if there exist $0 \leq a_1(u) < a_2(u) < a_3(u) \leq 1$ such that $H^{(\alpha)}$ visits $u$ at times $a_1(u), a_2(u), a_3(u)$ and in addition $a_3(u)-a_2(u) \geq \delta, a_2(u)-a_1(u) \geq \delta$.

Now, for $\delta>0$ and $u \in \cT^{(\alpha)}$, define $A^\delta(u,\cT^{(\alpha)})$ the set of ancestors of $u$ in $\cT^{(\alpha)}$ (i.e. elements of the tree that are visited before the first visit of $u$ or after the last visit of $u$ by $H^{(\alpha)}$) that are not $\delta$-nodes. Finally, set:

\begin{align*}
CMPD_\delta \left(u,\cT^{(\alpha)}\right) \coloneqq \sum_{v \in A^\delta(u,\cT^{(\alpha)})} \tilde{h}_{u,v}\left(\cT^{(\alpha)}\right)
\end{align*}
where, in words, we define $\tilde{h}_{u,v}$ as follows: removing the vertex $v$ from the tree splits it into several connected components. We sum the sizes of all of these components which do not contain the root of $\cT^{(\alpha)}$, nor the vertex $u$. Rigorously, one can define it as: 
\begin{align*}
\tilde{h}_{u,v} \coloneqq \left(d(v)-g(v)\right) - \left(d_u(v)-g_u(v)\right),
\end{align*}
where $g_u(v)=\sup\{s<g(u), H^{(\alpha)}_s=H^{(\alpha)}_{g(v)}\}$, $d_u(v)=\inf\{s>d(u), H^{(\alpha)}_s=H^{(\alpha)}_{g(v)}\}$ are the consecutive times at which $H^{(\alpha)}$ visits $v$ such that $u$ is visited inbetween, corresponding to the branch starting from $v$ that contains $u$.

Assume by Skorokhod theorem that the convergence of Theorem \ref{thm:cvcontour}, stating that the renormalized white reduced tree $T^\circ(f_n^w)$ converges to $\cT^{(\alpha)}$, holds almost surely. Assume that there exists $\eta>0$ and an increasing extraction $\phi:\N^* \rightarrow \N^*$ such that, for all $n \geq 1$, we can find a vertex $v_n \in T(f_{\phi(n)}^w)$ such that $|\theta_{v_n}(T(f_n^w))|\geq \epsilon n$, for which $MPD_{1/n}(v_n, T(f_{\phi(n)}^w)) \geq \eta$. Using the fact that $\cT^{(\alpha)}$ is compact, up to extraction, $v_n$ converges to some vertex $v_\infty \in \cT^{(\alpha)}$ satisfying $|\theta_{v_\infty}(T(f_n^w))|\geq \epsilon$. $v_\infty$ should in addition satisfy $CMPD_0(v_\infty, \cT^{(\alpha)}) \geq \eta$, which is impossible. The result follows. 

Notice that the condition that the subtree rooted in the vertex $v_n$ has size at least $\epsilon n$ is mandatory. Indeed, otherwise, it may happen that $v_n$ belongs to a 'small' branch of the tree, but converges to a point $v_\infty$ of $\cT^{(\alpha)}$ with a large subtree on top of it.
\end{proof}

\bibliographystyle{abbrv}
\bibliography{biblidual}
\end{document}